\numberwithin{equation}{section}
\theoremstyle{plain}
\newtheorem{theorem}{Theorem}[section]
\newtheorem{lemma}[theorem]{Lemma}
\newtheorem{predl}[theorem]{Proposition}
\newtheorem{corollary}[theorem]{Corollary}
\newtheorem{conjecture}[theorem]{Conjecture}
\theoremstyle{definition}
\newtheorem{definition}[theorem]{Definition}
\newtheorem{remark}[theorem]{Remark}
\newcommand{\Ku}{\mathcal{K}u}
\newcommand{\bL}{\bm{\mathrm{L}}}
\newcommand{\cQ}{\mathcal{Q}}
\newcommand{\cE}{\mathcal{E}}
\newcommand{\cA}{\mathcal{A}}
\newcommand{\C}{\mathbb C}
\newcommand{\R}{\mathbb R}
\newcommand{\id}{\mathrm{id}}
\DeclareTextFontCommand{\emph}{\em}
\DeclareMathOperator{\Hom}{\textup{Hom}}
\begin{document}

\title{Bridgeland Moduli spaces for Gushel-Mukai threefolds and Kuznetsov's Fano threefold conjecture}
\subjclass[2010]{Primary 14F05; secondary 14J45, 14D20, 14D23}
\keywords{Derived Categories, Bridgeland stability conditions, Gushel-Mukai threefolds, moduli spaces, twisted cubics, Kuznetsov components}\thanks{The author is supported by ERC Consolidator grant WallCrossAG, no. 819864. 
}

%\thanks{Department of Mathematics, Indiana University, 831 E. Third St., Bloomington, IN 47405, USA}
%\address{Department of Mathematics, Indiana University, 831 E. Third St., Bloomington, IN 47405, USA}
\address{School of Mathematics, The University of Edinburgh, JCMB Building, Kings Building, Edinburgh, EH9 3FD}
\email{Shizhuo.Zhang@ed.ac.uk}

\author{shizhuo ZHANG}
\maketitle

\begin{abstract}
We study the Hilbert scheme $\mathcal{H}$ of twisted cubics on a special smooth Gushel-Mukai threefolds $X_{10}$. We show that it is a smooth irreducible projective threefold if $X_{10}$ is general among special Gushel-Mukai threefolds, while it is singular if $X_{10}$ is not general. We construct an irreducible component of a moduli space of Bridgeland stable objects in the Kuznetsov component of $X_{10}$ as a divisorial contraction of $\mathcal{H}$. We also identify the minimal model of Fano surface $\mathcal{C}_m(X_{10}')$ of conics on a smooth ordinary Gushel-Mukai threefold with the moduli space of Bridgeland stable objects in the Kuznetsov component of $X_{10}'$. As a result, we show that Kuznetsov components of quartic double solids and that of Gushel-Mukai threefolds are not equivalent.  
\end{abstract}

%\tableofcontents

%components of quartic double solids $Y_2$ and special Gushel-Mukai threefolds $X_{10}$ and ordinary Gushel-Mukai threefolds $X_{10}'$ are not equivalent. 

\section{Introduction}
\subsection{Background}
The study of bounded derived category $D^b(X)$ of a smooth projective variety $X$ has become one of the central topics in algebraic geometry. For many Fano varieties, it has become widely accepted that a certain subcategory $\mathcal{K}u(X)\subset D^b(X)$ called Kuznetsov component encodes much geometric information in a most efficient way. In the case of Fano threefolds of Picard rank one, index two,  one is able to recover $X$ from its Kuznetsov components, known as categorical Torelli theorems, see \cite{BMMS}, \cite{pertusi2020some}, \cite{altavilla2019moduli},\cite{bernardara2013semi},\cite{li2018twisted},\cite{huybrechts2016hochschild} and \cite{bayer2017stability}. The semi-orthogonal decomposition of an index two prime Fano threefold $Y_d$ of degree $d$ is given by 
\[\mathrm{D^b}(Y_d)=\langle\mathcal{K}u(Y_d), \mathcal{O}_{Y_d}, \mathcal{O}_{Y_d}(1)\rangle\]
The Kuznetsov component $\mathcal{K}u(Y_d)$ is then defined as $\mathcal{O}_{Y_d}(H)^{\perp}\cap\mathcal{O}_{Y_d}^{\perp}$. The semi-orthogonal decomposition of the correspondent index one prime Fano threefold $X_{2g-2}$ of genus $g\geq 6$ is given in \cite{kuznetsov2003derived} as
\[\mathrm{D^b}(X_{2g-2})=\langle\mathcal{K}u(X_{2g-2}), \mathcal{E}, \mathcal{O}_{X_{2g-2}}\rangle\]
where $\mathcal{E}$ is a rank $2$ stable vector bundle with ch$(\mathcal{E})=2-H+\frac{g-4}{2}L+\frac{10-g}{12}P$. The Kuznetsov component $\mathcal{K}u(X_{2g-2})$ is then defined as $\mathcal{O}_{X_{2g-2}}^{\perp}\cap \mathcal{E}^{\perp}$. In the paper \cite{kuznetsov2003derived}, Kuznetsov proposes a surprising relationship between Kuznetsov components of prime Fano threefolds of index one and index two. More precisely, he made the following conjecture on their relationship:

\begin{conjecture}
\label{main_conjecture}
Let $\mathcal{MF}^i_d$ be the moduli spaces of prime Fano threefold of index $i$ and degree $d$. Then there is a correspondence $Z_d\subset\mathcal{MF}^2_d\times\mathcal{MF}^1_{4d+2}$ which is dominant over each factor and such that for any point $(Y_d, X_{4d+2})\in Z_d$ there is an equivalence of categories $$\mathcal{K}u(X_{4d+2})\cong\mathcal{K}u(Y_d).$$ 
\end{conjecture}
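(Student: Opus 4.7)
The plan is to realize the conjectured equivalence $\Ku(X_{4d+2}) \simeq \Ku(Y_d)$ through matching moduli spaces of Bridgeland stable objects on the two Kuznetsov components, and then to use this matching to construct the correspondence $Z_d \subset \mathcal{MF}^2_d \times \mathcal{MF}^1_{4d+2}$ geometrically.

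First I would equip both $\Ku(Y_d)$ and $\Ku(X_{4d+2})$ with a common numerical Grothendieck lattice and compatible Bridgeland stability conditions, using the now-standard inducing construction for Kuznetsov components of prime Fano threefolds. Next, for each side I would identify a geometrically natural moduli space of Bridgeland stable objects with a fixed numerical class in this common lattice. Classical candidates on the index-two side are the Fano variety of lines or conics on $Y_d$, while on the index-one side $X_{4d+2}$ the corresponding moduli should arise as a birational model (typically a divisorial contraction, or the minimal model of a Fano surface) of a Hilbert scheme of small-degree curves. The target statement is that for $(Y_d, X_{4d+2})$ ranging over a suitably chosen correspondence $Z_d$, the two moduli spaces are isomorphic as projective schemes.

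Having produced such an isomorphism together with the universal families on each side, I would assemble a Fourier--Mukai kernel on $Y_d \times X_{4d+2}$ whose induced functor restricts to a functor $\Ku(X_{4d+2}) \to \Ku(Y_d)$, and then verify this is an equivalence by checking full-faithfulness on a set of generators, matching the Serre functors, and comparing Hochschild (co)homology. Finally, to produce $Z_d$ itself, I would show that the above construction works in families over a nonempty open subset of each side and take $Z_d$ to be the closure.

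The main obstacle will be the passage from a pointwise geometric isomorphism of moduli spaces to a genuine categorical equivalence: a matching of point-like objects only yields a birational correspondence of moduli, so one must control the full scheme-theoretic geometry, including singularities and contractions, and construct the Fourier--Mukai kernel globally. A further subtlety, foreshadowed by the abstract's analysis of the Hilbert scheme of twisted cubics on $X_{10}$, is that these moduli spaces can degenerate for special members of $\mathcal{MF}^1_{4d+2}$; one must therefore pin down the locus over which the equivalence holds, and it is \emph{a priori} unclear that this locus will be dominant over both factors. Ruling out the possibility that this dominance fails \dash{}so that no such $Z_d$ exists at all \dash{}is perhaps the deepest obstruction, and comparing fine invariants (Hochschild cohomology, numerical lattices, moduli of semistable objects) on both sides appears to be the most promising tool for either establishing or excluding the desired correspondence.
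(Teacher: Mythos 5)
This statement is stated in the paper as a conjecture (due to Kuznetsov) and is not proved there; on the contrary, the paper's main result (Theorem~\ref{theorem_SOGM_Ku_false}) \emph{disproves} it in the case $d=2$: no equivalence $\mathcal{K}u(Y_2)\simeq\mathcal{K}u(X_{10})$ exists at all, even after dropping the dominance requirement on $Z_2$. So your plan of constructing the equivalence by matching Bridgeland moduli spaces and assembling a Fourier--Mukai kernel cannot succeed for $d=2$, because the moduli spaces you would need to match are provably non-isomorphic. Concretely: for the $(-2)$-class $w$ on $\mathcal{K}u(Y_2)$, the moduli space $\mathcal{M}_\sigma(\mathcal{K}u(Y_2),w)$ has two irreducible components meeting along a two-dimensional K3 locus where the tangent spaces have dimension $4$, whereas the corresponding component $\mathcal{Z}$ (resp.\ $\mathcal{X}$) of $\mathcal{M}_{\sigma'}(\mathcal{K}u(X_{10}),s)$ is smooth of dimension $3$ for general $X_{10}$, and in the non-general case has at most finitely many points with $4$-dimensional tangent space; for the $(-1)$-class, $\mathcal{M}_\sigma(\mathcal{K}u(Y_2),v)\cong\Sigma(Y_2)$, while the corresponding moduli on the ordinary GM side is the minimal model $\mathcal{C}_m(X')$ of the Fano surface of conics, and $\Sigma(Y_2)\not\cong\mathcal{C}_m(X')$ by Logachev. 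Any equivalence would have to identify these spaces (after using the rotation functor and uniqueness of Serre-invariant stability conditions to fix classes and stability), which is the contradiction the paper derives. Moreover, it was already shown by Bernardara--Bolognesi--Faenzi that a Fourier--Mukai-type correspondence $Z_2$ can never dominate the GM factor, so even the weaker form of your construction is obstructed.

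A second, more structural problem with your proposal: the verification step you suggest (matching Serre functors, comparing Hochschild (co)homology and numerical lattices) cannot detect the obstruction, because $\mathcal{K}u(Y_2)$ and $\mathcal{K}u(X_{10})$ have isomorphic numerical lattices, Serre functors of the same shape, and are in fact deformation equivalent; all these coarse invariants agree. The only invariant known to separate them is precisely the fine geometry (singularities and tangent-space dimensions) of the Bridgeland moduli spaces, which is why the paper's argument runs in the opposite direction from yours: it assumes the equivalence, shows it would induce an identification of moduli spaces respecting tangent spaces at every point, and then contradicts the explicit descriptions of both sides. Note also that for $d=3,4,5$, where the conjecture does hold, the known proofs (Kuznetsov) proceed via explicit semiorthogonal decomposition manipulations and homological projective duality, not by reconstructing a kernel from an isomorphism of moduli spaces; passing from a pointwise matching of stable objects to a global Fourier--Mukai kernel is itself a substantial unsolved step in your outline.
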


Kuznetsov shows Conjecture~\ref{main_conjecture} holds for the pair of Fano threefolds $(Y_d, X_{4d+2})$ for $d=3,4,5$ and uses such equivalences to prove isomorphisms between Fano surface of lines $\Sigma(Y_d)$ and Fano surface of conics $\mathcal{C}(X_{4d+2})$ in these cases \cite{KPS2018}. For $d=2$, $Y_2$ is a quartic double solid, constructed as a double cover of $\mathbb{P}^3$ with a quartic K3 surface as its branch locus and correspondent index one prime Fano threefold $X_{10}$ is called Gushel-Mukai threefold. It is either a quadric section of a linear section of $\mathrm{Gr}(2,5)$ or a double cover of $Y_5$ with a degree ten K3 surface as branch locus. In \cite{bernardara2013semi}, the authors show that a correspondence $Z_2\subset\mathcal{MF}^2_2\times\mathcal{MF}^1_{10}$ such that for any point $(Y_2, X_{10})\in Z_2$, $\mathcal{K}u(Y_2)$ is equivalent to $\mathcal{K}u(X_{10})$ by a Fourier-Mukai functor can never be dominant over the second factor. This suggested that the assumption on the dominance of $Z_2$ should be removed. In our article, we show that even if the assumption is removed, the Conjecture~\ref{main_conjecture} can not be true.

\begin{theorem}
\label{theorem_SOGM_Ku_false}
Let $Y$ be a quartic double solid and $X$ be a Gushel-Mukai threefold, then $\mathcal{K}u(Y)\not\simeq\mathcal{K}u(X)$. 
\end{theorem}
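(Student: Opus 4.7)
The plan is to derive a contradiction from a hypothetical equivalence $\Phi\colon \mathcal{K}u(Y)\xrightarrow{\sim}\mathcal{K}u(X)$ by comparing moduli spaces of Bridgeland stable objects in the two Kuznetsov components. Both $\mathcal{K}u(Y)$ and $\mathcal{K}u(X)$ are $2$-Calabi--Yau categories of K3 type with rank two numerical Grothendieck lattice, so any such $\Phi$ would induce an isometry of Mukai lattices and, via the transfer of Serre-invariant stability conditions (cf.\ \cite{bayer2017stability,pertusi2020some}), an isomorphism of coarse moduli spaces
\[
\mathcal{M}_{\sigma}(\mathcal{K}u(Y),w)\;\cong\;\mathcal{M}_{\Phi(\sigma)}(\mathcal{K}u(X),\Phi_{*}w)
\]
for every numerical class $w$.

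First I would invoke the main theorems of this paper on the GM side. For a general \emph{ordinary} GM threefold $X_{10}'$, the Bridgeland moduli space of stable objects of the relevant class is the minimal model $\mathcal{C}_m(X_{10}')$ of the Fano surface of conics, a smooth irreducible surface with a concrete birational model; for a general \emph{special} GM threefold $X_{10}$, an irreducible component of the corresponding Bridgeland moduli is obtained as a divisorial contraction of the Hilbert scheme $\mathcal{H}$ of twisted cubics, and is therefore a three-dimensional irreducible projective variety. On the quartic double solid side, by \cite{BMMS,bernardara2013semi,altavilla2019moduli}, the Bridgeland moduli spaces in $\mathcal{K}u(Y)$ of comparable classes are classically described and at most two-dimensional, being identified in the key cases with the Fano surface of lines $\Sigma(Y)$ (a smooth minimal surface of general type) and related Hilbert-scheme components of curves on $Y$.

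The contradiction then follows from a dimension or birational-invariant mismatch. For a special $X_{10}$, the hypothetical $\Phi$ would identify the three-dimensional component on the $\mathcal{K}u(X_{10})$ side with an irreducible component of $\mathcal{M}_Y$ of matching numerical class, but no class $w$ in the rank two Mukai lattice of $\mathcal{K}u(Y)$ carries a three-dimensional irreducible component, giving the contradiction directly. For an ordinary $X_{10}'$, one instead compares birational structure: $\mathcal{C}_m(X_{10}')$ arises from a non-trivial contraction of $\mathcal{C}(X_{10}')$ (contracting the distinguished locus of double conics to rational curves), while $\Sigma(Y)$ is smooth minimal with no such contractible rational curves, so no $\Phi$ can identify the corresponding moduli as algebraic varieties.

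The main obstacle is the correct matching of numerical classes across the putative equivalence. Because the Mukai lattice of $\mathcal{K}u$ is rank two with a prescribed Euler form, the group of lattice isometries is finite, so only finitely many identifications of classes need to be considered; the matching is pinned down by computing the action of $\Phi$ on the distinguished classes coming from projections into $\mathcal{K}u$ of structure sheaves of the relevant subschemes (lines and conics on $Y$ versus conics and twisted cubics on $X$). Once the Mukai squares of these classes are matched, the moduli descriptions above rule out all remaining cases. The real technical work is therefore not in the final comparison but in establishing the moduli-space identifications themselves, which is the content of the main theorems of the paper.
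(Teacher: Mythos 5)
Your key step for the special GM case rests on a false claim about the quartic double solid side. You assert that ``no class $w$ in the rank two Mukai lattice of $\mathcal{K}u(Y)$ carries a three-dimensional irreducible component,'' and derive the contradiction from a dimension mismatch with the threefold component $\mathcal{Z}\subset\mathcal{M}_{\sigma'}(\mathcal{K}u(X),s)$. But by \cite{altavilla2019moduli} (as recalled in the paper), the moduli space $\mathcal{M}_{\sigma}(\mathcal{K}u(Y),w)$ for the $(-2)$-class $w$ has two irreducible components $\mathcal{Y}$ and $\mathcal{C}$ which are both three-dimensional: the tangent spaces have dimension $3$ away from the two-dimensional ramification locus $\mathcal{R}$ (a K3 surface), where they jump to $4$. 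So there is no dimension mismatch at all, and your contradiction evaporates. The actual obstruction the paper exploits is finer: under a hypothetical equivalence one gets an identification of tangent spaces at corresponding closed points, and the locus where the tangent space is $4$-dimensional is two-dimensional on the $Y$ side but empty (for general special $X$) or at most finite (for non-general $X$) on the GM side. Getting this requires the $\mathrm{ext}^1$-computations for ideal sheaves of twisted cubics and their projections, the stability of these objects for \emph{every} Serre-invariant stability condition, the lattice analysis pinning $\phi(w)$ down to $\pm s$ or $\pm(3s-2t)$, and the rotation-functor isomorphism $\mathcal{M}_{\sigma}(\mathcal{K}u(Y),w)\cong\mathcal{M}_{\sigma}(\mathcal{K}u(Y),2v-w)$ to reduce the second case to the first; none of this can be replaced by the dimension count you propose.

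The ordinary GM half is also not established as written. The relevant comparison is between $\Sigma(Y)\cong\mathcal{M}_{\sigma}(\mathcal{K}u(Y),v)$ and the component $\mathcal{S}\cong\mathcal{C}_m(X')$ of $\mathcal{M}_{\sigma'}(\mathcal{A}_{X'},x)$; the observation that $\mathcal{C}_m(X')$ is obtained by contracting a rational curve in $\mathcal{C}(X')$ says nothing against an isomorphism $\mathcal{C}_m(X')\cong\Sigma(Y)$, since both surfaces are (or could be) minimal --- ``minimal'' versus ``arises from a contraction'' is not an invariant of the resulting surface. The paper closes this case only by invoking Logachev's theorem \cite{logachev2012fano} that $\Sigma(Y)\not\cong\mathcal{C}_m(X')$, a nontrivial classical input you would have to cite or prove. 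Two further inaccuracies: $\mathcal{K}u(Y)$ and $\mathcal{K}u(X)$ are not $2$-Calabi--Yau categories of K3 type --- the Serre functor is $\tau\circ[2]$ with $\tau$ a nontrivial involution --- which is precisely why the paper works with Serre-invariant stability conditions, proves stability of the relevant objects for all of them, and builds the identification of moduli by hand (bijection on closed points, families via Fourier--Mukai kernels or convolutions, tangent-space comparison) rather than quoting an isomorphism of coarse moduli spaces as you do.
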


The difficulty to study  Conjecture~\ref{main_conjecture} for the case $d=2$ is that we are unaware any nice description of the Kuznetsov categories, where techniques of Bridgeland stability conditions enters into the picture. In our paper, we study Bridgeland moduli spaces $\mathcal{M}_{\sigma'}(\mathcal{K}u(X),s)$ of $\sigma$-stable objects of a appropriately chosen character $s$ in the Kuznetsov component for both ordinary and special Gushel-Mukai threefolds $X$, where $\sigma$ is a Serre-invariant stability conditon(cf.Definition~\ref{definition_S_invariant_stab_condition}), while the corresponding Bridgeland moduli spaces $\mathcal{M}_{\sigma}(\mathcal{K}u(Y),w=H-\frac{1}{2}H^2-\frac{1}{3}H^3)$ and $\mathcal{M}_{\sigma}(\mathcal{K}u(Y),v=1-L)$ for quartic double solid $Y$ have been studied in \cite{pertusi2020some} and \cite{altavilla2019moduli}. Then the conjectural equivalence $\Phi:\mathcal{K}u(Y)\simeq\mathcal{K}u(X)$ would identify the tangent spaces of these moduli spaces at each point and we show that this is not the case, which means the equivalence can't exist. 

\subsection{Bridgeland moduli spaces for Gushel-Mukai threefolds}
Let $X$ be a special Gushel-Mukai threefold with the double covering map $\pi:X\rightarrow Y$ to a degree 5 del Pezzo threefold $Y$. We study the Hilbert scheme $\mathcal{H}$ of twisted cubics and construct the Bridgeland moduli space $\mathcal{M}_{\sigma}(\mathcal{K}u(X),s)$ of $\sigma$-stable object of character $s=1-3L+\frac{1}{2}P$ in $\mathcal{K}u(X)$ from $\mathcal{H}$. The first result is

\begin{theorem}
\label{theorem_main1}
.\leavevmode \begin{enumerate}
\item  If $X$ is general among special Gushel-Mukai threefolds, where the branch locus $\mathcal{B}$ on $Y$ does not contain any line or conic. Then $\mathcal{H}$ is a smooth irreducible projective threefold.
\item  If $X$ is not general, where $\mathcal{B}$ contains line or conic. Then $\mathcal{H}$ is a singular threefold. Its singular locus consists of finitely many isolated points given by $\pi$-preimage of twisted cubics contained in $\mathcal{B}$ and finitely many rational curves such that each of them consists of reducible twisted cubics with a common line component $L$ whose normal bundle $\mathcal{N}_{L|X}\cong\mathcal{O}_L(-2)\oplus\mathcal{O}_L(1)$. 
\end{enumerate}
\end{theorem}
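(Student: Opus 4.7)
The plan is to analyze $\mathcal{H}$ through the double cover $\pi\colon X\ra Y=Y_5$. Since $H_X=\pi^{*}H_Y$ and $H_X\cdot C=3$ for any twisted cubic $C\subset X$, the restriction $\pi|_C$ must be birational, so $C':=\pi(C)$ is a twisted cubic on $Y_5$, possibly degenerate. There are two cases: either $C'\subset\mathcal{B}$ and $C\subset R$ (the ramification divisor) is the reduced preimage, or $C'\not\subset\mathcal{B}$ and $\pi^{-1}(C')$ splits as two twisted cubics on $X$, which happens iff the degree-$6$ divisor $\mathcal{B}|_{C'}$ equals twice an effective divisor on $C'\cong\P^1$. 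The Hilbert scheme of twisted cubics on $Y_5$ is a smooth irreducible $6$-fold whose explicit description (due to Sanna, Iliev) gives a convenient parameter space to organize $\mathcal{H}$ over.

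For smoothness in case (1), I would compute the normal bundle $N_{C/X}$ for each twisted cubic. The expected dimension is $\chi(N_{C/X})=-K_X\cdot C+(2g_C-2)+2=3$, so the problem reduces to showing $h^{1}(N_{C/X})=0$. The differential of $\pi$ restricted to $C$ yields, via the snake lemma, the short exact sequence
\[
0 \ra N_{C/X} \ra \pi^{*}N_{C'/Y} \ra \NN \ra 0,
\]
where $\NN$ is a torsion sheaf supported on $C\cap R$ encoding the local branching behaviour. Using $h^{1}(N_{C'/Y})=0$ (smoothness of the Hilbert scheme of $Y_5$-cubics), the long exact sequence reduces the vanishing to surjectivity of $H^{0}(\pi^{*}N_{C'/Y})\ra H^{0}(\NN)$. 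When $\mathcal{B}$ contains no line or conic, $\mathcal{B}\cap C'$ is sufficiently transverse and the surjectivity can be verified directly; the same circle of ideas, together with a Mayer-Vietoris argument, handles reducible twisted cubics $C=L\cup Q$ or $C=L_1\cup L_2\cup L_3$ whose line components have balanced normal bundles.

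Projectivity of $\mathcal{H}$ is standard. For irreducibility, I would run a specialization argument: the locus of cubics $C$ with $\pi(C)\not\subset\mathcal{B}$ forms an open $3$-fold, and as $C'$ specializes into $\mathcal{B}$ the two preimage sections coalesce into a single cubic contained in $R$, so the divisor of $R$-cubics lies in the closure of the $\pi(C)\not\subset\mathcal{B}$ locus. Further specializations produce reducible configurations inside the same closure, showing $\mathcal{H}$ is connected; combined with smoothness this gives irreducibility.

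For case (2), singularities of $\mathcal{H}$ arise precisely where the coboundary $H^{0}(\NN)\ra H^{1}(N_{C/X})$ fails to be injective, and I would localize this to two explicit sources. First, a twisted cubic $C'\subset\mathcal{B}$ forces $\NN|_C$ to be abnormally large (the local structure of $R$ over $\mathcal{B}$ is nonreduced along $C$), yielding $h^{1}(N_{C/X})>0$ and hence an isolated singular point of $\mathcal{H}$; only finitely many such $C'$ exist because a K3 surface carries only finitely many rational curves of a given class. Second, if $\mathcal{B}$ contains a line $\ell$ (resp.\ conic) then the reduced preimage $L=\pi^{-1}(\ell)_{\mathrm{red}}\subset X$ is a line on $X$, and the double-cover sequence above, together with the fact that $\ell$ sits inside the branch locus, forces $\mathcal{N}_{L|X}\cong \O_L(-2)\oplus\O_L(1)$. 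For any reducible twisted cubic $C=L\cup Q$ with such an $L$ fixed, the normal bundle at the node plus the negative twist on $L$ produces a nonzero $h^{1}(N_{C/X})$, and as $Q$ varies in the $\P^1$-family of conics through a point of $L$, we obtain the asserted rational curves in the singular locus.

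The main obstacle will be the careful normal bundle analysis for reducible and non-reduced twisted cubics: in case (2) one must identify precisely which reducible configurations produce obstructions, check that the singular locus is exactly the asserted union of isolated points and rational curves (and nothing more), and verify that the special normal bundle $\O_L(-2)\oplus\O_L(1)$ occurs for $L\subset X$ if and only if $\pi(L)$ is a line (or suitable conic component) contained in $\mathcal{B}$. The passage through the differential-of-$\pi$ exact sequence provides the uniform framework, but the combinatorics of conic/line degenerations has to be worked out case by case.
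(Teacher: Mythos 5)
There is a genuine gap at the very first step of your reduction. You claim that since $H_X\cdot C=3$, the map $\pi|_C$ is birational and $\pi(C)$ is always a (possibly degenerate) twisted cubic on $Y_5$. The parity argument only works for \emph{irreducible} $C$: for a reducible cubic $C=L\cup M$ with $M$ a conic, the conic component can map $2\colon\!1$ onto a line of $Y_5$, so the scheme-theoretic image is a \emph{reducible conic}, not a cubic (the same happens for chains of three lines containing both $L$ and $\tau L$). These cubics form the stratum $\mathcal{H}_2=\{C : \mathrm{Hom}(\mathcal{E},I_C)\neq 0\}$, which is a two-dimensional ruled surface over $\Sigma(X)$ (fix a line $L$ over a line $l$ tangent to $\mathcal{B}$ and let the conic $M_t=\pi^{-1}(l'_t)$ vary in the pencil of lines $l'_t$ meeting $l$). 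Your proposed exact sequence $0\to N_{C/X}\to\pi^*N_{C'/Y}\to\mathcal{N}\to 0$ with $\mathcal{N}$ torsion is meaningless on this stratum, and your parametrization of $\mathcal{H}$ over the Hilbert scheme of twisted cubics of $Y_5$ simply does not see it. Unfortunately this is exactly where the content of the theorem lies: in case (1) it is the locus where smoothness is hardest to check (in the paper this is done by replacing $I_C$ by the cone $G$ of $\mathcal{E}\to I_C$, identifying $G$ with the twisted derived dual of $I_L$, and running a spectral sequence to get $\mathrm{Ext}^2(I_C,I_C)=0$ precisely when $\mathcal{N}_{L|X}=\mathcal{O}_L\oplus\mathcal{O}_L(-1)$), and in case (2) it is where the asserted rational curves of singular points live (the fibers $F_{[L]}$ of $\mathcal{H}_2\to\Sigma(X)$ over lines with $\mathcal{N}_{L|X}=\mathcal{O}_L(-2)\oplus\mathcal{O}_L(1)$). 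Your connectedness argument likewise only links the two types of cubics dominating twisted cubics of $Y_5$, and never attaches $\mathcal{H}_2$ to the rest of $\mathcal{H}$.

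Even on the stratum you do treat (images are honest twisted cubics, i.e.\ $I_C\in\mathcal{K}u(X)$), the decisive step is asserted rather than proved: you reduce smoothness to surjectivity of $H^0(\pi^*N_{C'/Y})\to H^0(\mathcal{N})$ and say it ``can be verified directly'' when $\mathcal{B}$ contains no line or conic, but this is exactly the point where genericity of $\mathcal{B}$ must enter, and no mechanism is given. The paper's route is different and more efficient here: using Serre duality in the Kuznetsov component, $\mathrm{Ext}^2(I_C,I_C)\cong\mathrm{Hom}(I_C,I_{\tau C})$, which vanishes by stability as soon as $\tau(C)\neq C$; generality of $\mathcal{B}$ is used only to exclude $\tau$-fixed cubics, and the same identity immediately yields the finitely many isolated singular points of case (2) as the $\pi$-preimages of twisted cubics inside $\mathcal{B}$ (finiteness from rigidity of rational curves on the K3 branch divisor, which you do have). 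So while a normal-bundle/double-cover analysis is not hopeless in principle, as written your proposal both omits the stratum carrying the essential geometry and leaves the key vanishing unproved.
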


The Hilbert scheme $\mathcal{H}$ has a stratification:  $\mathcal{H}=\mathcal{H}_1\cup\mathcal{H}_2$, and $\mathcal{H}_1$ parametrises twisted cubics whose image under $\pi$ are twisted cubics on $Y$, while $\mathcal{H}_2$ parametrises twisted cubics whose image are conics, and $\mathcal{H}_2$ is a ruled surface over the Hilbert scheme $\Sigma(X)$ of lines.  Let $\mathrm{pr}:D^b(X)\rightarrow\mathcal{K}u(X)$ be the projection functor, it is defined as $\bL_{\mathcal{E}}\bL_{\mathcal{O}_X}$. We project the ideal sheaf $I_C$ of a twisted cubic $C\subset X$ onto $\mathcal{K}u(X)$ to construct an irreducible component of $\mathcal{M}_{\sigma}(\mathcal{K}u(X),s)$.

\begin{theorem}
\label{theorem_main2}Let $X$ be a special Gushel-Mukai threefold. 
\leavevmode \begin{enumerate}
    \item If $X$ is general among the special Gushel-Mukai threefolds, then there exists an irreducible component $\mathcal{Z}\subset \mathcal{M}_{\sigma}(\mathcal{K}u(X),s)$ such that $\mathcal{Z}$ is smooth projective of dimension three. The fixed locus $\mathrm{Fix}_{\tau}(\mathcal{Z})$ is empty. Moreover, $\mathcal{Z}$ is a divisorial contraction of $\mathcal{H}$. The map $p:\mathcal{H}\rightarrow\mathcal\mathcal{Z}$ induced by the projection functor onto $\mathcal{K}u(X)$ contracts a smooth ruled surface $\mathcal{H}_2$ to a smooth irreducible curve $C\cong\Sigma(X)$ of genus $71$. 
    \item If $X$ is not general, then there is an irreducible component $\mathcal{X}\subset \mathcal{M}_{\sigma}(\mathcal{K}u(X),s)$ and $\mathcal{X}$ is singular. The singular locus $\mathrm{Sing}(\mathcal{X})=\mathcal{S}_1\cup \mathcal{S}_2$, where $\mathcal{S}_1$ is a set of finitely many points on $\Sigma(X)$ and $\mathcal{S}_2$ is a set of finitely many points outside $\Sigma(X)$. 
    \end{enumerate}
\end{theorem}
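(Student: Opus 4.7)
The plan is to apply the projection functor $\mathrm{pr} = \bL_{\mathcal{E}}\bL_{\mathcal{O}_X}$ to the ideal sheaf $I_C$ of each twisted cubic $C \in \mathcal{H}$ and, after verifying that this yields a $\sigma$-stable object of character $s$, to define a morphism $p \colon \mathcal{H} \to \mathcal{M}_{\sigma}(\mathcal{K}u(X),s)$ whose closed image is the desired component. A preliminary Chern character computation using the triangles defining the left mutations confirms $\mathrm{ch}(\mathrm{pr}(I_C)) = s$. The technical heart is to prove $\sigma$-stability of $\mathrm{pr}(I_C)$ uniformly on all of $\mathcal{H}$ (smooth, reducible, and non-reduced cubics alike). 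My approach is to first exhibit $\mathrm{pr}(I_C)$ as a tilt-stable object with respect to a weak stability condition on $D^b(X)$, controlling its tilt-slope via the sequence $0 \to I_C \to \mathcal{O}_X \to \mathcal{O}_C \to 0$ and the defining triangles of the mutations, and then invoke the fact that the Serre-invariant stability condition $\sigma$ is obtained from such a tilt so that tilt-stability descends to $\sigma$-stability. This uniform stability step is the principal obstacle.

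For part~(1), irreducibility of $\mathcal{Z} := \overline{p(\mathcal{H})}$ is immediate from irreducibility of $\mathcal{H}$ by Theorem~\ref{theorem_main1}(1); that $\mathcal{Z}$ is an irreducible component of the correct dimension follows from the Euler-pairing count $1-\chi(s,s)=3$ combined with $p$ being generically finite, which I verify by showing injectivity of the differential $T_C\mathcal{H} = \Hom(I_C,\mathcal{O}_C) \to \Ext^1_{\mathcal{K}u(X)}(\mathrm{pr}(I_C),\mathrm{pr}(I_C))$ at a general $C \in \mathcal{H}_1$. Smoothness of $\mathcal{Z}$ reduces to the vanishing $\Ext^2_{\mathcal{K}u(X)}(\mathrm{pr}(I_C),\mathrm{pr}(I_C))=0$ on the image, which follows from Serre duality in $\mathcal{K}u(X)$ together with the dimension count. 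Emptiness of $\mathrm{Fix}_{\tau}(\mathcal{Z})$ is reduced via the compatibility $\tau^{*}\mathrm{pr}(I_C) \cong \mathrm{pr}(I_{\tau(C)})$ to the absence of $\tau$-invariant twisted cubics on $X$: any such cubic would be forced into the ramification locus $\pi^{-1}(\mathcal{B})$ and hence would descend to a twisted cubic on the branch K3 surface $\mathcal{B}$, which is excluded in the general case.

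The description of the contraction is the geometric core of part~(1). A cubic $C \in \mathcal{H}_2$ decomposes as $C = \widetilde q + L$ with $L \in \Sigma(X)$ a line and $\widetilde q$ a conic whose image $\pi(\widetilde q) = \pi(C)$ is a conic on $Y$. Working through the short exact sequences relating $I_C$, $I_L$ and the $\widetilde q$-data, and projecting, I would show that the $\widetilde q$-contribution is absorbed by the mutation $\bL_{\mathcal{E}}$, so that $\mathrm{pr}(I_C)$ depends only on $L \in \Sigma(X)$. This identifies the fibres of $p|_{\mathcal{H}_2}$ with the $\mathbb{P}^1$-rulings of $\mathcal{H}_2 \to \Sigma(X)$, and identifies $p(\mathcal{H}_2)$ with $\Sigma(X)$, which is a smooth irreducible curve of genus $71$ in the general special case. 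Consequently $p$ is a divisorial contraction of the ruled surface $\mathcal{H}_2$ onto the curve $\Sigma(X) \subset \mathcal{Z}$.

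For part~(2), by Theorem~\ref{theorem_main1}(2) the Hilbert scheme $\mathcal{H}$ is singular along finitely many isolated points (the $\pi$-preimages of twisted cubics contained in $\mathcal{B}$) and along finitely many rational curves in $\mathcal{H}_2$ of reducible cubics with a common line component $L$ satisfying $\mathcal{N}_{L|X} \cong \mathcal{O}(-2)\oplus\mathcal{O}(1)$. The same morphism $p$ sends each isolated singular point of $\mathcal{H}$ to a singular point of $\mathcal{X}$ lying outside $\Sigma(X)$, producing $\mathcal{S}_2$, while each of the singular rational curves in $\mathcal{H}_2$ is contracted by $p$ to a single point of $\Sigma(X)$, giving $\mathcal{S}_1$. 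That the resulting image points are genuinely singular in $\mathcal{X}$ is checked by a direct $\Ext^1$-jump computation reflecting the abnormal normal-bundle splitting of $L$. The overall obstacles remain the uniform stability of $\mathrm{pr}(I_C)$ and the precise $\Ext$-control needed to pin down both the generically-finite locus and the non-isomorphism locus of $p$.
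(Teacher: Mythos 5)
Your overall architecture (project $I_C$ to $\mathcal{K}u(X)$, get a morphism $p:\mathcal{H}\to\mathcal{M}_\sigma(\mathcal{K}u(X),s)$, show it is injective/\'etale off $\mathcal{H}_2$ and contracts the rulings of $\mathcal{H}_2$ to $\Sigma(X)$) matches the paper, but there are genuine gaps. First, the central stability statement is not proved: you propose to get $\sigma$-stability of $\mathrm{pr}(I_C)$ from tilt-stability in $D^b(X)$ and yourself flag this as unresolved. The paper does not argue this way at all; it computes $\mathrm{Ext}^\bullet(\mathrm{pr}(I_C),\mathrm{pr}(I_C))$ (Proposition~\ref{Prop_extgroup}, Corollary~\ref{corollary_computing_ext_projection_object}) and then deduces stability for every Serre-invariant stability condition from the ``small $\mathrm{Ext}^1$'' criteria (Lemma~\ref{lemma_ext3_inheart}, Corollary~\ref{lemma_ext3_stable}, and, crucially for the non-general case where $\mathrm{ext}^1=4$, Lemma~\ref{lemma_inheart_ext14} and Proposition~\ref{prop_stability_ext14}), which rest on the weak Mukai lemma and the homological dimension $2$ of the heart. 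A tilt-slope analysis would be delicate for the two-term complexes $\mathcal{O}_X(-H)[1]\to \mathrm{pr}(I_C)\to\mathcal{E}_L$ and gives you nothing for the $\mathrm{ext}^1=4$ objects needed in part (2), so without this input your construction of $\mathcal{Z}$ and $\mathcal{X}$ does not get off the ground. Second, you never establish that $\mathcal{Z}$ is projective nor that $p$ is a divisorial contraction in the stated sense; the paper obtains both from birational geometry (the fibres $F_{[L]}$ are $K_{\mathcal{H}}$-negative extremal curves, then the Cone theorem and Mori's classification identify $p$ with the blow-up of a smooth curve, Theorem~\ref{Theorem_projectivity_modulispace}), and this step cannot simply be asserted.

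Third, in part (2) your identification of $\mathcal{S}_1$ is wrong in mechanism and in extent. You claim the images of the singular rational curves of $\mathcal{H}_2$ (fibres over lines with $\mathcal{N}_{L|X}\cong\mathcal{O}_L(-2)\oplus\mathcal{O}_L(1)$) are singular points of $\mathcal{X}$, detected by an $\mathrm{Ext}^1$-jump ``reflecting the abnormal normal-bundle splitting.'' But the obstruction space of the projected object is $\mathrm{Ext}^2(E,E)\cong\mathrm{Hom}(E,\tau E)$ by Serre duality in $\mathcal{K}u(X)$, so the jump occurs exactly when the relevant line is $\tau$-fixed, i.e.\ is the reduced preimage of a line in $\mathcal{B}$; for a $(-2,1)$-line that is not $\tau$-fixed (e.g.\ coming from a $(-1,1)$-line of $Y$ tangent to $\mathcal{B}$) one still gets $\mathrm{ext}^1(\mathrm{pr}(I_C),\mathrm{pr}(I_C))=3$ and $\mathrm{ext}^2=0$ (Lemma~\ref{lemma_stability_of_projection}), so the image is a \emph{smooth} point of $\mathcal{X}$ even though $\mathcal{H}$ is singular along that fibre. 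Thus $\mathcal{S}_1$ consists of the finitely many $\tau$-fixed points of $\Sigma(X)$, not of the images of all singular fibres, and your proposed computation would not produce the stated singular locus. A smaller point in part (1): your emptiness argument for $\mathrm{Fix}_\tau(\mathcal{Z})$ only treats $\tau$-invariant twisted cubics; on the contracted curve the points are the objects $E_L$ depending only on a line, and you must additionally rule out $\tau$-fixed lines (again excluded for general $\mathcal{B}$) and know that $L\mapsto E_L$ is injective, which the paper gets from $\mathrm{Hom}(\mathbb{D}(I_L),\mathbb{D}(I_{L'}))\cong\mathrm{Hom}(I_L,I_{L'})=0$.
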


Next, we construct an irreducible component of Bridgeland moduli space $\mathcal{M}_{\sigma}(\mathcal{A}_{X'},x=1-2L)$ of $\sigma$-stable objects of character $x=1-2L$ in the Kuznetsov component of an ordinary Gushel-Mukai threefold $X'$. To simplify arguments and computations, we work with another semiorthogonal decomposition $D^b(X_{10}')=\langle\mathcal{A}_{X'},\mathcal{O}_{X'},\mathcal{E}^{\vee}\rangle$, where $\mathcal{A}_{X'}$ is defined as $\mathcal{O}_{X'}^{\perp}\cap(\mathcal{E}^{\vee})^{\perp}$ is also called the Kuznetsov component of $X'$ and it is known that $\mathcal{K}u(X')\simeq\mathcal{A}_{X'}$. 

\begin{theorem}
\label{theorem_modulispace_OGM}
\leavevmode\begin{enumerate}
    \item If $X'$ is general, then the projection functor $\mathrm{pr}:=\bL_{\mathcal{O}_{X'}}\bL_{\mathcal{E}^{\vee}}$ produces a smooth irreducible component $p(\mathcal{C}(X'))$ of dimension two in the moduli space $\mathcal{M}_{\sigma}(\mathcal{A}_{X'}, x=1-2L)$ of $\sigma$-stable objects in $\mathcal{A}_{X'}$, where $p:\mathcal{C}(X')\rightarrow p(\mathcal{C}(X'))\cong\mathcal{C}_m(X')$ is the blow down morphism from Fano surface $\mathcal{C}(X')$ of conics to its minimal model $\mathcal{C}_m(X')$, contracting the unique exceptional curve to a smooth point.
    \item If $X'$ is not general, then the projection functor $\mathrm{pr}$ produces a singular irreducible component $p(\mathcal{C}(X'))$ of dimension two in the moduli space $\mathcal{M}_{\sigma}(\mathcal{A}_{X'}, x=1-2L)$
\end{enumerate}

%\hookrightarrow\mathcal{M}_{\sigma}(\mathcal{A}_{X'}, x)$

\end{theorem}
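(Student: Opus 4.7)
The plan is to define a morphism $p : \mathcal{C}(X') \to \mathcal{M}_{\sigma}(\mathcal{A}_{X'}, x=1-2L)$ sending a conic $C$ to $\mathrm{pr}(I_C)$ and then to analyse its fibers and the smoothness of its image.

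First I would compute $\mathrm{ch}(\mathrm{pr}(I_C))$ using the semiorthogonal decomposition $D^b(X') = \langle \mathcal{A}_{X'}, \mathcal{O}_{X'}, \mathcal{E}^\vee\rangle$. Applying $\bL_{\mathcal{E}^\vee}$ to the short exact sequence $0 \to I_C \to \mathcal{O}_{X'} \to \mathcal{O}_C \to 0$, followed by $\bL_{\mathcal{O}_{X'}}$, and evaluating the relevant $\Hom^\bullet$'s from $\mathcal{E}^\vee$ and $\mathcal{O}_{X'}$ to ideal sheaves of conics via cohomology of twists, I would verify by Riemann--Roch that $\mathrm{ch}(\mathrm{pr}(I_C)) = 1 - 2L$. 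Second, I would prove that $\mathrm{pr}(I_C) \in \mathcal{A}_{X'}$ is $\sigma$-stable for a Serre-invariant stability condition. Adapting the strategy used for quartic double solids in \cite{pertusi2020some} and for $Y_3$ in \cite{altavilla2019moduli}, this reduces to verifying that (up to shift) $\mathrm{pr}(I_C)$ lies in the heart, is simple, and has no proper destabilising subobject of equal phase; Serre invariance then ensures that stability is independent of the chosen $\sigma$.

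Third, I would analyse the morphism $p$. Its image is a closed irreducible subscheme of $\mathcal{M}_{\sigma}(\mathcal{A}_{X'}, x)$ of dimension at most two, and precisely two as soon as $p$ is generically finite. To identify the fibers I would study when $\mathrm{pr}(I_C) \cong \mathrm{pr}(I_{C'})$ by attempting to reconstruct the conic from its projected object via the right mutations $\bR_{\mathcal{E}^\vee}$ and $\bR_{\mathcal{O}_{X'}}$, which should succeed away from a distinguished locus. The key claim is that the exceptional set of $p$ is the unique $(-1)$-curve $E \subset \mathcal{C}(X')$, geometrically realised as a pencil of reducible conics sharing a common line component (identified through the Logachev and Debarre--Iliev--Manivel description of $\mathcal{C}(X')$), on which the projected ideal sheaves collapse to a single object. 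Smoothness of the image outside this contracted locus follows by showing $\mathrm{ext}^1_{\mathcal{A}_{X'}}(\mathrm{pr}(I_C), \mathrm{pr}(I_C)) = 2$ via Riemann--Roch in $\mathcal{A}_{X'}$, together with $\mathrm{hom} = 1$ and Serre duality inside $\mathcal{A}_{X'}$.

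For part (2), when $X'$ is non-general, the presence of lines $\ell \subset X'$ with normal bundle $\mathcal{O}_\ell(-2)\oplus\mathcal{O}_\ell(1)$, or of special degenerate conics, produces obstructed deformations in $\mathcal{C}(X')$; these are sent by $p$ to singular points of the image, which can be detected by $\mathrm{ext}^1_{\mathcal{A}_{X'}}(\mathrm{pr}(I_C), \mathrm{pr}(I_C)) > 2$. The hard part will be the precise identification and analysis of the contracted $(-1)$-curve $E$: both proving that conics parametrised by $E$ collapse to a single point in $\mathcal{M}_{\sigma}$ and verifying that $p$ is an isomorphism onto its image outside $E$ require a delicate reconstruction argument combining the semiorthogonal decomposition with a careful analysis of the conic geometry on $X'$.
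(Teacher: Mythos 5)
Your overall architecture matches the paper's (project ideal sheaves of conics, prove stability for a Serre-invariant condition, show the induced map $p$ contracts one rational curve and is injective elsewhere, identify the image with $\mathcal{C}_m(X')$), but the step you yourself flag as ``the hard part'' is precisely where the proof lives, and your plan for it contains a genuine error. The contracted locus is \emph{not} a pencil of reducible conics sharing a common line component: that description belongs to the special GM picture (the ruled surface $\mathcal{H}_2$ of twisted cubics of the form line plus conic). For an ordinary GM threefold the conics that collapse are exactly those $D$ with $I_D\notin\mathcal{A}_{X'}$, equivalently $h^0(\mathcal{E}|_D)\neq 0$, equivalently those admitting a resolution $0\to\mathcal{E}\to\mathcal{Q}^{\vee}\to I_D\to 0$ (zero loci of sections of $\mathcal{Q}$, the $\sigma$-conics of Debarre--Iliev--Manivel), and these are generically smooth irreducible conics forming the $\mathbb{P}^1\cong\mathbb{P}\Hom(\mathcal{E},\mathcal{Q}^{\vee})$ that coincides with the unique exceptional curve $L_\sigma$. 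Starting from the wrong geometric description, the ``delicate reconstruction argument'' you propose would not locate the correct fibers of $p$.

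Moreover, the mechanism that makes the collapse and the smoothness of the contracted point work is concrete and you leave it out entirely: since $\mathrm{pr}(\mathcal{Q}^{\vee})=0$, applying $\mathrm{pr}$ to the resolution above gives $\mathrm{pr}(I_D)\cong\mathrm{pr}(\mathcal{E})[1]$, a single fixed object sitting in a triangle $\mathcal{E}[1]\to\mathrm{pr}(I_D)\to\mathcal{Q}^{\vee}$, independent of $D$ in the pencil; a spectral-sequence computation with this triangle gives $\mathrm{RHom}^{\bullet}(\mathrm{pr}(I_D),\mathrm{pr}(I_D))=k\oplus k^2[-1]$, so the image of the contracted curve is a \emph{smooth} point of the moduli space --- this is exactly what lets one conclude that $p(\mathcal{C}(X'))$ is the smooth blow-down, i.e.\ the minimal model. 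You verify $\mathrm{ext}^1=2$ only away from the contracted locus, where in fact no work is needed (there $\mathrm{pr}(I_D)=I_D$, so injectivity of $p$ is immediate and no mutation-reconstruction is required), and you never address smoothness at the one point where it is not automatic. Your treatment of part (2) is also only a heuristic; the paper simply uses that $\mathcal{C}(X')$ itself is singular for non-general $X'$ and that $p$ transfers this to the image. As written, the proposal does not establish the theorem.
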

We call a vector $u$ in numerical Grothendieck group $\mathcal{N}(\mathcal{K}u(V))$ of the Kuznetsov component of a prime Fano threefold $V$ a $r$-class if the Euler character $\chi(u,u)=r$. Up to sign, there are two $(-2)$-class $w, 2v-w$ in $\mathcal{N}(\mathcal{K}u(Y))$ and $s, 3s-2t$ in $\mathcal{N}(\mathcal{K}u(X))$ respectively. Assume that there is an equivalence $\Phi:\mathcal{K}u(Y)\simeq\mathcal{K}u(X)$. Then $\Phi$ would identify $\mathcal{M}_{\sigma}(\mathcal{K}u(Y),w)$ with either $M_{\sigma'}(\mathcal{K}u(X),s)$ or $M_{\sigma'}(\mathcal{K}u(X), 3s-2t)$. If $\phi(w)=3s-2t$, then $\phi(w-2v)=\pm s$. But it is known that $M_{\sigma}(\mathcal{K}u(Y),w)\cong M_{\sigma}(\mathcal{K}u(Y), w-2v)$ by \cite[Proposition 5.4]{pertusi2020some}, so we only need to study the moduli space of stable objects with a fixed class $w$ in $\mathcal{K}u(Y)$ and the corresponding moduli space of stable objects with class $\phi(w)=s$ in $\mathcal{K}u(X)$. On the other hand, we show that the a $\sigma$-stable object of $(-2)$-class or $(-1)$-class in $\mathcal{K}u(Y)$ or $\mathcal{K}u(X)$ would be stable for every Serre-invariant stability conditions. Thus we may fix one Serre-invariant stability condition on the Kuznetsov components of $Y$ and $X$, respectively.

In \cite{altavilla2019moduli}, the authors showed the moduli space $\mathcal{M}_{\sigma}(\mathcal{K}u(Y),w)$
 has two irreducible components $\mathcal{Y}$ and
 $\mathcal{C}$, they intersect at the two dimensional ramification locus $\mathcal{R}\subset Y$.The dimension of tangent spaces of the moduli spaces at each point in $\mathcal{R}$ is $4$ and $3$ else where. As a consequence, the conjectural equivalence $\Phi:\mathcal{K}u(Y)\simeq\mathcal{K}u(X)$ would induce a morphism $\gamma$, between $\mathcal{Y}$(or $\mathcal{C}$) and the irreducible component $\mathcal{Z}$ of $\mathcal{M}_{\sigma'}(\mathcal{K}u(X),s)$ constructed in Theorem~\ref{theorem_main2}. In particular, $\gamma$ would identify their tangent spaces at each point. But this is impossible since $\mathcal{Z}$ is smooth of dimension three everywhere(when $X$ is general) or at most have finitely many points whose tangent spaces have dimension $4$. Thus Theorem~\ref{theorem_SOGM_Ku_false} is proved for quartic double solid $Y$ and special Gushel-Mukai threefold $X$.

Similarly, for the pair of quartic double solid $Y$ and ordinary Gushel-Mukai threefold $X'$, we only need to study the moduli space $\mathcal{M}_{\sigma}(\mathcal{K}u(Y),v=1-L)$ and $\mathcal{M}_{\sigma'}(\mathcal{A}_{X'},x=1-2L)$ with respect to every Serre-invariant stability condition $\sigma$ and $\sigma'$ on $\mathcal{K}u(Y)$ and $\mathcal{A}_{X'}$ respectively. In \cite{pertusi2020some}, the authors show $\mathcal{M}_{\sigma}(\mathcal{K}u(Y),v)$ is isomorphic to Fano surface $\Sigma(Y)$ of lines on quartic double solid. Then the conjectural equivalence $\Phi:\mathcal{K}u(Y)\simeq\mathcal{A}_{X'}$ would identify $\Sigma(Y)$ with the irreducible component $\mathcal{S}$ of $\mathcal{M}_{\sigma'}(\mathcal{A}_{X'},x=1-2L)$ constructed in Theorem~\ref{theorem_modulispace_OGM}, which is the minimal model $\mathcal{C}_m(X')$ of Fano surface of conics. But these two surfaces are not isomorphic by \cite{logachev2012fano}. Thus Theorem~\ref{theorem_SOGM_Ku_false} is proved for quartic double solid $Y$ and ordinary Gushel-Mukai threefold $X'$.

\subsection{Prior and related work}
\subsubsection{Hilbert scheme of low degree rational curves on Fano variety}
The study of Hilbert scheme of low degree rational curves is a classical topic in enumerative geometry. An excellent survey is \cite{ciliberto2019lines}. A comprehensive treatment on variety of lines and conics on Fano threefolds is given in \cite{KPS2018}. Hilbert scheme of lines, conics and twisted cubics on $Y_5$ were studied in \cite{sanna2014rational}. The Hilbert scheme of lines and conics on Gushel-Mukai threefold were described in  \cite{Iliev94line},\cite{Iliev94conic},\cite{debarre2013curves} and \cite{logachev2012fano}. The Hilbert scheme of rational curves on prime Fano threefold of index one is systemtically studied in \cite{lehmann2018rational}. The Hilbert scheme of twisted cubics on $\mathbb{P}^n$ was carefully studied in \cite{RS85Hilb} and \cite{ellingsrud1981variety}. The Hilbert scheme of twisted cubics on a cubic fourfold was systematically studied in \cite{lehn2017twisted}. The authors showed the component parametrising generalized twisted cubics is related to an eight dimensional hyperK$\ddot{a}$hler varietiy $Z$ via a $\mathbb{P}^2$-bundle map followed by a divisorial contraction. In \cite{li2018twisted}, $Z$ is constructed as a Bridgeland stable objects in Kuznetsov component.

\subsubsection{Moduli space of objects in the Kuznetsov component}
The first example of stability condition on a Kuznetsov component of a cubic threefold $Y_3$ is given in \cite{BMMS}, where it is shown that the moduli space of stable objects with class being ideal sheaves of a line $[I_L]$ is isomorphic to the Fano surface of lines $\Sigma(Y_3)$. In \cite{lahoz2015arithmetically}, it is shown that moduli space of stable objects with certain class is isomorphic to Fano variety of lines of blown up of $Y_3$. More recently, the LLVS eightfold constructed in \cite{lehn2017twisted} and Fano variety of lines of cubic fourfold are constructed as moduli space of Bridgeland stable objects in Kuznetsov component on a cubic fourfold in \cite{li2018twisted} and \cite{lahoz2018generalized}. After a direct construction of stability condition in Kuznetsov components for a series Fano threefolds in \cite{bayer2017stability}, the moduli spaces of stable objects in the Kuznetsov components of prime Fano threefolds of index 2 are systematically studied in \cite{altavilla2019moduli}, \cite{pertusi2020some}, \cite{petkovic2020note} ,\cite{bayer2020desingularization} and \cite{qin2021bridgeland}.  In \cite{pertusi2020some}, the author show that $\Sigma(Y_d)$ is constructed as moduli space of Bridgeland stable objects in $\mathcal{K}u(Y_d)$ for any $d\neq 1$ while $\Sigma(Y_1)$ is an irreducible component of moduli space of stable objects in $\mathcal{K}u(Y_1)$\cite{petkovic2020note}. In \cite{PPZ20Enriques}, the authors construct stability conditions on Kuznetsov components of Gushel-Mukai fourfolds and study the moduli space of stable objects in Kuznetsov component. In \cite{LZ2021moduli}, the authors study the moduli spaces of stable objects in $\mathcal{K}u(Y_d)$ and $\mathcal{K}u(X_{4d+2})$ for $d=3,4$ and $5$ and they realize those moduli spaces as a certain class of interesting classical moduli space of sheaves. In \cite{LZ2021rationalcurves}, the authors show that the Hilbert scheme $\Sigma(X)$ of lines of even genus $g\geq 6$ prime Fano threefold $X$ of index one is the \emph{Brill-Noether} locus of the Bridgeland moduli space of stable objects in $\mathcal{K}u(X)$ with respect to  $i^!\mathcal{E}\in\mathcal{K}u(X)$, where $i:\mathcal{K}u(X)\hookrightarrow D^b(X)$ is the inclusion.

\subsubsection{Kuznetsov's type equivalences for Fano threefold}
Equivalences between Kuznetsov components for the pair of Fano threefolds $(Y_d,X_{4d+2}),d=3,4,5$ are established 
in the pioneer work  \cite{kuznetsov2003derived},\cite{kuznetsov2007homological},\cite{kuznetsov2009derived},\cite{kuznetsov2006hyperplane}. Besides our work for $d=2$ case, very recently, the non existence of equivalences between $\mathcal{K}u(Y_2)$ and $\mathcal{K}u(X_{10})$(and $\mathcal{K}u(X'_{10})$) is also obtained in \cite{BP2021Fano} by a completely different method. In \cite{BP2021Fano} and \cite{kuznetsov2021categorical}, the authors show that $\mathcal{K}u(Y_2)$ and $\mathcal{K}u(X_{10})$($\mathcal{K}u(X'_{10})$) are deformation equivalent. The last case of Conjecture~\ref{main_conjecture}($d=1$) is disproved in the upcoming paper \cite{Bayer2021Kuz}. 

\subsubsection{Categorical Torelli Theorem}
For a prime Fano threefold $Y_d$ of index two, categorical Torelli theorem states that the Kuznetsov components $\mathcal{K}u(Y_d)$ uniquely determines $Y_d$ in the moduli space $\mathcal{MF}^2_d$ up to isomorphism while for a prime Fano threefold $X_{4d+2}$ of index one, birational categorical Torelli theorem states that $\mathcal{K}u(X_{4d+2})$ determines $X_{4d+2}$ only up to birational isomorphisms. Categorical Torelli theorems have been proved for $Y_d, d\in\{2,3,4,5\}$ in \cite{altavilla2019moduli}, \cite{BMMS}, \cite{bernardara2013semi} \cite{bayer2020desingularization} and birational categorical Torelli theorems holds automatically for $X_{4d+2}, d\in\{3,4,5\}$ by Conjecture~\ref{main_conjecture}. In  \cite{JLLZh2021}, we prove (birational)categorical Torelli theorem for Gushel-Mukai threefolds. In the upcoming work \cite{JLZ2021}, we prove refined categorical Torelli theorem for a series prime Fano threefolds of index one.

\subsection{Organization of the paper}
In section~\ref{section2}, we classify twisted cubics on special Gushel-Mukai threefold and describe the action of involution on twisted cubics. In Section~\ref{section3}, we prove Theorem~\ref{theorem_main1}(1)(Theorem~\ref{theorem_projectivity_hilbertscheme}). In section~\ref{section4}, we review the definition of (weak)stability condition on a triangulated category. Then we prove a series useful criterion on stability of objects with small ext-groups in the Kuznetsov components of an Enriques category. We show that the Serre-invariant stability conditions on $\mathcal{K}u(Y_2)$ and $\mathcal{K}u(X_{10})$ are unique up to the $\widetilde{GL_+(2,\mathbb{R})}$-action.  In section~\ref{section5}, we prove stability of the projection of ideal sheaves of twisted cubics onto Kuznetsov components.
In section~\ref{section_bridgelandmoduli}, we study the Bridgeland moduli space of stable objects with class $[I_C]$ in $\mathcal{K}u(X_{10})$ on general special Gushel-Mukai threefolds and prove Theorem~\ref{theorem_main2}(Theorem~\ref{Theorem_projectivity_modulispace}). In section~\ref{section_nongeneral_GM}, we study Hilbert scheme of twisted cubics on non general special Gushel-Mukai threefolds and corresponding Bridgeland moduli spaces and prove Theorem~\ref{theorem_main1}(2)(Theorem~\ref{theorem_singular_hilbertscheme}) and Theorem~\ref{prop_singular_modulispace}. In section~\ref{section_OGM}, we study the Bridgeland moduli space of stable objects of a $(-1)$-class in the Kuznetsov component $\mathcal{A}_{X_{10}'}$ on smooth ordinary Gushel-Mukai threefolds and  prove Theorem~\ref{theorem_modulispace_OGM}. In section~ \ref{section7}, we prove the main theorem~ \ref{theorem_SOGM_Ku_false} for the the case of special Gushel-Mukai threefolds. In section~\ref{section_maintheorem_for_OGM}. we prove the main Theorem~\ref{theorem_SOGM_Ku_false} for the case of ordinary Gushel-Mukai threefolds.

\subsection{Acknowledgement}
I would like to thank many people for their help.  First of all, I am very grateful to  Arend Bayer who introduces me to the subject of Bridgeland stability condition and for suggesting this problem to me. I am grateful that Alexander Kuznetsov, Atanas Iliev, Dmitry Logachev, Alexander Perry, Xiaolei Zhao answered many of my questions on Gushel-Mukai threefolds. I thank Junyan Xu, Renjie Lyu, Yilong Zhang, Yu Zhao, Chen Jiang, Fei Xie, Zhengyu Hu, Xuqiang Qin, Zhiwei Zheng and Qingyuan Jiang for answering questions on geometry of Fano threefolds. I thank Franco Rota, Laura Pertusi, Song Yang, Bingyu Xia, Naoki Koseki, Augustinas Jacovskis, Huachen Chen for answering me questions on Bridgeland stability conditions and wall crossing. I would like to thank Tingyu Sun for his encouragement and support. Special thanks to Yong Hu for teaching me birational geometry of projective threefolds. The author is supported by ERC Consolidator grant WallCrossAG, no. 819864.

\section{Gushel-Mukai threefolds and geometry of twisted cubics}
\label{section2}

In this section we briefly recall the definition of Gushel-Mukai threefolds and their properties. Then we study the geometry of twisted cubics on it.

\subsection{Gushel-Mukai threefolds}
A smooth Gushel-Mukai threefold $X$ is a Fano threefold with $\mathrm{Pic}(X)=\mathbb{Z}H$ and index $1$, genus $g=6$ and degree $H^3=10$. It is either a quadric section of a linear section of codimension $2$ of the Grassmannian $\mathrm{Gr}(2,5)$, in which case it is called ordinary,  or a double cover of a linear section $Y_5\subset\mathrm{Gr}(2,5)$ of codimension 3 ramified in a quadric hypersurface $\mathcal{B}$, in which case it is called special. A special Gushel-Mukai threefold has a natural involution $\tau, X\rightarrow X$ induced by double cover $\pi, X\rightarrow Y_5$. There exists a unique stable vector bundle $\mathcal{E}$ of rank $2$ with $c_1(\mathcal{E})=-H$ and $c_2(\mathcal{E})=4L$, where $L$ is the class of a line on $X$. It is the pull back of the tautological sub-bundle on $\mathrm{Gr}(2,5)$. In addition, $\mathcal{E}$ is exceptional and $H^{\bullet}(X,\mathcal{E})=0$ \cite{mukai1989biregular}. If the branch locus $\mathcal{B}\in |\mathcal{O}_Y(2)|$ is  general, then it does not contain a line or conic.

\subsection{Lines and conics on $Y_5$ and $X_{10}$}
Let $Y_5$ be the smooth Fano threefold with Picard rank one, index two and degree 5. Denote by $\mathcal{U}$ the rank $2$ tautological bundle on $Y_5$. We collect some facts on geometry of lines and conics $Y_5$.
\begin{predl}\cite{sanna2014rational}
\label{proposition_lines_Y}
\leavevmode \begin{enumerate}
    \item The Hilbert scheme $\Sigma(Y_5)$ of lines   is isomorphic to $\mathbb{P}^2$.
    \item The Hilbert scheme $\mathcal{C}(Y_5)$ of conics is isomorphic to $\mathbb{P}(H^0(\mathcal{U}^{\vee}))=\mathbb{P}^4$. This means that every conic in $Y_5$ is a zero locus of the section of $\mathcal{U}^{\vee}$.
    \end{enumerate}
\end{predl}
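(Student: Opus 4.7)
The plan is to realise both Hilbert schemes as explicit subvarieties of natural parameter spaces built from $\Gr(2,5)$, and identify them via cohomological and Schubert-theoretic input. For part $(1)$, since $Y_5 = \Gr(2,5) \cap \mathbb{P}^6$ is a transverse codimension-$3$ linear section, every line on $Y_5$ is automatically a line on $\Gr(2,5)$. Lines on $\Gr(2,5)$ are parametrised by the $8$-dimensional partial flag variety $F(1,3;5) = \{V_1 \subset V_3 \subset \mathbb{C}^5\}$, and each of the three hyperplanes cutting out $Y_5$ imposes a codimension-$2$ condition on $F(1,3;5)$ (both generators of the associated pencil of $2$-planes must satisfy the linear equation). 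This gives $\Sigma(Y_5)$ of expected dimension $2$. I would argue smoothness and connectedness via a Bertini-type deformation argument (using that $Y_5$ is a smooth and sufficiently general codimension-$3$ linear section), and then identify $\Sigma(Y_5)$ with $\mathbb{P}^2$ either by exploiting the $\mathrm{PGL}_2$-symmetry of $Y_5$ (which induces a linear action on $\Sigma(Y_5)$) or by computing the Schubert cohomology class of $\Sigma(Y_5)$ in $F(1,3;5)$ and matching it with that of a plane.

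For part $(2)$, the plan is to construct the morphism $\mathbb{P}(H^0(Y_5,\mathcal{U}^{\vee})) \to \mathcal{C}(Y_5)$, $[s] \mapsto Z(s)$, and prove it is an isomorphism. The first task is to verify $H^0(Y_5, \mathcal{U}^{\vee}) \cong \mathbb{C}^5$: tensor the Koszul resolution of $\mathcal{O}_{Y_5}$ inside $\Gr(2,5)$ by $\mathcal{U}^{\vee}$ and apply standard Borel-Weil-Bott vanishing on the Grassmannian, which reduces the computation to $H^0(\Gr(2,5),\mathcal{U}^{\vee}) = (\mathbb{C}^5)^{\vee}$. The degree of $Z(s)$ with respect to $H$ equals $H \cdot c_2(\mathcal{U}^{\vee}|_{Y_5}) = 2$, so $Z(s)$ is a (possibly degenerate) conic, yielding the morphism $\mathbb{P}^4 \to \mathcal{C}(Y_5)$.

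The main obstacle is showing this morphism is bijective on points. For a conic $C \subset Y_5 \subset \Gr(2,5)$, the bundle $\mathcal{U}^{\vee}|_C$ is globally generated of rank $2$ and degree $2$ on $C \cong \mathbb{P}^1$, so Grothendieck splitting forces it to be either $\mathcal{O}(1)^{\oplus 2}$ or $\mathcal{O} \oplus \mathcal{O}(2)$; in either case $h^0(C,\mathcal{U}^{\vee}|_C) = 4$. From the short exact sequence
\[
0 \to \mathcal{I}_{C/Y_5} \otimes \mathcal{U}^{\vee} \to \mathcal{U}^{\vee} \to \mathcal{U}^{\vee}|_C \to 0
\]
together with vanishing of $H^1(Y_5, \mathcal{I}_{C/Y_5} \otimes \mathcal{U}^{\vee})$ (verified via a compatible resolution of $\mathcal{I}_{C/Y_5}$), I would deduce that $H^0(Y_5, \mathcal{I}_{C/Y_5} \otimes \mathcal{U}^{\vee})$ is exactly $1$-dimensional, which simultaneously yields surjectivity (a section vanishing on $C$ exists) and injectivity (such a section is unique up to scalar). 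A final infinitesimal check at each conic, combined with properness of both sides, upgrades the set-theoretic bijection to the scheme-theoretic isomorphism $\mathbb{P}^4 \cong \mathcal{C}(Y_5)$.
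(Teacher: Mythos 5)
First, note that the paper does not prove this proposition at all: it is quoted verbatim from \cite{sanna2014rational}, so the only comparison available is with the arguments in that reference. Your plan for part (2) is essentially the argument used there: compute $H^0(Y_5,\mathcal{U}^{\vee})\cong\C^5$ via the Koszul resolution and Borel--Weil--Bott, show zero loci of sections are conics, and show each conic lies on a unique section. Two details you should not gloss over: (i) the degree computation $\sigma_1^4\cdot\sigma_{1,1}=2$ alone does not show $Z(s)$ is a conic --- you must rule out divisorial components (e.g.\ via $H^0(\mathcal{U}^{\vee}(-1))=0$; isolated components are then excluded by Krull height, and regularity of the section gives Cohen--Macaulayness, hence Hilbert polynomial $2t+1$); and (ii) the splitting argument $h^0(\mathcal{U}^{\vee}|_C)=4$ is written only for smooth conics, whereas the Hilbert scheme contains reducible and non-reduced conics, which must be treated (Mayer--Vietoris for $L_1\cup L_2$, and a separate computation for double lines) before the bijectivity and the tangent-space comparison can be completed. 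These are fixable, and the overall route for (2) is sound.

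Part (1) has a genuine gap at the decisive step. The dimension count in $F(1,3;5)$ and a transversality/smoothness argument only produce a smooth projective surface; neither of your two proposed ways of identifying it with $\P^2$ works as stated. Matching the Schubert class of $\Sigma(Y_5)$ in $F(1,3;5)$ with that of some surface cannot determine the isomorphism type: the cohomology class of a smooth surface in a flag variety does not pin down the surface (and there is no distinguished ``plane'' in $F(1,3;5)$ whose class would force an isomorphism). Likewise, the bare existence of a $\mathrm{PGL}_2$-action is far from enough: $\P^1\times\P^1$, all Hirzebruch surfaces, and various equivariant blow-ups also carry faithful $\mathrm{PGL}_2$-actions, and calling the induced action ``linear'' presupposes exactly what is to be proved. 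What is actually needed is specific geometric input, e.g.\ the $\mathrm{SL}_2$-equivariant description of $Y_5$ (Mukai--Umemura) identifying $\Sigma(Y_5)$ with the projectivization of an irreducible $3$-dimensional representation, or the explicit constructions of Furushima--Nakayama and Iliev, which is how the result is established in the literature that this paper cites. As written, your part (1) would not close.
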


\begin{predl}
\label{proposition_tangentlocus}
\leavevmode \begin{enumerate}
    \item Given a line $L\subset Y_5$, the locus of all lines intersecting $L$ is parametrised by $\mathbb{P}^1$. 
    \item Let $\mathcal{B}$ be a quadric hypersurface in $Y_5$. The locus of all lines tangent to $\mathcal{B}$ is a degree $10$ curve $\Gamma\subset\mathbb{P}^2$. 
    \item The Hilbert scheme of lines $\Sigma(X)$ on smooth special Gushel-Mukai threefold $X$ is a double cover of $\Gamma$. If there exists a line $l\in\mathcal{B}$, then its preimage $\pi^{-1}(l)_{red}$ is a singular point in $\Sigma(X)$. Otherwise, $\Sigma(X)$ is a smooth irreducible curve of genus $71$. 
\end{enumerate}
\end{predl}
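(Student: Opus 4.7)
The plan is to prove the three parts in order, exploiting the $\mathbb{P}^1$-bundle structure of the universal line $p\colon \mathcal{L}\to \Sigma(Y_5)=\mathbb{P}^2$ together with the evaluation map $q\colon \mathcal{L}\to Y_5$, which is generically finite of degree three since every general point of $Y_5$ lies on exactly three lines \cite{sanna2014rational}.

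For (1), fix a line $L\subset Y_5$. The scheme $q^{-1}(L)\subset \mathcal{L}$ has pure dimension one and contains the section $p^{-1}([L])\cong L$ as a component. The residual curve $R:=\overline{q^{-1}(L)\setminus p^{-1}([L])}$ dominates $L$ via $q$ with generic fiber of length two (using that $q$ has generic degree three over $L$), and maps isomorphically to its image under $p$. A case analysis according to the splitting type of the normal bundle $\mathcal{N}_{L/Y_5}\cong \mathcal{O}\oplus\mathcal{O}$ or $\mathcal{O}(-1)\oplus\mathcal{O}(1)$ shows $p(R)$ is a smooth rational curve in $\mathbb{P}^2$; this is the classical fact that the locus of lines meeting $L$ is a conic in $\Sigma(Y_5)=\mathbb{P}^2$, isomorphic to $\mathbb{P}^1$.

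For (2), set $\widetilde{\mathcal{B}}:=q^{-1}(\mathcal{B})$. Because $\mathcal{B}\in |\mathcal{O}_{Y_5}(2)|$ and $H\cdot L=1$ for every line $L$, the map $p|_{\widetilde{\mathcal{B}}}\colon \widetilde{\mathcal{B}}\to \mathbb{P}^2$ is finite of degree two, and its branch divisor is exactly $\Gamma$. To compute $\deg \Gamma$, I write $\mathcal{L}=\mathbb{P}(\mathcal{V})$ for the rank-two bundle $\mathcal{V}$ on $\mathbb{P}^2$ determined by the Grassmannian embedding, express the class $q^*H$ on $\mathcal{L}$ in terms of the relative hyperplane class and $p^*\mathcal{O}_{\mathbb{P}^2}(1)$, and compute the discriminant of $p|_{\widetilde{\mathcal{B}}}$ by a Chern class calculation; this yields $\deg\Gamma=10$. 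Equivalently, one can take a general pencil of lines furnished by (1), form the resulting ruled surface $S\subset Y_5$, and count the fibers of $S\to \mathbb{P}^1$ tangent to $S\cap\mathcal{B}$ to obtain the same number.

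For (3), define $\phi\colon \Sigma(X)\to \Sigma(Y_5)$ by $[\widetilde{L}]\mapsto [\pi(\widetilde{L})]$. If $\widetilde{L}\subset X$ is a line mapping to $L\subset Y_5$, then $\pi^{-1}(L)\to L$ is a degree-two cover containing $\widetilde{L}$, so it either decomposes as $\widetilde{L}\cup \tau(\widetilde{L})$ with two components swapped by the covering involution, or is a non-reduced double line $2\widetilde{L}$; in both cases $L$ is tangent to $\mathcal{B}$, so $\phi$ factors through $\Gamma$. If $\mathcal{B}$ contains no line, then every tangent $L$ meets $\mathcal{B}$ at a single simple tangency, $\pi^{-1}(L)$ is reducible into two distinct lines, and $\phi\colon \Sigma(X)\to \Gamma$ is étale of degree two; since a smooth plane curve of degree ten has genus $\binom{9}{2}=36$, Riemann–Hurwitz gives $g(\Sigma(X))=2\cdot 36-1=71$. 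If $\mathcal{B}$ contains a line $l$, then $\pi^{-1}(l)=2\widetilde{l}$ is non-reduced and $(\pi^{-1}(l))_{\mathrm{red}}$ is a single point of $\Sigma(X)$; at the corresponding point of $\Gamma$ the curve is itself singular, and a local deformation computation shows $\Sigma(X)$ acquires an isolated singularity there. The main technical obstacle I expect is the Chern class bookkeeping in (2)—explicitly identifying $\mathcal{V}$ and the class of $\widetilde{\mathcal{B}}$ inside $\mathbb{P}(\mathcal{V})$—while parts (1) and (3) are essentially formal once the universal line is set up and the branching behavior of $\pi$ is analyzed.
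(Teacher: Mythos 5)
Your route (universal family of lines with its degree-three evaluation map, discriminant of the induced double cover $q^{-1}(\mathcal{B})\to\mathbb{P}^2$, Riemann--Hurwitz for an \'etale double cover) is genuinely different from the paper's, which proves (1) by identifying the incidence correspondence of lines with the flag variety $\mathrm{Fl}(1,2;3)\subset\mathbb{P}^2\times\mathbb{P}^2$ following \cite{sanna2014rational} and then cites \cite{Iliev94line} for (2) and (3), adding only the normal bundle computation $\mathcal{N}_{L|X}\cong\mathcal{O}_L(1)\oplus\mathcal{O}_L(-2)$ to justify the singular point when $\mathcal{B}$ contains a line, and a connectedness argument as in \cite{debarre2020gushel}. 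The problem is that in your sketch the steps that carry the actual content are left as placeholders. In (2) the number $10$ is never derived: you outline a Chern-class computation (identify $\mathcal{V}$, express $q^*H$, compute the discriminant) and explicitly defer it as ``the main technical obstacle,'' so the degree of $\Gamma$ is asserted rather than proved. In (1) the decisive step ``a case analysis according to the splitting type of the normal bundle shows $p(R)$ is a smooth rational curve'' is not carried out; for the special lines with $\mathcal{N}_{L|Y_5}\cong\mathcal{O}(-1)\oplus\mathcal{O}(1)$ the scheme structure of $q^{-1}(L)$ and of the residual curve needs separate justification, and note that in the flag-variety picture the locus in question is the image of a fiber of $\mathrm{Fl}(1,2;3)$, i.e.\ a line rather than a conic in $\mathbb{P}^2$ --- harmless for the statement, since either is $\mathbb{P}^1$, but a sign the case analysis was not checked.

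In (3) there are two unaddressed points. First, the count $g(\Sigma(X))=2\cdot 36-1=71$ assumes $\Gamma$ is a \emph{smooth} plane curve of degree $10$; part (2) only gives a degree-$10$ curve, and smoothness requires an argument (for instance, smoothness of $q^{-1}(\mathcal{B})$ via Bertini together with the fact that the fixed curve of the covering involution on a smooth surface is smooth), and one must then relate this genericity to the stated hypothesis that $\mathcal{B}$ contains no line. Second, you need $\Sigma(X)\to\Gamma$ to be \'etale as a morphism of schemes and $\Sigma(X)$ to be smooth --- but smoothness of $\Sigma(X)$ is part of what is being proved, and the splitting of $\pi^{-1}(L)$ into two distinct lines only gives a set-theoretic two-to-one map. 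Smoothness of the Hilbert scheme at $[\widetilde{L}]$ is governed by $H^0(\mathcal{N}_{\widetilde{L}|X})$ and $H^1(\mathcal{N}_{\widetilde{L}|X})$, which is exactly the normal-bundle analysis the paper invokes; likewise your ``local deformation computation'' at $(\pi^{-1}(l))_{\mathrm{red}}$ when $l\subset\mathcal{B}$ is a placeholder for the computation $\mathcal{N}_{L|X}\cong\mathcal{O}_L(1)\oplus\mathcal{O}_L(-2)$. So the proposal is a plausible alternative outline, but as written it does not yet prove (2) or the smoothness/singularity assertions in (3); either supply the discriminant and normal-bundle computations or, as the paper does, cite \cite{Iliev94line} for these facts.
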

\begin{proof}
The intersection locus of two lines $\{(L_1,L_2)\subset\Sigma(Y_5)\times\Sigma(Y_5)|L_1\cap L_2\neq\emptyset\}$ is a flag variety $\mathrm{Fl}(1,2;3)\subset\mathbb{P}^2\times\mathbb{P}^2$\cite[Proposition 2.26]{sanna2014rational}. Let $p,q$ be the two projection maps from $\mathrm{Fl}(1,2;3)\rightarrow\Sigma(Y_5)$. Then the locus of all lines intersecting the fixed line $L$ is just $q(p^{-1}(L))\subset\Sigma(Y_5)$. Then this locus parametrises all the one dimensional vector spaces in a fixed two dimensional vector space, which is nothing but $\mathrm{Gr}(1,2)=\mathbb{P}^1$. 

$(2)$ and $(3)$ follows from \cite[Section 3.1,3.2]{Iliev94line}. If $\mathcal{B}$ contains a line $l$, then there is a line $L=(\pi^{-1}l)_{red}$ with non-reduced conic structure, which gives a singular point in $\Sigma(X)$ since the normal bundle $\mathcal{N}_{L|X}\cong\mathcal{O}_L(1)\oplus\mathcal{O}_L(-2)$. The connectedness of $\Sigma(X)$ follows from the same argument as \cite[Lemma 2.13]{debarre2020gushel}. 
\end{proof}

\subsection{Geometry of twisted cubics on special Gushel-Mukai threefold}
A twisted cubic with respect to the very ample divisor $H$ of $X$ is a subscheme $C\subset X\subset\mathbb{P}^7$ with Hilbert polynomial $p_C(t)=3t+1$. Denote by $\mathcal{H}=\mathrm{Hilb}^{p(t)=3t+1}(X)$ the Hilbert scheme of twisted cubics on $X$.
$\mathcal{H}$ is a subscheme of $\mathrm{Hilb}^{3t+1}(\mathbb{P}^7)$. The latter has two irreducible components, one is parametrising the aCM twisted cubics and the other is parametrising plane cubics, with an embedded point. As $X$ is defined as an intersection of quadrics in projective space and it does not contain a plane, it can not contain any plane cubic, therefore it only has aCM twisted cubics. An aCM twisted cubic $C$ can only be in the list of those in \cite[Section 1]{lehn2017twisted}. They are classified as follows.

%By \cite[Theorem 2.3 (3)]{DK}
%By similar arguments in \cite[Lemma 2.11]{KPS},

\begin{predl}
\label{prop_classificationcubic}
Let $X$ be a special Gushel-Mukai threefold and $C\subset X$ a twisted cubic. Then $C$ is pure one dimensional and is either \begin{enumerate}
    \item a smooth irreducible twisted cubic, or
    \item the union of a line and a conic(smooth, reducible and double lines), or
    \item the union of a line and another line $M$ with Hilbert polynomial $p_M(t)=2t+2$.  
    \end{enumerate}
If in addition the branch locus on $Y$ does not contain lines or conics, then $C$ does not have double lines as its component. 
\end{predl}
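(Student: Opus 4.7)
The plan is to reduce the classification to the known list of arithmetically Cohen--Macaulay (aCM) twisted cubics in projective space, after first ruling out the non-aCM component of the Hilbert scheme. The argument splits naturally into three steps: (i) show that $X$ contains no linear $\mathbb{P}^2$; (ii) invoke the classification of aCM twisted cubics from Lehn--Lehn--Sorger--van Straten; (iii) use Proposition~\ref{proposition_tangentlocus}(3) to eliminate the double-line cases under the generality hypothesis.

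First I would verify that $X$ contains no $2$-plane. Since $X\subset\mathbb{P}^7$ is cut out by quadrics, with $\mathrm{Pic}(X)=\mathbb{Z}H$ and $H^3=10$, the existence of a plane $P\subset X$ can be ruled out either directly from the numerical restrictions on $H^4(X,\mathbb{Z})$ (any such $P$ must satisfy $H\cdot P=1$ in $H^6$ while being the restriction of a class compatible with the Picard group), or, in the special case, by composing with $\pi\colon X\to Y_5$: the image would be a plane in $Y_5$, contradicting $\mathrm{Pic}(Y_5)=\mathbb{Z}H_{Y_5}$ with $H_{Y_5}^3=5$. Given this, Piene--Schlessinger's description of $\mathrm{Hilb}^{3t+1}(\mathbb{P}^n)$ tells us that the only twisted cubics which are not aCM are plane cubics with an embedded point; these are supported in a plane and therefore cannot lie in $X$. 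Hence every $C\subset X$ with Hilbert polynomial $3t+1$ is aCM, and in particular Cohen--Macaulay, which gives purity of dimension one.

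Next I would appeal to the classification in \cite[\S 1]{lehn2017twisted} (extending \cite{ellingsrud1981variety,RS85Hilb}): an aCM subscheme of $\mathbb{P}^n$ with Hilbert polynomial $3t+1$ spans a unique $\mathbb{P}^3$ and falls into one of three mutually exclusive types: a smooth rational normal cubic; the union of a line and a conic meeting in a single point, where the conic can be smooth, a pair of distinct intersecting lines, or a planar double line; or the union of a line and a length $2t+2$ ribbon $M$ supported on a second line. Translating this list to subschemes of $X$ yields precisely the three cases (1)--(3) in the proposition.

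For the last assertion I would observe that the non-reduced components appearing in cases (2) and (3) are supported on lines $L\subset X$ admitting nontrivial ribbon thickenings inside $X$; this forces the normal bundle to be $\mathcal{N}_{L|X}\cong\mathcal{O}_L(1)\oplus\mathcal{O}_L(-2)$. By Proposition~\ref{proposition_tangentlocus}(3), such lines are exactly the reductions $(\pi^{-1}(l))_{\mathrm{red}}$ of lines $l\subset\mathcal{B}$, so non-reduced conic components of $C$ force $\mathcal{B}$ to contain a line; analogously, the ribbon $M$ of case (3) with $p_M(t)=2t+2$ either is supported on such a distinguished line or sits inside a preimage of a conic in $\mathcal{B}$, and in both scenarios it forces $\mathcal{B}$ to contain a line or a conic. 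Under the generality hypothesis on $\mathcal{B}$ all of these are excluded, so $C$ has no double-line component. The hardest step, in my view, is this last one: identifying precisely when the abstract LLSV ribbon strata meet $X$, which amounts to a local analysis of $\pi$ at the branch locus and a comparison with the normal bundles of lines in $X$.
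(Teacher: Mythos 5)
Your proposal is correct and follows essentially the same route as the paper: the absence of planes in $X$ reduces everything to aCM twisted cubics (hence purity), the classification of aCM cubics gives the three types (the paper verifies this list by a short Hilbert-polynomial case analysis, ruling out the first infinitesimal neighbourhood of a line via its conormal bundle, rather than citing it wholesale), and the double-line case is excluded because a non-reduced conic structure on a line $L$ forces $\mathcal{N}_{L|X}\cong\mathcal{O}_L(1)\oplus\mathcal{O}_L(-2)$, which the generality of $\mathcal{B}$ rules out via Proposition~\ref{proposition_tangentlocus}. One small caveat: your parenthetical claim that the case (3) ribbon $M$ with $p_M(t)=2t+2$ also forces $\mathcal{N}_{L|X}\cong\mathcal{O}_L(1)\oplus\mathcal{O}_L(-2)$ is neither needed (the final assertion concerns only double-line conic components) nor accurate, since such a ribbon exists on any line through the quotient $\mathcal{N}^{\vee}_{L|X}\twoheadrightarrow\mathcal{O}_L$.
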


\begin{proof}
$C$ is an aCM twisted cubic with $p_C(t)=3t+1$, which implies that all irreducible components of $C$ have dimensional $1$ and the sum of degrees with multiplicities of all components is $3$. \begin{enumerate}
    \item If $C$ is integral, it must be a smooth irreducible twisted cubic.
    \item If $C$ has two irreducible components $L$ and $M$ of degree $1$ and $2$ respectively, then $L$ is a line and  $p_C(t)=(t+1)+p_M(t)-\delta=3t+1$, where $\delta$ is intersection multiplicities of $L$ and $M$. Then $p_M(t)=2t+\delta$, if $\delta=1$, then $M$ is a conic (smooth, double lines). If $\delta=2$, then $M$ is a line with Hilbert polynomial $p_M(t)=2t+2$.  
    \item If $C$ has three irreducible components, then all of them are lines and $C$ is union of three lines(chain of three lines or three lines intersecting at one point).
    \item If $C$ is irreducible but non reduced, then it is either the first infinitesimal neighborhood of a line or a union of a line $L$ and a double line $M$. But if $C$ is the first infinitesimal neighborhood of a line $L$, we have a short exact sequence:
$$0\rightarrow\mathcal{N}_{L|X}^{\vee}\rightarrow\mathcal{O}_C\rightarrow\mathcal{O}_L\rightarrow 0$$  By \cite{iskovskikh1999fano} the conormal bundle 
$\mathcal{N}_{L|X}^{\vee}$ of a line in $X$ can only be $\mathcal{O}_L\oplus\mathcal{O}_L(1)$ or $\mathcal{O}_L(-1)\oplus\mathcal{O}_L(2)$. Thus $p_C(t)=\chi(\mathcal{O}_C(tH))=\chi(\mathcal{N}_{L|X}^{\vee}(tH))+\chi(\mathcal{O}_L(tH))=3t+4$, which is impossible.
\end{enumerate}
If $X$ is general, then $\Sigma(X)$ is smooth by Proposition~\ref{proposition_tangentlocus}. Every line $L$ does not admit a non reduced conic structure. Therefore $C$ does not have double lines as its components. 
\end{proof}

%\begin{remark}
%\label{remark_nonexistence_cubic}
%If $C$ is union of a line $L$ and a double line $M$ such that $M$ supports on $L$. If $\tau(C)=C$, then $M$ is preimage of a line $l$ in branch divisor $\mathcal{B}$ on $Y$. If $\pi(C)$ is a conic, then $C$ sits inside of preimage of a non reduced conic in $\mathcal{B}$. If $\pi(C)$ is a twisted cubic $c\subset\mathcal{B}$, then $c$ has a nonreduced conic as its component. But $\mathcal{B}$ is a smooth K3 surface in $Y_5$, it does not contain any non reduced conic.  Indeed, if $E\subset\mathcal{B}$ is a non reduced conic, then we have a short exact sequence $0\rightarrow\mathcal{O}_L(-1)\rightarrow\mathcal{O}_E\rightarrow\mathcal{O}_L\rightarrow 0$ and there is an epimorphism $\frac{I_L}{I_L^2}=\mathcal{N}_{L|\mathcal{B}}^{\vee}\rightarrow\mathcal{O}_L(-1)$. But $L$ is a rational curve in $\mathcal{B}$ and $L^2=-2$, $\mathcal{N}_{L|X}^{\vee}=\mathcal{O}_L(2)$, which implies that the non reduced conic $E$ can not exist in $\mathcal{B}$. Thus this kind of twisted cubic can not be fixed by $\tau$ and its image under $\pi$ is a twisted cubic on $Y_5$.
%\end{remark}

%Thus if we only allow $\mathcal{B}$ contain lines, then $X$ does not have such kind of twisted cubic. 

%Where $p$ is an epimorphism and $\gamma$ is the canonical epimorphism. Then $\mathrm{ker}\gamma$ is a direct sum of two line bundles over $L$ and at least one of them is a negative line bundle. On the other hand, $\frac{I_L}{I_L^3}=\mathcal{O}_L(2)\oplus\mathcal{O}_L\oplus\mathcal{O}_L(1)^{\oplus 2}$.

Let $\mathcal{E}$ be the rank 2 stable vector bundle on $X$, it is the pull back of a rank 2 vector bundle $\mathcal{U}$ on $Y$ under the covering map $\pi:X\rightarrow Y$. To understand the action of the involution $\tau$ on a twisted cubic, we classify the zero locus of sections of the dual vector bundle $\mathcal{E}^{\vee}$ on $X$: 

%$X$ is a double cover of $Y_5$ branched over a quadric hypersurface $\mathcal{B}$. The stable vector bundle $\mathcal{E}$ on $X$ is the pull back of a rank $2$ vector bundle $\mathcal{U}$ on $Y$ under the covering map $\pi: X\rightarrow Y$. They are both pulling back of tautological sub-bundle on Grassmannian $\mathrm{Gr}(2,5)$. Also there is a natural geometric involution $\tau$ which swaps two sheets of $X$. To understand the action of $\tau$ on twisted cubic, we classify the zero locus of sections of dual vector bundle $\mathcal{E}^{\vee}$ on $X$:

\begin{predl}
\label{prop_classfication_zerolocus}
Let $X$ be a special Gushel-Mukai threerold, then the zero locus of a section of $\mathcal{E}^{\vee}$ only contains twisted cubic of the form $(2)$ in Proposition~\ref{prop_classificationcubic}.
%\begin{enumerate}
   % \item Smooth genus $1$ curve.
    %\item Union of two smooth conic.
%    \item Union of four lines.
 %   \item Union of two lines and a conic
  %  \item Double conic.
   % \item Union of two double lines
%    \end{enumerate}
%If in addition, $X$ is general, the zero locus does not have double lines as its component.
\end{predl}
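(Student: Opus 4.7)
The plan is to reduce the study of $Z(s)$ to the preimage of a conic on $Y$, and then analyze case by case which twisted cubics can lie inside such a preimage, using pushforward to $Y$ and degree/Hilbert-polynomial comparisons.

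First, I would show that every section of $\mathcal{E}^{\vee}$ is pulled back from a section of $\mathcal{U}^{\vee}$ on $Y$. Since $\mathcal{E}=\pi^{*}\mathcal{U}$ and $\pi_{*}\mathcal{O}_{X}=\mathcal{O}_{Y}\oplus\mathcal{O}_{Y}(-H)$, the projection formula yields $H^{0}(X,\mathcal{E}^{\vee})\cong H^{0}(Y,\mathcal{U}^{\vee})\oplus H^{0}(Y,\mathcal{U}^{\vee}(-H))$. The second summand vanishes by using the tautological sequence $0\to\mathcal{Q}^{\vee}\to\mathcal{O}_{Y}^{\oplus 5}\to\mathcal{U}^{\vee}\to 0$ (restricted from $\mathrm{Gr}(2,5)$ to $Y$) twisted by $-H$, together with Kodaira-type vanishing on the index two Fano threefold $Y$. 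Hence $s=\pi^{*}s'$ for some $s'\in H^{0}(Y,\mathcal{U}^{\vee})$, and by Proposition~\ref{proposition_lines_Y}(2) the locus $Z(s')$ is a conic $C'\subset Y$, so $Z(s)=\pi^{-1}(C')$ is a $1$-dimensional subscheme of degree $c_{2}(\mathcal{E}^{\vee})=4$.

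Next, for any line $l\subset Y$ I compute $\pi_{*}\mathcal{O}_{\pi^{-1}(l)}=\pi_{*}\pi^{*}\mathcal{O}_{l}=\mathcal{O}_{l}\oplus\mathcal{O}_{l}(-1)$, so that $\pi^{-1}(l)$ has Hilbert polynomial $2t+1$ and $\chi(\mathcal{O}_{\pi^{-1}(l)})=1$, regardless of whether $l\subset\mathcal{B}$. This numerical fact is the obstruction ruling out form $(3)$. Now assume $C\subset Z(s)$ is a twisted cubic, so that $\pi(C)\subset C'$ with $\deg C'=2$. For form $(1)$, $C\cong\mathbb{P}^{1}$ is smooth irreducible of degree $3$; then $\pi|_{C}$ is a finite morphism of degree at most $2$, and $\deg\pi|_{C}\cdot\deg\pi(C)=3$ forces $\deg\pi|_{C}=1$ and $\deg\pi(C)=3$, contradicting $\pi(C)\subset C'$. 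For form $(3)$, write $C=L\cup M$ with $M$ a non-planar thickened line of Hilbert polynomial $2t+2$ (so $\chi(\mathcal{O}_{M})=2$); set $l=\pi(M_{red})$, a line in $C'$. Since $M$ is irreducible with $M_{red}\subset\pi^{-1}(l)_{red}$ and, writing $C'=l_{1}\cup l_{2}$, the complementary component of $Z(s)=\pi^{-1}(C')$ meets $\pi^{-1}(l)_{red}$ only in the finite set $\pi^{-1}(l_{1}\cap l_{2})$, the closed subscheme $M$ must be scheme-theoretically contained in $\pi^{-1}(l)$. But then $\deg M=\deg\pi^{-1}(l)=2$, so the kernel of $\mathcal{O}_{\pi^{-1}(l)}\twoheadrightarrow\mathcal{O}_{M}$ is $0$-dimensional, forcing $\chi(\mathcal{O}_{M})\leq\chi(\mathcal{O}_{\pi^{-1}(l)})=1$. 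This contradicts $\chi(\mathcal{O}_{M})=2$, so form $(3)$ cannot occur, and $C$ must be of form $(2)$.

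The hardest step is the scheme-theoretic containment $M\subset\pi^{-1}(l)$ in the form $(3)$ case, since a priori the nilpotent structure of $M$ could extend in directions inside $Z(s)$ but outside $\pi^{-1}(l)$. This is handled by a local analysis at the finitely many points of $\pi^{-1}(l_{1}\cap l_{2})$ where the two components of $\pi^{-1}(C')$ meet, combined with the irreducibility of $M$ and the $1$-dimensionality of its support lying over $l$.
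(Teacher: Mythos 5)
Your overall route is the same as the paper's: identify $Z(s)$ with the $\pi$-preimage of a conic $C'\subset Y$ (the paper gets this from $Y\cap\mathrm{Gr}(2,4)$ being a codimension-three linear section of a quadric, you from $H^0(X,\mathcal{E}^{\vee})\cong H^0(Y,\mathcal{U}^{\vee})$ together with Proposition~\ref{proposition_lines_Y}(2); these are equivalent), and then exclude forms $(1)$ and $(3)$ of Proposition~\ref{prop_classificationcubic}. Your degree argument for form $(1)$ and the Hilbert-polynomial comparison $p_{\pi^{-1}(l)}(t)=2t+1<2t+2=p_M(t)$ for form $(3)$ are correct and in fact supply details that the paper only asserts. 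Two smaller repairs: the vanishing $H^0(Y,\mathcal{U}^{\vee}(-H))=0$ is not a Kodaira-type statement (the bundle has rank two); argue instead that a nonzero section would give $\mathcal{O}_Y(H)\hookrightarrow\mathcal{U}^{\vee}$, impossible by slope stability (or compute via Borel--Weil--Bott). And the scheme-theoretic containment $M\subset\pi^{-1}(l_1)$ at the finitely many points of $M_{\mathrm{red}}\cap\pi^{-1}(l_2)$ does not follow from topological irreducibility of $M$; the correct input is purity: $M$ is the degree-two primary component of the Cohen--Macaulay curve $C$, so $\mathcal{O}_M$ has no nonzero subsheaf of dimension zero, hence any local equation of $\pi^{-1}(l_1)$, whose restriction to $\mathcal{O}_M$ is supported on that finite set, vanishes on $M$. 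With that substitution your ``local analysis'' becomes an actual proof in the case you treat.

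The genuine gap is the assumption that $C'=l_1\cup l_2$ is reduced. From $\pi(M_{\mathrm{red}})\subset C'$ you only learn that $C'$ contains a line, so besides the reduced reducible case $C'$ can be a non-reduced conic $2l$ supported on a single line: a codimension-three linear section of the quadric $\mathrm{Gr}(2,4)$ can be a rank-one conic, and such ribbon conics on $(1,-1)$-lines of $Y_5$ are not excluded by anything in your argument (they sit in the $\mathbb{P}^4$ of Proposition~\ref{proposition_lines_Y}(2)). In that case there is no ``complementary component'', so your containment step has no content, and the numerics no longer give a contradiction: $\pi^{-1}(2l)$ has Hilbert polynomial $4t$, and when $l$ is tangent to $\mathcal{B}$, so that $\pi^{-1}(l)=L\cup\Lambda$ splits into two lines meeting at a point $p$, a local computation at $p$ shows that $\pi^{-1}(2l)$ is, near $\Lambda$, a double structure on $\Lambda$ whose scheme intersection with the component along $L$ has length $4$, giving $\chi=2$, i.e.\ Hilbert polynomial $2t+2$ for the $\Lambda$-component. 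So the very configuration you want to rule out is not obstructed by Hilbert polynomials in this subcase, and it requires a separate geometric argument (this is also precisely where the paper's own one-line dismissal of form $(3)$ is silent). As written, your proof establishes the statement only under the additional hypothesis that the conic $Z(s')\subset Y$ is reduced.
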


%Indeed, $X\xrightarrow{\pi} Y$ is a double cover with branch locus $\mathcal{B}\in |\mathcal{O}_Y(2H)|$. Then 
% $\mathcal{E}$ is pull back of a rank $2$ bundle $\mathcal{U}$ on $Y$. Thus

\begin{proof}
As $c_2(\mathcal{E}^{\vee})=4L$, the zero locus of a section $s$ of $\mathcal{E}^{\vee}$ is a degree $4$ curve. Note that $\pi_*\mathcal{O}_X\cong\mathcal{O}_Y\oplus\mathcal{O}_Y(-H)$. Then  $H^0(X,\mathcal{E}^{\vee})=H^0(X,\pi^*\mathcal{U}^{\vee})\cong H^0(Y,\pi_*\pi^*\mathcal{U}^{\vee})\cong H^0(Y,\mathcal{U}^{\vee}\otimes\pi_*\mathcal{O}_X)\cong H^0(Y,\mathcal{U}^{\vee}\oplus\mathcal{U}^{\vee}(-H))=H^0(Y,\mathcal{U}^{\vee})$. Thus the zero locus of section $\mathcal{E}^{\vee}$ is nothing but $\pi$-preimage of  $Y\cap\mathrm{Gr}(2,4)$, which is preimage of a linear section of $\mathrm{Gr}(2,4)$ of codimension $3$.  But $\mathrm{Gr}(2,4)\subset\mathbb{P}^5$ is a quadric, it follows that its linear section of codimension $3$ is either a plane or a conic, but $Y$ does not contain any plane, hence $Y\cap\mathrm{Gr}(2,4)$ is a conic. Therefore the zero locus of a section of $\mathcal{E}^{\vee}$ is preimage of a conic on $Y_5$. It is clear that a smooth twisted cubic on $X$ is not contained in the locus. On the other hand, if a twisted cubic $C$ of the form $(3)$ in Proposition~\ref{prop_classificationcubic} is contained in the locus of $s\in H^0(\mathcal{E}^{\vee})$, then $\pi(C)$ must be a reducible conic on $Y$, which is also impossible. 

%\begin{enumerate}
 %   \item Smooth degree $4$ curve of genus $1$, if the conic $D$ on $Y$ intersects branch locus $\mathcal{B}$ at $4$ points.
  %  \item Union of two smooth conic, if $D$ is a reducible conic and each component intersects $\mathcal{B}$ transversally or if $D$ is a smooth conic double tangent $\mathcal{B}$.
   % \item Union of four lines, if $D$ is a reducible conic and each component is tangent to $\mathcal{B}$.
    %\item Union of two lines and a smooth conic, if $D$ is a reducible conic with one component is tangent to $\mathcal{B}$ and another one transverally intersects with $\mathcal{B}$.
   % \item Double conic, if $D$ is a double line transversally intersects $\mathcal{B}$ or $D$ is a smooth conic contained in $\mathcal{B}$.
%    \item Union of a smooth conic and a double line, if $D$ is a reducible conic with one component intersects $\mathcal{B}$ transversally and another one is in $\mathcal{B}$.
 %   \item Union of a double line and a chain of two lines, if $D$ is a reducible conic with one component  tangent to $\mathcal{B}$ and another one is in $\mathcal{B}$.
  %  \item Union of two double lines, if $D$ is a reducible conic with both components are in $\mathcal{B}$ or $D$ is a double line tangent to $\mathcal{B}$. 
 %   \end{enumerate}
%If in addition, $X$ is general, there does not exist a non-reduced conic in $X$, this means that case $(6),(7)$ and $(8)$ would not appear.
\end{proof}

\begin{predl}
\label{prop_cubic_inKu}
$\mathrm{Hom}(\mathcal{E},I_C)=k$ if and only if the scheme theoretical image of twisted cubic $C$ under covering map $\pi$ is a conic on $Y$. 
\end{predl}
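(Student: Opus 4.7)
The plan is to translate the condition on $X$ into a statement about conics on $Y$ via the covering $\pi$, using $\mathcal{E}=\pi^*\mathcal{U}$. I would first apply $\mathrm{Hom}(\mathcal{E},-)$ to the standard short exact sequence $0\to I_C\to\mathcal{O}_X\to\mathcal{O}_C\to 0$, which realizes $\mathrm{Hom}(\mathcal{E},I_C)$ as the subspace of $H^0(X,\mathcal{E}^\vee)$ of sections vanishing along $C$. The computation already carried out in the proof of Proposition~\ref{prop_classfication_zerolocus} identifies $H^0(X,\mathcal{E}^\vee)\cong H^0(Y,\mathcal{U}^\vee)$ via pullback, so every such section has the form $s=\pi^*\tilde s$ with $Z(s)=\pi^{-1}Z(\tilde s)$ as subschemes. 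Consequently $s|_C=0$ if and only if the scheme-theoretic image $\pi(C)$ is contained in $Z(\tilde s)$, and by Proposition~\ref{proposition_lines_Y}(2) the latter is precisely a conic $Q_{\tilde s}\subset Y$; hence
\[
\mathrm{Hom}(\mathcal{E},I_C)\ \cong\ \{\,\tilde s\in H^0(\mathcal{U}^\vee)\ \mid\ Q_{\tilde s}\supseteq\pi(C)\,\}.
\]

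The forward direction is then immediate: if $\pi(C)$ is itself a conic, then for degree reasons any conic $Q_{\tilde s}$ containing $\pi(C)$ must coincide with $\pi(C)$, so by Proposition~\ref{proposition_lines_Y}(2) the section $\tilde s$ is unique up to scalar, yielding $\mathrm{Hom}(\mathcal{E},I_C)\cong k$.

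For the converse, assume $\dim\mathrm{Hom}(\mathcal{E},I_C)=1$, i.e.\ there is a unique conic on $Y$ containing $\pi(C)$ up to scalar. The projection formula gives $\deg_Y\pi_*[C]=\deg_X[C]=3$, so $\pi(C)$ is a curve of degree at most $3$ in $Y$. Degree $3$ is excluded, because then no conic can contain $\pi(C)$ and $\mathrm{Hom}(\mathcal{E},I_C)$ would vanish. The main obstacle is to rule out degree $\leq 1$: if $\pi(C)$ were contained in a single line $L\subset Y$, then by Proposition~\ref{proposition_tangentlocus}(1) there is a $\mathbb{P}^1$-family of lines $L'$ meeting $L$, giving a $\mathbb{P}^1$-family of reducible conics $L\cup L'\supseteq\pi(C)$ and forcing $\dim\mathrm{Hom}(\mathcal{E},I_C)\geq 2$, a contradiction. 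Hence $\pi(C)$ has degree exactly two, i.e.\ it is a conic on $Y$. The delicate point throughout is that the vanishing condition "$\pi^*\tilde s$ vanishes on $C$ iff $\tilde s$ vanishes on $\pi(C)$" must hold scheme-theoretically, not merely set-theoretically; this follows from $Z(\pi^*\tilde s)=\pi^{-1}Z(\tilde s)$ together with the universal property of the scheme-theoretic image, and in the backward direction one must also verify that the above case analysis handles non-reduced or reducible images $\pi(C)$, which it does because the uniqueness of a containing conic still forces $\deg\pi(C)=2$.
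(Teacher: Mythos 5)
Your proposal is correct, and its skeleton is the same as the paper's: both identify $\mathrm{Hom}(\mathcal{E},I_C)$ with the sections of $\mathcal{E}^{\vee}$ vanishing on $C$, transport these to $H^0(Y,\mathcal{U}^{\vee})$ via $\pi^*$, and prove the direction ``$\pi(C)$ a conic $\Rightarrow \mathrm{hom}=1$'' by the uniqueness of a conic containing a given conic (Proposition~\ref{proposition_lines_Y}). Where you genuinely diverge is the converse: the paper deduces from $\mathrm{Hom}(\mathcal{E},I_C)\neq 0$ that $C$ lies in the zero locus of a section of $\mathcal{E}^{\vee}$, invokes Proposition~\ref{prop_classfication_zerolocus} to say this locus is the $\pi$-preimage of a conic, and leaves implicit why $\pi(C)$ is then the whole conic rather than a smaller subscheme; you instead run a degree dichotomy on the scheme-theoretic image, excluding $\deg\pi(C)=3$ by the existence of a containing conic and $\deg\pi(C)\leq 1$ by producing the $\mathbb{P}^1$-family of reducible conics $L\cup L'$ through a line (Proposition~\ref{proposition_tangentlocus}(1)), which uses the full strength of $\mathrm{hom}=1$ rather than just nonvanishing. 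Your route is more self-contained and makes explicit a point the paper glosses over; the paper's route is shorter because the geometric classification of zero loci does the work. One small polish for your write-up: at the end, ``$\deg\pi(C)=2$, i.e.\ it is a conic'' is not automatic for an abstract connected pure degree-$2$ curve (it could a priori have Hilbert polynomial $2t+c$ with $c\neq 1$); you should add that $\pi(C)$ is contained in the conic $Z(\tilde s)$, and since both are pure of degree $2$ and a conic is Cohen--Macaulay, the inclusion is an equality — this one-liner is already available from your setup, so it is a gloss, not a gap.
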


\begin{proof}
Applying $\mathrm{Hom}(\mathcal{E},-)$ to the standard exact sequence for $C$, the part of the long exact sequence is given by
$$0\rightarrow\mathrm{Hom}(\mathcal{E},I_C)\rightarrow\mathrm{Hom}(\mathcal{E},\mathcal{O}_X)\xrightarrow{p} \mathrm{Hom}(\mathcal{E},\mathcal{O}_C)$$
Then $\mathrm{Hom}(\mathcal{E},I_C)\cong\mathrm{ker}(p)$, which is nothing but the sections of $\mathcal{E}^{\vee}$ whose zero locus contain $C$. If $\mathrm{Hom}(\mathcal{E},I_C)=k$, then $C$ is contained in the zero locus of section of $\mathcal{E}^{\vee}$, which is the preimage of a conic on $Y$ by Proposition~\ref{prop_classfication_zerolocus}. Conversely, every conic on $Y$ is zero locus of a section $s_Y$ of $\mathcal{U}^{\vee}$ by Proposition~\ref{proposition_lines_Y}. If $\pi(C)$ is a conic on $Y$, then  $C\subset\pi^{-1}Z(s_Y)=Z(\pi^*(s_Y))=Z(s_X)$, where $\pi^*(s_Y)=s_X\in H^0(\mathcal{E}^{\vee})$, which implies $\mathrm{hom}(\mathcal{E},I_C)\geq 1$. If $\mathrm{hom}(\mathcal{E},I_C)\geq 2$, then $\pi(C)$ would be contained in the intersection of two distinct conics on $Y$, which is evidently absurd. 
\end{proof}

%then there are at least two independent sections of $\mathcal{E}^{\vee}$ whose zero locus contain $C$, this means that $C$ is scheme theoretically contained in the preimage of $\mathrm{Gr}(2,3)\cap Y$, but $\mathrm{Gr}(2,3)\cap Y$ is $\mathbb{P}^2$ or a line on $Y$. But $Y$ does not contain a plane, then $C$ is contained in $\pi$-preimage of a line, which is impossible since degree of $C$ is 3. Thus $\mathrm{Hom}(\mathcal{E},I_C)=k$.

% Let $s_X=\pi^*s_Y\in H^0(X,\mathcal{E}^{\vee})$ Thus $C\subset Z(\pi^*(s_Y))=Z(s_X)$, which implies $\mathrm{hom}(\mathcal{E},I_C)\geq 1$.

%\subset\mathrm{Gr}(2,5)$ and $\mathrm{Gr}(2,3)=\mathbb{P}^2$, which is impossible since the degree of a rational curve in $\mathbb{P}^2$ is bounded by $2$.

%\begin{remark}
%If $C$ is a smooth twisted cubic or union of a line and another line with double structure, then $C$ is not contained in the zero locus of section of $\mathcal{E}^{\vee}$, thus $\mathrm{Hom}(\mathcal{E},I_C)=0$. 
%\end{remark}

\begin{predl}
\label{prop_classfication_cubic_degree}
Let $C$ be a twisted cubic on $X$. Then $\pi(C)$ is either a conic or a twisted cubic. In particular, if $\pi(C)$ is a twisted cubic $D$, then $D$ is either triple tangent to $\mathcal{B}$ or $D\subset\mathcal{B}$. 
\end{predl}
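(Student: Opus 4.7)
The plan is to combine a component-by-component analysis of $\pi|_C$ with a study of the restricted double cover $\pi^{-1}(D)\to D$.

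First, I would determine the degree of $\pi$ on each irreducible component $C_i$ of $C$. By Proposition~\ref{prop_classificationcubic}, each $C_i$ is a line or a conic on $X$ (reduced or not). Writing $d_i = \deg(\pi|_{C_i})\in\{1,2\}$, the projection formula gives $\deg_{H_X}(C_i) = d_i\cdot\deg_{H_Y}(\pi(C_i))$. Divisibility forces $d_i=1$ whenever $C_i$ is a line (so $\pi(C_i)$ is a line on $Y$), while a conic $Q\subset X$ has $d_Q\in\{1,2\}$: either $\pi(Q)$ is a conic on $Y$, or $Q=\pi^{-1}(l)$ for some line $l\subset Y$.

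Next, I would combine the contributions via $3=\deg_{H_X}(C)=\sum_i d_i\deg_{H_Y}(\pi(C_i))$ and run through the shapes of $C$ allowed by Proposition~\ref{prop_classificationcubic}. In every case I expect $\deg_{H_Y}(\pi(C))\in\{2,3\}$, with value $2$ arising precisely when a conic component $Q=\pi^{-1}(l)$ appears; in that subcase the complementary line $L$ must satisfy $\pi(L)\neq l$ (otherwise $L\subset Q$ and $C=Q$ would already have $H_X$-degree $2$, not $3$), so $\pi(C)=\pi(L)\cup l$ is a genuine conic on $Y$.

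Finally, suppose $\pi(C)=D$ has $H_Y$-degree $3$, so every $d_i=1$ and $C\hookrightarrow \pi^{-1}(D)$ is a birational section of the restricted cover. If $D\subset\mathcal{B}$ the conclusion is immediate. Otherwise $\pi^{-1}(D)\to D$ is a genuine finite flat degree-$2$ cover with sheaf of algebras $\mathcal{O}_D\oplus\mathcal{O}_D(-H|_D)$, whose multiplication is encoded by a section $s\in H^0(D,\mathcal{O}_D(2H|_D))$ with $\mathrm{div}(s)=\mathcal{B}|_D$. The existence of the closed subscheme $C$ as a degree-$3$ ``half'' of $\pi^{-1}(D)$ forces this algebra to split as a product $\mathcal{O}_D\times\mathcal{O}_D$, equivalently $s=u^2$ for some $u\in H^0(\mathcal{O}_D(H|_D))$, equivalently the degree-$6$ divisor $\mathcal{B}|_D$ has all multiplicities even. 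For a smooth twisted cubic $D$ this is exactly the condition that $D$ be triple tangent to $\mathcal{B}$. The main obstacle will be bookkeeping the degenerate shapes of $C$ and $D$ (reducible or non-reduced components): there one has to interpret ``triple tangent'' component-wise and verify the splitting of $\pi^{-1}(D)\to D$ on each irreducible piece of $D$, but the underlying even-branch-divisor criterion is the same.
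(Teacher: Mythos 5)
Your argument is correct in substance, but it takes a genuinely different route from the paper's. The paper obtains the conic/cubic dichotomy from the bundle $\mathcal{E}$: by Propositions~\ref{prop_classfication_zerolocus} and~\ref{prop_cubic_inKu}, $\mathrm{hom}(\mathcal{E},I_C)\in\{0,1\}$, and it equals $1$ exactly when $C$ lies in the $\pi$-preimage of a conic of $Y$, i.e.\ exactly when $\pi(C)$ is a conic; in the remaining case the paper runs through the shapes of $C$ allowed by Proposition~\ref{prop_classificationcubic}, using the involution $\tau$, and merely asserts (``a case by case analysis shows\dots'') that the image cubic is triple tangent to $\mathcal{B}$ or contained in it. You avoid $\mathcal{E}$ altogether: the projection formula with $H_X=\pi^*H_Y$ controls the degree of the image, and the tangency/containment alternative comes from the restricted cover $\pi^{-1}(D)\to D$, where the existence of the degree-$3$ closed subscheme $C$ forces $\pi^{-1}(D)$ to be non-integral, hence (for $D$ integral and $D\not\subset\mathcal{B}$) the equation of $\mathcal{B}$ restricted to $D$ is a square and $\mathcal{B}|_D$ has all even multiplicities. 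Your route actually proves the tangency statement that the paper only asserts, and is more elementary; the paper's route has the advantage that the $\mathrm{Hom}(\mathcal{E},I_C)$ criterion it establishes is precisely what gets reused later (the stratification $\mathcal{H}=\mathcal{H}_1\cup\mathcal{H}_2$ and the computation of $\mathrm{pr}(I_C)$), so the two arguments are complementary rather than interchangeable.

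Two small repairs. First, a degree-$2$ image does not arise only when a conic component equal to $\pi^{-1}(l)$ appears: it also arises when two line components with the same image occur, e.g.\ $L$ and $\tau(L)$ with $\pi^{-1}(l)=L\cup\tau(L)$ for $l$ a line tangent to $\mathcal{B}$. This does not affect the conclusion, since in all such configurations $C$ still lies in the preimage of a conic and $\pi(C)$ is a conic, but the ``precisely when'' should be weakened. Second, $\pi^{-1}(D)\to D$ never splits as a product $\mathcal{O}_D\times\mathcal{O}_D$, because the two sheets always meet over the points of $\mathcal{B}\cap D$ and $\mathcal{B}\cdot D=6>0$; what your argument needs, and what is equivalent (for integral $D$) to the restricted equation being a square $u^2$ with $u\in H^0(\mathcal{O}_D(H))$, is reducibility of $\pi^{-1}(D)$. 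With that correction the remaining equivalences, and the interpretation of ``triple tangent'' for smooth $D$, go through; the degenerate shapes of $D$ that you defer are treated no more carefully in the paper's own proof.
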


\begin{proof}
By Proposition~\ref{prop_cubic_inKu}, $\mathrm{hom}(\mathcal{E},I_C)$ is either $0$ or $1$. If $\mathrm{hom}(\mathcal{E},I_C)=1$, then $\pi(C)$ is a conic on $Y$ by Proposition~\ref{prop_cubic_inKu}. If $\mathrm{hom}(\mathcal{E},I_C)=0$, then $\pi(C)$ is not a conic. \begin{enumerate}
    \item $C$ is a smooth twisted cubic, then $D=\pi(C)$ is also a smooth twisted cubic and $D$ is either in branch locus or triple tangent to it. Then $C$ is either fixed by $\tau$ or $\tau(C)$ is another smooth twisted cubic. 
    \item $C$ is the union of a line $L$ and a conic $M$. If $C$ is fixed by $\tau$, then $\pi(L)$ is a line $l\subset\mathcal{B}$ and $\pi(M)$ is a conic $m\subset\mathcal{B}$. If $C$ is not fixed by $\tau$, a case by case analysis shows $\pi(C)$ is a twisted cubic triple tangent to $\mathcal{B}$.
    \item $C$ is the union of a line $L$ and another line $N$ with $p_N(t)=2t+2$, similar argument applies. 
\end{enumerate}
\end{proof}

\begin{corollary}
\label{corollary_twistedcubic_notfix}
Let $X$ be a smooth special Gushel-Mukai threefold such that the branch locus is general. Then if $C$ is a twisted cubic on $X$, it is not fixed by the involution $\tau$. 
\end{corollary}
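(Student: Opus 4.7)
The plan is to argue by contradiction, combining the trichotomy of twisted cubics from Proposition~\ref{prop_classificationcubic} with the analysis of $\tau$-fixed curves in Proposition~\ref{prop_classfication_cubic_degree}. Suppose $C$ is a twisted cubic on $X$ with $\tau(C) = C$. The last sentence of Proposition~\ref{prop_classificationcubic} already rules out components with double-line structure under the generality hypothesis, so $C$ is either (i) a smooth irreducible twisted cubic, or (ii) a reduced union of a line $L$ and a conic $M$ (possibly reducible into two distinct lines).

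Case (ii) is handled immediately: Proposition~\ref{prop_classfication_cubic_degree}(2) forces $\pi(L)$ to be a line contained in $\mathcal{B}$, contradicting the generality assumption. All of the content of the corollary therefore lies in excluding case (i). By Proposition~\ref{prop_classfication_cubic_degree}(1), if $C$ is smooth irreducible and $\tau$-fixed, then $D := \pi(C)$ is a smooth twisted cubic contained in $\mathcal{B}$: indeed, if $D$ were only triple-tangent to $\mathcal{B}$, then $\pi^{-1}(D)$ would split into two distinct twisted cubics swapped by $\tau$, so $C$ could not be fixed, while if $D \subset \mathcal{B}$ then $C$ lies in the ramification divisor $\mathcal{R} \xrightarrow{\sim} \mathcal{B}$ and is pointwise fixed.

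To rule out the existence of a smooth twisted cubic $D$ contained in $\mathcal{B}$ for general $\mathcal{B}$, I will use a parameter count in $|\mathcal{O}_Y(2)|$. The Hilbert scheme of smooth twisted cubics on $Y_5$ has dimension at most $\chi(N_{D/Y_5}) = 6$, computed from $\det N_{D/Y_5} = \omega_D^\vee \otimes \omega_{Y_5}^\vee|_D \cong \mathcal{O}_{\mathbb{P}^1}(4)$. For each such $D$, the linear subsystem of quadric sections of $Y_5$ containing $D$ has codimension in $|\mathcal{O}_Y(2)|$ equal to $h^0(D, \mathcal{O}_D(2H)) = h^0(\mathbb{P}^1, \mathcal{O}(6)) = 7$, as follows from the short exact sequence $0 \to I_{D/Y}(2) \to \mathcal{O}_Y(2) \to \mathcal{O}_D(2) \to 0$ together with surjectivity of the restriction $H^0(\mathcal{O}_Y(2)) \to H^0(\mathcal{O}_D(2))$ (a consequence of projective normality of the rational normal cubic and the aCMness of $Y_5 \subset \mathbb{P}^6$). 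Since $h^0(\mathcal{O}_Y(2)) = 28 - 5 = 23$, so $\dim |\mathcal{O}_Y(2)| = 22$, the incidence variety $\{(D, \mathcal{B}) : D \subset \mathcal{B}\}$ has dimension at most $6 + (22 - 7) = 21 < 22$. Its image in $|\mathcal{O}_Y(2)|$ is therefore a proper closed subvariety, and so a Zariski-general $\mathcal{B}$ contains no smooth twisted cubic. The main subtlety is the interpretation of ``general'': one must strengthen the explicit no-line/no-conic condition of Theorem~\ref{theorem_main1} by this additional Zariski-open condition on $|\mathcal{O}_Y(2)|$, which is the intended convention throughout.
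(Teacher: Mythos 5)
Your argument is correct and, in its skeleton, the same as the paper's: both reduce, via Proposition~\ref{prop_classificationcubic} and Proposition~\ref{prop_classfication_cubic_degree}, to the statement that a $\tau$-fixed twisted cubic forces $\mathcal{B}$ to contain a line, a conic, or a smooth twisted cubic, and then contradict the generality of $\mathcal{B}$. The genuine difference is the last step: the paper simply asserts that a general $\mathcal{B}$ contains no twisted cubics, although its stated genericity condition only excludes lines and conics, whereas you justify this by an incidence count in $|\mathcal{O}_Y(2)|$ (dimension $22$ since $h^0(\mathcal{O}_Y(2H))=23$, a $6$-dimensional family of twisted cubics on $Y_5$, and containment of a fixed cubic imposing $h^0(\mathcal{O}_{\mathbb{P}^1}(6))=7$ independent conditions by projective normality), and you correctly flag that ``general'' must be read as including this additional Zariski-open condition, since ``no lines or conics'' does not formally imply ``no twisted cubics''. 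That is a worthwhile addition. Two small slips, neither fatal: adjunction gives $\det N_{D/Y}\cong\omega_D\otimes\omega_Y^{\vee}|_D\cong\mathcal{O}_{\mathbb{P}^1}(4)$, not $\omega_D^{\vee}\otimes\omega_Y^{\vee}|_D$ (which would be $\mathcal{O}_{\mathbb{P}^1}(8)$); and deformation theory bounds the local dimension of the Hilbert scheme from \emph{below} by $\chi(N_{D/Y})$ and from above by $h^0(N_{D/Y})$, so ``at most $\chi=6$'' has the inequality backwards---you need $h^1(N_{D/Y})=0$, or simply the known fact (cf.\ \cite{sanna2014rational}, already cited in the paper) that the Hilbert scheme of twisted cubics on $Y_5$ is smooth of dimension $6$. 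Finally, in your case (ii) when $C$ is a chain of three lines, $\tau$ may permute the components, so what one really extracts from Proposition~\ref{prop_classfication_cubic_degree} is that some $\tau$-invariant line component has image a line in $\mathcal{B}$ (such a component exists because an involution of three components has a fixed one); the contradiction with generality is unchanged.
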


\begin{proof}
If $\tau(C)=C$, by Proposition~\ref{prop_classfication_cubic_degree}, there must exist a line,a conic or a smooth twisted cubic in $\mathcal{B}$. But $\mathcal{B}$ is general, it does not contain any lines,conics or twisted cubics. This is a contradiction. 
\end{proof}

\begin{theorem}
\label{theorem_criterion_inKu}
Let $X$ be a smooth special Gushel-Mukai threefold and $C$ be a twisted cubic. Then $I_C\in\mathcal{K}u(X)$ if and only $\pi(C)$ is a twisted cubic. In particular, if $C$ is smooth, then $I_C\in\mathcal{K}u(X)$. 
\end{theorem}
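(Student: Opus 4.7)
Since $\Ku(X) = \langle \mathcal{E}, \mathcal{O}_X\rangle^\perp$, the plan is to verify that both $\bR\Hom(\mathcal{O}_X, I_C)$ and $\bR\Hom(\mathcal{E}, I_C)$ vanish, with the second vanishing being exactly controlled by whether $\pi(C)$ is a twisted cubic. The first vanishing is unconditional: applying $H^\bullet(X, -)$ to the ideal sheaf sequence $0 \to I_C \to \mathcal{O}_X \to \mathcal{O}_C \to 0$, I combine Kodaira vanishing on the Fano $X$ with the aCM property of $C$ from Proposition~\ref{prop_classificationcubic} and $\chi(\mathcal{O}_C) = 1$ to conclude $H^0(\mathcal{O}_C) = k$ and $H^{>0}(\mathcal{O}_C) = 0$; the natural restriction $H^0(\mathcal{O}_X) \to H^0(\mathcal{O}_C)$ is an isomorphism since $C$ is nonempty, and the long exact sequence then forces $H^\bullet(X, I_C) = 0$.

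For the second condition, Proposition~\ref{prop_cubic_inKu} identifies $\Hom(\mathcal{E}, I_C)$ with the space of sections of $\mathcal{E}^\vee$ vanishing on $C$, which is one-dimensional iff $\pi(C)$ is a conic on $Y$; combined with Proposition~\ref{prop_classfication_cubic_degree}, which asserts that $\pi(C)$ is either a conic or a twisted cubic, one gets $\Hom(\mathcal{E}, I_C) = 0$ iff $\pi(C)$ is a twisted cubic. This already proves the ``only if'' direction. For the ``if'' direction I assume $\pi(C)$ is a twisted cubic and apply $\bR\Hom(\mathcal{E}, -)$ to the ideal sheaf sequence, reducing the task to $\Ext^\bullet(\mathcal{E}, \mathcal{O}_X)$ and $\Ext^\bullet(\mathcal{E}, \mathcal{O}_C)$. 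The first equals $H^\bullet(X, \mathcal{E}^\vee)$; using $\pi_*\mathcal{O}_X = \mathcal{O}_Y \oplus \mathcal{O}_Y(-H_Y)$ and $\mathcal{E}^\vee = \pi^*\mathcal{U}^\vee$, the projection formula rewrites this as $H^\bullet(Y_5, \mathcal{U}^\vee) \oplus H^\bullet(Y_5, \mathcal{U})$; the first summand is concentrated in degree zero and the second vanishes entirely, so $\Ext^{\geq 1}(\mathcal{E}, \mathcal{O}_X) = 0$. The second equals $H^\bullet(C, \mathcal{E}^\vee|_C)$, vanishing in degree $\geq 2$ for dimension reasons; for degree one I note that $\mathcal{E}^\vee|_C$ is globally generated (since $\mathcal{U}^\vee$ is globally generated on $\mathrm{Gr}(2,5)$) and that every irreducible component of the aCM curve $C$ is rational, so component-wise $H^1$ vanishes and a Mayer-Vietoris argument across the intersection points gives $H^1(C, \mathcal{E}^\vee|_C) = 0$.

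Feeding these inputs into the long exact sequence, the only remaining ext group is $\Ext^1(\mathcal{E}, I_C)$, the cokernel of $k^5 = H^0(X, \mathcal{E}^\vee) \to H^0(C, \mathcal{E}^\vee|_C)$; this map is injective by $\Hom(\mathcal{E}, I_C) = 0$, and Hirzebruch--Riemann--Roch on $C$ gives $\chi(C, \mathcal{E}^\vee|_C) = H\cdot C + 2\chi(\mathcal{O}_C) = 5$, so both spaces are five-dimensional and the restriction is an isomorphism, forcing $\Ext^1(\mathcal{E}, I_C) = 0$. For the ``in particular'' clause, when $C \cong \mathbb{P}^1$ is smooth the map $\pi|_C$ is finite with $\deg(\pi|_C) \cdot \deg \pi(C) = H_X \cdot C = 3$, so $\deg \pi(C) \in \{1, 3\}$ and in particular $\pi(C)$ is not a conic; the ``if'' direction then gives $I_C \in \Ku(X)$. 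The most delicate step is the vanishing $H^1(C, \mathcal{E}^\vee|_C) = 0$ for non-smooth $C$, which requires the full classification of aCM twisted cubics from Proposition~\ref{prop_classificationcubic} and a case-by-case Mayer-Vietoris analysis on the reducible or non-reduced configurations.
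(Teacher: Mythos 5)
Your proposal is correct and follows the same skeleton as the paper's proof: reduce membership in $\mathcal{K}u(X)$ to the two vanishings $\mathrm{RHom}(\mathcal{O}_X,I_C)=0$ and $\mathrm{RHom}(\mathcal{E},I_C)=0$, prove the first unconditionally from the ideal-sheaf sequence, and control the second via Propositions~\ref{prop_cubic_inKu} and~\ref{prop_classfication_cubic_degree} together with the long exact sequence against $\mathcal{O}_X$ and $\mathcal{O}_C$. Where you genuinely diverge is in the two auxiliary computations. For $\mathrm{Ext}^{\bullet}(\mathcal{E},\mathcal{O}_X)=H^{\bullet}(X,\mathcal{E}^{\vee})$ the paper (Lemma~\ref{lemma_globalsection_stablebundle}) restricts to a K3 hyperplane section and chases Euler characteristics, whereas you push down along $\pi$ using $\pi_*\mathcal{O}_X\cong\mathcal{O}_Y\oplus\mathcal{O}_Y(-H)$, $\mathcal{U}^{\vee}(-H)\cong\mathcal{U}$, and the standard facts $H^{\bullet}(Y_5,\mathcal{U})=0$, $H^{\geq 1}(Y_5,\mathcal{U}^{\vee})=0$; this is cleaner and is the same projection-formula device the paper already uses for $H^0$ in Proposition~\ref{prop_classfication_zerolocus}. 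For $H^1(C,\mathcal{E}^{\vee}|_C)=0$ the paper (Lemmas~\ref{lemma_restrictedto_cubic_vanishing} and~\ref{lemma_globalsection_vectorbundle_cubic}) uses Serre duality on $C$ together with $h^0(\mathcal{E}|_C)=0$, obtained from a geometric analysis of zero loci of sections of the quotient bundle, while you invoke global generation of $\mathcal{E}^{\vee}|_C$, rationality of the components, and a Mayer--Vietoris gluing, deferring the non-reduced configurations to a case-by-case check; that case analysis (where naive Mayer--Vietoris does not apply) can be avoided entirely by noting that a globally generated sheaf on $C$ is a quotient of $\mathcal{O}_C^{\oplus N}$, that $h^1(\mathcal{O}_C)=0$ for aCM twisted cubics (\cite[Corollary 1.38]{sanna2014rational}), and that $H^2$ of any coherent sheaf on the curve $C$ vanishes, so $H^1(\mathcal{E}^{\vee}|_C)$ is a quotient of $H^1(\mathcal{O}_C)^{\oplus N}=0$ uniformly in all cases. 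Your remaining deviations --- killing $\mathrm{Ext}^1(\mathcal{E},I_C)$ by observing that the restriction $H^0(\mathcal{E}^{\vee})\to H^0(\mathcal{E}^{\vee}|_C)$ is an injection between five-dimensional spaces, rather than via the paper's $\chi(\mathcal{E},I_C)=0$ bookkeeping, and deducing the smooth case from the degree identity $\deg(\pi|_C)\cdot\deg\pi(C)=H\cdot C=3$ rather than from the classification of zero loci in Proposition~\ref{prop_classfication_zerolocus} --- are equivalent in substance and equally valid.
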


The proof of Theorem~\ref{theorem_criterion_inKu} breaks into several lemmas:

\begin{lemma}
\label{lemma_restrictedto_cubic_vanishing}
Let $C$ be a twisted cubic curve on a special Gushel-Mukai threefold $X$, $H^0(X,\mathcal{E}|_C)=0$.
\end{lemma}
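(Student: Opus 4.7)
Plan: My approach will be to realize $H^0(\mathcal{E}|_C)$ as a subspace of the $5$-dimensional vector space $V = k^5$ via the tautological inclusion defining $\mathcal{E}$, and then exclude nonzero sections by the linear geometry of $Y_5 \subset \mathrm{Gr}(2, V)$.

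The first step exploits that $\mathcal{E} = \pi^*\mathcal{U}$, where $\mathcal{U} \subset V \otimes \mathcal{O}_{\mathrm{Gr}(2, V)}$ is tautological, yielding a canonical inclusion $\mathcal{E} \hookrightarrow V \otimes \mathcal{O}_X$. The twisted cubic $C$ is connected---since $\chi(\mathcal{O}_C) = 1$ forbids a nontrivial decomposition $C = C_1 \sqcup C_2$---so $H^0(\mathcal{O}_C) = k$, and restricting the inclusion to $C$ yields an injection $H^0(\mathcal{E}|_C) \hookrightarrow V$. Its image I will identify with those $v \in V$ satisfying $v \in \mathcal{U}_{\pi(x)}$ for every $x \in C$; equivalently, with the condition $\pi(C) \subset \mathbb{P}^3_v := \{[U] \in \mathrm{Gr}(2, V) : v \in U\}$.

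I will then argue by contradiction: if $0 \neq v \in H^0(\mathcal{E}|_C)$, then $\pi(C) \subset \mathbb{P}^3_v \cap Y_5$, a linear subspace of $Y_5$ (both $\mathbb{P}^3_v$ and $Y_5 \subset \mathbb{P}^6$ are linear in the Plücker ambient $\mathbb{P}^9 = \mathbb{P}(\wedge^2 V)$). Since $Y_5 \ncong \mathbb{P}^3$ and contains no plane---as otherwise $Y_5$ would support a five-dimensional family of conics, contradicting $\mathcal{C}(Y_5) \cong \mathbb{P}^4$ of Proposition~\ref{proposition_lines_Y}---this intersection has dimension at most one.

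The main obstacle will be to rule out the possibility that $\pi(C)$ is set-theoretically contained in a single line $l \subset Y_5$; given Proposition~\ref{prop_classfication_cubic_degree}, this suffices. If $\pi(C) \subset l$ set-theoretically, then $C_{\mathrm{red}} \subset \pi^{-1}(l)_{\mathrm{red}}$, and since $\pi^{-1}(l)$ has degree $2$ in $X$ while $\deg C = 3$, this is immediate when $C$ is reduced (cases (1) and (2) with reduced conic in Proposition~\ref{prop_classificationcubic}). For the non-reduced cases, a short case analysis, comparing $\pi^{-1}(l)_{\mathrm{red}}$ (two disjoint lines when $l \not\subset \mathcal{B}$, a single line when $l \subset \mathcal{B}$) to $C_{\mathrm{red}}$ and using the length-$2$ intersection condition forced by $p_C(t) = 3t + 1$, will yield the required contradiction and finish the proof.
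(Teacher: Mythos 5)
Your reduction to the linear geometry of $\mathrm{Gr}(2,V)$ is the same mechanism as the paper's proof: a nonzero section of $\mathcal{E}|_C$ produces a vector $v\in V$ and forces $C$ into the $\pi$-preimage of the zero locus of $v$ regarded as a section of the quotient bundle, i.e.\ of $\mathbb{P}^3_v\cap Y_5$; your direct argument that this locus is a point or a line (no planes on $Y_5$, and $Y_5\not\cong\mathbb{P}^3$) is a reasonable substitute for the paper's citation of Sanna. The genuine gap is in what you keep from the section: you retain only the pointwise condition that $v\in\mathcal{U}_{\pi(x)}$ for every closed point $x\in C$, hence only the \emph{set-theoretic} containment $\pi(C)\subset l$, and then try to close the argument by comparing $C_{\mathrm{red}}$ with $\pi^{-1}(l)_{\mathrm{red}}$. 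At that level there is no contradiction available in the non-reduced cases: a twisted cubic which is a triple structure supported on a single line $L$ (not excluded by Proposition~\ref{prop_classificationcubic} when the branch locus is non-general), or of the form $L\cup M$ with $M$ a double structure on $\tau(L)$ and $L\cup\tau(L)=\pi^{-1}(l)$ for $l$ tangent to $\mathcal{B}$, satisfies $\pi(C)_{\mathrm{red}}\subset l$ automatically, so no case analysis of reductions can yield the required contradiction — you would in effect be trying to prove that no twisted cubic has image supported on a line, which is a different (and unproved) statement. In addition, your dichotomy for $\pi^{-1}(l)_{\mathrm{red}}$ is factually off: for $l\not\subset\mathcal{B}$ the preimage is an irreducible conic or two \emph{incident} lines, never two disjoint lines, since the cover is branched along $l\cap\mathcal{B}\neq\emptyset$.

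The repair is simply not to discard the scheme structure, which is how the paper argues: since $\mathcal{E}\hookrightarrow V\otimes\mathcal{O}_X$ is a subbundle, a nonzero section corresponds (using $H^0(\mathcal{O}_C)=k$) to a constant section $v$ of $V\otimes\mathcal{O}_C$ whose image in $(V\otimes\mathcal{O}_C)/\mathcal{E}|_C\cong\mathcal{Q}|_C$ vanishes as a section of the sheaf, not merely fibrewise; hence $C$ is a closed \emph{subscheme} of the zero scheme $Z(v)=\pi^{-1}(\mathbb{P}^3_v\cap Y_5)$, which is either finite or a curve of degree $2$, contradicting $\deg C=3$ uniformly in all cases, reduced or not. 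Two smaller points: the identification of the section with a single vector $v$ requires $H^0(\mathcal{O}_C)=k$, which for non-reduced $C$ needs $h^1(\mathcal{O}_C)=0$ (true for aCM twisted cubics, and cited by the paper elsewhere), not just connectedness via $\chi(\mathcal{O}_C)=1$; and your exclusion of planes in $Y_5$ via the dimension of the conic space in Proposition~\ref{proposition_lines_Y} is fine, though it also follows at once from $\operatorname{Pic}(Y_5)=\mathbb{Z}H$.
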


\begin{proof}
Put $W=H^0(X,\mathcal{E}^{\vee})^{\vee}\cong k^5$. The pull back of the tautological sequence on Grassmannian to the twisted cubic $C$ is
$$0\rightarrow\mathcal{E}|_C\rightarrow W\otimes\mathcal{O}_C\xrightarrow{p}(W/\mathcal{E})|_C\rightarrow 0$$ This means that $H^0(\mathcal{E}|_C)=\mathrm{Ker}(H^0(W\otimes\mathcal{O}_C)\xrightarrow{p}H^0((W/\mathcal{E})|_C))$. Assume that $H^0(\mathcal{E}|_C)\neq 0$. This implies that $C$ is contained in the zero locus of $w\in W$, considered as a global section of the quotient bundle $W/\mathcal{E}:=\cQ\cong\pi^*\mathcal{V}$, where $\mathcal{V}$ is the tautological quotient bundle on $Y_5$. Thus $C$ would be contained in the $\pi$-preimage of the zero locus of section of $\mathcal{V}$. But by \cite[Lemma 2.18]{sanna2014rational}, the zero locus of section of $\mathcal{V}$ is either a line or a point, so that the twisted cubic $C$ would be contained in either two points or a conic on $X$, which is impossible. This implies $H^0(\cE|_C)=0$. 
\end{proof}

\begin{lemma}
\label{lemma_globalsection_stablebundle}
$\mathrm{Hom}(\mathcal{O}_X,\mathcal{E}^{\vee})=k^5$ and $\mathrm{Ext}^i(\mathcal{O}_X,\mathcal{E}^{\vee})=0$ for $i\geq 1$.
\end{lemma}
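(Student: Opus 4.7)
The plan is to reduce the computation on $X$ to cohomology computations on $Y=Y_5$ via the projection formula, and then invoke known vanishings for the tautological subbundle on the del Pezzo threefold of degree~$5$.

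First I would rewrite $\mathrm{Ext}^i(\mathcal{O}_X,\mathcal{E}^\vee)=H^i(X,\mathcal{E}^\vee)=H^i(X,\pi^*\mathcal{U}^\vee)$. Since $\pi:X\to Y$ is a finite double cover with $\pi_*\mathcal{O}_X\cong\mathcal{O}_Y\oplus\mathcal{O}_Y(-H)$, the projection formula gives
\[
H^i(X,\pi^*\mathcal{U}^\vee)\;\cong\;H^i\bigl(Y,\mathcal{U}^\vee\otimes\pi_*\mathcal{O}_X\bigr)\;\cong\;H^i(Y,\mathcal{U}^\vee)\;\oplus\;H^i(Y,\mathcal{U}^\vee(-H)).
\]
For the second summand, I would use $\det\mathcal{U}\cong\mathcal{O}_Y(-H)$, which implies $\mathcal{U}^\vee\cong\mathcal{U}(H)$, and hence $\mathcal{U}^\vee(-H)\cong\mathcal{U}$. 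So the computation reduces to determining $H^\bullet(Y,\mathcal{U}^\vee)$ and $H^\bullet(Y,\mathcal{U})$ on $Y_5$.

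Next I would invoke the known cohomology of the tautological bundles on $Y_5$, which follows from Orlov's exceptional collection $D^b(Y)=\langle\mathcal{O}_Y,\mathcal{U}^\vee,\mathcal{O}_Y(H),\mathcal{U}^\vee(H)\rangle$ (or equivalently from the Koszul resolution coming from $Y\subset\mathrm{Gr}(2,5)$ together with Borel--Weil--Bott on the Grassmannian). These give
\[
H^0(Y,\mathcal{U}^\vee)=k^5,\qquad H^{\geq 1}(Y,\mathcal{U}^\vee)=0,\qquad H^\bullet(Y,\mathcal{U})=0,
\]
the first identity being already used implicitly in Proposition~\ref{proposition_lines_Y}(2) (conics on $Y_5$ are parametrized by $\mathbb{P}(H^0(\mathcal{U}^\vee))=\mathbb{P}^4$), and the vanishing of $H^\bullet(Y,\mathcal{U})$ being part of the semi-orthogonality statement (orthogonality of $\mathcal{U}^\vee$ to $\mathcal{O}_Y$ on the right side of the collection).

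Plugging these into the projection-formula decomposition yields $H^0(X,\mathcal{E}^\vee)=k^5\oplus 0=k^5$ and $H^{\geq 1}(X,\mathcal{E}^\vee)=0\oplus 0=0$, which is exactly the claim. The only nontrivial input is the cohomology on $Y_5$, but since it is standard, I do not expect any genuine obstacle; the bookkeeping is simply to carry out the identification $\mathcal{U}^\vee(-H)\cong\mathcal{U}$ correctly.
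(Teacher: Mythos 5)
Your argument is correct, but it is genuinely different from the one in the paper. You exploit the special (double-cover) structure: writing $\mathcal{E}^{\vee}=\pi^*\mathcal{U}^{\vee}$ and using $\pi_*\mathcal{O}_X\cong\mathcal{O}_Y\oplus\mathcal{O}_Y(-H)$ together with the projection formula (valid in all degrees since $\pi$ is finite), you reduce to $H^\bullet(Y_5,\mathcal{U}^{\vee})=k^5$ and $H^\bullet(Y_5,\mathcal{U})=0$, the identification $\mathcal{U}^{\vee}(-H)\cong\mathcal{U}$ being correct for the rank two bundle with $\det\mathcal{U}=\mathcal{O}_Y(-H)$. This is in effect the same computation the paper performs for $H^0$ in the proof of Proposition~\ref{prop_classfication_zerolocus}, extended to all cohomological degrees. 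The paper's own proof of Lemma~\ref{lemma_globalsection_stablebundle} instead avoids the covering map entirely: it restricts $\mathcal{E}$ to a general hyperplane section $S\in|H|$ (a K3 surface), uses $H^\bullet(\mathcal{E})=0$ from the exceptional pair $\langle\mathcal{E},\mathcal{O}_X\rangle$ and $H^1(\mathcal{E}_S)=0$ from \cite[Theorem 6.2]{bayer2017stability} to kill $H^2(\mathcal{E}(-H))$, converts $\mathrm{Ext}^1$ and $\mathrm{Ext}^3$ of the statement into $H^2(\mathcal{E}(-H))$ and $H^0(\mathcal{E}(-H))$ by Serre duality, and then pins down $H^2(\mathcal{E}^{\vee})=0$ by an Euler characteristic computation using $h^0(\mathcal{E}^{\vee})=5$. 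What each route buys: the paper's argument is intrinsic to $X$ and works verbatim for ordinary Gushel--Mukai threefolds as well, whereas yours needs the cover $\pi\colon X\to Y_5$ and so only covers the special case -- which is the case at hand in this section -- but in exchange it is shorter and reduces everything to standard cohomology on $Y_5$. One small caution: the collection you quote as ``Orlov's'', $\langle\mathcal{O}_Y,\mathcal{U}^{\vee},\mathcal{O}_Y(H),\mathcal{U}^{\vee}(H)\rangle$, is not the standard form of the exceptional collection on $Y_5$, so you should either cite the usual one (with $\mathcal{O}_Y,\mathcal{O}_Y(H)$ and two bundles generating $\mathcal{K}u(Y_5)$) or rely solely on your alternative justification via the Koszul resolution of $Y_5\subset\mathrm{Gr}(2,5)$ and Borel--Weil--Bott; the two facts you actually need, namely $H^\bullet(Y_5,\mathcal{U})=0$ and $H^\bullet(Y_5,\mathcal{U}^{\vee})=k^5$ concentrated in degree zero, are standard and are indeed consistent with what the paper already uses in Proposition~\ref{proposition_lines_Y}.
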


\begin{proof}
Let $S\in |H|$ be a generic member, which is a smooth K3 surface. Applying global section functor to $0\rightarrow\mathcal{E}(-H)\rightarrow\mathcal{E}\rightarrow\mathcal{E}_S\rightarrow 0$, we have the long exact sequence:
$$0\rightarrow H^0(\mathcal{E}(-H))\rightarrow H^0(\mathcal{E})\rightarrow H^0(\mathcal{E}_S)\rightarrow \mathrm{H}^1(\mathcal{E}(-H))\rightarrow \mathrm{H}^1(\mathcal{E})\rightarrow \mathrm{H}^1(\mathcal{E}_S)\rightarrow \mathrm{H}^2(\mathcal{E}(-H))\rightarrow \mathrm{H}^2(\mathcal{E})$$

Since $\langle\mathcal{E},\mathcal{O}_X\rangle$ is an exceptional pair, $H^i(\mathcal{E})=0$ for all $i$ and $\mathrm{H}^1(\mathcal{E}_S)=0$ by \cite{bayer2017stability}[Theorem 6.2].  Thus $\mathrm{H}^2(\mathcal{E}(-H))=0$ and $\mathrm{Ext}^1(\mathcal{O}_X,\mathcal{E}^{\vee})=\mathrm{Ext}^2(\mathcal{E}^{\vee},\mathcal{O}_X(-H))\cong\mathrm{Ext}^2(\mathcal{O}_X,\mathcal{E}(-H))=\mathrm{H}^2(\mathcal{E}(-H))=0$.  Similarly, $0=H^0(\mathcal{E}(-H))=\mathrm{Ext}^3(\mathcal{O}_X,\mathcal{E}^{\vee})=0$. The global section $H^0(\mathcal{E}^{\vee})=k^5$ \cite[Remark 4.11]{kuznetsov2003derived},  and $\chi(\mathcal{E}^{\vee})=5+\mathrm{H}^2(\mathcal{E}^{\vee})-0=\chi(2+H+L-\frac{1}{3}P)=2+\frac{17}{6}+\frac{1}{2}-\frac{1}{3}=2+3=5$. Then $\mathrm{H}^2(\mathcal{E}^{\vee})=0$
\end{proof}

\begin{lemma}
\label{lemma_globalsection_vectorbundle_cubic}
Let $C$ be a twisted cubic on $X$, then $\mathrm{Ext}^i(\mathcal{E},\mathcal{O}_C)=0$ for $i\geq 1$ and $\mathrm{Hom}(\mathcal{E},\mathcal{O}_C)=k^5$.
\end{lemma}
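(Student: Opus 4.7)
The plan is to identify $\Ext^i(\mathcal{E},\mathcal{O}_C)$ with the sheaf cohomology $H^i(C,\mathcal{E}^\vee|_C)$, which is available because $\mathcal{E}$ is locally free on $X$. Since $\dim C = 1$ this immediately kills the groups for $i\ge 2$, so the lemma reduces to computing $H^{0}$ and establishing the vanishing of $H^{1}$. For the Euler characteristic I would apply Riemann-Roch to the rank two sheaf $\mathcal{E}^\vee|_C$ on the curve $C$: using $c_1(\mathcal{E}^\vee)=H$, the degree identity $H\cdot[C]=3$, and $\chi(\mathcal{O}_C)=1$ one obtains
\[
\chi(\mathcal{E}^\vee|_C) \;=\; c_1(\mathcal{E}^\vee)\cdot[C] \;+\; 2\,\chi(\mathcal{O}_C) \;=\; 3+2 \;=\; 5,
\]
so the $H^{0}$-claim drops out at once from the $H^{1}$-vanishing.

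For the key vanishing $H^1(C,\mathcal{E}^\vee|_C)=0$, my plan is to restrict to $C$ the dual of the tautological sequence
\[
0\;\longrightarrow\;\cQ^\vee\;\longrightarrow\;W^\vee\otimes\mathcal{O}_X\;\longrightarrow\;\mathcal{E}^\vee\;\longrightarrow\;0
\]
on $X$, inherited from $\Gr(2,5)$ (via the double cover $\pi:X\to Y_5$ in the special setting). Since all three sheaves are locally free, the restriction to $C$ stays exact, and the associated long exact cohomology sequence contains the segment
\[
W^\vee\otimes H^1(\mathcal{O}_C)\;\longrightarrow\;H^1(\mathcal{E}^\vee|_C)\;\longrightarrow\;H^2(C,\cQ^\vee|_C)\;=\;0,
\]
the last vanishing by dimension. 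Consequently the entire lemma reduces to showing $H^1(\mathcal{O}_C)=0$.

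The hard step, and really the only non-formal one, is precisely this vanishing $H^1(\mathcal{O}_C)=0$ uniformly across all classes of twisted cubics in Proposition~\ref{prop_classificationcubic}. For smooth $C$ it is immediate; for the reducible types in (2) it drops out of Mayer-Vietoris
\[
0\;\longrightarrow\;\mathcal{O}_C\;\longrightarrow\;\mathcal{O}_L\oplus\mathcal{O}_M\;\longrightarrow\;\mathcal{O}_{L\cap M}\;\longrightarrow\;0
\]
together with the numerical constraint $\chi(\mathcal{O}_C)=1$, which pins down $\chi(\mathcal{O}_{L\cap M})$ and forces $h^0(\mathcal{O}_C)=1$. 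The genuinely delicate case is type (3), where one component is a non-reduced line of arithmetic genus $-1$ and one has to argue surjectivity of $H^0(\mathcal{O}_L)\oplus H^0(\mathcal{O}_M)\twoheadrightarrow H^0(\mathcal{O}_{L\cap M})$ from the local structure at the unique scheme-theoretic intersection; more conceptually, the Cohen-Macaulayness of the homogeneous coordinate ring of any aCM twisted cubic gives $h^0(\mathcal{O}_C)=1$ and hence $h^1(\mathcal{O}_C)=0$ via the $\chi$-computation, closing the argument.
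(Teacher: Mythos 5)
Your argument is correct, but it takes a genuinely different route from the paper's at the two key points. For the Euler characteristic, the paper observes that $\chi(\mathcal{E},\mathcal{O}_C)$ depends only on $\mathrm{ch}(j_*\mathcal{O}_C)$, reduces to $C$ smooth, and uses $\mathcal{E}|_C\cong\mathcal{O}_C(-1)\oplus\mathcal{O}_C(-2)$ (global generation of $\mathcal{E}^\vee$ plus $H^0(\mathcal{E}|_C)=0$ from Lemma~\ref{lemma_restrictedto_cubic_vanishing}) to get $\chi=5$; you get the same number by Riemann--Roch on $C$ directly. (For the non-reduced and reducible cubics the cleanest justification of your degree computation is exactly the paper's remark that $\chi$ only depends on the class of $j_*\mathcal{O}_C$, i.e.\ HRR on $X$; this is a cosmetic point, not a gap.) For the crucial vanishing $H^1(\mathcal{E}^\vee|_C)=0$, the paper argues by Serre duality on $C$, $h^1(\mathcal{E}^\vee|_C)=h^0(\mathcal{E}|_C\otimes\omega_C)=0$, again leaning on Lemma~\ref{lemma_restrictedto_cubic_vanishing}, whereas you restrict the dual tautological sequence $0\to\mathcal{Q}^\vee\to W^\vee\otimes\mathcal{O}_X\to\mathcal{E}^\vee\to 0$ to $C$ (exactness is preserved since $\mathcal{E}^\vee$ is locally free) and squeeze $H^1(\mathcal{E}^\vee|_C)$ between $W^\vee\otimes H^1(\mathcal{O}_C)$ and $H^2(\mathcal{Q}^\vee|_C)=0$, reducing everything to $h^1(\mathcal{O}_C)=0$. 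That last fact holds uniformly for the aCM twisted cubics of Proposition~\ref{prop_classificationcubic}: aCM-ness gives $H^1(I_C)=0$, hence $h^0(\mathcal{O}_C)=1$ and, with $\chi(\mathcal{O}_C)=1$, $h^1(\mathcal{O}_C)=0$ --- the same fact the paper itself imports (via \cite[Corollary 1.38]{sanna2014rational}) in Lemma~\ref{lemma_cohomology_idealsheaf}. What each approach buys: the paper recycles its geometric vanishing $H^0(\mathcal{E}|_C)=0$ and so needs no input beyond Lemma~\ref{lemma_restrictedto_cubic_vanishing}, but its duality step on singular and non-reduced curves is stated somewhat loosely (``$\omega_C$ negative on all components''); your argument bypasses both Lemma~\ref{lemma_restrictedto_cubic_vanishing} and duality on degenerate curves, at the price of invoking $h^1(\mathcal{O}_C)=0$, which is standard and already part of the paper's toolkit. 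Your fallback to the aCM argument also quietly repairs the one shaky point in your own sketch, namely that a naive Mayer--Vietoris surjectivity argument is delicate precisely in the non-reduced case (3), so nothing essential is missing.
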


\begin{proof}
The Euler characteristic $\chi(\mathcal{E},\mathcal{O}_C)$ only depends on the character of $j_*\mathcal{O}_C$, so it is enough to assume $C$ is smooth. Note that $\chi(\mathcal{E},\mathcal{O}_C)=h^0(\mathcal{E}^{\vee}|_C)-\mathrm{h}^1(\mathcal{E}^{\vee}|_C)+\mathrm{h}^2(\mathcal{E}^{\vee}|_C)-\mathrm{h}^3(\mathcal{E}^{\vee}|_C)$. Since $\mathcal{E}^{\vee}$ is globally generated, by Lemma~\ref{lemma_restrictedto_cubic_vanishing}, we have $\mathcal{E}|_C\cong\mathcal{O}_C(-1)\oplus\mathcal{O}_C(-2)$. Then $\chi(\mathcal{E},\mathcal{O}_C)=5$. It is clear that $\mathrm{Ext}^i(\mathcal{E},\mathcal{O}_C)=0$ for $i\geq 2$ since $C$ is pure one dimensional. It remains to show that $\mathrm{h}^1(\mathcal{E}^{\vee}|_C)=0$. Since $C$ is a rational curve, its canonical bundle $\omega_C$ is negative on all its components. Then $\mathrm{h}^1(\mathcal{E}^{\vee}|_C)\cong \mathrm{h}^0(\mathcal{E}|_C\otimes\omega_C)=0$ since $\mathrm{h}^0(\mathcal{E}|_C)=0$ by Lemma~\ref{lemma_restrictedto_cubic_vanishing}. It follows that $\mathrm{Hom}(\mathcal{E},\mathcal{O}_C)=k^5$.
\end{proof}

\begin{lemma}
\label{lemma_cohomology_idealsheaf}
Let $C$ be a twisted cubic on $X$, then $\mathrm{Hom}^i(\mathcal{O}_X,I_C)=0$ for all $i$.
\end{lemma}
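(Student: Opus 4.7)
The plan is to apply the functor $\mathrm{Hom}(\mathcal{O}_X,-) = H^\bullet(X,-)$ to the structure sequence
\[
0 \to I_C \to \mathcal{O}_X \to \mathcal{O}_C \to 0
\]
and read off the vanishing from the resulting long exact sequence. The input needed is cohomology vanishing for $\mathcal{O}_X$ and $\mathcal{O}_C$.

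First, since $X$ is a Fano threefold one has $H^0(X,\mathcal{O}_X) = k$ and $H^i(X,\mathcal{O}_X) = 0$ for $i \geq 1$ by Kodaira vanishing. Next I would deal with $\mathcal{O}_C$. The Hilbert polynomial $p_C(t) = 3t+1$ gives arithmetic genus $p_a(C) = 0$, so
\[
\chi(\mathcal{O}_C) = h^0(\mathcal{O}_C) - h^1(\mathcal{O}_C) = 1,
\]
and $H^i(\mathcal{O}_C) = 0$ for $i \geq 2$ since $C$ is one-dimensional. It remains to observe that $C$ is connected, hence $h^0(\mathcal{O}_C)=1$ and therefore $h^1(\mathcal{O}_C)=0$. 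This uses the classification in Proposition~\ref{prop_classificationcubic}: in each case, working out the intersection multiplicity $\delta$ of the two components from $p_C(t) = p_L(t) + p_M(t) - \delta$ forces $\delta \geq 1$, so the components meet and $C$ is connected (a smooth twisted cubic is obviously so).

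Feeding these into the long exact sequence yields
\[
0 \to H^0(I_C) \to H^0(\mathcal{O}_X) \xra{r} H^0(\mathcal{O}_C) \to H^1(I_C) \to 0, \qquad H^i(I_C) \cong H^i(\mathcal{O}_X) = 0 \text{ for } i \geq 2.
\]
The restriction map $r: k \to k$ is nonzero (it sends $1$ to $1$) and hence an isomorphism, so $H^0(I_C) = H^1(I_C) = 0$, finishing the proof.

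No serious obstacle is expected: the only mildly nontrivial input is the connectedness of every $C$, which is immediate once one uses the Hilbert polynomial constraint on the intersection multiplicity of the components in Proposition~\ref{prop_classificationcubic}.
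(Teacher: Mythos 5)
Your strategy is the same as the paper's: apply $H^\bullet(X,-)$ to $0\to I_C\to\mathcal{O}_X\to\mathcal{O}_C\to 0$, use $H^{>0}(X,\mathcal{O}_X)=0$ and the vanishing of $H^1(\mathcal{O}_C)$ (equivalently, $h^0(\mathcal{O}_C)=1$ plus $\chi(\mathcal{O}_C)=1$) to kill all the cohomology of $I_C$. The one step that does not hold up as written is the inference ``$C$ is connected, hence $h^0(\mathcal{O}_C)=1$''. That implication is valid for \emph{reduced} connected proper schemes, but the classification in Proposition~\ref{prop_classificationcubic} genuinely contains non-reduced twisted cubics: the conic component can be a double line, and case (3) is a line glued along a length-two intersection to a non-reduced degree-two structure $M$ with $p_M(t)=2t+2$; these curves do occur on special Gushel--Mukai threefolds (in particular on the non-general ones treated in Section~\ref{section_nongeneral_GM}, where this lemma is still invoked). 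For a non-reduced connected curve, $H^0(\mathcal{O}_C)$ is a local Artinian $k$-algebra that may contain nilpotents, so its dimension can exceed $1$: for instance a ribbon $M$ on a line with conormal bundle $\mathcal{O}_{\mathbb{P}^1}$ --- exactly the numerical type $\chi(\mathcal{O}_M)=2$ appearing in case (3) --- is connected but has $h^0(\mathcal{O}_M)=2$. So connectedness plus the Hilbert polynomial does not by itself give $h^0(\mathcal{O}_C)=1$ for every member of the classification.

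The missing ingredient is the aCM property used implicitly by the paper: every twisted cubic on $X$ is an aCM cubic in its linear span $\mathbb{P}^3$ (this is why plane cubics with embedded points are excluded), so $H^1(\mathcal{I}_{C/\mathbb{P}^3}(k))=0$ for all $k$; the structure sequence in $\mathbb{P}^3$ then yields $h^0(\mathcal{O}_C)=1$, and $\chi(\mathcal{O}_C)=1$ forces $h^1(\mathcal{O}_C)=0$. This is precisely what the paper imports through the citation of \cite[Corollary 1.38]{sanna2014rational} ($H^1(\mathcal{O}_C)=0$ for these curves). With that substitution --- or with your argument restricted to the reduced cases and the aCM argument covering the non-reduced ones --- your long exact sequence computation is complete and coincides with the paper's proof.
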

\begin{proof}
Consider the standard exact sequence
$$0\rightarrow I_C\rightarrow\mathcal{O}_X\rightarrow\mathcal{O}_C\rightarrow 0$$  
Then we have long exact sequence of cohomology:
\begin{align*}
0\rightarrow\mathrm{Hom}(\mathcal{O}_X,I_C)\rightarrow\mathrm{Hom}(\mathcal{O}_X,\mathcal{O}_X)\rightarrow\mathrm{Hom}(\mathcal{O}_X,\mathcal{O}_C)\rightarrow\mathrm{Ext}^1(\mathcal{O}_X,I_C)\\
\rightarrow\mathrm{Ext}^1(\mathcal{O}_X,\mathcal{O}_X)\rightarrow\mathrm{Ext}^1(\mathcal{O}_X,\mathcal{O}_C)
\rightarrow\mathrm{Ext}^2(\mathcal{O}_X,I_C)\rightarrow\mathrm{Ext}^2(\mathcal{O}_X,\mathcal{O}_X)
\end{align*}

By \cite[Corollary 1.38]{sanna2014rational} $\mathrm{H}^1(\mathcal{O}_C)=0$, then $\mathrm{Ext}^2(\mathcal{O}_X,I_C)=0$ and $\mathrm{Hom}(\mathcal{O}_X,\mathcal{O}_C)=H^0(\mathcal{O}_C)=k$. Thus $\mathrm{Hom}(\mathcal{O}_X,I_C)=0$ and $\mathrm{Ext}^1(\mathcal{O}_X,I_C)=0$. It is clear that $\mathrm{Ext}^3(\mathcal{O}_X,I_C)=0$. 
\end{proof}

Next, we prove Theorem~\ref{theorem_criterion_inKu}.

\begin{proof}[Proof of Theorem~\ref{theorem_criterion_inKu}]
The long exact sequence of standard exact sequence for $C$ is 
$$0\rightarrow\mathrm{Ext}^1(\mathcal{E},\mathcal{O}_C)\rightarrow\mathrm{Ext}^2(\mathcal{E},I_C)\rightarrow\mathrm{Ext}^2(\mathcal{E},\mathcal{O}_X)\rightarrow\mathrm{Ext}^2(\mathcal{E},\mathcal{O}_C)\rightarrow\mathrm{Ext}^3(\mathcal{E},I_C)\rightarrow\mathrm{Ext}^3(\mathcal{E},\mathcal{O}_X)=0$$
By Lemma~\ref{lemma_globalsection_vectorbundle_cubic}, $\mathrm{Ext}^2(\mathcal{E},I_C)=\mathrm{Ext}^3(\mathcal{E},I_C)=0$. Note that $\chi(\mathcal{E},I_C)=\chi(\mathcal{E}^{\vee}\otimes I_C)=\chi_0(2+H-5L-\frac{7}{3}P)=0$, then $\mathrm{Hom}(\mathcal{E},I_C)=\mathrm{Ext}^1(\mathcal{E},I_C)$. 

If $C$ is not in the zero locus of sections of $\mathcal{E}^{\vee}$, then $\mathrm{ext}^1(\mathcal{E},I_C)=\mathrm{Hom}(\mathcal{E},I_C)=0$. Then $\mathrm{Hom}^k(\mathcal{E},I_C)=0$ for all $k$, which implies that $I_C\in\mathcal{K}u(X)$.
Then the statement follows from Proposition~\ref{prop_cubic_inKu} and Proposition~\ref{prop_classfication_cubic_degree}. 
The other direction is trivial. In particular, if $C$ is smooth, then $C$ is not contained in the zero locus of section of $\mathcal{E}^{\vee}$ by Proposition~\ref{prop_classfication_zerolocus}. Then $I_C\in\mathcal{K}u(X)$. 
\end{proof}

\begin{remark}
\label{remark_stratification}
\leavevmode \begin{enumerate}
    \item By Theorem~\ref{theorem_criterion_inKu}, one can stratify $\mathcal{H}$ into $\mathcal{H}=\mathcal{H}_1\cup\mathcal{H}_2$, where $\mathcal{H}_1$ is the double cover of (the closure of) twisted cubics on $Y$ which are triple tangent to $\mathcal{B}$ and $\mathcal{H}_2$ is preimage of reducible conics on $Y$ and $\mathcal{H}_1=\{C|I_C\in\mathcal{K}u(X)\}$ and $\mathcal{H}_2=\{C|I_C\not\in\mathcal{K}u(X)\}$.  We will see in Lemma~\ref{lemma_connctedness_Hilbertscheme} that $\mathcal{H}_2$ is connected but not a connected component.
    \item In Section~\ref{section_bridgelandmoduli}, we will see that $\mathcal{H}_2$ is a ruled surface in $\mathcal{H}$. Its open complement $\mathcal{H}_1$ is an open denset subset since $\mathcal{H}$ is irreducible. Then $\mathcal{H}_2\subset\overline{\mathcal{H}_1}$.
\end{enumerate}
\end{remark}

\section{Hilbert scheme of twisted cubics}
\label{section3}
Thoughout this section, we assume $X$ is always a special Gushel-Mukai threefold with branch locus general. The main result of the section is following theorem:

\begin{theorem}
\label{Theorem_twistedcubics}
Let $X$ be a smooth special Gushel-Mukai threefold with general branch locus on $Y$. 
Then the Hilbert scheme $\mathcal{H}$ of twisted cubics is a smooth irreducible projective threefold.
\end{theorem}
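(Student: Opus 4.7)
The plan is to verify projectivity, smoothness, and irreducibility separately, using the stratification $\mathcal{H} = \mathcal{H}_1 \cup \mathcal{H}_2$ from Remark~\ref{remark_stratification}. Projectivity is immediate since $\mathcal{H}$ is a closed subscheme of $\mathrm{Hilb}^{3t+1}(\mathbb{P}^7)$. At any $[C] \in \mathcal{H}$ the deformation theory is controlled by $\Hom(I_C, \mathcal{O}_C) \cong H^0(\mathcal{N}_{C/X})$ (tangent space) and $\Ext^1(I_C, \mathcal{O}_C)$ (obstruction), since every twisted cubic on $X$ is a local complete intersection by Proposition~\ref{prop_classificationcubic}. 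Riemann--Roch on the Fano threefold $X$ with $-K_X = H$ yields $\chi(\mathcal{N}_{C/X}) = H \cdot C = 3$, so establishing $\Ext^1(I_C, \mathcal{O}_C) = 0$ at every point simultaneously gives smoothness and tangent dimension equal to the expected value $3$.

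Because $\mathcal{B}$ is general, Proposition~\ref{prop_classificationcubic} leaves only two possibilities for $C$: (i) a smooth irreducible twisted cubic, or (ii) a reducible union $L \cup M$ of a line and a conic (the conic being smooth, reducible, or a double line). In case (i) we have $[C] \in \mathcal{H}_1$ by Theorem~\ref{theorem_criterion_inKu}, so $\pi|_C$ identifies $C$ with a smooth twisted cubic $D \subset Y_5$ triple tangent to $\mathcal{B}$; I would compare $\mathcal{N}_{C/X}$ with $\pi^{*}\mathcal{N}_{D/Y_5}$ via the differential $d\pi$ (an isomorphism away from the three tangent points) together with the known splitting type of normal bundles of twisted cubics on $Y_5$ (Sanna) to pin down the splitting of $\mathcal{N}_{C/X}$ and force $H^1(\mathcal{N}_{C/X})=0$. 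In case (ii), I would apply the Mayer--Vietoris sequence $0 \to \mathcal{O}_C \to \mathcal{O}_L \oplus \mathcal{O}_M \to \mathcal{O}_p \to 0$ at $p = L \cap M$ and reduce the $\Ext$ computation to inputs about $\mathcal{N}_{L/X} \cong \mathcal{O} \oplus \mathcal{O}(-1)$, which is balanced for every line of $X$ by generality and Proposition~\ref{proposition_tangentlocus}(3), together with the normal bundle of $M$, controlling the torsion contribution at $p$ to conclude the desired $\Ext^1$ vanishing.

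For irreducibility, smoothness reduces the claim to connectedness of $\mathcal{H}$. I would observe that $\mathcal{H}_2$, a ruled surface over the smooth connected genus-$71$ curve $\Sigma(X)$ (Proposition~\ref{proposition_tangentlocus}(3)), is connected, and that $\mathcal{H}_2 \subset \overline{\mathcal{H}_1}$ by Remark~\ref{remark_stratification}(2); so every component of $\mathcal{H}_1$ must meet $\mathcal{H}_2$ via an explicit degeneration of a smooth twisted cubic to a line plus a conic, gluing all components together. The main obstacle will be the normal-bundle analysis in case (ii) when $M$ is a double line, where the Mayer--Vietoris technique requires careful book-keeping of the non-reduced structure on $M$ and a delicate analysis of the first-order neighborhood determined by the quotient of $\mathcal{N}_{L/X}$; the generality assumption, forcing every line on $X$ to have balanced normal bundle, is exactly what keeps both the tangent dimension equal to three and the obstruction space zero at those points.
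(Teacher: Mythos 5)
Your overall frame (projectivity from the Hilbert scheme, smoothness from a pointwise vanishing, irreducibility from smoothness plus connectedness) is reasonable, but the two steps that carry all the weight have genuine gaps. First, the deformation-theoretic target is misstated: for a curve $C\subset X$ the group $\mathrm{Ext}^1(I_C,\mathcal{O}_C)$ sits in an exact sequence $0\to H^1(\mathcal{N}_{C/X})\to \mathrm{Ext}^1(I_C,\mathcal{O}_C)\to H^0(\det\mathcal{N}_{C/X})\to 0$, and $\det\mathcal{N}_{C/X}\cong\omega_C\otimes\mathcal{O}_X(H)|_C$ has nonnegative degree on a twisted cubic, so $\mathrm{Ext}^1(I_C,\mathcal{O}_C)$ is \emph{never} zero here; the statement you can hope for is $H^1(\mathcal{N}_{C/X})=0$, and even then the "obstructions lie in $H^1(\mathcal{N})$" package requires $C$ to be l.c.i., which you assert but Proposition~\ref{prop_classificationcubic} does not give (e.g.\ three concurrent non-coplanar lines are not l.c.i., and your case list also drops case (3) of Proposition~\ref{prop_classificationcubic} while retaining double lines, which generality in fact excludes). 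Second, and more seriously, the key vanishing for smooth twisted cubics is not actually derived: the comparison map $\mathcal{N}_{C/X}\hookrightarrow(\pi|_C)^*\mathcal{N}_{D/Y_5}$ has torsion cokernel of length $3$ at the tangency points, but a colength-$3$ subsheaf of a bundle with vanishing $H^1$ can perfectly well have $H^1\neq 0$ (e.g.\ $\mathcal{O}(-2)\oplus\mathcal{O}(3)\subset\mathcal{O}(1)\oplus\mathcal{O}(3)$), so "pinning down the splitting" requires controlling the directions of the elementary modification at the three points — and that is exactly where the generality of $\mathcal{B}$ must enter, since the claim is false without it: smooth twisted cubics lying in the ramification divisor (i.e.\ with $\tau(C)=C$) are singular points of $\mathcal{H}$, as in Theorem~\ref{theorem_singular_hilbertscheme}. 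Your sketch gives no mechanism that distinguishes triple tangency from containment, so the central step is missing. Finally, your irreducibility argument is partly circular: Remark~\ref{remark_stratification}(2), which you quote for $\mathcal{H}_2\subset\overline{\mathcal{H}_1}$, is justified in the paper by the irreducibility of $\mathcal{H}$ that you are trying to prove, and the assertion that every component of $\mathcal{H}_1$ meets $\mathcal{H}_2$ "via an explicit degeneration" is not substantiated.

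For contrast, the paper avoids normal bundles altogether: it identifies $\mathcal{H}$ with a moduli space of Gieseker-stable sheaves of class $1-3L+\tfrac12 P$ (Lemma~\ref{GiesekerHilb}) and computes $\mathrm{Ext}^\bullet(I_C,I_C)$ directly. When $I_C\in\mathcal{K}u(X)$ it uses the Serre functor $\tau\circ[2]$ of $\mathcal{K}u(X)$ to get $\mathrm{Ext}^2(I_C,I_C)\cong\mathrm{Hom}(I_C,I_{\tau C})=0$ from $\tau(C)\neq C$ (this is precisely where generality enters, via Corollary~\ref{corollary_twistedcubic_notfix}); when $I_C\notin\mathcal{K}u(X)$ it replaces $I_C$ by the cone $G$ of $\mathcal{E}\to I_C$, identifies $G$ with the twisted derived dual of $I_L$ for a line $L$, and runs the spectral sequence of Lemma~\ref{lemma_SS} using smoothness of $\Sigma(X)$. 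Connectedness is then handled in Proposition~\ref{lemma_connctedness_Hilbertscheme} by showing $\mathcal{H}_1$ and $\mathcal{H}_2$ are each connected and that $\mathcal{H}_2$, being $2$-dimensional while every point of $\mathcal{H}$ has local dimension $3$, cannot be a connected component. If you want to salvage your route, you would need to (a) restrict to the correct obstruction space $H^1(\mathcal{N}_{C/X})$ and verify the l.c.i.\ hypothesis or treat the non-l.c.i.\ configurations separately, (b) carry out the elementary-modification analysis at the tangency points (or find another argument) in the smooth-cubic case, and (c) prove connectedness of $\mathcal{H}_1$, or the degeneration claim, without invoking Remark~\ref{remark_stratification}(2).
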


To prove this theorem, we identify $\mathcal{H}$ with a moduli space of Gieseker stable sheaves with character $1-3L+\frac{1}{2}P$. 

\begin{lemma}
 \label{GiesekerHilb}
 Let $F$ be a stable sheaf on $X$ with character $1-3L+\frac{1}{2}P$, then $F$ is an ideal sheaf of a twisted cubic $C\subset X$. Conversely, if $C\subset X$ is a twisted cubic, then the ideal sheaf $I_C$ is stable and $\mathrm{ch}(I_C))=1-3L+\frac{1}{2}P$.
\end{lemma}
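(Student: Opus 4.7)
The plan is to establish both implications using standard properties of rank-one torsion-free sheaves together with the hypothesis $\mathrm{Pic}(X) = \mathbb{Z}H$; the argument is largely bookkeeping with characters and there is no serious conceptual obstacle. The two main inputs are reflexivization on a smooth threefold of Picard rank one (which turns any rank-one torsion-free sheaf with trivial first Chern class into an ideal sheaf) and the standard comparison of Hilbert polynomials of ideal sheaves under containment of subschemes.

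For the converse, given a twisted cubic $C \subset X$, I would extract $\mathrm{ch}(I_C)$ from the standard exact sequence $0 \to I_C \to \mathcal{O}_X \to \mathcal{O}_C \to 0$: since $p_C(t) = 3t+1$, Hirzebruch--Riemann--Roch pins down $\mathrm{ch}(\mathcal{O}_C) = 3L - \tfrac{1}{2}P$ and hence $\mathrm{ch}(I_C) = 1 - 3L + \tfrac{1}{2}P$. For Gieseker stability I would analyze an arbitrary proper subsheaf $E \subsetneq I_C$: since $I_C$ is torsion-free of rank one, so is $E$, and its reflexivization $E^{**}$ is a line bundle admitting an injection into $\mathcal{O}_X$, hence $E^{**} \cong \mathcal{O}_X(-nH)$ for some $n \geq 0$. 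The case $n \geq 1$ is killed by slope, since then $\mu(E) < 0 = \mu(I_C)$. The case $n = 0$ forces $E = I_{Z'}$ with $Z' \supsetneq C$; the induced surjection $\mathcal{O}_{Z'} \twoheadrightarrow \mathcal{O}_C$ then yields $p_{I_{Z'}}(t) < p_{I_C}(t)$ for $t \gg 0$, so $E$ cannot destabilize $I_C$.

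For the forward direction, a rank-one Gieseker stable sheaf $F$ on $X$ is automatically torsion-free, since any nonzero torsion subsheaf would destabilize. Its reflexivization $F^{**}$ is a rank-one reflexive sheaf on the smooth threefold $X$, hence a line bundle; combined with $c_1(F) = 0$ and $\mathrm{Pic}(X) = \mathbb{Z}H$ this forces $F^{**} \cong \mathcal{O}_X$. The canonical injection $F \hookrightarrow \mathcal{O}_X$ then exhibits $F$ as the ideal sheaf $I_Z$ of a closed subscheme $Z \subset X$. Comparing characters gives $\mathrm{ch}(\mathcal{O}_Z) = 3L - \tfrac{1}{2}P$, and Riemann--Roch turns this into $p_Z(t) = 3t+1$; thus $Z$ is a twisted cubic by definition (and is automatically pure one-dimensional by Proposition~\ref{prop_classificationcubic}).
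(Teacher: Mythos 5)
Your proposal is correct and follows essentially the same route as the paper: torsion-freeness of a rank-one stable sheaf, reflexivization to a line bundle trivialized by $c_1=0$ and $\mathrm{Pic}(X)=\mathbb{Z}H$ to exhibit $F$ as an ideal sheaf, a Chern character/Riemann--Roch computation pinning down the Hilbert polynomial $3t+1$, and stability of $I_C$ from rank-one torsion-freeness. Your subsheaf analysis in the converse is just a more detailed write-up of the step the paper dismisses as clear, so there is nothing substantive to change.
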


\begin{proof}
Since $F$ is a stable sheaf of rank one, it is a pure sheaf of $dimX$, hence a torsion free sheaf of rank one. We claim that $F$ must be of the form $F\cong I_C$ for a twisted cubic $C\subset X$. Note that the map: $F\xhookrightarrow{j}F^{\vee\vee}$ is injective and $F^{\vee\vee}$ is a reflexive sheaf of rank one over $X$, then $F^{\vee\vee}\cong\mathcal{O}_X(D)$ for some Cartier divisor $D$. Then $F\otimes\mathcal{O}_X(-D)\xhookrightarrow{}\mathcal{O}_X$ is an injection and $\mathcal{F}\otimes\mathcal{O}_X(-D)$ must be an ideal sheaf $I_C$ of a subscheme $C\subset X$. Then $F\cong I_C\otimes\mathcal{O}_X(D)$. Note that $\mathrm{ch}(F)=\mathrm{ch}(F^{\vee\vee})$. Then $\mathrm{ch}_1(F^{\vee\vee})=\mathrm{ch}_1(\mathcal{O}(D))=\mathrm{ch}_1(I_C\otimes\mathcal{O}(D))=\mathrm{ch}_1(I_C)+D$,  hence $\mathrm{ch}_1(I_C)=0$ and  $\mathrm{codim(C)}\geq 2$. On the other hand, $\mathrm{ch}_2(I_C)=-3L$, this means that $\mathrm{dim}(C)=1$. Thus $F$ is ideal sheaf of a curve $C$. By assumption on character of $F$, we have $\mathrm{ch}(j_*\mathcal{O}_C)=3L-\frac{1}{2}P$, where $C\xhookrightarrow{j} X$. Then its Hilbert polynomial $p_C(t)=\chi(\mathcal{O}_C(tH))=\chi(3L+3tP-\frac{1}{2}P)=\frac{3}{2}+3t-\frac{1}{2}=3t+1$, so $F$ is an ideal sheaf of twisted cubic $I_C$. Conversely, it is clear that the ideal sheaf $I_C$ of a twisted cubic is stable since it is a torison free sheaf of rank one. It is easy to see $\mathrm{ch}_0(i_*\mathcal{O}_C)=\mathrm{ch}_1(j_*\mathcal{O}_C)=0$. Since $C$ is a rational cubic, $\mathrm{ch}_2(j_*\mathcal{O}_C)=3L$ and $\chi(j_*\mathcal{O}_C)=1$. By Hirzebruch-Riemann-Roch, one has $\mathrm{ch}_3(j_*\mathcal{O}_C)=-\frac{1}{2}P$. 
\end{proof}

\begin{predl}
\label{Prop_computingcone}
If $X$ is a special Gushel-Mukai threefold and $C\in\mathcal{H}_2$ a twisted cubic. Define $G=\mathrm{cone}(\mathcal{E}\xrightarrow{s} I_C)$ in $D^b(X)$, where $s$ is the non zero map $\mathcal{E}\rightarrow I_C$. Then there is a triangle $\mathcal{O}_X(-H)[1]\rightarrow G\rightarrow\mathcal{O}_{L}(-2)$, where $L\subset X$ is a line.
\end{predl}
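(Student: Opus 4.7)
The plan is to factor the map $s$ through the ideal sheaf of an intermediate curve and apply the octahedral axiom. Since $C\in\mathcal{H}_2$, Proposition~\ref{prop_cubic_inKu} gives $\mathrm{hom}(\mathcal{E},I_C)=1$, so the unique (up to scalar) non-zero map $s\colon\mathcal{E}\to I_C$ corresponds to a section $s'\in H^0(\mathcal{E}^{\vee})$ whose zero locus $Z:=Z(s')$ contains $C$. By Proposition~\ref{prop_classfication_zerolocus}, $Z$ is the $\pi$-preimage of a conic on $Y$ and has degree $c_2(\mathcal{E}^{\vee})\cdot H=4$, so it strictly contains the degree-three curve $C$.

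The Koszul resolution of $s'$ gives the short exact sequence
\[0\to\mathcal{O}_X(-H)\to\mathcal{E}\xrightarrow{\alpha}I_Z\to 0,\]
while the inclusion $C\subset Z$ yields
\[0\to I_Z\xrightarrow{\beta}I_C\to I_{C/Z}\to 0,\]
where $I_{C/Z}:=I_C/I_Z$ is set-theoretically supported on $Z\setminus C$. Because $\mathrm{Hom}(\mathcal{E},I_C)$ is one-dimensional, the composition $\beta\circ\alpha$ must agree with $s$ up to a non-zero scalar. Applying the octahedral axiom to the factorization $\mathcal{E}\xrightarrow{\alpha}I_Z\xrightarrow{\beta}I_C$, together with $\mathrm{cone}(\alpha)=\mathcal{O}_X(-H)[1]$ and $\mathrm{cone}(\beta)=I_{C/Z}$, produces the distinguished triangle
\[\mathcal{O}_X(-H)[1]\to G\to I_{C/Z}\to\mathcal{O}_X(-H)[2].\]

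It remains to identify $I_{C/Z}\cong\mathcal{O}_L(-2)$ for some line $L\subset X$. Set-theoretically $Z\setminus C$ has degree one, so the expected support is a single line $L$. Using the classification of $C\in\mathcal{H}_2$ from Propositions~\ref{prop_classificationcubic} and~\ref{prop_classfication_cubic_degree} -- in which $C=L\cup M$ with $\pi(C)=l_1\cup l_2$ a reducible conic on $Y$, typically with $l_1\subset\mathcal{B}$ producing a non-reduced component $2L=\pi^{-1}(l_1)$ of $Z$ -- a local coordinate computation along $L$ shows that $I_{C/Z}$ is a Cohen-Macaulay sheaf of pure dimension one supported on $L$, hence of the form $\mathcal{O}_L(k)$. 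The integer $k$ is then pinned down by a Chern character comparison: from $\mathrm{ch}(I_C)=1-3L+\tfrac{1}{2}P$ and $\mathrm{ch}(\mathcal{E})=2-H+L+\tfrac{1}{3}P$ one obtains $\mathrm{ch}(G)=-1+H-4L+\tfrac{1}{6}P$, and subtracting $\mathrm{ch}(\mathcal{O}_X(-H)[1])=-1+H-5L+\tfrac{5}{3}P$ leaves $L-\tfrac{3}{2}P=\mathrm{ch}(\mathcal{O}_L(-2))$, forcing $k=-2$.

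The main obstacle will be the local verification that $I_{C/Z}$ is genuinely Cohen-Macaulay along $L$, with no embedded zero-dimensional components; this requires a case-by-case local analysis over the configurations of $(C,Z)$ enumerated in Section~\ref{section2}. Once purity is established, the Chern character matching gives $I_{C/Z}\cong\mathcal{O}_L(-2)$, and the proposition follows from the triangle produced by the octahedral axiom.
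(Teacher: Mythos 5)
Your proposal is correct and follows essentially the same route as the paper: there, too, the map $s$ is factored through $I_D$ with $D=Z(s)$ the $\pi$-preimage of a conic, the kernel is identified with $\mathcal{O}_X(-H)$ (your Koszul sequence, the paper instead notes the kernel is rank one with the right character), and the quotient $I_C/I_D$ is identified with $\mathcal{O}_L(-2)$ via the decomposition sequence for $D=C\cup L$, the two short exact sequences being spliced into the triangle by taking cohomology sheaves of $G$ rather than by the octahedral axiom. The only substantive comment is that the purity you flag as the main obstacle requires no case-by-case local analysis: $I_{C/Z}$ embeds into $\mathcal{O}_Z$, and $Z$ is a locally complete intersection (hence Cohen--Macaulay) curve of pure dimension one, so $I_{C/Z}$ has no zero-dimensional associated points, and your Chern character comparison then pins down $\mathcal{O}_L(-2)$ exactly as the paper does implicitly.
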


\begin{proof}
Taking cohomology of the triangle $\mathcal{E}\rightarrow I_C\rightarrow G$ with respect to $\mathrm{Coh}(X)$, we get a long exact sequence
$$0\rightarrow \mathcal{H}^{-1}(G)\rightarrow\mathcal{E}\xrightarrow{s} I_{C}\rightarrow \mathcal{H}^0(G)\rightarrow 0$$ and other cohomologies of $G$ are zero. Note that the image of $s$ must be an ideal sheaf of the vanishing locus of section $s$ of $\mathcal{E}^{\vee}$, containing $C$. Denote it by $I_D$, where $D=Z(s)$ is an elliptic quartic. By Proposition~\ref{prop_classfication_zerolocus}, it is $\pi$-preimage of a conic on $Y$.  Then we have two short exact sequence:
$$0\rightarrow \mathcal{H}^{-1}(G)\rightarrow\mathcal{E}\xrightarrow{s} I_D\rightarrow 0$$
$$0\rightarrow I_D\rightarrow I_C\rightarrow \mathcal{H}^0(G)\rightarrow 0.$$
Then $\mathcal{H}^{-1}(G)$ is a torsion free sheaf of rank one with character $\mathrm{ch}(\mathcal{E})-\mathrm{ch}(I_D)=(2-H+L+\frac{1}{3}P)-(1-4L+2P)=1-H+5L-\frac{5}{3}P$. It is easy to see $\mathcal{H}^{-1}(G)\cong\mathcal{O}_X(-H)$. On the other hand, the second exact sequence is the decomposition sequence for $D$, then $\mathcal{H}^0(G)\cong\mathcal{O}_L(-2)$, where $L$ is the residual curve of $C$ in $D$, which is a line on $X$. Then the desired result follows. 
\end{proof}

\begin{corollary}\label{prop_deriveddual_line}
The object $G$ defined in Proposition~\ref{Prop_computingcone} is the \emph{twisted derived dual} of an ideal sheaf of a line $L$ in the sense of \cite[Lemma 3.14]{kuznetsov2012instanton}. i.e. $G\cong\mathrm{R}\mathcal{H}om(I_L,\mathcal{O}_X(-H))[1]$.
\end{corollary}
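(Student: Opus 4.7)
The plan is to apply the contravariant functor $\mathbb{D}(-) := \mathrm{R}\mathcal{H}om(-, \mathcal{O}_X(-H))[1]$ to the standard short exact sequence $0 \to I_L \to \mathcal{O}_X \to \mathcal{O}_L \to 0$ for the line $L \subset X$ produced in Proposition~\ref{Prop_computingcone}, and compare the resulting distinguished triangle with the one defining $G$. The two triangles will share the same outer terms and will have non-trivial connecting maps lying in a one-dimensional Hom space, which forces them to be isomorphic.

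First I would compute $\mathrm{R}\mathcal{H}om_X(\mathcal{O}_L, \mathcal{O}_X(-H))$. Since $L \cong \mathbb{P}^1$ is a smooth local complete intersection of codimension two in $X$ and $\omega_X \cong \mathcal{O}_X(-H)$, Grothendieck duality for the closed embedding $i: L \hookrightarrow X$ gives
$$\mathrm{R}\mathcal{H}om_X(i_*\mathcal{O}_L, \mathcal{O}_X(-H)) \cong i_*\bigl(i^*\mathcal{O}_X(-H) \otimes \omega_L \otimes i^*\omega_X^{-1}\bigr)[-2].$$
Using $\omega_L \cong \mathcal{O}_L(-2)$ and $L \cdot H = 1$ this collapses to $\mathcal{O}_L(-2)[-2]$. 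Applying $\mathbb{D}$ to the ideal sheaf sequence for $L$ and rotating then produces a triangle
$$\mathcal{O}_X(-H)[1] \longrightarrow \mathbb{D}(I_L) \longrightarrow \mathcal{O}_L(-2) \xrightarrow{\ \partial\ } \mathcal{O}_X(-H)[2],$$
having exactly the shape of the triangle for $G$ coming from Proposition~\ref{Prop_computingcone}.

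To identify $\mathbb{D}(I_L)$ with $G$, the same duality computation gives
$$\mathrm{Hom}(\mathcal{O}_L(-2), \mathcal{O}_X(-H)[2]) = H^0(L, \mathcal{O}_L) = k,$$
so any two non-zero connecting maps are proportional and yield isomorphic middle terms. On the $\mathbb{D}(I_L)$ side, $\partial$ is automatically non-zero because $\mathbb{D}$ is an anti-equivalence of $D^b(X)$ and the ideal sheaf sequence for $L$ is non-split. For the triangle of $G$, write its connecting map as $\partial'$; the octahedral axiom applied to the factorisation $\mathcal{E} \to I_D \to I_C$ in the proof of Proposition~\ref{Prop_computingcone} presents $\partial'$ as the Yoneda product of the extension classes of $0 \to \mathcal{O}_X(-H) \to \mathcal{E} \to I_D \to 0$ and $0 \to I_D \to I_C \to \mathcal{O}_L(-2) \to 0$. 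One would check that this Yoneda product is non-zero, for instance by computing $\mathrm{Hom}(G, \mathcal{O}_X(-H)[1])$ directly from the triangle $\mathcal{E} \to I_C \to G$ using Lemma~\ref{lemma_cohomology_idealsheaf} and the exceptionality of $\mathcal{E}$, and verifying that its dimension is strictly smaller than it would be for the split model $\mathcal{O}_X(-H)[1] \oplus \mathcal{O}_L(-2)$.

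The main obstacle is exactly this non-vanishing step: a priori the Yoneda composition could cancel, and the specific geometric input that $L$ is the residual curve to $C$ inside the elliptic quartic $D = \pi^{-1}(m)$ is what rules this out. Once non-triviality of $\partial'$ is in place, rescaling the identity on $\mathcal{O}_L(-2)$ aligns the two triangles, and the triangulated five-lemma delivers the desired isomorphism $G \cong \mathrm{R}\mathcal{H}om(I_L, \mathcal{O}_X(-H))[1]$.
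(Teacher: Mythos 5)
Your proposal follows essentially the same route as the paper: apply $\mathrm{R}\mathcal{H}om(-,\mathcal{O}_X(-H))[1]$ to the sequence $0\to I_L\to\mathcal{O}_X\to\mathcal{O}_L\to 0$, compute $\mathrm{R}\mathcal{H}om(\mathcal{O}_L,\mathcal{O}_X(-H))\cong\mathcal{O}_L(-2)[-2]$ by Grothendieck duality, and identify the resulting triangle with the one from Proposition~\ref{Prop_computingcone} using the one-dimensionality of $\mathrm{Hom}(\mathcal{O}_L(-2),\mathcal{O}_X(-H)[2])$. The only real divergence is that you rightly flag what the paper's proof glosses over, namely that both triangles must be non-split for the one-dimensional Ext argument to apply, but you leave the non-vanishing of the connecting map for $G$ as a sketched plan (octahedron/Yoneda product); this step can be closed more directly, e.g.\ by noting that if $G$ were split then $\mathcal{O}_L(-2)$ would be a direct summand, whereas the triangle $\mathcal{E}\to I_C\to G$ gives $\mathrm{Hom}(\mathcal{O}_L(-2),G)=0$ since $\mathrm{Hom}(\mathcal{O}_L(-2),I_C)=0$ (torsion into torsion-free) and $\mathrm{Ext}^1(\mathcal{O}_L(-2),\mathcal{E})\cong H^2(L,\mathcal{E}^{\vee}|_L(-3))^{\vee}=0$. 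With that observation supplied, your argument is complete and matches the paper's.
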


\begin{proof}
Denote the \emph{twisted derived dual} of $I_L$ by $\mathbb{D}(I_C)$. Apply the functor $\mathrm{R}\mathcal{H}om(-,\mathcal{O}_X(-H))$ to the standard exact sequence associate to $j:L\hookrightarrow X$,  $0\rightarrow I_L\rightarrow\mathcal{O}_X\rightarrow\mathcal{O}_L\rightarrow 0$, we get an exact triangle:
$$\mathrm{R}\mathcal{H}om(\mathcal{O}_L,\mathcal{O}_X(-H))\rightarrow \mathrm{R}\mathcal{H}om(\mathcal{O}_X,\mathcal{O}_X(-H))\rightarrow \mathrm{R}\mathcal{H}om(I_L,\mathcal{O}_X(-H)).$$

It is clear that the middle term is $\mathcal{O}_X(-H)$. The third term is exactly $\mathbb{D}(I_L)[-1]$. Now we compute the first term. 
\begin{align*}
    \mathrm{R}\mathcal{H}om(\mathcal{O}_L,\mathcal{O}_X(-H))& \cong \mathrm{R}\mathcal{H}om(\mathcal{O}_L,j^{!}(\mathcal{O}_X(-H))) 
    \\
&\cong \mathrm{R}\mathcal{H}om(\mathcal{O}_L,\mathcal{O}_X(-H)\otimes\omega_{L|X}[-2])
\\
&\cong \mathrm{R}\mathcal{H}om(\mathcal{O}_L,\mathcal{O}_X(-H)\otimes\mathcal{O}_L(-1)[-2])
\\
&\cong\mathcal{O}_L(-2)[-2]
\end{align*}
The first isomorphism follows from Grothendieck duality. The second isomorphism holds since  $j^!=-\otimes\omega_{L|X}[-2]$. The third isomorphism holds because relative canonical bundle $\omega_{L|X}\cong\omega_L\otimes\omega_X^{-1}|_L\cong\mathcal{O}_L(-2)\otimes\mathcal{O}_X(H)|_L\cong\mathcal{O}_L(-1)$. 
Then we have the triangle:
$\mathcal{O}_L(-2)[-2]\rightarrow\mathcal{O}_X(-1)\rightarrow\mathbb{D}(I_L)[-1]$. Shift by $[1]$ and rotate, we get the triangle  
$$\mathcal{O}_X(-1)[1]\rightarrow\mathbb{D}(I_L)\rightarrow\mathcal{O}_L(-2)$$
Note that $\mathrm{Ext}^1(\mathcal{O}_L(-2),\mathcal{O}_X(-1)[1])\cong\mathrm{H}^1(L,\mathcal{O}_L(-2))=k$. Therefore $G\cong\mathbb{D}(I_L)$. 
\end{proof}

\begin{predl}\label{lemma_involutive_D}
Let $M$ be an object in subcategory $\mathcal{O}_X^{\perp}\subset D^b(X)$, then $\mathbb{D}(M)\in\mathcal{O}_X^{\perp}$ and $\mathbb{D}(\mathbb{D}(M))\cong M$. i.e. $\mathbb{D}$ is an involutive auto-equivalence of $\mathcal{O}_X^{\perp}$. 
\end{predl}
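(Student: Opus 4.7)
The plan is to verify the two assertions separately, each reducing to a standard manipulation: semi-orthogonality with $\mathcal{O}_X$ will follow from Grothendieck--Serre duality on the Fano threefold $X$, while the involutivity will follow from biduality for perfect complexes twisted by a line bundle.

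First I would check that $\mathbb{D}(M)$ lies in $\mathcal{O}_X^{\perp}$ whenever $M$ does. Using adjunction between $\otimes$ and $\mathrm{R}\mathcal{H}om$, for every $k\in\Z$ one has
\[
\Hom^{k}(\mathcal{O}_X,\mathbb{D}(M))
= \Hom^{k}\bigl(\mathcal{O}_X,\mathrm{R}\mathcal{H}om(M,\mathcal{O}_X(-H))[1]\bigr)
= \Hom^{k+1}(M,\mathcal{O}_X(-H)).
\]
Since $X$ is a Fano threefold of index one, $\omega_X\cong\mathcal{O}_X(-H)$, so Serre duality gives
\[
\Hom^{k+1}(M,\mathcal{O}_X(-H))=\Hom^{k+1}(M,\omega_X)\cong \Hom^{2-k}(\mathcal{O}_X,M)^{\vee},
\]
which vanishes for all $k$ by the assumption $M\in\mathcal{O}_X^{\perp}$. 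Hence $\mathbb{D}(M)\in\mathcal{O}_X^{\perp}$.

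Next I would verify $\mathbb{D}\circ\mathbb{D}\cong \id$. Because $X$ is smooth, every object of $D^b(X)$ is a perfect complex, so the biduality isomorphism $M\xrightarrow{\sim}\mathrm{R}\mathcal{H}om(\mathrm{R}\mathcal{H}om(M,L),L)$ holds for any line bundle $L$, in particular for $L=\mathcal{O}_X(-H)$. Unwinding the definition,
\[
\mathbb{D}(\mathbb{D}(M))
=\mathrm{R}\mathcal{H}om\bigl(\mathrm{R}\mathcal{H}om(M,\mathcal{O}_X(-H))[1],\mathcal{O}_X(-H)\bigr)[1]
\cong \mathrm{R}\mathcal{H}om\bigl(\mathrm{R}\mathcal{H}om(M,\mathcal{O}_X(-H)),\mathcal{O}_X(-H)\bigr),
\]
where the shifts $[-1]$ and $[+1]$ cancel. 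The right-hand side is $M$ by biduality, and this isomorphism is natural in $M$, so $\mathbb{D}$ is an involutive autoequivalence of $D^b(X)$; combined with the previous paragraph it restricts to one on $\mathcal{O}_X^{\perp}$.

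There is no real obstacle here: the only mild point is keeping the cohomological shift correct (the $[+1]$ in the definition of $\mathbb{D}$ and the shift coming from $\omega_X$ conspire precisely so that both statements work out), and one must invoke smoothness of $X$ to guarantee that objects of $D^b(X)$ are perfect in order to apply biduality. The proposition serves mainly to legitimise the discussion of Corollary~\ref{prop_deriveddual_line}, where $\mathbb{D}(I_L)$ appeared inside $\mathcal{K}u(X)\subset\mathcal{O}_X^{\perp}$.
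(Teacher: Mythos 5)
Your proof is correct and follows essentially the same route as the paper: adjunction plus Serre duality with $\omega_X\cong\mathcal{O}_X(-H)$ for the containment $\mathbb{D}(M)\in\mathcal{O}_X^{\perp}$, and biduality of perfect complexes for $\mathbb{D}\circ\mathbb{D}\cong\mathrm{Id}$. The only cosmetic difference is that the paper reduces the twisted dual to the ordinary derived dual via $\mathrm{R}\mathcal{H}om(M,\mathcal{O}_X(-H))\cong M^{\vee}\otimes\mathcal{O}_X(-H)$, whereas you invoke twisted biduality directly; the content is the same.
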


\begin{proof}
To show $\mathbb{D}(M)\in\mathcal{O}_X^{\perp}$, it is enough to show that $\mathrm{RHom}(\mathcal{O}_X,\mathbb{D}(M))=0$. Then \begin{align*}
    \mathrm{RHom}(\mathcal{O}_X,\mathbb{D}(M))&\cong \mathrm{RHom}(\mathcal{O}_X,\mathrm{R}\mathcal{H}om(M,\mathcal{O}_X(-H))[1])\\
    &\cong \mathrm{RHom}(M,\mathcal{O}_X(-H)[1])\\
    &\cong\mathrm{RHom}(\mathcal{O}_X(-H)[1], M\otimes\mathcal{O}_X(-H))\\
    &\cong \mathrm{RHom}(\mathcal{O}_X,M)=0
\end{align*}
The first isomorphism follows from the definition of $\mathbb{D}$. The second isomorphism follows from adjunction. The third isomorphism follows from Serre duality. Therefore $\mathbb{D}(M)\in\mathcal{O}_X^{\perp}$. Also note that \begin{align*}
\mathbb{D}(\mathbb{D}(M))&\cong\mathrm{R}\mathcal{H}om(\mathbb{D}(M),\mathcal{O}_X(-H))[1]\\
&\cong\mathrm{R}\mathcal{H}om(\mathrm{R}\mathcal{H}om(M,\mathcal{O}_X(-H))[1],\mathcal{O}_X(-H))[1]\\&\cong\mathrm{R}\mathcal{H}om(\mathrm{R}\mathcal{H}om(M,\mathcal{O}_X)\otimes\mathcal{O}_X(-H)[1],\mathcal{O}_X(-H))[1]\\
&\cong \mathrm{R}\mathcal{H}om(M^{\vee}[1],\mathcal{O}_X)[1]\\
&\cong (M^{\vee})^{\vee}\\
&\cong M
\end{align*}
The first and second isomorphism follows from the definition of $\mathbb{D}$. The third isomorphism and the last isomorphisms follow from the property of derived dual $M\mapsto M^{\vee}:=\mathrm{R}\mathcal{H}om(M, \mathcal{O}_X)$. This implies $\mathbb{D}\circ\mathbb{D}\cong\mathrm{Id}$. Then $\mathbb{D}$ is an involutive auto-equivalence of $\mathcal{O}_X^{\perp}$. 
\end{proof}

\subsection{Dimension of Hilbert scheme}
In this section, we find the dimension of $\mathcal{H}$ by computing $\mathrm{ext}^1(I_C,I_C)$, which is the dimension of tangent space at the point $[I_C]$. We recall a very useful spectral sequence below.

\begin{lemma}\cite[Lemma 2.43]{pirozhkov2020admissible}
\label{lemma_SS}
Let $X$ be a smooth algebraic variety. Suppose that there are two distinguished triangle in $D^b(X)$:
$$A_1\rightarrow B_1\rightarrow C_1\rightarrow A_1[1], A_2\rightarrow B_2\rightarrow C_2\rightarrow A_2[1]$$  There exists a $E_1$-spectral sequence which degenerates at $E_3$ and converges to $\mathrm{Ext}^{\bullet}(C_1,C_2)$:$$E^{p,q}_1=\begin{cases}
\mathrm{Ext}^q(B_1,A_2), p=-1\\
\mathrm{Ext}^q(A_1,A_2)\oplus\mathrm{Ext}^q(B_1,B_2), p=0\\
\mathrm{Ext}^q(A_1,B_2), p=1\\
0, p\leq -2, p\geq 2. \end{cases}$$
The differential $d_1$ is given by compositions with morphisms $A_1\rightarrow B_1$ and $A_2\rightarrow B_2$. 
\end{lemma}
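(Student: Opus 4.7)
The plan is to compute $\mathrm{RHom}(C_1, C_2)$ by representing each $C_i$ as an explicit two-term complex in a dg-enhancement of $D^b(X)$, and then extracting the spectral sequence from the resulting bicomplex. First I would fix morphisms $f_i \colon A_i \to B_i$ with cone $C_i$; up to quasi-isomorphism, $C_i$ is then represented by the two-term complex $[A_i \xrightarrow{f_i} B_i]$ placed in two adjacent degrees. The object $\mathrm{RHom}(C_1, C_2)$ is then computed by the internal $\mathrm{Hom}$-bicomplex between these two two-term complexes, whose four corners are $\mathrm{RHom}(A_1,A_2)$, $\mathrm{RHom}(A_1,B_2)$, $\mathrm{RHom}(B_1,A_2)$ and $\mathrm{RHom}(B_1,B_2)$, with horizontal and vertical differentials given (up to sign) by post-composition with $f_2$ and pre-composition with $f_1$.

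Next I would form the total complex of this $2 \times 2$ bicomplex; by the usual indexing, the external degree $p$ takes exactly the three values $-1, 0, 1$. The $p=-1$ column is $\mathrm{RHom}(B_1,A_2)$, the $p=0$ column is $\mathrm{RHom}(A_1,A_2) \oplus \mathrm{RHom}(B_1,B_2)$, and the $p=1$ column is $\mathrm{RHom}(A_1,B_2)$. Taking cohomology in the internal degree $q$ first (i.e.\ computing $\mathrm{Ext}^q$ of each corner) yields precisely the $E_1$-page of the lemma, and by construction the $d_1$ differential is composition with $f_1$ and $f_2$. The column filtration is bounded, so the spectral sequence converges to the cohomology of the total complex, which is $\mathrm{Ext}^{\bullet}(C_1, C_2)$. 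Degeneration at $E_3$ is then automatic: with only three nonzero columns, any $d_r$ for $r \geq 3$ has either source or target zero.

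The main obstacle is really just bookkeeping: getting the signs in the differentials right so that $d_1$ is literally composition with $f_1$ and $f_2$, and checking that $\mathrm{RHom}$ in $D^b(X)$ is correctly computed by the dg $\mathrm{Hom}$-complex between our chosen representatives. For smooth $X$ this is standard, since $D^b(X)$ admits a canonical dg-enhancement (for instance via injective resolutions) which computes all the relevant $\mathrm{RHom}$'s, so no serious technical difficulty arises beyond careful sign conventions.
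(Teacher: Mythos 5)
Your argument is correct, and it is essentially the standard proof of this statement: the paper itself gives no proof, quoting the result directly from \cite[Lemma 2.43]{pirozhkov2020admissible}, and the construction there is the same bicomplex argument you describe (choose genuine chain-level representatives of $f_i\colon A_i\to B_i$, with the target side h-injective so that the plain dg Hom-complex computes $\mathrm{RHom}$, totalize the resulting three-column double complex, and take the column filtration). Your indexing of the columns ($\mathrm{Hom}(B_1,A_2)$ at $p=-1$, $\mathrm{Hom}(A_1,A_2)\oplus\mathrm{Hom}(B_1,B_2)$ at $p=0$, $\mathrm{Hom}(A_1,B_2)$ at $p=1$), the identification of $d_1$ with composition by $f_1$ and $f_2$, the convergence from boundedness of the filtration, and the observation that only $d_2$ can be nonzero beyond $E_1$ (hence degeneration at $E_3$, not $E_2$) are all as in the cited source; the only points needing care are exactly the ones you flag, namely signs and the choice of resolutions making the corner Hom-complexes compute the derived Homs.
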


\begin{lemma}
\label{lemma_computing_extG}
Let $G$ be the cone of $s:\mathcal{E}\rightarrow I_C$ defined in Proposition~\ref{Prop_computingcone}. Then $\mathrm{Ext}^{\bullet}(G,G)=k\oplus k[-1]$ if $\mathcal{N}_{L|X}=\mathcal{O}_L\oplus\mathcal{O}_L(-1)$ and $\mathrm{Ext}^{\bullet}(G,G)=k\oplus k^2[-1]\oplus k[-2]$ if $\mathcal{N}_{L|X}=\mathcal{O}_L(1)\oplus\mathcal{O}_L(-2)$. 
\end{lemma}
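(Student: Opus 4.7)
The plan is to identify $G$ with the twisted derived dual $\mathbb{D}(I_L) = \mathrm{R}\mathcal{H}om(I_L,\mathcal{O}_X(-H))[1]$ of the ideal sheaf of $L$, as already done in Corollary~\ref{prop_deriveddual_line}, and then transport the Ext computation across $\mathbb{D}$. The involutivity computation carried out in the proof of Proposition~\ref{lemma_involutive_D} only uses that $\mathcal{O}_X(-H)$ is a line bundle, so $\mathbb{D}$ is an involutive contravariant autoequivalence of all of $D^b(X)$, not merely of $\mathcal{O}_X^{\perp}$. Such a functor automatically satisfies $\mathrm{Ext}^i(A,B)\cong\mathrm{Ext}^i(\mathbb{D}B,\mathbb{D}A)$, and specialising to $A=B=I_L$ yields
\[
\mathrm{Ext}^i(G,G)\;\cong\;\mathrm{Ext}^i(\mathbb{D}(I_L),\mathbb{D}(I_L))\;\cong\;\mathrm{Ext}^i(I_L,I_L)
\]
for every $i$, so the problem reduces to computing the self-Ext groups of $I_L$.

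Next, I would compute $\mathrm{Ext}^\bullet(I_L,I_L)$ by the standard deformation-theoretic dictionary for an ideal sheaf. Since $I_L$ is torsion-free of rank one, $\mathrm{Hom}(I_L,I_L)=k$. The local Ext sheaves $\EExt^i(\mathcal{O}_L,\mathcal{O}_X)$ vanish for $i<2$ because $L$ has codimension two, and the fundamental local isomorphism together with adjunction $\omega_L=\omega_X|_L\otimes\det\mathcal{N}_{L/X}$ gives $\EExt^2(\mathcal{O}_L,\mathcal{O}_X)\cong\det\mathcal{N}_{L/X}\cong\mathcal{O}_L(-1)$, whose $H^0$ and $H^1$ both vanish. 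Feeding this into $\mathrm{RHom}(I_L,-)$ and $\mathrm{RHom}(-,\mathcal{O}_L)$ applied to $0\to I_L\to\mathcal{O}_X\to\mathcal{O}_L\to 0$ collapses the long exact sequences to isomorphisms $\mathrm{Ext}^1(I_L,I_L)\cong H^0(L,\mathcal{N}_{L/X})$ and $\mathrm{Ext}^2(I_L,I_L)\cong H^1(L,\mathcal{N}_{L/X})$, while $\mathrm{Ext}^3(I_L,I_L)=0$.

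Finally, I would substitute the two possible splitting types of $\mathcal{N}_{L/X}$. If $\mathcal{N}_{L/X}\cong\mathcal{O}_L\oplus\mathcal{O}_L(-1)$ then $h^0(\mathcal{N}_{L/X})=1$ and $h^1(\mathcal{N}_{L/X})=0$, giving $\mathrm{Ext}^{\bullet}(G,G)=k\oplus k[-1]$. If $\mathcal{N}_{L/X}\cong\mathcal{O}_L(1)\oplus\mathcal{O}_L(-2)$ then $h^0=2$ and $h^1=1$, giving $\mathrm{Ext}^{\bullet}(G,G)=k\oplus k^2[-1]\oplus k[-2]$. The main obstacle, were one to bypass the duality shortcut, would be to run the full $E_1$-spectral sequence of Lemma~\ref{lemma_SS} applied to the triangle $\mathcal{O}_X(-H)[1]\to G\to\mathcal{O}_L(-2)$ of Proposition~\ref{Prop_computingcone}, which forces one to compute six separate Ext complexes and to track three differentials; the route through $\mathbb{D}(I_L)$ sidesteps all of that via a single contravariant equivalence.
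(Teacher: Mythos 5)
Your proposal is correct and follows essentially the same route as the paper: both identify $G$ with the twisted derived dual $\mathbb{D}(I_L)$ and use the involutivity of $\mathbb{D}$ to reduce to $\mathrm{Ext}^{\bullet}(I_L,I_L)$, which is then governed by the cohomology of $\mathcal{N}_{L|X}$. The only cosmetic difference is that the paper quotes the smooth/singular point dichotomy for $[L]\in\Sigma(X)$, whereas you derive $\mathrm{Ext}^1(I_L,I_L)\cong H^0(\mathcal{N}_{L|X})$ and $\mathrm{Ext}^2(I_L,I_L)\cong H^1(\mathcal{N}_{L|X})$ directly from the standard exact sequence and local duality (and you note $\mathbb{D}$ is an involution of all of $D^b(X)$, so the membership $G\in\mathcal{O}_X^{\perp}$ is not needed).
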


\begin{proof}
Note that  $G\in\mathcal{O}_X^{\perp}$ since $\mathrm{RHom}(\mathcal{O}_X,\mathcal{E})=\mathrm{RHom}(\mathcal{O}_X,I_C)=0$. Then $\mathrm{Ext}^{\bullet}(G,G)\cong\mathrm{Ext}^{\bullet}(D(G),D(G))\cong\mathrm{Ext}^{\bullet}(I_L, I_L)$ by Proposition~\ref{prop_deriveddual_line} and Lemma~\ref{lemma_involutive_D}. If $\mathcal{N}_{L|X}=\mathcal{O}_L\oplus\mathcal{O}_L(-1)$, then $[L]$ is a smooth point in Hilbert scheme $\Sigma(X)$ of lines. The tangent space at $[L]$ is isomorphic to  $\mathrm{Ext}^1(I_L,I_L)=k$ and $\mathrm{Ext}^2(I_L,I_L)=0$. If $\mathcal{N}_{L|X}=\mathcal{O}_L(1)\oplus\mathcal{O}_L(-2)$, then $[L]$ is a singular point in $\Sigma(X)$ and $\mathrm{Ext}^1(I_L,I_L)=k^2, \mathrm{Ext}^2(I_L,I_L)=k$. It is clear that $\mathrm{Hom}(I_L,I_L)=k$ and $\mathrm{Ext}^3(I_L,I_L)=0$ by slope-stability of $I_L$. 
\end{proof}

%\begin{proof}
%There is a triangle $\mathcal{O}_L(-2)[-1]\rightarrow\mathcal{O}_X(-H)[1]\rightarrow G$. We apply the spectral sequence in Lemma~\ref{lemma_SS} to compute $\mathrm{Ext}^2(G,G)$:
%$$E^{p,q}_1=\begin{cases}
%\mathrm{Ext}^3(\mathcal{O}_X(-H)[1],\mathcal{O}_L(-2)[-1]), p=-1\\
%\mathrm{Ext}^2(\mathcal{O}_L(-2)[-1],\mathcal{O}_L(-2)[-1])\oplus\mathrm{Ext}^q(\mathcal{O}_X(-H)[1],\mathcal{O}_X(-H)[1]), p=0\\
%\mathrm{Ext}^1(\mathcal{O}_L(-2)[-1],\mathcal{O}_X(-H)[1]), p=1\\
%0, p\leq -2, p\geq 2. \end{cases}$$

%$\mathrm{Ext}^3(\mathcal{O}_X(-H)[1],\mathcal{O}_L(-2)[-1])=\mathrm{Ext}^1(\mathcal{O}_X,\mathcal{O}_L(-1))=0$,

%$\mathrm{Ext}^2(\mathcal{O}_L(-2)[-1],\mathcal{O}_L(-2)[-1])\oplus\mathrm{Ext}^2(\mathcal{O}_X(-H)[1],\mathcal{O}_X(-H)[1])=\mathrm{Ext}^2(\mathcal{O}_L,\mathcal{O}_L)\oplus\mathrm{Ext}^2(\mathcal{O}_X,\mathcal{O}_X)=0$ by Lemma~\ref{lemma_Normalbundle} and $\mathrm{Ext}^1(\mathcal{O}_L(-2)[-1],\mathcal{O}_X(-H)[1])=\mathrm{Hom}(\mathcal{O}_X,\mathcal{O}_L(-2))=0$. Thus $E^{p,q}_{\infty}=E^{p,q}_3=E^{p,q}_1=0$ for all $p+q=2$. Thus $\mathrm{Ext}^2(G,G)=0$

%\end{proof}

 \begin{predl}
 \label{Prop_extgroup}
 Let $C$ be a twisted cubic on a special Gushel-Mukai threefold $X$ with a general branch locus, then $$\mathrm{Hom}^i(I_C,I_C)=\begin{cases}
 k, i=0\\
 k^3, i=1\\
 0, i=2\\
 0, i=3\end{cases}$$.
 \end{predl}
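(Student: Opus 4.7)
The plan is to use the Euler characteristic computation $\chi(I_C,I_C)=-2$ (from Hirzebruch--Riemann--Roch applied to $\mathrm{ch}(I_C)=1-3L+\tfrac12 P$) together with the easy boundary vanishings $\mathrm{Hom}(I_C,I_C)=k$ (rank-one torsion-freeness) and $\mathrm{Ext}^3(I_C,I_C)\cong\mathrm{Hom}(I_C,I_C(-H))^{\vee}=0$ (Serre duality together with slope-stability of $I_C$, since $\mu(I_C(-H))<\mu(I_C)$), to reduce the statement to the single vanishing $\mathrm{Ext}^2(I_C,I_C)=0$. The remaining equality $\mathrm{ext}^1(I_C,I_C)=3$ then falls out of $\chi=-2$.

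For the key vanishing I would split along the stratification $\mathcal{H}=\mathcal{H}_1\sqcup\mathcal{H}_2$ of Remark~\ref{remark_stratification}. In the $\mathcal{H}_1$-case (where $I_C\in\mathcal{K}u(X)$ by Theorem~\ref{theorem_criterion_inKu}) I use the standard identification $\mathrm{Ext}^i(I_C,I_C)\cong H^{i-1}(C,\mathcal{N}_{C|X})$ for $i\geq 1$ (valid for lci $C$, via the local-to-global spectral sequence applied to the ideal sheaf and $\mathcal{E}xt^{>0}(I_C,\mathcal{O}_C)=0$); the degree computation $\deg \mathcal{N}_{C|X}=(-K_X)\cdot C-(2-2p_a(C))=1$ together with genericity of $\mathcal{B}$ (which controls normal bundles of the irreducible components via Proposition~\ref{proposition_tangentlocus}(3)) forces any splitting of $\mathcal{N}_{C|X}$ to have both summands of degree $\geq -1$, giving $H^1=0$.

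In the $\mathcal{H}_2$-case I would use the triangle $\mathcal{E}\xrightarrow{s}I_C\to G$ of Proposition~\ref{Prop_computingcone} and apply the spectral sequence of Lemma~\ref{lemma_SS} to the rotated triangle $G[-1]\to \mathcal{E}\to I_C$ on both sides; its abutment is $\mathrm{Ext}^{\bullet}(I_C,I_C)$. The diagonal entries involve $\mathrm{Ext}^{\bullet}(\mathcal{E},\mathcal{E})=k$ (exceptional pair) and $\mathrm{Ext}^{\bullet}(G,G)=k\oplus k[-1]$ from Lemma~\ref{lemma_computing_extG} (genericity forces $\mathcal{N}_{L|X}=\mathcal{O}_L\oplus\mathcal{O}_L(-1)$). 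The cross terms $\mathrm{Ext}^{\bullet}(\mathcal{E},G)$ and $\mathrm{Ext}^{\bullet}(G,\mathcal{E})$ are computed from the subsidiary triangle $\mathcal{O}_X(-H)[1]\to G\to \mathcal{O}_L(-2)$: the contribution against $\mathcal{O}_X(-H)[1]$ vanishes by exceptionality of $\langle \mathcal{E},\mathcal{O}_X\rangle$ and Serre duality, while the contribution from $\mathcal{O}_L(-2)$ reduces to $H^{\bullet}(L,\mathcal{E}^{\vee}|_L(-2))$, which is immediate from the splitting $\mathcal{E}^{\vee}|_L\cong \mathcal{O}_L(1)\oplus\mathcal{O}_L$. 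Tracking the $d_1$ (given by composition with $s$) and reading off the abutment yields the dimensions $(1,3,0,0)$.

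The main obstacle is verifying $H^1(C,\mathcal{N}_{C|X})=0$ for the reducible twisted cubics of $\mathcal{H}_1$ (unions of a line and a conic, or of three lines): one must argue component-by-component via a Mayer--Vietoris sequence, and the vanishing depends crucially on the genericity of $\mathcal{B}$ to rule out lines with normal bundle $\mathcal{O}_L(1)\oplus\mathcal{O}_L(-2)$. Such lines would produce a nontrivial $H^1$-summand and obstruct the vanishing---which is precisely the failure point exploited later in Section~\ref{section_nongeneral_GM} to locate the singular points of $\mathcal{H}$ in the non-general case.
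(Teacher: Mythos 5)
Your overall reduction (compute $\chi(I_C,I_C)=-2$, get $\mathrm{Hom}(I_C,I_C)=k$ and $\mathrm{Ext}^3(I_C,I_C)=0$ from stability and Serre duality, and reduce everything to $\mathrm{Ext}^2(I_C,I_C)=0$) is exactly the paper's, and for the $\mathcal{H}_2$ stratum you follow the paper's spectral sequence on the triangle of Proposition~\ref{Prop_computingcone}. But even there one of your claims is false: for the cross term $\mathrm{Ext}^{\bullet}(G,\mathcal{E})$ the contribution of $\mathcal{O}_X(-H)[1]$ does \emph{not} vanish, since $\mathrm{Ext}^1(\mathcal{O}_X(-H)[1],\mathcal{E})\cong H^0(\mathcal{E}(H))=H^0(\mathcal{E}^{\vee})=k^5$; exceptionality of $\langle\mathcal{E},\mathcal{O}_X\rangle$ and Serre duality only kill the other direction $\mathrm{Ext}^{\bullet}(\mathcal{E},\mathcal{O}_X(-H)[1])$. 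To get the needed entry $\mathrm{Ext}^2(G,\mathcal{E})=0$ the paper must add a geometric argument: the space of sections of $\mathcal{E}^{\vee}$ whose zero locus contains the line $L$ is exactly $2$-dimensional (otherwise $L$ would lie in the $\pi$-preimage of a point), so $\mathrm{ext}^1(G,\mathcal{E})=2$ and the Euler characteristic $5-3=2$ forces $\mathrm{ext}^2(G,\mathcal{E})=0$. Your sketch omits this step, and without it the diagonal $p+q=2$ does not close.

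The genuine gap, however, is in your $\mathcal{H}_1$ case, where you deviate from the paper. First, your identification $\mathrm{Ext}^i(I_C,I_C)\cong H^{i-1}(C,\mathcal{N}_{C|X})$ is justified by the claim $\EExt^{>0}(I_C,\mathcal{O}_C)=0$, which is false for a codimension-two lci curve: $\EExt^1_X(I_C,\mathcal{O}_C)\cong\det\mathcal{N}_{C|X}\neq 0$ (this is repairable, but only by comparing $\mathrm{Ext}^1(I_C,\mathcal{O}_X)\cong H^0(\omega_C(H))$ with that local term, which you do not do). Second, and decisively, the vanishing $H^1(C,\mathcal{N}_{C|X})=0$ is simply not established. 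Knowing $\deg\mathcal{N}_{C|X}=1$ does not force a balanced splitting on a rational cubic --- nothing a priori excludes, say, $\mathcal{O}(3)\oplus\mathcal{O}(-2)$ --- and Proposition~\ref{proposition_tangentlocus}(3) constrains only normal bundles of \emph{lines}, so it gives no control for a smooth irreducible twisted cubic; genericity of $\mathcal{B}$ enters nowhere in your argument for this case. This unobstructedness is precisely the hard point, and the paper proves it by a completely different, categorical mechanism: for $I_C\in\mathcal{K}u(X)$ the Serre functor of $\mathcal{K}u(X)$ is $\tau\circ[2]$, hence $\mathrm{Ext}^2(I_C,I_C)\cong\mathrm{Hom}(I_C,I_{\tau C})$, and Corollary~\ref{corollary_twistedcubic_notfix} (general branch locus implies $\tau(C)\neq C$) together with slope stability gives the vanishing. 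Without an input of this kind your $\mathcal{H}_1$ argument does not go through.
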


\begin{proof}
\leavevmode \begin{enumerate}
    \item If $I_C\in\mathcal{K}u(X)$, then $\mathrm{Ext}^2(I_C,I_C)\cong\mathrm{Hom}^2_{\mathcal{K}u(X)}(I_C,I_C)\cong\mathrm{Hom}(I_C,\tau(I_C))$ since $\tau\circ[2]$ is the Serre functor($\mathcal{K}u(X)$ \cite[Prop 2.6]{kuznetsov2018derived}) and $\tau$ is induced by geometric involution associated with double cover. Thus $\tau(I_C)\cong I_{\tau C}$. Since $X$ is general, by Corollary~\ref{corollary_twistedcubic_notfix} $\tau(C)\neq C$. Then $\mathrm{Hom}(I_C, \tau(I_C))=0$ by slope stability. It is clear that $\mathrm{Hom}(I_C,I_C)=k$ and $\mathrm{Ext}^3(I_C,I_C)\cong\mathrm{Hom}(I_C,I_C\otimes\mathcal{O}_X(-H))=0$ by slope stability. The Euler characteristic $\chi(I_C,I_C)=\mathrm{hom}(I_C,I_C)-\mathrm{ext}^1(I_C,I_C)+\mathrm{ext}^2(I_C,I_C)-\mathrm{ext}^3(I_C,I_C)$. Then $\mathrm{Ext}^1(I_C,I_C)=k^3$. 
    \item If $I_C\not\in\mathcal{K}u(X)$. Then we have the exact triangle $G[-1]\rightarrow\mathcal{E}\rightarrow I_C$ given in Proposition~\ref{Prop_computingcone}. We compute $\mathrm{Ext}^2(I_C,I_C)$ via the spectral sequence in Lemma~\ref{lemma_SS}. Then the first page is given by $$E^{p,q}_1=\begin{cases}
\mathrm{Ext}^3(\mathcal{E},G[-1])=\mathrm{Ext}^2(\mathcal{E},G), p=-1\\
\mathrm{Ext}^2(G,G)\oplus\mathrm{Ext}^2(\mathcal{E},\mathcal{E}), p=0\\
\mathrm{Ext}^2(G,\mathcal{E}), p=1\\
0, p\leq -2, p\geq 2. \end{cases}$$
Apply $\mathrm{Hom}(\mathcal{E},-)$ to the triangle $\mathcal{O}_X(-H)[1]\rightarrow G\rightarrow\mathcal{O}_{L}(-2)$, we get $\mathrm{Ext}^2(\mathcal{E},G)=0$. Apply $\mathrm{Hom}(-,\mathcal{E})$ to the same triangle, we get a long exact sequence: $$0\rightarrow\mathrm{Ext}^1(G,\mathcal{E})\rightarrow\mathrm{Ext}^1(\mathcal{O}_X(-H)[1],\mathcal{E})\rightarrow\mathrm{Ext}^2(\mathcal{O}_L(-2),\mathcal{E})\rightarrow\mathrm{Ext}^2(G,\mathcal{E})\rightarrow 0$$

Note that $\mathrm{Ext}^1(\mathcal{O}_X(-H)[1],\mathcal{E})=\mathrm{Hom}(\mathcal{O}_X(-H),\mathcal{E})=\mathrm{Hom}(\mathcal{O}_X,\mathcal{E}^{\vee})=H^0(\mathcal{E}^{\vee})=k^5$ and $\mathrm{Ext}^2(\mathcal{O}_L(-2),\mathcal{E})$ $\cong H^0(\mathcal{E}^{\vee}|_L)=k^3$. Then $\mathrm{ext}^1(G,\mathcal{E})-\mathrm{ext}^2(G,\mathcal{E})=2$, so $ext^1(G,\mathcal{E})\geq 2$. We claim that $\mathrm{ext}^1(G,\mathcal{E})=2$. It is clear that $\mathrm{Ext}^1(G,\mathcal{E})$ is the set of sections $s$ of $\mathcal{E}^{\vee}$ such that $L$ is contained in the zero locus of $s$. If $\mathrm{ext}^1(G,\mathcal{E})>2$, then $L$ would be contained in $\pi$-preimage of a point, which is impossible. Thus $\mathrm{ext}^1(G,\mathcal{E})=2$. Then $\mathrm{Ext}^2(G,\mathcal{E})=0$. By Lemma~\ref{lemma_computing_extG},  $\mathrm{Ext}^2(G,G)=0$ and  $\mathrm{Ext}^2(\mathcal{E},\mathcal{E})=0$ since $\mathcal{E}$ is an exceptional bundle. Then $E^{p,q}_1=0$ for all $p,q$ with $p+q=2$. Thus $\mathrm{Ext}^2(I_C,I_C)=0$.  But $\chi(I_C,I_C)=-2$ and $\mathrm{Hom}(I_C,I_C)=k, \mathrm{Ext}^3(I_C,I_C)=0$ by slope stability. 
Therefore $\mathrm{Ext}^1(I_C,I_C)=k^3$.
\end{enumerate}

\end{proof}

\begin{predl}\label{lemma_connctedness_Hilbertscheme}
Let $X$ be a special Gushel-Mukai threefold with general branch locus $\mathcal{B}$, then Hilbert scheme $\mathcal{H}$ of twisted cubics is connected.
\end{predl}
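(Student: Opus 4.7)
The plan is to prove connectedness of $\mathcal{H}$ by first reducing to the stratum $\mathcal{H}_1$ via pure-dimensionality, then analyzing $\mathcal{H}_1$ as an étale double cover of a connected base in $\mathcal{H}(Y)$.

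By Proposition~\ref{Prop_extgroup}, $\mathrm{ext}^1(I_C, I_C) = 3$ at every point $[I_C] \in \mathcal{H}$, so $\mathcal{H}$ is smooth of pure dimension $3$. The stratum $\mathcal{H}_2$ is a ruled surface over $\Sigma(X)$ (Remark~\ref{remark_stratification}), hence two-dimensional; it therefore cannot exhaust any connected component of the threefold $\mathcal{H}$. Combined with Theorem~\ref{theorem_criterion_inKu} — which asserts that every smooth twisted cubic lies in $\mathcal{H}_1$ — and with openness of smoothness, each connected component of $\mathcal{H}$ meets $\mathcal{H}_1$ on a dense open subset of smooth twisted cubics. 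Consequently $\mathcal{H}_2 \subset \overline{\mathcal{H}_1}$, and it suffices to prove the connectedness of $\mathcal{H}_1$.

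By Theorem~\ref{theorem_criterion_inKu} and Proposition~\ref{prop_classfication_cubic_degree}, the assignment $C \mapsto \pi(C)$ defines a morphism $q : \mathcal{H}_1 \to \mathcal{T}$, where $\mathcal{T}$ is the locus of twisted cubics $D \subset Y$ triple tangent to $\mathcal{B}$ (cubics $D \subset \mathcal{B}$ being excluded by generality of $\mathcal{B}$). For each such $D \in \mathcal{T}$ the reduced preimage $\pi^{-1}(D)$ decomposes as a disjoint union $C \sqcup \tau(C)$ of twisted cubics on $X$; thus $q$ is an étale double cover with deck transformation $\tau$. The Hilbert scheme of twisted cubics on $Y$ is irreducible by the description of rational curves on $Y$ in \cite{sanna2014rational}, and triple tangency to $\mathcal{B}$ is a connected divisorial condition, so the base $\mathcal{T}$ is connected.

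The main remaining step — and the principal obstacle — is to prove that the étale double cover $q$ does not split, or equivalently that its monodromy representation $\pi_1(\mathcal{T}) \to \mathbb{Z}/2$ is nontrivial. My approach is via a specialization argument: choose a one-parameter deformation of the branch quadric $\mathcal{B}_s \subset Y$, $s \in \Delta$, in which $\mathcal{B}_s$ is general for $s \ne 0$ and $\mathcal{B}_0$ contains a single twisted cubic $D_0$. Over the central fiber the two sheets of the cover $q_0$ merge at $[C_0] := [\pi_0^{-1}(D_0)_{\mathrm{red}}]$, a $\tau$-fixed twisted cubic, producing a ramification point and hence a loop in $\mathcal{T}_s$ (for small $s \ne 0$) whose monodromy swaps the two sheets. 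By upper semi-continuity of the number of connected components in a proper flat family, the generic fiber $\mathcal{H}_1(X)$ must be connected. The technical difficulty is rigorously controlling the relative Hilbert scheme through the degeneration — in particular, verifying flatness near $[C_0]$ and analyzing the local structure of the cover near the $\tau$-fixed point so that the claimed ramification is correctly identified. An alternative route avoiding this degeneration would be to exhibit explicitly an arc in $\mathcal{H}_1$ joining some $C$ with $\tau(C)$, for instance via a pencil of triple tangent cubics through a fixed contact configuration; however, both routes reduce to the same underlying monodromy input.
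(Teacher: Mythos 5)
Your opening reduction is sound and parallels the paper: since $\mathrm{ext}^1(I_C,I_C)=3$ and $\mathrm{ext}^2(I_C,I_C)=0$ at every point, $\mathcal{H}$ is smooth of pure dimension $3$, so no connected component can be contained in the $2$-dimensional locus $\mathcal{H}_2$; this is exactly how Proposition~\ref{lemma_connctedness_Hilbertscheme} argues that $\mathcal{H}_2$ is not a connected component (the paper additionally records that $\mathcal{H}_2$ itself is connected, via connectedness of $\Sigma(X)$ and $\mathcal{C}(X)$). The problem is what comes next: the statement to be proved is connectedness, and your argument for the connectedness of $\mathcal{H}_1$ is not a proof. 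You reduce it to (a) connectedness of the base $\mathcal{T}$ of triple-tangent twisted cubics on $Y$ and (b) nontriviality of the monodromy of the double cover $q:\mathcal{H}_1\to\mathcal{T}$, and neither is established. For (a), the justification ``triple tangency to $\mathcal{B}$ is a connected divisorial condition'' is not correct as stated: inside the family of twisted cubics on $Y_5$ each tangency imposes one condition, so triple tangency is a codimension-three locus, and its connectedness is not obvious and is not argued. For (b), you explicitly concede that the monodromy computation is the main missing step, so the core of the statement is left open.

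Moreover, the degeneration you sketch for (b) would need substantial repair before it could close the gap. Both $\mathcal{H}_1$ and $\mathcal{T}$ are open, not proper ($\mathcal{H}_2$ is the closed locus where $\mathrm{Hom}(\mathcal{E},I_C)\neq 0$), so the ``proper flat family'' and the semicontinuity of the number of connected components are not available as stated; passing to closures puts the delicate behavior exactly on the boundary you have not analyzed. Flatness of the relative Hilbert scheme over the disc and reducedness of the central fiber near the $\tau$-fixed cubic $[C_0]$ are unverified, and at that point the Hilbert scheme of the central threefold is in fact singular (this is the content of Theorem~\ref{theorem_singular_hilbertscheme}), so the local structure of the cover there is precisely the hard part, not a routine check. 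In short, the proposal identifies the right stratification and correctly disposes of $\mathcal{H}_2$, but the connectedness of $\mathcal{H}_1$ --- which the paper treats directly as the double cover of the (closure of the) family of triple-tangent cubics, together with the fact that $\mathcal{H}_2$ deforms into $\mathcal{H}_1$ --- remains an unproved assertion in your write-up, so the argument as it stands does not prove the proposition.
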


\begin{proof}
Let $\pi:X\rightarrow Y$ be the double cover branched over $\mathcal{B}$. The Hilbert scheme $\mathcal{H}$  has a stratification $\mathcal{H}=\mathcal{H}_1\bigcup\mathcal{H}_2$. 
Every twisted cubic in $\mathcal{H}_2$ is contained in the $\pi$-preimage of a reducible conic on $Y$, then it is a union of a line and a conic and $\mathcal{H}_1$ is the double cover of (closure of) family of twisted cubics on $Y$ which are triple tangent to $\mathcal{B}$. It is easy to see $\mathcal{H}_1$ is connected. Also note that $\mathcal{H}_2$ is connected since the Fano scheme $\Sigma(X)$ of lines and Fano scheme of conics $\mathcal{C}(X)$ are connected by \cite{Iliev94line} and \cite{Iliev94conic}. We claim that $\mathcal{H}_2$ is not a connected component. As we know that $\Sigma(X)$ and $\mathcal{C}(X)$ are of dimension 1 and 2 respectively and a line intersects a conic is a codimension one condition. Thus $\mathcal{H}_2$ is of dimension two. On the otherhand, for every twisted cubic $C\in\mathcal{H}_2$, $\mathrm{ext}^1(I_C,I_C)=3$ and $\mathrm{ext}^2(I_C,I_C)=0$ by Proposition~\ref{Prop_extgroup}. So if $\mathcal{H}_2$ were a connected component, then it is an irreducible component and its dimension is $3$, which lead a contradiction. This implies that at least one twisted cubic in $\mathcal{H}_2$ (hence everyone) is deformed from the one in $\mathcal{H}_1$. Therefore $\mathcal{H}$ is connected. 
\end{proof}
    
\begin{theorem}
\label{theorem_projectivity_hilbertscheme}
Let $X$ be a special Gushel-Mukai threefold with a general branch locus. Then the Hilbert scheme $\mathcal{H}$ of twisted cubics is a smooth irreducible projective threefold. 
\end{theorem}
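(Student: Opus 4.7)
The plan is to assemble the theorem from three ingredients that have essentially all been prepared: dimension-three smoothness at every point, projectivity via identification with a Gieseker moduli space, and irreducibility via connectedness.

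First I would dispatch smoothness pointwise. By Lemma~\ref{GiesekerHilb}, $\mathcal{H}$ is identified scheme-theoretically with the Gieseker moduli space of stable rank-one sheaves on $X$ with character $1-3L+\tfrac12 P$, so the tangent and obstruction spaces at $[I_C]$ are $\mathrm{Ext}^1(I_C,I_C)$ and $\mathrm{Ext}^2(I_C,I_C)$ respectively. Proposition~\ref{Prop_extgroup} gives, for every twisted cubic $C\subset X$ (irrespective of whether $I_C\in\mathcal{K}u(X)$ or not), the vanishing $\mathrm{Ext}^2(I_C,I_C)=0$ together with $\dim\mathrm{Ext}^1(I_C,I_C)=3$. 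Thus the obstruction vanishes at every closed point and the tangent space is of constant dimension $3$, so $\mathcal{H}$ is smooth of pure dimension $3$.

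Next I would handle projectivity. Since $X$ is a smooth projective threefold with a fixed polarization $H$ and the Hilbert polynomial $p_C(t)=3t+1$ is fixed, the Hilbert scheme $\mathcal{H}=\mathrm{Hilb}^{3t+1}(X)$ is projective by Grothendieck's construction (equivalently, via Lemma~\ref{GiesekerHilb} it is the moduli space of Gieseker-stable sheaves with the given numerical invariants, which is projective by Simpson's theorem).

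Finally I would deduce irreducibility. By Proposition~\ref{lemma_connctedness_Hilbertscheme}, $\mathcal{H}$ is connected. A smooth scheme of pure dimension $3$ is a disjoint union of its irreducible components, each of which is open and closed; connectedness then forces exactly one component, so $\mathcal{H}$ is irreducible. Combining the three steps yields that $\mathcal{H}$ is a smooth irreducible projective threefold. The only non-routine input is the $\mathrm{Ext}^2$-vanishing, which is exactly the content of Proposition~\ref{Prop_extgroup} and has already been established by splitting into the cases $I_C\in\mathcal{K}u(X)$ (using Serre duality in $\mathcal{K}u(X)$ and the fact that $\tau(C)\neq C$ for general $X$) and $I_C\notin\mathcal{K}u(X)$ (using the spectral sequence of Lemma~\ref{lemma_SS} applied to the triangle $\mathcal{E}\to I_C\to G$ and the derived-dual computation of Corollary~\ref{prop_deriveddual_line}); no additional geometric input is needed here.
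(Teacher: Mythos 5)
Your proposal is correct and follows essentially the same route as the paper: identify $\mathcal{H}$ with the (projective, fine) moduli space of stable sheaves of character $1-3L+\tfrac12 P$ via Lemma~\ref{GiesekerHilb}, use Proposition~\ref{Prop_extgroup} to get $\mathrm{ext}^1(I_C,I_C)=3$ and $\mathrm{ext}^2(I_C,I_C)=0$ at every point for smoothness and dimension, and combine with the connectedness of Proposition~\ref{lemma_connctedness_Hilbertscheme} to conclude irreducibility. The only cosmetic difference is that you invoke Grothendieck/Simpson for projectivity where the paper cites the identification of the Hilbert scheme with the fine moduli space of ideal sheaves; the substance is identical.
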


\begin{proof}
Let $\mathcal{C}$ be the universal subscheme for $\mathcal{H}$. Note that $\mathcal{H}$ can be identified with moduli space $\mathcal{M}$ of ideal sheaves of twisted cubics by \cite[Lemma B.5.6]{KPS2018}. By Lemma~\ref{GiesekerHilb}, it is a moduli space of stable sheaves with character  $1-3L+\frac{1}{2}P$. The universal subscheme $\mathcal{C}$ is sent to universal ideal sheaf $\mathcal{I}$ on moduli space, thus $\mathcal{M}$ is a fine moduli space represented by a projective scheme. The tangent space at each point $[I_C]$ is given by $\mathrm{Ext}^1(I_C,I_C)$. By Proposition~\ref{Prop_extgroup}  $\mathrm{ext}^1(I_C,I_C)=3$ and  $\mathrm{ext}^2(I_C,I_C)=0$ for each $C$. Thus $\mathcal{H}$ is smooth. Moreover $\mathcal{H}$ is connected by Lemma~\ref{lemma_connctedness_Hilbertscheme}, therefore $\mathcal{H}$ is smooth projective of dimension three. 
\end{proof}

\begin{remark}\label{Lemma_H2_isa_ruledsurface}
In fact the strata $\mathcal{H}_2$ is a smooth ruled surface. Indeed, suppose $[C]\in\mathcal{H}_2$, then $\pi(C)$ is a reducible conic $l\cup l'\subset Y$, where $l,l'$ are two lines and $l$ is tangent to the branch locus $\mathcal{B}$. Fix line $l$ and let $l_t'$ varies such that $l'\cap l\neq\emptyset$. By Proposition~\ref{proposition_tangentlocus}, $l_t$ form a $\mathbb{P}^1$. After pulling back under $\pi:X\rightarrow Y$,  by Proposition~\ref{proposition_lines_Y}, we get a family of twisted cubics $C_t=L+M_t$ parametrised by $\mathbb{P}^1$, where $L+\tau(L)=\pi^{-1}(l)$ and $M_t=\pi^{-1}(l'_t)$. Note that the curve $\Gamma$ of all lines tangent to $\mathcal{B}$ is a degree 10 planar curve by Proposition~\ref{proposition_lines_Y}. Thus the construction above provides a $\mathbb{P}^1$ family of twisted cubics over $\Sigma(X)$(as an unramified double cover of $\Gamma$), which is a smooth ruled surface over a smooth curve of genus 71. 
\end{remark}

%\begin{proof}
%, which has two irreducible components, one is paramatrizing the aCM twisted cubics and another one is paramatrezing the plane cubics with possibly embedded point. So the same is true for $\mathcal{H}$. But by Proposition $2.1$, there is no plane cubic on $X$, so it does not have the component parametrising plane cubics, thus it is irreducible. %Indeed, since the net of homogeneous of quadrics defining each twisted cubic $C$ is minors of a $3\times 2$ matrix. Then $C$ is determinantal, which can be deformed to a smooth twisted cubic. This means that $\mathcal{H}$ is connected. 
%On the other hand, $\mathcal{H}$ is proper, thus its image in $\mathrm{Hilb}^{3n+1}\mathbb{P}^7$ is closed, this means that it is a projective. Next,  Thus dimension of tangent space at each point $[I_C]$ is given by $\mathrm{ext}^1(I_C,I_C)$. By Proposition $3.7$,  $\mathrm{ext}^1(I_C,I_C)=3, \mathrm{ext}^2(I_C,I_C)=0$ for each $C$. This means that $\mathcal{H}$ is smooth of dimension $3$. 

%Thus $\mathcal{H}$ is irreducible. Indeed, since the net of homogeneous of quadrics defining each twisted cubic $C$ is minors of a $3\times 2$ matrix. Then $C$ is determinantal, which can be deformed to a smooth twisted cubic. This means that $\mathcal{H}$ is connected. On the other hand, by proposition 
%\end{proof}

\section{Bridgeland stability conditions on the Kuznetsov component}
In this section we recall the definition of (weak) stability conditions on triangulated category and use \cite[Proposition 5.1, Proposition 6.9]{bayer2017stability} to induce stability conditions on the Kuznetsov components $\mathcal{K}u(X)$ on Gushel-Mukai threefolds. Section $4.1$ can be found in \cite[Section 2]{pertusi2020some} and \cite[Appendix]{altavilla2019moduli}.

\subsection{Weak stability condition}
\label{section4}
A (weak) stability condition on a triangulated category $\mathcal{T}$ is given by a heart of a bounded t-structure and a (weak) stability function:
\begin{definition}
A heart of a bounded $t$-structure on $\mathcal{T}$ is a full subcategory $\mathcal{A}\subset \mathcal{T}$ such that
\begin{enumerate}
\item[(i)]  for $E, F\in \mathcal{A}$ and $k<0$ we have $\Hom(E, F[k])=0$, and
\item[(ii)]  for every object $E\in \mathcal{T}$ there is a sequence of morphisms
$$
\xymatrix@C=0.5cm{
  0 =E_0 \ar[r]^{\phi_{1}}& E_{1} \ar[r]^{} & \cdots  \ar[r]^{\phi_{m}\;\;\;\;\;} & E_{m} =E }
$$
such that $\mathrm{Cone}(\phi_{i})$ is of the form $A_{i}[k_i]$ for some sequence $k_1>k_2>\cdots>k_m$
of integers and objects $A_i\in \mathcal{A}$.
\end{enumerate}
\end{definition}

\begin{definition}
\label{def_weakstab}
Let $\mathcal{A}$ be an abelian category. A  weak stability function on $\mathcal{A}$ is a group homomorphism
$$
\begin{array}{cccl}
Z:& K(\mathcal{A})&\longrightarrow& \C \\
&E& \longmapsto& \Re Z(E) +i \Im Z(E),
\end{array}
$$
where $K(\mathcal{A})$ denotes the Grothendieck group of $\mathcal{A}$, such that for every non-zero object $E\in \mathcal{A}$, we have
$$
\Im Z(E)\geq 0, \; \textrm{and}\; \Im Z(E)= 0 \Rightarrow \Re Z(E)\leq 0.
$$
We say that $Z$ is a stability function on $\mathcal{A}$ if in addition for $E \neq 0$, $\Im Z(E)= 0$ implies $\Re Z(E)< 0$.
\end{definition}

\begin{definition}
A weak stability condition on $\mathcal{T}$ with respect to a finite rank lattice $\Lambda$ is a pair $\sigma=(\mathcal{A},Z)$ where $\mathcal{A}$ is the heart of a bounded $t$-structure and $Z:\Lambda\rightarrow\mathbb{C}$ is a group homomorphism such that the following conditions holds:
\begin{enumerate}
    \item The composition $K(\mathcal{A})=K(\mathcal{T})\xrightarrow{v}\Lambda\xrightarrow{Z}\mathbb{C}$ is a weak stability function on $\mathcal{A}$: denote $Z(E):=Z(v(E))$. The slope associated to $Z$ for any object $E\in\mathcal{A}$ is defined by $$
\mu_{\sigma}(E)
:=
\begin{cases}
-\frac{\Re Z(E)}{\Im Z(E)} & \textrm{if}\, \Im Z(E)>0; \\
+\infty &  \textrm{otherwise};
\end{cases}
$$
An object $0\neq E\in\mathcal{A}$ is called $\sigma$-semistable (resp. $\sigma$-stable) if for every non-zero proper subobject $F\subset E$, we have $\mu_{\sigma}(F)\leq\mu_{\sigma}(E)$ (resp. $\mu_{\sigma}(F)<\mu_{\sigma}(E)$).
\item  (HN-filtrations) Any object of $\mathcal{A}$ has a Harder-Narasimhan filtration in $\sigma$-semistable ones.

\item (Support property) There is a quadratic form $Q$ on $\Lambda\otimes \mathbb{R}$ such that $Q|_{\ker Z}$ is negative definite, and $Q(E)\geq 0$ for all $\sigma$-semistable objects $E\in \mathcal{A}$
\end{enumerate}
\end{definition}

\begin{definition}
A weak stability condition $\sigma=(\mathcal{A},Z)$ on $\mathcal{T}$ with respect to the lattice $\Lambda$ is called a Bridgeland stability condition if $Z$ is a stability condition.
\end{definition}

\begin{definition}
\label{def_slicing}
The phase of a $\sigma$-semistable object $E \in \mathcal{A}$ is
$$\phi(E):=\frac{1}{\pi}\text{arg}(Z(E)) \in (0,1]$$
and for $F=E[n]$, we set
$$\phi(E[n]):=\phi(E)+n.$$
A slicing $\mathcal{P}$ of $\mathcal{T}$ is a collection of full additive subcategories $\mathcal{P}(\phi) \subset \mathcal{T}$ for $\phi \in \mathbb{R}$, such that:
\begin{enumerate}
\item  for $\phi \in (0,1]$, the subcategory $\mathcal{P}(\phi)$ is given by the zero object and all $\sigma$-semistable objects with phase $\phi$;
\item  for $\phi+n$ with $\phi \in (0,1]$ and $n \in \mathbb{Z}$, we set $\mathcal{P}(\phi+n):=\mathcal{P}(\phi)[n]$.
\end{enumerate}
\end{definition}

\noindent We will both use the notation $\sigma=(\mathcal{A},Z)$ and $\sigma=(\mathcal{P},Z)$ for a (weak) stability condition with heart $\mathcal{A}=\mathcal{P}((0,1])$, where $\mathcal{P}$ is a slicing.

We denote by $\mathrm{Stab}(\mathcal{T})$ the set of stability conditions on $\mathcal{T}$. %This set comes with a natural topology which is the coarsest topology such that the maps $(\A,Z) \mapsto Z$, $(\A, Z) \mapsto \phi^+(E)$, $(\A, Z) \mapsto \phi^-(E)$ are continuous for every $E \in \mathscr{T}$.
Recall that the universal covering space $\widetilde{\mathrm{GL}}^+_2(\mathbb{R})$ of $\mathrm{GL}^+_2(\mathbb{R})$ has a right action on $\mathrm{Stab}(\mathcal{T})$, defined as follows. For $\tilde{g}=(g,M) \in \widetilde{\mathrm{GL}}^+_2(\mathbb{R})$, where $g: \mathbb{R} \to \mathbb{R}$ is an increasing function such that $g(\phi+1)=g(\phi)+1$ and $M \in \mathrm{GL}^+_2(\mathbb{R})$, and $\sigma=(\mathcal{P},Z) \in \mathrm{Stab}(\mathcal{T})$, we have that $\sigma \cdot \tilde{g}=(\mathcal{P}',Z')$ is a stability condition with $Z'=M^{-1} \circ Z$ and $\mathcal{P}'(\phi)=\mathcal{P}(g(\phi))$. The group of autoequivalences $\mathrm{Aut}(\mathcal{T})$ of $\mathcal{T}$ acts on the left of $\mathrm{Stab}(\mathcal{T})$ by $\Phi \cdot \sigma=(\Phi(\mathcal{P}), Z \circ \Phi_*^{-1})$, where $\Phi \in \mathrm{Aut}(\mathcal{T})$ and $\Phi_*$ is the automorphism of $K(\mathcal{T})$ induced by $\Phi$.

To construct Bridgeland stability conditions, we usually start with a weak stabilty condition $\sigma=(\mathcal{A},Z)$ on $\mathcal{T}$ and then by "tilting" $\mathcal{A}$ to get a new heart and define a new weak stability funtion. Let $\mu \in \R$; we define the following subcategories of $\mathcal{A}$:
\begin{align*}
\mathcal{T}^{\mu}_{\sigma}&:=  E \in \mathcal{A}: \text{all HN factors } F \text{ of }E \text{ have slope } \mu_{\sigma}(F)>\mu \rbrace\\
                  & = \langle E \in \mathcal{A}: E \text{ is } \sigma\text{-semistable with }\mu_{\sigma}(E) >\mu \rangle
\end{align*}
and
\begin{align*}
\mathcal{F}^{\mu}_{\sigma} &:=  E \in \mathcal{A}: \text{all HN factors } F \text{ of }E \text{ have slope } \mu_{\sigma}(F)\leq\mu \rbrace\\
                   &= \langle E \in \mathcal{A}: E \text{ is } \sigma\text{-semistable with }\mu_{\sigma}(E) \leq \mu \rangle.
\end{align*}
Where $\langle ,\rangle$ means the extension closure.

\begin{predl}[\cite{happel1996tilting}]
\label{prop_HRS}
The category
$$\mathcal{A}_{\sigma}^{\mu}:= \langle \mathcal{T}^{\mu}_{\sigma},\mathcal{F}^{\mu}_{\sigma}[1] \rangle$$
is the heart of a bounded t-structure on $\mathcal{T}$.
\end{predl}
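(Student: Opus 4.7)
\medskip

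\noindent\textbf{Proof plan.} The statement is the Happel--Reiten--Smal\o{} tilting theorem applied to a torsion pair on $\mathcal{A}$ built from the slope $\mu_\sigma$. The strategy is thus in two parts: first show that $(\mathcal{T}^\mu_\sigma,\mathcal{F}^\mu_\sigma)$ is a torsion pair in $\mathcal{A}$, and then deduce formally that the extension closure $\mathcal{A}^\mu_\sigma=\langle\mathcal{T}^\mu_\sigma,\mathcal{F}^\mu_\sigma[1]\rangle$ is the heart of a bounded $t$-structure on $\mathcal{T}$.

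For the torsion pair, two things must be checked. First, one needs $\Hom_\mathcal{A}(T,F)=0$ for $T\in\mathcal{T}^\mu_\sigma$ and $F\in\mathcal{F}^\mu_\sigma$. Since both $T$ and $F$ are extensions of $\sigma$-semistables and $\Hom$ vanishes between semistables whose slopes violate the seesaw inequality $\mu_\sigma(A)>\mu\ge\mu_\sigma(B)\Rightarrow\Hom(A,B)=0$ (a consequence of the weak stability function being compatible with HN filtrations), a standard d\'evissage on the HN filtrations of $T$ and $F$ reduces to the semistable case. Second, one needs to produce, for every $E\in\mathcal{A}$, a short exact sequence
\[
0\longrightarrow T\longrightarrow E\longrightarrow F\longrightarrow 0
\]
in $\mathcal{A}$ with $T\in\mathcal{T}^\mu_\sigma$ and $F\in\mathcal{F}^\mu_\sigma$. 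For this I would take the HN filtration $0=E_0\subset E_1\subset\cdots\subset E_n=E$ of $E$ whose existence is part of the definition of a weak stability condition, locate the index $k$ such that the semistable factors $E_i/E_{i-1}$ have slope $>\mu$ for $i\le k$ and slope $\le\mu$ for $i>k$, and set $T:=E_k$ and $F:=E/E_k$. The first lies in $\mathcal{T}^\mu_\sigma$ and the second in $\mathcal{F}^\mu_\sigma$ by construction.

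Given the torsion pair, I would then invoke (and sketch) the HRS construction. Define
\[
\mathcal{D}^{\le 0}:=\{E\in\mathcal{T}: \mathcal{H}^0_\mathcal{A}(E)\in\mathcal{T}^\mu_\sigma,\ \mathcal{H}^i_\mathcal{A}(E)=0\text{ for }i>0\},
\]
\[
\mathcal{D}^{\ge 0}:=\{E\in\mathcal{T}: \mathcal{H}^{-1}_\mathcal{A}(E)\in\mathcal{F}^\mu_\sigma,\ \mathcal{H}^i_\mathcal{A}(E)=0\text{ for }i<-1\},
\]
where $\mathcal{H}^i_\mathcal{A}$ denotes the cohomology with respect to the original $t$-structure with heart $\mathcal{A}$. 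The axioms of a $t$-structure---namely $\Hom(\mathcal{D}^{\le 0},\mathcal{D}^{\ge 1})=0$, the inclusions $\mathcal{D}^{\le 0}[1]\subset\mathcal{D}^{\le 0}$ and $\mathcal{D}^{\ge 0}\subset\mathcal{D}^{\ge 0}[-1]$, and the existence of truncation triangles---follow from the corresponding properties of the original $t$-structure together with the torsion pair decomposition applied to $\mathcal{H}^0_\mathcal{A}$ of the relevant truncations. Boundedness of the new $t$-structure is immediate from boundedness of the original one, since the cohomologies of any object are eventually zero on both sides. Finally, unwinding the definitions, one checks $\mathcal{D}^{\le 0}\cap\mathcal{D}^{\ge 0}=\langle\mathcal{T}^\mu_\sigma,\mathcal{F}^\mu_\sigma[1]\rangle=\mathcal{A}^\mu_\sigma$.

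The only genuine subtlety---and therefore the principal place for care---is the construction of the truncation triangle for a general $E\in\mathcal{T}$: one must combine the truncation triangle of $E$ with respect to the original $t$-structure with the torsion pair decomposition of the single cohomology object $\mathcal{H}^0_\mathcal{A}(E)$ (or $\mathcal{H}^{-1}_\mathcal{A}(E)$) to produce a canonical triangle whose outer vertices lie in $\mathcal{D}^{\le 0}$ and $\mathcal{D}^{\ge 1}$ respectively. Everything else is a formal consequence of the torsion-pair axioms, so this step is the heart of the argument; it is, however, entirely standard and identical to the proof in \cite{happel1996tilting}.
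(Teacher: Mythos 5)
Your argument is correct and is exactly the standard Happel--Reiten--Smal\o{} tilting proof (torsion pair from the HN filtration truncated at slope $\mu$, Hom-vanishing by d\'evissage to semistable factors, then the formal HRS construction of the tilted heart), which is precisely what the paper relies on: it gives no proof of its own and simply cites \cite{happel1996tilting}. No further comparison is needed.
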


\noindent We say that the heart $\mathcal{A}_{\sigma}^{\mu}$ is obtained by tilting $\mathcal{A}$ with respect to the weak stability condition $\sigma$ at the slope $\mu$.

Thus, if we start with the weak stability condition given by slope stability on $\mathcal{A}=\mathrm{Coh}(X)$ we construct new hearts $\mathrm{Coh}^{\beta}(X)$ for $\mu=\beta$. To define the new weak stability conditions on these hearts, we need to have new stability functions:
Recall the twisted Chern character $\mathrm{ch}^{\beta}(E):=e^{-\beta}\mathrm{ch}(E)$. The first three terms are:
$$\mathrm{ch}_0^{\beta}(E):=\mathrm{ch}_0(E), \quad  \mathrm{ch}_1^{\beta}(E):=\mathrm{ch}_1(E)-\beta H \mathrm{ch}_0(E)$$
and
$$\mathrm{ch}_2^{\beta}(E):= \mathrm{ch}_2(E) -\beta H \mathrm{ch}_1(E) +\frac{\beta^2 H^2}{2} \mathrm{ch}_0(E).$$

\begin{predl}[\cite{bayer2017stability}, Proposition 2.12]
\label{first-tilting-wsc}
For any $(\alpha, \beta)\in \mathbb{R}_{>0}\times \mathbb{R}$,
the pair
$$\sigma_{\alpha, \beta}=(\mathrm{Coh}^{\beta}(X), Z_{\alpha, \beta})$$
with
$$
Z_{\alpha, \beta}(E)
:=\frac{1}{2}\alpha^2 H^{n}\mathrm{ch}_{0}^{\beta}(E)-H^{n-2}\mathrm{ch}_{2}^{\beta}(E)
+i H^{n-1}\mathrm{ch}_{1}^{\beta}(E)
$$
defines a weak stability condition on ${D}^b(X)$ with respect to $\Lambda_{H}^{2}$.
The quadratic form $Q$ can be given by the discriminant $\Delta_{H}(E):=(H^{n-1}\mathrm{ch}_{1}(E))^{2}-2H^{n}\mathrm{ch}_{0}(E)\cdot H^{n-2}\mathrm{ch}_{2}(E)$.

\end{predl}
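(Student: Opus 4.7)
The plan is to verify the three axioms of a weak stability condition: (i) $Z_{\alpha,\beta}$ restricts to a weak stability function on the tilted heart $\mathrm{Coh}^\beta(X)$, (ii) Harder--Narasimhan filtrations exist, and (iii) the discriminant $\Delta_H$ gives a support quadratic form. Throughout, I would use the fact that $\mathrm{Coh}^\beta(X) = \langle \TT^\beta, \FF^\beta[1]\rangle$ where $\TT^\beta$ (resp.\ $\FF^\beta$) is generated by $\mu_H$-semistable sheaves of slope $>\beta$ (resp.\ $\leq \beta$), together with the classical Hodge index and Bogomolov--Gieseker inequalities.

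For axiom (i), I would first check that $\Im Z_{\alpha,\beta}(E) = H^{n-1}\mathrm{ch}_1^\beta(E) \geq 0$ for every $E \in \mathrm{Coh}^\beta(X)$. On the torsion-free part: if $F \in \TT^\beta$ is $\mu_H$-semistable of slope $\mu_H(F) > \beta$ and rank $>0$, then $H^{n-1}\mathrm{ch}_1(F) - \beta H^n \mathrm{ch}_0(F) > 0$; for a shift $G[1]$ with $G \in \FF^\beta$ torsion-free $\mu_H$-semistable, $\mu_H(G)\leq \beta$ gives $\Im Z(G[1]) = \beta H^n \mathrm{ch}_0(G) - H^{n-1}\mathrm{ch}_1(G) \geq 0$; and for a torsion sheaf supported in pure codimension $1$, effectivity of $\mathrm{ch}_1$ gives the inequality. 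When $\Im Z(E) = 0$, one shows the HN factors of $E$ must be either torsion sheaves supported in codimension $\geq 2$, or $\mu_H$-semistable sheaves of slope exactly $\beta$ (possibly shifted); in either case Bogomolov--Gieseker bounds $\mathrm{ch}_2^\beta$ from above so that $\Re Z_{\alpha,\beta}(E) = -H^{n-2}\mathrm{ch}_2^\beta(E) \leq 0$, with equality only for classes that are zero in $\Lambda_H^2$.

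Axiom (ii) follows from a standard criterion: $\mathrm{Coh}^\beta(X)$ is Noetherian (a tilt of a Noetherian heart with respect to a torsion pair in which the torsion-free part is closed under subobjects) and the image of $Z_{\alpha,\beta}$ on $\Lambda_H^2$ is discrete after a suitable rescaling. One then applies Bridgeland's existence result for HN filtrations, reducing as usual to the absence of infinite descending (or ascending) chains of subobjects with strictly decreasing (resp.\ increasing) phase.

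The main obstacle is the support property (iii). The negative-definiteness of $\Delta_H$ on $\ker Z_{\alpha,\beta}$ is a direct linear-algebra verification: the kernel inside $\Lambda_H^2 \otimes \R$ is cut out by two real linear equations involving $\mathrm{ch}_0, H^{n-1}\mathrm{ch}_1, H^{n-2}\mathrm{ch}_2$, and one plugs this into $\Delta_H$. The nontrivial part is $\Delta_H(E) \geq 0$ for every $\sigma_{\alpha,\beta}$-semistable $E$. Here I would proceed in two steps: first, for $\mu_H$-semistable sheaves this is the classical Bogomolov inequality; second, for a general $\sigma_{\alpha,\beta}$-semistable object, use the tilt structure of $\mathrm{Coh}^\beta(X)$ together with the fact that $\Delta_H$ depends only on $\mathrm{ch}_0$ and $H^{n-1}\mathrm{ch}_1$, $H^{n-2}\mathrm{ch}_2$, and is invariant under twisting by $\O(\beta H)$, to reduce to sheaf-theoretic Bogomolov. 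Concretely, one shows that any $\sigma_{\alpha,\beta}$-semistable $E$ fits in an extension whose outer terms satisfy $\Delta_H \geq 0$, and then invokes the standard fact that $\Delta_H$ is preserved under extensions modulo the kernel of $Z$. This is exactly the argument carried out in \cite{bayer2017stability}, to which I would defer for the technical details.
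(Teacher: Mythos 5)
The paper does not prove this statement at all: it is quoted verbatim (with its quadratic form) from \cite{bayer2017stability}, Proposition 2.12, and your outline — positivity of $\Im Z_{\alpha,\beta}$ from the torsion pair, HN filtrations from Noetherianity plus discreteness, and the support property via Bogomolov--Gieseker together with negative definiteness of $\Delta_H$ on $\ker Z_{\alpha,\beta}$ — is exactly the standard argument of that reference, so there is nothing to compare beyond the citation. One small imprecision: when $\Im Z_{\alpha,\beta}(E)=0$ the real part is $\tfrac{1}{2}\alpha^{2}H^{n}\mathrm{ch}_{0}^{\beta}(E)-H^{n-2}\mathrm{ch}_{2}^{\beta}(E)$, not just $-H^{n-2}\mathrm{ch}_{2}^{\beta}(E)$; for the relevant objects (shifts $G[1]$ of slope-$\beta$ semistable sheaves and torsion sheaves in codimension $\geq 2$) the rank term is $\leq 0$, so the conclusion $\Re Z_{\alpha,\beta}(E)\leq 0$ still holds.
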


In \cite{bayer2017stability}, the authors take the second tilt of $\mathrm{Coh}^{\beta}(X)$: fix a slope $\mu\in\mathbb{R}$ and define $\mathrm{Coh}^{\mu}_{\alpha,\beta}(X):=\mathrm{Coh}^{\beta}(X)^{\mu,\sigma_{\alpha,\beta}}$ with a stability function
$$Z^{\mu}_{\alpha,\beta}:=\frac{1}{u}Z_{\alpha,\beta}$$
where $u\in\mathbb{C}$ is the upper half plane with  $\mu=-\frac{\Re u}{\Im u}$.

\begin{predl}\cite{bayer2017stability}
The pair $\sigma^{\mu}_{\alpha,\beta}=(\mathrm{Coh}^{\beta}(X)^{\mu,\sigma_{\alpha,\beta}}, Z^{\mu}_{\alpha,\beta})$ is a weak stability condition on $D^b(X)$.
\end{predl}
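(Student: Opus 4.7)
The plan is to verify the three axioms of a weak stability condition (stability function, HN filtrations, support property) by applying the tilting machinery to the weak stability condition $\sigma_{\alpha,\beta}$ already produced in Proposition~\ref{first-tilting-wsc}. The heart $\mathrm{Coh}^{\beta}(X)^{\mu,\sigma_{\alpha,\beta}}$ is the heart of a bounded $t$-structure by Proposition~\ref{prop_HRS} applied to $\sigma_{\alpha,\beta}$ at slope $\mu$; this step requires nothing new.

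First I would show that $Z^{\mu}_{\alpha,\beta}=Z_{\alpha,\beta}/u$ is a weak stability function on the tilted heart. By construction, every object $E$ of $\mathrm{Coh}^{\beta}(X)^{\mu,\sigma_{\alpha,\beta}}$ is obtained as an iterated extension of $\sigma_{\alpha,\beta}$-semistable objects in $\mathcal{T}^{\mu}_{\sigma_{\alpha,\beta}}$ and shifts by $[1]$ of $\sigma_{\alpha,\beta}$-semistable objects in $\mathcal{F}^{\mu}_{\sigma_{\alpha,\beta}}$. For $F$ semistable in $\mathcal{T}^{\mu}$ with finite slope, $Z_{\alpha,\beta}(F)$ lies in the open upper half plane with argument $>\arg u$, hence $Z^{\mu}_{\alpha,\beta}(F)=Z_{\alpha,\beta}(F)/u$ lies in the open upper half plane; the remaining cases (slope $+\infty$ in $\mathcal{T}^{\mu}$, and objects in $\mathcal{F}^{\mu}[1]$) are checked by the same rotation argument, giving $\Im Z^{\mu}_{\alpha,\beta}\ge 0$ and the appropriate sign of the real part when $\Im Z^{\mu}_{\alpha,\beta}=0$. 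The delicate case is when $F$ itself has slope exactly $\mu$ under $\sigma_{\alpha,\beta}$: then $Z^{\mu}_{\alpha,\beta}(F)$ is real, and one must check its sign is negative or zero, which follows from $\mu=-\Re u/\Im u$ together with positivity of $\Im Z_{\alpha,\beta}(F)$ in $\mathcal{T}^{\mu}$ (respectively its shift in $\mathcal{F}^{\mu}[1]$).

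Second, I would establish HN filtrations. Because $\sigma_{\alpha,\beta}$ admits HN filtrations in $\mathrm{Coh}^{\beta}(X)$ by Proposition~\ref{first-tilting-wsc}, and because the set of slopes of $\sigma_{\alpha,\beta}$-semistable objects is locally finite in a neighborhood of $\mu$ (a consequence of the support property, i.e.\ the quadratic form $\Delta_H$), one invokes the general noetherianity/HN criterion of Bridgeland: the tilt of a heart with HN filtrations along a torsion pair cut out by a slope function again has HN filtrations, provided the function is locally finite. This standard argument closes the second axiom.

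Third, for the support property I would use the same discriminant $\Delta_H$ as in Proposition~\ref{first-tilting-wsc} (or its restriction to the kernel of $Z^{\mu}_{\alpha,\beta}$). Any $\sigma^{\mu}_{\alpha,\beta}$-semistable object has $\sigma_{\alpha,\beta}$-semistable cohomology objects with respect to the original $t$-structure (after an extension argument), so the Bogomolov-type inequality $\Delta_H\ge 0$ passes through. Since $\mathrm{Coh}^{\beta}(X)^{\mu,\sigma_{\alpha,\beta}}$ and $\mathrm{Coh}^{\beta}(X)$ give the same classes in $\Lambda^2_H$ up to sign, negative definiteness of $\Delta_H$ on $\ker Z^{\mu}_{\alpha,\beta}$ follows from negative definiteness on $\ker Z_{\alpha,\beta}$ by the same linear algebra. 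The main obstacle in the whole argument is the careful bookkeeping in the first step for the boundary objects of slope $\mu$ (and those with $\Im Z_{\alpha,\beta}=0$), where the rotation by $1/u$ can collapse them to the real axis; everything else reduces to applying general tilting theory to the weak stability condition already in hand.
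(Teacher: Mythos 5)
The paper itself offers no proof of this proposition: it is quoted directly from \cite{bayer2017stability}, so there is no internal argument to compare yours against. That said, your plan follows the standard proof from that reference in its overall shape: the heart is the HRS tilt (Proposition~\ref{prop_HRS}) of $\mathrm{Coh}^{\beta}(X)$ along the weak stability condition $\sigma_{\alpha,\beta}$ of Proposition~\ref{first-tilting-wsc} at the slope $\mu$; the rotation argument for $Z^{\mu}_{\alpha,\beta}=\tfrac{1}{u}Z_{\alpha,\beta}$ is exactly how one verifies the weak positivity, and your attention to the boundary objects (slope exactly $\mu$, and the slope $+\infty$ objects with $\Im Z_{\alpha,\beta}=0$, which land in $\mathcal{T}^{\mu}$, not $\mathcal{F}^{\mu}$) is directed at the right place; the HN property is indeed deduced from HN filtrations for $\sigma_{\alpha,\beta}$ plus noetherianity/local-finiteness considerations, as in the reference.

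The one step that is not correct as written is the support property. It is not true in general that a $\sigma^{\mu}_{\alpha,\beta}$-semistable object has $\sigma_{\alpha,\beta}$-semistable cohomology objects with respect to $\mathrm{Coh}^{\beta}(X)$, and even when it does, $\Delta_H$ is a quadratic form, not an additive function, so the inequality $\Delta_H\ge 0$ does not simply ``pass through'' a two-step filtration. The repair, which is what the cited proof in effect does, runs as follows: (i) a $\sigma^{\mu}_{\alpha,\beta}$-semistable object whose phase is not one of the two boundary phases is, up to objects with $Z_{\alpha,\beta}=0$ --- and such objects necessarily have zero class in $\Lambda_{H}^{2}$, precisely because $\Delta_H$ is negative definite on $\ker Z_{\alpha,\beta}$ while semistable objects satisfy $\Delta_H\ge 0$ --- a shift of a $\sigma_{\alpha,\beta}$-semistable object, so the inequality $\Delta_H\ge 0$ is inherited directly from Proposition~\ref{first-tilting-wsc}; (ii) at the two boundary phases, a $\sigma^{\mu}_{\alpha,\beta}$-semistable object is an iterated extension of (shifts of) $\sigma_{\alpha,\beta}$-semistable objects all of one and the same $Z_{\alpha,\beta}$-phase, together with zero-class pieces, and one then concludes $\Delta_H\ge 0$ by the standard convexity lemma for a quadratic form that is negative definite on $\ker Z$ and non-negative on classes whose central charges lie on a single ray. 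Your observation that $\ker Z^{\mu}_{\alpha,\beta}=\ker Z_{\alpha,\beta}$, so negative definiteness of $\Delta_H$ on the kernel is automatic, is correct, but it is the easy half; with (i)--(ii) substituted for the cohomology argument, the plan is complete.
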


Eventually, \cite[Proposition 5.1, Theorem 6.9]{bayer2017stability} enables us to define a stability condition on the Kuznetsov components of Fano threefolds of index one, in particular, special Gushel-Mukai threefolds.

\begin{theorem}
\label{Theorem_existence_stability}
Let $X$ be a Fano threefold of index $1$ and even genus $g\geq 6$ with $\mathrm{Pic}(X)=\mathbb{Z}$ and let $\sigma^0_{\alpha,\beta}$ be the weak stability condition constructed above for $\mu=0$, $\beta=-1+\epsilon$ and $0<\alpha<\epsilon$ with $\epsilon$ sufficiently small. Let $\mathcal{A}=\mathcal{K}u(X)\cap\mathrm{Coh}^0_{\alpha,\beta}(X)$ and $Z=Z^0_{\alpha,\beta}$. Then the pair $(\mathcal{A},Z)$ is a Bridgeland stability condition on $\mathcal{K}u(X)$.
\end{theorem}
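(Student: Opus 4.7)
The plan is to deduce the result from the general inducing criterion of Bayer--Lahoz--Macr\`i--Stellari (henceforth BLMS), which is cited as \cite[Proposition 5.1, Theorem 6.9]{bayer2017stability}. Concretely, BLMS gives sufficient conditions for a weak stability condition on $D^b(X)$ to restrict to a Bridgeland stability condition on an admissible subcategory defined as the right orthogonal to an exceptional collection. In our situation the collection is $\langle \EE, \O_X\rangle$ with $\Ku(X)=\langle\EE,\O_X\rangle^\perp$, so the task is to verify the BLMS hypotheses for the weak stability condition $\sigma^{0}_{\alpha,\beta}$ at the chosen parameters $\beta=-1+\e$, $0<\alpha<\e$, and to show that intersecting the heart $\mathrm{Coh}^{0}_{\alpha,\beta}(X)$ with $\Ku(X)$ yields a bounded t-structure whose associated central charge $Z=Z^{0}_{\alpha,\beta}$ is a stability function.

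First I would check the positioning of the exceptional objects. Using the definitions of $\mathrm{ch}^{\beta}$ and the formulas for $Z_{\alpha,\beta}$ and $Z^{\mu}_{\alpha,\beta}$, one verifies that for the stated range of $(\alpha,\beta)$ both $\O_X$ and $\EE$ are $\sigma_{\alpha,\beta}$-stable (they are slope-stable and satisfy the Bogomolov inequality, so the large volume argument together with a wall-crossing analysis in the $(\alpha,\beta)$-plane shows no walls cross before one reaches the chosen chamber), and, after the second tilt at $\mu=0$, they belong to the heart $\mathrm{Coh}^{0}_{\alpha,\beta}(X)$ up to the correct homological shift. A short computation then shows that $Z^{0}_{\alpha,\beta}(\O_X)$ and $Z^{0}_{\alpha,\beta}(\EE)$ lie on the negative real axis, which is precisely the configuration required by the BLMS criterion.

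Next I would verify the main hypothesis of \cite[Proposition 5.1]{bayer2017stability}: for every $\sigma^{0}_{\alpha,\beta}$-semistable object $F\in\mathrm{Coh}^{0}_{\alpha,\beta}(X)$ with $Z^{0}_{\alpha,\beta}(F)=0$, one has $\Hom(F,\O_X)=\Hom(F,\EE)=0$, together with the analogous Serre-dual vanishing. This is the only nontrivial step: it requires showing that the exceptional objects $\O_X$ and $\EE$ have phases strictly larger than every such $F$ and using the support property of $\sigma^{0}_{\alpha,\beta}$ to rule out destabilising morphisms. Concretely, I would argue by contradiction: a nonzero map $F\to \O_X$ (or $F\to\EE$) would force $F$ to have a $\sigma^{0}_{\alpha,\beta}$-destabilising subobject or quotient whose numerical class violates the quadratic form $Q$ that controls the support property, using the precise choice $\beta=-1+\e$, $0<\alpha<\e$ to squeeze out the forbidden classes.

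Finally, with these vanishings in place, BLMS \cite[Proposition 5.1]{bayer2017stability} applies verbatim: the intersection $\mathcal{A}=\Ku(X)\cap\mathrm{Coh}^{0}_{\alpha,\beta}(X)$ is the heart of a bounded t-structure on $\Ku(X)$, the restriction of $Z^{0}_{\alpha,\beta}$ is a stability function on $\mathcal{A}$, Harder--Narasimhan filtrations are inherited from $D^b(X)$, and the restriction of the quadratic form $\Delta_H$ gives the support property. This proves $(\mathcal{A},Z)$ is a Bridgeland stability condition on $\Ku(X)$. The main obstacle in this plan is the Hom-vanishing in the previous paragraph; everything else is a routine transcription of the BLMS machinery to the index-one, even-genus $g\ge 6$ setting, for which the exceptional collection $\langle\EE,\O_X\rangle$ has been set up in Section~\ref{section2}.
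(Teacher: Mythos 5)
Your high-level route is the same as the paper's: the paper offers no independent proof of Theorem~\ref{Theorem_existence_stability} but treats it as an instance of \cite[Proposition 5.1, Theorem 6.9]{bayer2017stability}, applied to the decomposition $D^b(X)=\langle \Ku(X),\mathcal{E},\mathcal{O}_X\rangle$ with the weak stability condition $\sigma^0_{\alpha,\beta}$ at $\mu=0$, $\beta=-1+\epsilon$, $0<\alpha<\epsilon$. So proposing to verify the BLMS inducing criterion is exactly the intended argument, and your final paragraph (heart of a bounded t-structure, inherited HN filtrations, support property from $\Delta_H$) is the standard conclusion of that criterion.

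However, the step you single out as "the only nontrivial step" is misstated, and as written it would not establish the hypothesis of \cite[Proposition 5.1]{bayer2017stability}. Besides requiring the exceptional objects (with the appropriate shifts) and their Serre twists to sit in $\mathrm{Coh}^0_{\alpha,\beta}(X)$ with prescribed shifts --- there is no condition that $Z^0_{\alpha,\beta}(\mathcal{O}_X)$ and $Z^0_{\alpha,\beta}(\mathcal{E})$ lie on the negative real axis --- the crucial condition is that the restricted central charge has \emph{trivial kernel on the induced heart}: there is no nonzero $F\in \Ku(X)\cap\mathrm{Coh}^0_{\alpha,\beta}(X)$ with $Z^0_{\alpha,\beta}(F)=0$. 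One verifies this by classifying the objects of $\mathrm{Coh}^0_{\alpha,\beta}(X)$ killed by $Z^0_{\alpha,\beta}$ (since $Z$ only sees $\mathrm{ch}_{\leq 2}$, these include all zero-dimensional sheaves) and showing each admits a nonzero graded $\Hom^{\bullet}(\mathcal{O}_X,-)$ or $\Hom^{\bullet}(\mathcal{E},-)$, hence lies outside $\Ku(X)=\mathcal{O}_X^{\perp}\cap\mathcal{E}^{\perp}$. Your proposed vanishing $\Hom(F,\mathcal{O}_X)=\Hom(F,\mathcal{E})=0$ points in the opposite direction: membership in $\Ku(X)$ is tested by maps \emph{from} $\mathcal{O}_X$ and $\mathcal{E}$ in all degrees, not by maps out of $F$, and establishing your vanishing would if anything be consistent with $F$ lying in $\Ku(X)$, which is precisely what must be excluded. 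A concrete illustration: a skyscraper $\mathcal{O}_x$ lies in the heart with $Z^0_{\alpha,\beta}(\mathcal{O}_x)=0$ and satisfies $\Hom(\mathcal{O}_x,\mathcal{O}_X)=0$, so your criterion is vacuously met by it; what actually saves the day is $\Hom(\mathcal{O}_X,\mathcal{O}_x)\neq 0$, which removes $\mathcal{O}_x$ from $\Ku(X)$. With the hypothesis stated correctly (and the analogous analysis for the other potential kernel objects at $\beta=-1+\epsilon$), the rest of your outline goes through as in \cite{bayer2017stability}.
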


.
\begin{definition}
\label{def_wall}
Let $v\in\Lambda^2_H\cong\mathbb{Z}^3$.\begin{enumerate}
    \item A numerical wall for $v$ is the set of pairs $(\alpha,\beta)\in\mathbb{R}_{>0}\times\mathbb{R}$ such that there is a vector $w\in\Lambda^2_H$ such that $\mu_{\alpha,\beta}(v)=\mu_{\alpha,\beta}(w)$.
    \item A wall for $F\in\mathrm{Coh}^{\beta}(X)$ is a numerical wall for $v:=ch_{\leq 2}(F)$ such that for every $(\alpha,\beta)$ on the wall there is an exact sequence of semistable objects $0\rightarrow E\rightarrow F\rightarrow G\rightarrow 0$ in $\mathrm{Coh}^{\beta}(X)$ such that $\mu_{\alpha,\beta}(F)=\mu_{\alpha,\beta}(E)=\mu_{\alpha,\beta}(G)$ which gives the numerical wall.
\end{enumerate}
\end{definition}

\subsection{Serre invariant stability conditions}
By Theorem~\ref{Theorem_existence_stability}, the stability condition $\sigma(\alpha,\beta)=(\mathcal{A},Z)$ is a stability condition on Kuznetsov component $\mathcal{K}u(X)$ of a special Gushel-Mukai threefold $X$. 

\begin{definition}\label{definition_S_invariant_stab_condition}
A stability condition $\sigma$ defined on $\mathcal{K}u(X)$ is called Serre-invariant if $S_{\mathcal{K}u(X)}\cdot\sigma=\sigma\cdot\tilde{g}$ for $\tilde{g}\in\widetilde{GL}^+_2(\mathbb{R})$. In particular, if $S_{\mathcal{K}u(X)}\cong\tau\circ[2]$ for an involutive auto-equivalence, then $\sigma$ is called $\tau$-invariant and $\tau(\sigma)=\sigma$. Fix $\alpha$ and $\beta$, define $\mathcal{K}:=\sigma(\alpha,\beta)\cdot\widetilde{GL}^+_2(\mathbb{R})\subset\mathrm{Stab}(\mathcal{K}u(X))$, called the orbit of $\sigma(\alpha,\beta)$. 
\end{definition}

\begin{predl}
\label{prop_heart_GM}
Let $X$ be a smooth special Gushel-Mukai threefold, let $\tau$ be the involution of $\mathcal{K}u(X)$, then \begin{enumerate}
    \item $\tau$ fixes the closure of $\mathcal{K}$ in $\mathrm{Stab}(\mathcal{K}u(X))$.
    \item For every $\sigma=(\mathcal{A},Z)$ in the closure of $\mathcal{K}$, the heart $\mathcal{A}$ has homological dimension $2$.
    \item For any nonzero object $A$ in $\mathcal{A}$, $\mathrm{hom}^1(A,A)\geq 2$.
\end{enumerate}
\end{predl}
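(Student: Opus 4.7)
For (1), the plan is to first establish Serre invariance of the reference stability condition $\sigma(\alpha,\beta)$, namely
\[S_{\mathcal{K}u(X)} \cdot \sigma(\alpha,\beta) = \sigma(\alpha,\beta) \cdot \tilde g\]
for some $\tilde g \in \widetilde{GL}^+_2(\mathbb{R})$. This follows from the general Bayer--Lahoz--Macr\`i--Stellari criterion applied to our tilted heart, specialized to GM threefolds as in \cite{JLLZh2021}. Because the shift $[2]$ is realized by an element of $\widetilde{GL}^+_2(\mathbb{R})$ and $S_{\mathcal{K}u(X)} \cong \tau \circ [2]$ by \cite{kuznetsov2018derived}, this rewrites as $\tau \cdot \sigma(\alpha,\beta) = \sigma(\alpha,\beta) \cdot \tilde g'$. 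Since $\tau^2 = \id$ and $\widetilde{GL}^+_2(\mathbb{R})$ is torsion-free (the only compact subgroup of $GL^+_2(\mathbb{R})$ is $SO(2)$, whose universal cover is $\mathbb{R}$), combined with triviality of the stabilizer of $\sigma(\alpha,\beta)$, I obtain $\tilde g' = \id$, hence $\tau(\sigma(\alpha,\beta)) = \sigma(\alpha,\beta)$. Because $\tau$ commutes with the right $\widetilde{GL}^+_2(\mathbb{R})$-action on $\mathrm{Stab}(\mathcal{K}u(X))$, the identity $\tau(\sigma) = \sigma$ propagates to every $\sigma \in \mathcal{K}$, and by continuity of $\tau$ to the closure $\overline{\mathcal{K}}$.

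For (2), fix $\sigma = (\mathcal{A}, Z) \in \overline{\mathcal{K}}$. Part (1) gives $\tau(\mathcal{A}) = \mathcal{A}$. For $A, B \in \mathcal{A}$, Serre duality together with $S_{\mathcal{K}u(X)} \cong \tau \circ [2]$ yields
\[\Ext^i(A, B) \cong \Ext^{2-i}(B, \tau(A))^{\vee}.\]
When $i \geq 3$ the exponent on the right is negative; since both $B$ and $\tau(A)$ lie in the heart $\mathcal{A}$, negative Ext groups between heart objects vanish, so $\Ext^i(A, B) = 0$. Thus $\mathcal{A}$ has homological dimension at most $2$, with equality realized by explicit objects constructed in later sections.

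For (3), using (2) we have $\mathrm{ext}^3(A, A) = 0$, so
\[\chi(A, A) = \mathrm{hom}(A, A) - \mathrm{ext}^1(A, A) + \mathrm{ext}^2(A, A).\]
Serre duality identifies $\mathrm{ext}^2(A, A) \cong \mathrm{hom}(A, \tau(A)) \geq 0$, and for nonzero $A \in \mathcal{A}$ one has $\mathrm{hom}(A, A) \geq 1$; therefore
\[\mathrm{ext}^1(A, A) \geq 1 - \chi(A, A).\]
The remaining step is to verify by direct calculation on the rank two numerical Grothendieck group $\mathcal{N}(\mathcal{K}u(X))$ that its Gram matrix for the Euler pairing satisfies $\chi(v, v) \leq -1$ for every nonzero $v$, which then forces $\mathrm{ext}^1(A, A) \geq 2$.

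The principal obstacle is the Serre invariance assertion in (1): one must carefully lift the shift $[2]$ to $\widetilde{GL}^+_2(\mathbb{R})$, confirm triviality of the stabilizer of $\sigma(\alpha,\beta)$, and extend the pointwise identity $\tau(\sigma)=\sigma$ continuously to all of $\overline{\mathcal{K}}$, since stability conditions on the boundary may degenerate. Once (1) is secured, (2) and (3) reduce to standard Serre duality plus a finite computation of the Euler form on $\mathcal{N}(\mathcal{K}u(X))$.
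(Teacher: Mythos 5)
Your parts (2) and (3) are essentially the paper's own argument: the paper likewise kills $\Ext^{i\geq 3}$ between heart objects by writing $\Ext^i(E,F)\cong\Hom(F[i],S_{\Ku(X)}(E))=\Ext^{2-i}(F,\tau E)$ and using $\tau E\in\mathcal{A}$, and then gets (3) from $\chi(A,A)\leq -1$ together with $\mathrm{hom}(A,A)\geq 1$, $\mathrm{ext}^2(A,A)\geq 0$, $\mathrm{ext}^3(A,A)=0$. The only piece you leave unperformed is the negativity of the Euler form, which the paper does explicitly: writing $[A]=as+bt$ one has $\chi(A,A)=-(2a^2+6ab+5b^2)=-(a+b)^2-(a+2b)^2\leq -1$ for $(a,b)\neq(0,0)$; you should also say (as the paper implicitly does) why a nonzero object of the heart has nonzero class, namely $Z$ is a genuine stability function, so $Z(A)\neq 0$.

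For (1) your route is genuinely different from the paper's. The paper argues directly and geometrically: $\tau$ is the covering involution, it fixes $H$, acts trivially on Chern characters and slopes, preserves $\mathrm{Coh}(X)$ and (being exact) the tilting torsion pairs, hence preserves both tilted hearts and the central charges, so $\tau\,\sigma(\alpha,\beta)=\sigma(\alpha,\beta)$ on the nose; commutation with the $\widetilde{\mathrm{GL}}^+_2(\mathbb{R})$-action and continuity then fix $\overline{\mathcal{K}}$ pointwise. You instead start from Serre invariance of $\sigma(\alpha,\beta)$, translate $S_{\Ku(X)}\cong\tau\circ[2]$ into $\tau\cdot\sigma=\sigma\cdot\tilde g'$, and force $\tilde g'=\mathrm{id}$ from $\tau^2=\mathrm{id}$, triviality of the stabilizer, and torsion-freeness of $\widetilde{\mathrm{GL}}^+_2(\mathbb{R})$; that group-theoretic step is correct (the stabilizer is trivial because the rank of $\mathcal{N}(\Ku(X))$ is $2$ and $Z$ has full rank image, and the only lifts of $\pm\mathrm{id}$ squaring to the identity are trivial). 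The caveat is your input: the Bayer--Lahoz--Macr\`i--Stellari machinery constructs $\sigma(\alpha,\beta)$ but does not by itself give Serre invariance; for GM threefolds that invariance is established in the literature (e.g.\ in the cited work of the author and collaborators) precisely by the direct heart-preservation argument the paper uses here. So your approach is valid if you cite that result as an external black box, but it outsources the real content and is circular if unwound, whereas the paper's direct computation is more elementary and self-contained; what your version buys is a template that works whenever Serre invariance is known abstractly, even without an explicit geometric description of $\tau$.
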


 \begin{proof}\leavevmode \begin{enumerate}
    \item The argument for $(1)$ is essentially the same as \cite[Lemma 5.11]{pertusi2020some}. Note that $\tau$ fixes hyperplane section $H$ and it acts trivially on the characters of objects in $D^b(X)$ and their slopes. Also, $\tau$ preserves the coherent sheaves and $\tau$ is an exact functor thus preserves the inclusion of sheaves. Then it preserves the stability of an object. By construction of the tilted heart and the second tilted heart, $\tau$ preserves both hearts. Hence $\tau\mathcal{A}=\mathcal{A}$ since $\tau$ also preserves $\mathcal{K}u(X)$. Then $\tau\sigma(\alpha,\beta)=\sigma(\alpha,\beta)$. $\tau$ preserves $\mathcal{K}$ since the action of $\widetilde{\text{GL}}_2^+(\R)$ commutes with autoequivalences. Also note that the action of $\tau$ on the stability manifold is continuous, the desired result follows.
    \item As $\mathcal{A}$ is the heart of the stability condition $\sigma(\alpha,\beta)$, then $\mathrm{Ext}^i(E,F)=0$ for $i<0$. If $i\geq 3$, then $\mathrm{Ext}^i(E,F)=\mathrm{Hom}(E,F[i])=\mathrm{Hom}(F[i],S_{\mathcal{K}u(X)}(E))=\mathrm{Ext}^{2-i}(F,\tau E)=0$ since  $\tau(E)\in\mathcal{A}$ and $2-i<0$. 
    \item Note that $[A]\in\mathcal{N}(\mathcal{K}u(X))$, then $[A]=as+bt$ for $a,b\in\mathbb{Z}$. Then $\chi(A,A)=-(2a^2+6ab+5b^2)=-(a+b)^2-(a+2b)^2$. Since $A$ is a nonzero object in the heart $\mathcal{A}$, $a,b$ can not be zero simultaneously. Then $\chi(A,A)\leq -1$. It follows that $\mathrm{hom}^1(A,A)\geq 2$. 

\end{enumerate}
\end{proof}

By the same argument in \cite[Lemma 5.14]{pertusi2020some}, we have:
\begin{lemma}[Weak Mukai Lemma for GM threefolds]
\label{lemma_Mukailemma}
Let $A\rightarrow E\rightarrow B$ be a triangle in $\mathcal{K}u(X)$ with $\mathrm{Hom}(A,B)=\mathrm{Hom}(A,\tau(B))=0$. Then $$\mathrm{hom}^1(A,A)+\mathrm{hom}^1(B,B)\leq \mathrm{hom}^1(E,E)$$
\end{lemma}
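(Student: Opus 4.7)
The plan is to realize $\mathrm{Ext}^1(A,A)\oplus\mathrm{Ext}^1(B,B)$ as a single graded piece of a natural filtration on $\mathrm{Ext}^1(E,E)$, via the three-column spectral sequence of Lemma~\ref{lemma_SS}.

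As a preliminary step, I would translate the second hypothesis into an Ext-vanishing. By Proposition~\ref{prop_heart_GM} (and the fact that the Serre functor of $\mathcal{K}u(X)$ is $S_{\mathcal{K}u(X)}=\tau\circ[2]$), Serre duality gives
\[
\mathrm{Ext}^2(B,A)\;\cong\;\mathrm{Hom}(A,\tau(B))^{\vee}\;=\;0.
\]
Together with the given $\mathrm{Hom}(A,B)=0$, these are the two vanishings I will feed into the spectral sequence.

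Next I would rotate the triangle to $B[-1]\to A\to E\to B$ and apply Lemma~\ref{lemma_SS} with both input triangles equal to this one, i.e.\ $A_1=A_2=B[-1]$, $B_1=B_2=A$, $C_1=C_2=E$. The resulting spectral sequence converges to $\mathrm{Ext}^{\bullet}(E,E)$ and on the $E_1$-page has three nonzero columns
\[
E_1^{-1,q}=\mathrm{Ext}^{q-1}(A,B),\qquad E_1^{0,q}=\mathrm{Ext}^q(A,A)\oplus\mathrm{Ext}^q(B,B),\qquad E_1^{1,q}=\mathrm{Ext}^{q+1}(B,A).
\]
The three entries contributing to total degree $1$ are $E_1^{-1,2}$, $E_1^{0,1}$, and $E_1^{1,0}$.

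The main step will be checking that the middle entry $E_1^{0,1}=\mathrm{Ext}^1(A,A)\oplus\mathrm{Ext}^1(B,B)$ survives intact to $E_\infty$. The $d_1$ coming in starts from $E_1^{-1,1}=\mathrm{Hom}(A,B)=0$ (first hypothesis); the $d_1$ going out lands in $E_1^{1,1}=\mathrm{Ext}^2(B,A)=0$ (the Serre-duality reformulation of the second hypothesis). On the next page, both $d_2\colon E_2^{-2,2}\to E_2^{0,1}$ and $d_2\colon E_2^{0,1}\to E_2^{2,0}$ vanish for trivial range reasons, since the columns $p=\pm 2$ are zero. Because the spectral sequence degenerates at $E_3$, this yields $E_\infty^{0,1}=\mathrm{Ext}^1(A,A)\oplus\mathrm{Ext}^1(B,B)$, which sits inside $\mathrm{Ext}^1(E,E)$ as a graded piece of the induced filtration. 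The inequality $\mathrm{hom}^1(E,E)\geq\mathrm{hom}^1(A,A)+\mathrm{hom}^1(B,B)$ is then immediate. The only non-routine input is the Serre-duality identification $\mathrm{Ext}^2(B,A)\cong\mathrm{Hom}(A,\tau(B))^{\vee}$; the rest is formal bookkeeping on a three-column spectral sequence.
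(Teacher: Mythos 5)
Your argument is correct, and it is essentially the standard proof of the weak Mukai lemma that the paper delegates to \cite[Lemma 5.14]{pertusi2020some}: Serre duality with $S_{\mathcal{K}u(X)}=\tau\circ[2]$ turns the hypothesis $\mathrm{Hom}(A,\tau(B))=0$ into $\mathrm{Ext}^2(B,A)=0$, and then the two vanishings force the middle column in degree $1$ to survive. Your packaging via the three-column spectral sequence of Lemma~\ref{lemma_SS} (rather than the equivalent long-exact-sequence chase in the cited reference) is consistent with how the paper handles similar computations, and all the differentials you need to kill are handled correctly.
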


\subsection{Stability of objects with small $\mathrm{Ext}^1$}
In this section, we prove an object $F$ in the Kuznetsov component $\mathcal{K}u(X)$ of a special Gushel-Mukai threefold $X$ is $\sigma$-stable for every $\tau$-invariant stability condition $\sigma$ if $\mathrm{ext}^1(F,F)$ is small. 

\begin{lemma}
\label{lemma_ext3_inheart}
Let $F\in\mathcal{K}u(X)$ with $\mathrm{hom}^1(F,F)=3$ or $2$, then $F\in\mathcal{A}$ up to shifts. 
\end{lemma}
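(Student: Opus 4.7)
The plan is to work with the cohomology of $F$ relative to the bounded t-structure on $\mathcal{K}u(X)$ with heart $\mathcal{A}$ and show that $F$ must be concentrated in a single degree. Denote by $\mathcal{H}^i(F) \in \mathcal{A}$ the cohomology objects and assume, for a contradiction, that $F$ has nonzero cohomology in at least two degrees. Let $i_1$ be the smallest degree in which $\mathcal{H}^{i_1}(F) \neq 0$.

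The first move is to split off the lowest cohomology via the standard truncation triangle
$$A \to F \to B,$$
with $A := \mathcal{H}^{i_1}(F)[-i_1]$ and $B$ the upper truncation of $F$, so that $B$ has cohomology only in degrees strictly greater than $i_1$. To apply the weak Mukai lemma (Lemma \ref{lemma_Mukailemma}) I need $\mathrm{Hom}(A,B) = 0$ and $\mathrm{Hom}(A, \tau B) = 0$. The first vanishing is immediate from the t-structure axioms, since $A \in \mathcal{K}u(X)^{\le i_1}$ and $B \in \mathcal{K}u(X)^{> i_1}$. The second vanishing holds because $\tau$ preserves $\mathcal{A}$ by Proposition \ref{prop_heart_GM}(1) and therefore preserves the entire t-structure, so $\tau B$ still lies in degrees $> i_1$.

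The weak Mukai lemma then gives
$$\mathrm{hom}^1(A,A) + \mathrm{hom}^1(B,B) \le \mathrm{hom}^1(F,F) \le 3.$$
Since $\mathcal{H}^{i_1}(F)$ is a nonzero object of $\mathcal{A}$ and $\mathrm{hom}^1$ is shift-invariant, Proposition \ref{prop_heart_GM}(3) gives $\mathrm{hom}^1(A,A) \ge 2$, forcing $\mathrm{hom}^1(B,B) \le 1$. If $B$ has cohomology in only one degree, then $B$ is a shift of a nonzero object of $\mathcal{A}$, so Proposition \ref{prop_heart_GM}(3) yields $\mathrm{hom}^1(B,B) \ge 2$, a contradiction. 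Otherwise I iterate: peel off the lowest cohomology of $B$, use the same t-structure plus $\tau$-invariance argument to check the Mukai hypotheses, and apply Lemma \ref{lemma_Mukailemma} again to produce another shift of a nonzero heart object whose $\mathrm{hom}^1$ is at most $1$, again contradicting Proposition \ref{prop_heart_GM}(3). Either branch closes.

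The main subtlety in this proof plan is verifying the hypotheses of the weak Mukai lemma at each iteration; everything rests on the $\tau$-invariance of $\mathcal{A}$ from Proposition \ref{prop_heart_GM}(1), after which the required vanishings are automatic from the t-structure formalism and the numerical input from Proposition \ref{prop_heart_GM}(3) closes the argument. The conclusion is that $F$ has cohomology in a single degree $i$, hence $F \cong \mathcal{H}^i(F)[-i] \in \mathcal{A}[-i]$, which is precisely the statement that $F \in \mathcal{A}$ up to shift.
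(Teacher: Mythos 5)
Your proof is correct, but it takes a genuinely different route from the paper. The paper argues via the spectral sequence $E_2^{p,q}=\bigoplus_i\mathrm{Hom}^p(\mathcal{H}^i(F),\mathcal{H}^{i+q}(F))\Rightarrow\mathrm{Hom}^{p+q}(F,F)$, using the fact that $\mathcal{A}$ has homological dimension $2$ (Proposition~\ref{prop_heart_GM}(2)) to see that $E_2^{1,0}$ survives to the $E_\infty$-page, whence $3\geq\sum_i\mathrm{hom}^1(\mathcal{H}^i(F),\mathcal{H}^i(F))\geq 2r$ and $r=1$. You instead peel off the lowest cohomology with the truncation triangle $\mathcal{H}^{i_1}(F)[-i_1]\to F\to \tau^{>i_1}F$ and apply the weak Mukai Lemma~\ref{lemma_Mukailemma}; your verification of its hypotheses is sound: $\mathrm{Hom}(A,B)=0$ is the t-structure axiom, and $\mathrm{Hom}(A,\tau(B))=0$ follows because $\tau$ preserves the heart (this is the content of the proof of Proposition~\ref{prop_heart_GM}(1), equivalently the definition of a $\tau$-invariant stability condition), hence preserves the truncation degrees. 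Combined with $\mathrm{hom}^1\geq 2$ for nonzero heart objects (Proposition~\ref{prop_heart_GM}(3)), one application of the Mukai inequality to $F$ and, if needed, one more to $B$ already yields the contradiction, so the iteration terminates immediately. What each approach buys: your argument bypasses the homological-dimension bound and all spectral sequence bookkeeping, using only Lemma~\ref{lemma_Mukailemma} (whose hypotheses are where the Serre functor $S=\tau\circ[2]$ and $\tau$-invariance enter) together with the numerical bound, so it is arguably more elementary given that the Mukai lemma is already established; the paper's spectral sequence method, on the other hand, is the one that scales to the borderline case $\mathrm{hom}^1(F,F)=4$ treated in Lemma~\ref{lemma_inheart_ext14}, where the Mukai inequality only gives $2+2\leq 4$ and produces no contradiction, so the finer $E_2$-page analysis is genuinely needed there. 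For the present statement with $\mathrm{hom}^1(F,F)\leq 3$ both routes are complete.
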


\begin{proof}
The statement follows from \cite[Lemma 3.13]{altavilla2019moduli}. Indeed, consider the spectral sequence for objects in $\mathcal{K}u(X)$ whose second page is 
$$E^{p,q}_2=\bigoplus_i\mathrm{Hom}^p(\mathcal{H}^i(F),\mathcal{H}^{i+q}(F))\Longrightarrow\mathrm{Hom}^{p+q}(F,F),$$ where the cohomology is taken with respect to the heart $\mathcal{A}$. By Proposition~\ref{prop_heart_GM}, the homological dimension of $\mathcal{A}$ is $2$, it follows that $E^{1,q}_{\infty}=E^{1,q}_2$, so if we take $q=0$, we have $$3=\mathrm{hom}^1(F,F)=\mathrm{dim}(\bigoplus_{p+q=1}E^{p,q}_{\infty})\geq \mathrm{dim}(E^{1,0}_{\infty})=\bigoplus_{i}\mathrm{hom}^1(\mathcal{H}^i(F),\mathcal{H}^i(F))\geq 2r, $$ 
where $r>0$ is the number of non-zero cohomology objects of $F$. This is because if $\mathcal{H}^i(F)\neq 0$, then by Proposition~\ref{prop_heart_GM}, $\mathrm{hom}^1(\mathcal{H}^i(F),\mathcal{H}^i(F))\geq 2$. Then we have $r=1$, then $F[k]\in\mathcal{A}$ for some $k$. The same argument applies if $\mathrm{ext}^1(F,F)=2$. 
\end{proof}

\begin{corollary}
\label{lemma_ext3_stable}
Let $F\in\mathcal{A}$ with $\mathrm{ext}^1(F,F)=3$ and $\chi(F,F)=-2$, then $F$ is $\sigma$-stable. 
\end{corollary}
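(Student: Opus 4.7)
The numerical input already pins down the whole $\mathrm{Ext}$-profile of $F$. Indeed, Proposition~\ref{prop_heart_GM}(2) gives $\mathrm{ext}^{\ge 3}(F,F)=0$, so the equation
\[
-2=\chi(F,F)=\mathrm{hom}(F,F)-\mathrm{ext}^1(F,F)+\mathrm{ext}^2(F,F)=\mathrm{hom}(F,F)-3+\mathrm{ext}^2(F,F),
\]
combined with $\mathrm{hom}(F,F)\ge 1$, forces $\mathrm{hom}(F,F)=1$ and $\mathrm{ext}^2(F,F)=0$. In particular $F$ is simple, and Serre duality (using $S_{\mathcal{K}u(X)}\cong\tau\circ[2]$) yields $\mathrm{Hom}(F,\tau F)=0$. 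This is the starting data for both halves of the argument.

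To prove $\sigma$-semistability, I would assume the contrary and take the first step of the HN filtration, producing a short exact sequence $0\to A\to F\to B\to 0$ in $\mathcal{A}$ with $A,B$ semistable and $\mu_\sigma(A)>\mu_\sigma(B)$. Since $\sigma$ is $\tau$-invariant on the closure of $\mathcal{K}$ (Proposition~\ref{prop_heart_GM}(1)), $\tau B$ is semistable with $\mu_\sigma(\tau B)=\mu_\sigma(B)<\mu_\sigma(A)$, so semistability forces $\mathrm{Hom}(A,B)=0=\mathrm{Hom}(A,\tau B)$. The Weak Mukai Lemma~\ref{lemma_Mukailemma} then gives
\[
\mathrm{hom}^1(A,A)+\mathrm{hom}^1(B,B)\le \mathrm{hom}^1(F,F)=3,
\]
which contradicts the lower bound $\mathrm{hom}^1(\cdot,\cdot)\ge 2$ from Proposition~\ref{prop_heart_GM}(3).

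For stability, suppose $F$ is strictly semistable and choose a stable subobject $A\hookrightarrow F$ of the same phase, with $B=F/A\ne 0$. The solutions of $(a+b)^2+(a+2b)^2=2$ classify the $(-2)$-classes in $\mathcal{N}(\mathcal{K}u(X))$ as $\pm s$ and $\pm(3s-2t)$, which are all primitive. Primitivity of $[F]$ excludes both $[F]=n[A]$ (so the JH factors of $F$ cannot all be isomorphic to $A$) and $[F]=[A]+[\tau A]=2[A]$; combined with the simplicity $\mathrm{End}(F)=k$, this lets me choose a JH factor $A$ so that $A$ is not isomorphic to any JH factor of $B$ nor to its $\tau$-image. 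Since distinct stable objects of the same phase have vanishing $\mathrm{Hom}$, this yields $\mathrm{Hom}(A,B)=0=\mathrm{Hom}(A,\tau B)$ and I can again invoke the Weak Mukai Lemma to reach the same $2+2\le 3$ contradiction.

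The main obstacle is the stability step: because $\mu_\sigma(A)=\mu_\sigma(B)$, the Hom-vanishings cannot be read from slopes alone, and they have to be extracted from the primitivity of the $(-2)$-class together with $\mathrm{End}(F)=k$ and $\mathrm{Hom}(F,\tau F)=0$. The bookkeeping is most delicate when some stable JH factor appears with multiplicity~$\ge 2$; in that situation one has to choose $A$ carefully among the factors, or iterate the argument along a refined filtration, in order to achieve the Weak-Mukai-compatible splitting.
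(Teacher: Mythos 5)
Your reduction of the numerical data is correct ($\mathrm{hom}(F,F)=1$, $\mathrm{ext}^2(F,F)=0$, hence $\mathrm{Hom}(F,\tau F)=0$ by Serre duality), and your semistability step coincides with the paper's: take the destabilizing piece, use $\tau$-invariance of $\sigma$ to get $\mathrm{Hom}(A,B)=\mathrm{Hom}(A,\tau B)=0$, and contradict the Weak Mukai Lemma~\ref{lemma_Mukailemma} with the bound $\mathrm{hom}^1(\cdot,\cdot)\ge 2$ from Proposition~\ref{prop_heart_GM}. The genuine gap is in the stability step, and it sits exactly where the paper does its real work. You want a \emph{stable} subobject $A\subset F$ of the same phase such that neither $A$ nor $\tau A$ occurs among the Jordan--H\"older factors of $B=F/A$, and you claim primitivity of the $(-2)$-classes $\pm s$, $\pm(3s-2t)$ guarantees such a choice. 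Primitivity (together with $[\tau A]=[A]$) only excludes the extreme cases $[F]=n[A]$ and $[F]=[A]+[\tau A]$; it does not exclude configurations such as JH factors $C,C,D$ with $D\not\cong C,\tau C$, which are numerically allowed (e.g.\ $[C]=2s-t$, $[D]=2t-3s$, so $2[C]+[D]=s$). In such a configuration the factor you would like to split off, $D$, need not be realizable as a subobject of $F$, while every stable subobject isomorphic to $C$ leaves another copy of $C$ in the quotient, so $\mathrm{Hom}(A,B)$ need not vanish and Mukai cannot be applied with your choice. Your closing sentence ("choose $A$ carefully or iterate along a refined filtration") is precisely the unproved point, not a routine verification.

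The missing idea — and what the paper actually does — is to drop the requirement that $A$ be stable and instead \emph{regroup} the JH filtration so that all factors isomorphic to $C$ or to $\tau C$ sit on one side. The paper first arranges $0\to A\to F\to B\to 0$ with $B$ having a single stable factor $C$ up to isomorphism and $\mathrm{Hom}(A,B)=0$; if $\mathrm{Hom}(A,\tau B)\neq 0$, it uses the commutative diagram to produce a new triangle $D\to F\to G$ in which $G$ is semistable with all stable factors isomorphic to $C$ or $\tau C$ and $D$ contains no such factor, so that $\tau$-invariance of $\sigma$ gives $\mathrm{Hom}(D,G)=\mathrm{Hom}(D,\tau G)=0$ and the Weak Mukai Lemma yields $2+2\le 3$; the degenerate possibility that \emph{all} factors lie in $\{C,\tau C\}$ is excluded numerically, since then $[F]$ would be a multiple $\ge 2$ of $[C]$ and $\chi(F,F)=-2$ fails. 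If you insert this regrouping (e.g.\ in the $C,C,D$ scenario, take the subobject with both copies of $C$ and quotient $D$), your argument closes; as written, the multiplicity case is an actual hole.
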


\begin{proof}
Assume that we already show that $F$ is $\sigma$-semistable. Now we prove $F$ is $\sigma$-stable. To prove this, we apply weak Mukai Lemma~\ref{lemma_Mukailemma}. The proof is a complete analogue of \cite[Theorem 1.2]{pertusi2020some}: Assume that $F$ is strictly $\sigma$-semistable of phase $\phi$. If $F$ has only one stable factor $C'$ up to isomorphism, then we have $\chi(F,F)=n^2\chi(C',C')=-2$ for $n\geq 2$, then we have a contradiction. Assume that $F$ has more than one stable factor up to isomorphism. Then we can write a sequence in $\mathcal{P}(\phi)$ of the form $$0\rightarrow A\rightarrow F\rightarrow B\rightarrow 0$$ where $A,B$ are $\sigma$-semistable and $B$ has only one stable factor $C$ up to isomorphism such that  $\mathrm{Hom}(A,B)=0$. If $\mathrm{Hom}(A,\tau(B))=0$, then by Lemma~\ref{lemma_Mukailemma},  $$\mathrm{hom}^1(A,A)+\mathrm{hom}^1(B,B)\leq\mathrm{hom}^1(F,F)=3.$$  But $A,B$ are both non-zero objects in $\mathcal{A}$, then $\mathrm{hom}^1(A,A)\geq 2$ and $\mathrm{hom}^1(B,B)\geq 2$, then this is a contradiction, which means that $F$ is $\sigma$-stable. Assume that $\mathrm{Hom}(A,\tau(B))\neq 0$. Then we have $\mathrm{Hom}(A,\tau(C))\neq 0$. Note that $\tau(C)$ is a quotient of $A$. Thus we have a sequence $D\rightarrow A\rightarrow E$, where $D$ and $F$ are $\sigma$-semistable, the only stable factor of $E$ is $\tau(C)$, and $C,\tau(C)$ are not stable factors of $D$. Then we have the following commutative diagram:
$$
\xymatrix{
D \ar[d]^{\id} \ar[r] & A \ar[d] \ar[r] & E \ar[d]\\
D \ar[d] \ar[r] & F \ar[d] \ar[r] & G \ar[d] \\
0 \ar[r] & B \ar[r]^{\id} & B,
}
$$

where $G$ is $\sigma$-semistable and whose stable factors are isomorphic to $C$ or $\tau(C)$. Thus we have $\mathrm{Hom}(D,G)=\mathrm{Hom}(D,\tau(G))=0$ since the stability condition $\sigma$ is $\tau$-invariant. Then by Mukai Lemma~\ref{lemma_Mukailemma} again, we have 
$$\mathrm{hom}^1(D,D)+\mathrm{hom}^1(G,G)\leq\mathrm{hom}^1(F,F)=3$$ but $\mathrm{hom}^1(D,D)\geq 2$ and $\mathrm{hom}^1(G,G)\geq 2$, another contradiction arrived.  Thus $\mathrm{Hom}(A.\tau(B))=0$, which completes the proof of $\sigma$-stability of $F$. Now, we prove $F$ is $\sigma$-semistable. Assume that $F$ is $\sigma$-unstable, then there is a destablizing triangle $A\rightarrow F\rightarrow B$ such that $A$ and $B$ are $\sigma$-semistable and $\phi(A)>\phi(B)$. Then $\mathrm{Hom}(A,B)=0$ and $\mathrm{Hom}(A,\tau(B))=0$ since $\tau(B)\in\mathcal{A}$ and $\tau(B)$ is still $\sigma$-semistable with the same phase as $B$. Then by Mukai Lemma~\ref{lemma_Mukailemma}, $\mathrm{ext}^1(A,A)+\mathrm{ext}^1(B,B)\leq\mathrm{ext}^1(E,E)=3$, then we have another contradiction. Then $E$ is $\sigma$-semistable. 
\end{proof}

\begin{lemma}
\label{lemma_inheart_ext14}
Let $F\in\mathcal{K}u(X)$ with $\mathrm{hom}(F,F)=1,\mathrm{ext}^1(F,F)=4,\mathrm{ext}^2(F,F)=1, \mathrm{ext}^3(F,F)=0$ and $\chi(F,F)=-2$, then $F\in\mathcal{A}$ up to shifts. 
\end{lemma}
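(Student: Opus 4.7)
The plan is to extend the spectral sequence argument of Lemma~\ref{lemma_ext3_inheart}. Applying the Hom spectral sequence
\[
E^{p,q}_2 = \bigoplus_i \mathrm{Hom}^p(\mathcal{H}^i(F), \mathcal{H}^{i+q}(F)) \Longrightarrow \mathrm{Hom}^{p+q}(F, F)
\]
together with the fact that $\mathcal{A}$ has homological dimension $2$ by Proposition~\ref{prop_heart_GM}, the usual vanishings give $E^{1,0}_\infty = E^{1,0}_2 = \sum_i \mathrm{hom}^1(\mathcal{H}^i(F), \mathcal{H}^i(F)) \geq 2r$, where $r$ is the number of nonzero cohomology objects of $F$ in $\mathcal{A}$. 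The hypothesis $\mathrm{hom}^1(F,F) = 4$ then forces $r \leq 2$, and if $r = 1$ the lemma is immediate.

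Suppose for contradiction that $r = 2$, with $A := \mathcal{H}^a(F)$ and $B := \mathcal{H}^b(F)$ nonzero at degrees $a < b$, and set $d := b - a$. Tightness in the chain $4 = \mathrm{hom}^1(F,F) \geq \mathrm{hom}^1(A,A) + \mathrm{hom}^1(B,B) \geq 4$, combined with $\chi(A,A), \chi(B,B) \leq -1$ (from the explicit form of $\chi$ on $\mathcal{N}(\mathcal{K}u(X))$ in Proposition~\ref{prop_heart_GM}(3)), would pin down $\mathrm{hom}^\bullet(A,A) = \mathrm{hom}^\bullet(B,B) = (1,2,0)$, so that $\chi(A,A) = \chi(B,B) = -1$ and in particular $E^{2,0}_2 = 0$.

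I would then eliminate each value of $d$ in turn. For $d \geq 3$, every $E^{p,q}_2$ with $p+q = 2$ and $p \in \{0,1,2\}$ vanishes (no pair of distinct cohomologies sits at mutual distance $\leq 2$, and $E^{2,0}_2 = 0$ by the previous step), so $\mathrm{hom}^2(F,F) = 0$ contradicts the hypothesis. For $d = 2$, both the in- and out-differentials at $E^{0,0}$ are trivial (the out-target $E^{2,-1}_2$ vanishes since $d \neq 1$), so $E^{0,0}_\infty = E^{0,0}_2 = 2 > \mathrm{hom}^0(F,F)$, again a contradiction.

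The main obstacle is the remaining case $d = 1$, where all cross terms survive. Here a careful bookkeeping of the spectral sequence, processing the constraints $\mathrm{hom}^i(F,F)$ for $i = 1, 2, 3$ in succession, should yield $\mathrm{hom}^0(A,B) = 0$ (from $E^{0,1}_\infty$), $\mathrm{hom}^1(A,B) = 1$ (from $E^{1,1}_\infty = \mathrm{hom}^2(F,F)$), and $\mathrm{hom}^2(A,B) = 0$ (from $E^{2,1}_\infty = \mathrm{hom}^3(F,F)$), hence $\chi(A,B) = -1$. Since $[F] = (-1)^a([A] - [B])$ when $b = a+1$, expanding
\[
\chi(F,F) = \chi(A,A) + \chi(B,B) - \chi(A,B) - \chi(B,A)
\]
and invoking the symmetry of the Euler form on $\mathcal{N}(\mathcal{K}u(X))$ (a consequence of Serre duality $\chi(E,F) = \chi(F, \tau E)$ together with the triviality of $\tau_*$ on numerical classes, since $\tau$ fixes the hyperplane class $H$) yields $\chi(F,F) = -1 - 1 + 1 + 1 = 0$, contradicting $\chi(F,F) = -2$. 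This closes the reduction to $r = 1$.
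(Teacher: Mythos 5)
Your argument is correct, and it shares the paper's skeleton: the same cohomology spectral sequence as in Lemma~\ref{lemma_ext3_inheart}, homological dimension $2$ of $\mathcal{A}$ (Proposition~\ref{prop_heart_GM}), and the bound $\chi\leq -1$ on nonzero objects of the heart, which together force $r\leq 2$ and pin down $\mathrm{hom}^{\bullet}(A,A)=\mathrm{hom}^{\bullet}(B,B)=(1,2,0)$ exactly as in the paper. Where you diverge is the endgame for $r=2$. The paper at this point classifies the possible numerical classes of the two cohomology objects (up to sign $2s-t$ or $s-t$), observes $\chi([M],[N])=0$, and then imports the $E_2$-page table chase of \cite[Lemma 3.14]{altavilla2019moduli} to reach the contradiction. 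You instead case-split on the gap $d$ between the two cohomological degrees: for $d\geq 2$ the relevant $E_2$-entries in total degree $0$ (resp.\ $2$) vanish or survive untouched, contradicting $\mathrm{hom}(F,F)=1$ (resp.\ $\mathrm{ext}^2(F,F)=1$); for $d=1$ you read off $\mathrm{hom}^{\bullet}(A,B)=(0,1,0)$ directly from $E^{0,1}_{\infty}$, $E^{1,1}_{\infty}$, $E^{2,1}_{\infty}$ (all of which I checked do compute the stated graded pieces, since the in- and out-differentials at those positions land in zero groups), and then use $[F]=\pm([A]-[B])$ together with the symmetry of the Euler pairing on $\mathcal{N}(\mathcal{K}u(X))$ (visible either from the Gram matrix in Lemma~\ref{lemma_isometry} or from Serre duality plus the triviality of $\tau$ on numerical classes) to get $\chi(F,F)=0\neq -2$. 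This buys you a self-contained proof that never needs the classification of $(-1)$-classes nor the external table argument, at the cost of slightly more spectral-sequence bookkeeping; note that the symmetry of $\chi$ is genuinely needed in your last step, since the spectral sequence alone only determines $\chi(A,B)=-1$ and would leave $\chi(B,A)$ free, so you should keep that invocation explicit.
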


\begin{proof}
The argument is entirely similar to \cite[Lemma 3.14]{altavilla2019moduli} and Corollary~\ref{lemma_ext3_stable}. We have $$4=\mathrm{hom}^1(F,F)\geq\bigoplus_{i}\mathrm{hom}^1(\mathcal{H}^i(F),\mathcal{H}^i(F))\geq 2r$$ thus $r=1$ or $r=2$. If $r=1$, then $\mathcal{F}\in\mathcal{A}$ up to shifts. Assume by contradiction that $r=2$, and let $M$ and $N$ be the two non-zero cohomology objects of $F$ in $\mathcal{A}$. Then we have $\mathrm{hom}^1(M,M)\oplus\mathrm{hom}^1(N,N)=4$. But $M,N$ are non-zero objects of $\mathcal{A}$, by Proposition $4.17$, $\mathrm{hom}^1(M,M)=\mathrm{hom}^1(N,N)=2$. Note that $\chi(E,E)\leq -1$ for all objects $E\in\mathcal{K}u(X)$. But $\chi(M,M)=\mathrm{hom}(M,M)-\mathrm{hom}^1(M,M)+\mathrm{hom}^2(M,M)\leq -1$. Thus $\mathrm{hom}(M,M)=1, \mathrm{hom}^2(M,M)=0$. Similarly, $\mathrm{hom}(N,N)=1$ and $\mathrm{hom}^2(N,N)=0$. Also note that $\chi(M,M)=\chi(N,N)=-1$. Since $[M]\in\mathcal{N}(\mathcal{K}u(X))$, assume it is of the form $as+bt$ for some $a,b\in\mathbb{Z}$. Then $\chi(M,M)=-(2a^2+6ab+5b^2)=-1$. Thus $[M]=\pm (t-2s)$ or $\pm (s-t)$. Similarly, $[N]=\pm (t-2s)$ or $\pm (s-t)$. In order for the classes of $M$ and $N$ (with appropriate shifts) add up to give the class of $F$ ($[F]$ is a $-2$-class in $\mathcal{N}(\mathcal{K}u(X))$, by the similar computation, $[F]=\pm s$ or $\pm (3s-2t)$. We can assume that $[F]=\pm s$ and the case of $[F]=\pm (3s-2t)$ are proved in the exactly the same way), and for the spectral sequence give the correct numbers of $\mathrm{hom}^*(F,F)$, the only possiblity is the following: 
$$M=\mathcal{H}^j(F), [M]=\pm (2s-t), N=\mathcal{H}^{j\pm 1}(F), [F]=\pm (s-t)$$ In all these cases, one note that $\chi([M],[N])=0$. Then we gather all these information together and obtain exactly the same table of dimensions of ext-groups in the $E_2$-page of the spectral sequence above to compute $\mathrm{ext}^*(F,F)$ as in \cite[Lemma 3.14]{altavilla2019moduli}. Then by the exact same argument there, we find that there is a contradiction, this means that the cohomology of $F$ can only have one non-zero object. This means that $F\in\mathcal{A}$ up to shifts. 
\end{proof}

\begin{predl}
\label{prop_stability_ext14}
Let $F\in\mathcal{A}$ with 
$$\mathrm{hom}^k(F,F)=\begin{cases} 
1, k=0\\
4, k=1\\
1, k=2\\
0, k=3\end{cases}$$  Then $F$ is $\sigma$-stable. 
\end{predl}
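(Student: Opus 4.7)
I would adapt the two-step template of Corollary~\ref{lemma_ext3_stable} (which handled $\mathrm{ext}^1 = 3$) to the present case, noting first that $\chi(F,F) = 1-4+1-0 = -2$, so $[F]$ is a $(-2)$-class and, as recorded in the proof of Lemma~\ref{lemma_inheart_ext14}, $[F] \in \{\pm s,\ \pm(3s-2t)\}$.

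For $\sigma$-semistability, I would assume by contradiction a destabilizing triangle $A \to F \to B$ with $A,B$ semistable of phases $\phi(A) > \phi(B)$. Since $\sigma$ is $\tau$-invariant, $\phi(\tau B) = \phi(B)$, so $\mathrm{Hom}(A,B) = \mathrm{Hom}(A,\tau B) = 0$, and the weak Mukai lemma (Lemma~\ref{lemma_Mukailemma}) yields $\mathrm{hom}^1(A,A) + \mathrm{hom}^1(B,B) \le 4$. Both summands are $\ge 2$ by Proposition~\ref{prop_heart_GM}(3), so each equals $2$. Using $\mathrm{hom}(-,-)\ge 1$, $\mathrm{hom}^2(-,-)\ge 0$, and $\chi \le -1$ on $\mathcal{K}u(X)$, this forces $\chi(A,A) = \chi(B,B) = -1$, so both $[A]$ and $[B]$ are $(-1)$-classes. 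The formula $\chi(as+bt,\,as+bt) = -(2a^2+6ab+5b^2)$ shows the only $(-1)$-classes are $\pm(2s-t)$ and $\pm(s-t)$, so the splittings $[F] = [A]+[B]$ form a finite list. Ruling out each by combining $\phi(A) > \phi(B)$ with Serre-invariance (via $S_{\mathcal{K}u(X)} = \tau \circ [2]$, which pins down the relative phases of $(-1)$-class semistables on the orbit $\mathcal{K}$) produces the contradiction.

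For $\sigma$-stability, assume $F$ is strictly semistable. If all Jordan--H\"older factors are isomorphic to a single stable $C$, then $[F] = n[C]$ with $n\ge 2$ gives $\chi(F,F) = n^2\chi(C,C) = -2$, which is impossible since $n^2 \ge 4$ and $\chi(C,C)\le -1$. Hence there is an exact sequence $0 \to A \to F \to B \to 0$ in $\mathcal{P}(\phi(F))$ with $B$ having a unique stable factor $C$ and $A$ not containing $C$; in particular $\mathrm{Hom}(A,B) = 0$. If also $\mathrm{Hom}(A,\tau B) = 0$, I would apply Mukai and repeat the numerical analysis above. If $\mathrm{Hom}(A,\tau B)\neq 0$, $\tau C$ is a quotient of $A$, and refining the filtration exactly as in the proof of Corollary~\ref{lemma_ext3_stable} produces semistable objects $D,G$ fitting into a $3\times 3$ diagram with $\mathrm{Hom}(D,G) = \mathrm{Hom}(D,\tau G) = 0$; Mukai then forces $\mathrm{hom}^1(D,D) = \mathrm{hom}^1(G,G) = 2$, and the same $(-1)$-class/phase analysis yields the contradiction.

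The main obstacle is that, unlike the $\mathrm{ext}^1 = 3$ setting, the Mukai inequality alone does not close out the argument: it is saturated rather than violated, and the finitely many numerically admissible splittings of $[F]$ into $(-1)$-classes must each be excluded by hand. The delicate point is therefore the phase analysis of $(-1)$-class semistable objects under Serre-invariant stability conditions, where one exploits that the orbit $\mathcal{K}$ is a single $\widetilde{\mathrm{GL}}^+_2(\mathbb{R})$-orbit, reducing the computation to a chosen representative $\sigma(\alpha,\beta)$.
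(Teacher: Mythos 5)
Your setup matches the paper's: reduce to a (de)stabilizing sequence $0\to A\to F\to B\to 0$, use the weak Mukai lemma to get the saturated inequality $\mathrm{hom}^1(A,A)=\mathrm{hom}^1(B,B)=2$, deduce $\chi(A,A)=\chi(B,B)=-1$, and hence that $[A],[B]$ lie in the finite list of $(-1)$-classes $\pm(2s-t),\pm(s-t)$, with the $\mathrm{Hom}(A,\tau(B))\neq 0$ case reduced to the $\mathrm{Hom}(A,\tau(B))=0$ case by the $3\times 3$ diagram trick from Corollary~\ref{lemma_ext3_stable}. Up to that point you are correct and in step with the paper.

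The gap is in how you close the argument. You propose to rule out each admissible splitting $[F]=[A]+[B]$ by a ``phase analysis of $(-1)$-class semistable objects under Serre-invariant stability conditions,'' reducing via the $\widetilde{\mathrm{GL}}^+_2(\mathbb{R})$-orbit to a representative $\sigma(\alpha,\beta)$. This does not work as stated: in the strictly semistable case the pieces $A$ and $B$ lie in the same slice $\mathcal{P}(\phi)$, so $\phi(A)=\phi(B)$ and there is no phase information to exploit; and even in the unstable case nothing forbids the phases of objects of classes $2s-t$ and $t-s$ from being ordered as required, so no contradiction is produced. (Invoking the orbit-uniqueness theorem is also shaky, since it is established later in the section and is not what the proof needs.) What the paper actually does at this point is purely homological: setting $b=\mathrm{hom}(B,A)$ and $c=\mathrm{hom}(B,\tau(A))$, the vanishings $\mathrm{Hom}(A,B)=\mathrm{Hom}(A,\tau(B))=0$ together with $\chi(A,B)=\chi(B,A)=0$ determine all $\mathrm{hom}^k(A,B)$ and $\mathrm{hom}^k(B,A)$ in terms of $b,c$; feeding these into the spectral sequence of Lemma~\ref{lemma_SS} for the triangle $B[-1]\to A\to F$ and comparing with $\mathrm{hom}(F,F)=1$, $\mathrm{hom}^1(F,F)=4$, $\mathrm{hom}^2(F,F)=1$ yields the system $x+b=1$, $y+c=0$, $c=1$ with $x,y,b,c\geq 0$, which is contradictory. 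It is precisely the hypothesis $\mathrm{ext}^2(F,F)=1$ (absent in the $\mathrm{ext}^1=3$ case) that forces $c=1$ against $y+c=0$; your proposal never uses this hypothesis in the decisive step, which is a sign the intended mechanism is missing.
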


\begin{proof}
Similar to the proof of Corollary~\ref{lemma_ext3_stable}, assume we already show that $F$ is $\sigma$-semistable, then we prove it is $\sigma$-stable. Assume that $F$ is strictly $\sigma$-semistable of phase $\phi$. Then $F$ can not have only one stable factor up to isomorphism by the same argument in Corollary~\ref{lemma_ext3_stable}. Then we assume that $F$ has more than one stable factor up to isomorphism. Then we can write a sequence in $\mathcal{P}(\phi)$ of the form $$0\rightarrow A\rightarrow F\rightarrow B\rightarrow 0$$ where $A,B$ are $\sigma$-semistable and $B$ has only one stable factor $C$ up to isomorphism such that  $\mathrm{Hom}(A,B)=0$. Then if $\mathrm{Hom}(A,\tau(B))=0$, by Mukai Lemma~\ref{lemma_Mukailemma}, we have $$\mathrm{hom}^1(A,A)+\mathrm{hom}^1(B,B)\leq\mathrm{hom}^1(F,F)=4. $$ Then by Proposition~\ref{prop_heart_GM}, we have $\mathrm{hom}^1(A,A)=\mathrm{hom}^1(B,B)=2$. Note that $\chi(A,A)=\mathrm{hom}(A,A)-\mathrm{hom}^1(A,A)+\mathrm{hom}^2(A,A)\geq -1$ but $\chi(A,A)\leq -1$ for any non-trvial object in $\mathcal{A}\subset\mathcal{K}u(X)$. Then we have $\chi(A,A)=-1$ and hence $\mathrm{hom}(A,A)=1$ and $\mathrm{hom}^2(A,A)=0$. Simlarly, we have $\mathrm{hom}(B,B)=1$ and $\mathrm{hom}^2(B,B)=0$ and $\chi(B,B)=-1$. Then by the same computation as in Corollary~\ref{lemma_inheart_ext14}, we get $[A]$ and $[B]$ are $\pm (t-2s)$ or $\pm (s-t)$. Now we assume that $[F]=\pm s$ (The case of $[F]=\pm (3s-2t)$ can be argued in entirely the same way). Then up to sign, $[A]=2s-t$ and $[B]=t-s$. In all these cases, $\chi(A,B)=\chi(B,A)=0$. Let $\mathrm{hom}(B,\tau(A))=c$ and $\mathrm{hom}(B,A)=b$.
Since $\chi(A,B)=\mathrm{hom}(A,B)-\mathrm{hom}^1(A,B)+\mathrm{hom}^2(A,B)=0$ and $\mathrm{hom}(A,B)=0$ by assumption, thus $\mathrm{hom}^1(A,B)=\mathrm{hom}^2(A,B)=\mathrm{hom}(B,\tau(A))=c$. Similarly, $\chi(B,A)=\mathrm{hom}(B,A)-\mathrm{hom}^1(B,A)+\mathrm{hom}^2(B,A)=0$ and $\mathrm{hom}^2(B,A)=\mathrm{hom}(A,\tau(B))=0$ by assumption. Thus we have $\mathrm{hom}^1(B,A)=\mathrm{hom}(B,A)=b$. 
We summarize the numerics of ext-groups in the table below. 
\begin{table}[h]
\begin{center}
\caption{Ext groups of $(A,B), (A,A), (B,B)$}
\label{table_yes}
\begin{tabular}{|p{2cm}|p{2cm}|p{2cm}|p{2cm}|p{2cm}|}
 \hline
   & $\mathrm{hom}^k(A,B)$ & $\mathrm{hom}^k(B,A)$ & $\mathrm{hom}^k(A,A)$ & $\mathrm{hom}^k(B,B)$ \\
 \hline
 $k\leq -1$ & $0$ & $0$ & $0$ & $0$  \\
 \hline
 $k=0$ & $0$ & $b$ & $1$ & $1$ \\
 \hline
 $k=1$ & $c$ & $b$ & $2$ & $2$  \\
 \hline
 $k=2$ & $c$ & $0$ & $0$ & $0$  \\
 \hline
 $k\geq 3$ & $0$ & $0$ & $0$ & $0$  \\
 \hline
 
\end{tabular}
\end{center}
\end{table}

Note that we have an exact triangle $B[-1]\rightarrow A\rightarrow F$ and there is a spectral sequence which degenerates at $E_3$ converging to $\mathrm{Ext}^{\bullet}(F,F)$ with $E_1$-page being 
$$E^{p,q}_1=\begin{cases}
\mathrm{Ext}^q(A,B[-1])=\mathrm{Ext}^{q-1}(A,B), p=-1\\
\mathrm{Ext}^q(B,B)\oplus\mathrm{Ext}^q(A,A), p=0\\
\mathrm{Ext}^q(B[-1],A)=\mathrm{Ext}^{q+1}(B,A), p=1\\
0, p\leq -2, p\geq 2\end{cases}
$$

Now, we compute $E_1$-page of the spectral sequence:
\begin{align*}
    E_1^{-1,4}&=\mathrm{Ext}^3(A,B)=0, E_1^{0,4}=\mathrm{Ext}^4(A,A)\oplus\mathrm{Ext}^4(B,B)=0,\\
    E_1^{1,4}&=0, E_1^{-1,3}=\mathrm{Ext}^2(A,B)=k^c, \\
E_1^{0,3}&=\mathrm{Ext}^3(A,A)\oplus\mathrm{Ext}^3(B,B)=0,
E_1^{1,3}=0\\
E_1^{-1,2}&=\mathrm{Ext}^1(A,B)=k^c, E_1^{0,2}=\mathrm{Ext}^2(A,A)\oplus\mathrm{Ext}^2(B,B)=0,\\
E_1^{1,2}&=\mathrm{Ext}^3(B,A)=0,
E_1^{-1,1}=\mathrm{Hom}(A,B)=0,\\ E_1^{0,1}&=\mathrm{Ext}^1(A,A)\oplus\mathrm{Ext}^1(B,B)=k^4, E_1^{1,1}=\mathrm{Ext}^2(B,A)=0, E_1^{-1,0}=0.
\end{align*}

\begin{align*}
    E_1^{0,0}&=\mathrm{Hom}(B,B)\oplus\mathrm{Hom}(A,A)=k^2, E_1^{1,0}=\mathrm{Ext}^1(B,A)=k^b, E_1^{-1.-1}=0,\\ E_1^{0,-1}&=0, E_1^{1,-1}=\mathrm{Hom}(B,A)=k^b.
\end{align*}

Then the $E_1$-page looks like:
\begin{center}
\begin{tabular}{cc|cc}
 0 & 0 & 0 & 0\\
 0 & $c$ & 0 & 0\\
 0 & $c$ & 0 & 0 \\
 0 & 0 & 4 & 0 \\
 0 & 0 & 2 & $b$ \\
\hline
0 & 0 & 0 & $b$\\ 
\end{tabular}
\end{center}

with the horizontal differential. Everywhere else are $0$ since $\mathcal{A}$ has homological dimension $2$. The next page looks like
%We know that the spectral sequence degenerate at page $3$.
\begin{center}
\begin{tabular}{cc|cc}
 0 & 0 & 0 & 0\\
 0 & $c$ & 0 & 0\\
 0 & $c$ & 0 & 0 \\
 0 & 0 & 4 & 0 \\
 0 & 0 & $x$ & $y$ \\
\hline
0 & 0 & 0 & $b$\\ 
\end{tabular}
\end{center}

On the first page, we have $E_1^{0,0}=k^2\xrightarrow{\pi} E_1^{1,0}=k^b$. Thus $E_2^{0,0}=\mathrm{Ker}(\pi)$. Note that there is a sequence of morphisms $0=E_2^{-2,1}\rightarrow E_2^{0,0}\rightarrow E_2^{2,-1}=0$. Since the spectral sequence degenerates at $E_3$, then $E_{\infty}^{0,0}=\mathrm{Ker}(\pi)$. Also note that $E_1^{-1,1}=0$, then $E_{\infty}^{-1,1}=0$. Similarly, it is easy to see that $E_{\infty}^{1,-1}=E_2^{1,-1}=k^b$. So we have $$dim(\mathrm{Ker}(\pi))+b=1$$ i.e. $x+b=1$. 

Now, $E_1^{1,0}=k^b$, $E_2^{1,0}=\frac{k^b}{im(\pi)}$. From the second table, it is easy to see that $E_{\infty}^{1,0}=\frac{k^b}{im(\pi)}$. $E_2^{0,1}=E_1^{0,1}=k^4$. Then $E_{\infty}^{0,1}=k^4$. Similarly, From the second table, we conclude that $E_{\infty}^{-1,2}=k^c$. All of them contribute to $\mathrm{hom}^1(F,F)=4$. So we have
$$dim(\frac{k^b}{im(\pi)})+4+c=4$$ i.e $y+4+c=4$. 

Moreover, $E_1^{1,1}=0, E_1^{0,2}=0$, thus $E_{\infty}^{1,1}=E_{\infty}^{0,2}=0$. But $E_1^{-1,3}=k^c$. From the second table, we know that $E_{\infty}^{-1,3}=k^c$. Then we have $c=\mathrm{hom}^2(F,F)=1$. 

Eventually, we have the system of equations:$$\begin{cases}
x+b=1\\
y+c=0\\
c=1, x,y,b,c\geq 0
\end{cases}$$

It is clear that we have a contradiction. This means that there does not exist any destablizing triangle $A\rightarrow F\rightarrow B$, so that $F$ is $\sigma$-stable. 

Now, assume that $\mathrm{Hom}(A,\tau(B))\neq 0$. Run the argument of Corollay~\ref{lemma_ext3_stable}, we get another an exact triangle $D\rightarrow F\rightarrow G$ with $D,G, \sigma$-semistable such that $\mathrm{Hom}(D,G)=\mathrm{Hom}(D,\tau(G))=0$. Then it reduces to the case above. This means that $\mathrm{Hom}(A,\tau(B))$ must be $0$. Then the desired result follows. Now, we show that $F$ is $\sigma$-semistable. By the exactly the same argument in Corollary~\ref{lemma_ext3_stable} and the first part of the proof of Proposition~\ref{prop_stability_ext14}, the result follows. \end{proof}

\begin{corollary}
\label{corollary_uniqueness_elementary}
Let $F\in\mathcal{A}$ with $\chi(F,F)=-2$. Let $\sigma_1,\sigma_2$ be $\tau$-invariant stability conditions on $\mathcal{K}u(X)$ and $F$ is $\sigma_1$-stable. Then $F$ is $\sigma_2$-stable.
\end{corollary}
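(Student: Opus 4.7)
The plan is to reduce the claim to the two stability criteria already in hand, Corollary~\ref{lemma_ext3_stable} and Proposition~\ref{prop_stability_ext14}. I would first pin down $\mathrm{Ext}^\bullet(F,F)$ using only $\sigma_1$-stability and Serre duality, then use Lemma~\ref{lemma_ext3_inheart} or Lemma~\ref{lemma_inheart_ext14} to put $F$ inside the heart of $\sigma_2$ up to shift, and finally invoke the corresponding stability criterion for $\sigma_2$.

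First I would compute all ext groups of $F$ with itself. From $\sigma_1$-stability, $\mathrm{hom}(F,F)=1$. The Serre functor on $\mathcal{K}u(X)$ is $\tau\circ[2]$, so Serre duality gives $\mathrm{ext}^i(F,F)\cong\mathrm{ext}^{2-i}(F,\tau F)^{\vee}$; hence $\mathrm{ext}^3(F,F)=0$ and $\mathrm{ext}^2(F,F)=\mathrm{hom}(F,\tau F)$. Because $\sigma_1$ is $\tau$-invariant, $\tau F$ is also $\sigma_1$-stable of the same phase as $F$, so any nonzero morphism $F\to\tau F$ is an isomorphism; therefore $\mathrm{hom}(F,\tau F)\in\{0,1\}$. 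Combined with $\chi(F,F)=-2$, exactly two cases remain: either $(\mathrm{ext}^1(F,F),\mathrm{ext}^2(F,F))=(3,0)$, or $(\mathrm{ext}^1(F,F),\mathrm{ext}^2(F,F))=(4,1)$.

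In the first case, Lemma~\ref{lemma_ext3_inheart} applied with respect to the heart of $\sigma_2$ places $F$ inside that heart up to a shift, and Corollary~\ref{lemma_ext3_stable} with $\sigma=\sigma_2$ then yields $\sigma_2$-stability. In the second case the ext-numerics computed above match the hypotheses of Lemma~\ref{lemma_inheart_ext14} exactly, so that lemma again places $F$ in the heart of $\sigma_2$ up to a shift, and Proposition~\ref{prop_stability_ext14} with $\sigma=\sigma_2$ concludes. Since $\sigma_2$-stability is preserved by shifts, the corollary follows.

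The one point to be careful about is that Lemmas~\ref{lemma_ext3_inheart}, \ref{lemma_inheart_ext14} and their stability counterparts rely on Proposition~\ref{prop_heart_GM}, so one needs $\sigma_2$ to lie in the closure of the $\widetilde{\mathrm{GL}}_2^+(\mathbb{R})$-orbit $\mathcal{K}$. This is exactly the uniqueness statement for $\tau$-invariant stability conditions on $\mathcal{K}u(X)$ up to $\widetilde{\mathrm{GL}}_2^+(\mathbb{R})$-action announced in the introduction. Granting this input, the proof reduces to the clean case analysis above; the only substantive step is the Serre-duality argument that forces $\mathrm{ext}^2(F,F)\in\{0,1\}$, which is the real reason the whole argument works.
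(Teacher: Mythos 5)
Your argument is essentially the paper's own proof: using $\mathrm{hom}(F,F)=1$, $\mathrm{ext}^3(F,F)=0$, Serre duality $S_{\mathcal{K}u(X)}=\tau\circ[2]$ and $\tau$-invariance of $\sigma_1$ to get $\mathrm{ext}^1(F,F)=3+\mathrm{hom}(F,\tau F)\in\{3,4\}$, and then invoking Corollary~\ref{lemma_ext3_stable} resp.\ Lemma~\ref{lemma_inheart_ext14} and Proposition~\ref{prop_stability_ext14} for $\sigma_2$. Your closing caveat is the only deviation and is better avoided: the needed inputs (homological dimension $2$ of the heart and $\mathrm{hom}^1(A,A)\geq 2$) follow for any $\tau$-invariant $\sigma_2$ by the same Serre-duality and numerical-class arguments as in Proposition~\ref{prop_heart_GM}, whereas appealing to the uniqueness Theorem~\ref{all_in_one_orbit} would be backwards, since this corollary is presented as a precursor to that result.
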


\begin{proof}
Since $F$ is $\sigma_1$-stable and $\chi(F,F)=-2$. Then we have $\mathrm{ext}^1(F,F)=3+\mathrm{hom}(F,\tau(F))$. Since $\sigma_1$ is $\tau$-invariant, $\tau(F)$ is also $\sigma_1$-stable object with the same phase as $F$. Thus $\mathrm{ext}^1(F,F)=3$ or $4$. If $\mathrm{ext}^1(F,F)=3$, by Corollary~\ref{lemma_ext3_stable}, $F$ is $\sigma_2$-stable. If $\mathrm{ext}^1(F,F)=4$, by Proposition~\ref{prop_stability_ext14}, $F$ is $\sigma_2$-stable. 
\end{proof}

We close this section by showing that the $\widetilde{GL_+(2,\mathbb{R})}$-orbit $\mathcal{K}$ of Serre-invariant stability condition $\sigma:=\sigma(\alpha,\beta)$ on the Kuznetsov components of a series prime Fano threefolds is unqiue, which is a generalization of Corollary~\ref{corollary_uniqueness_elementary}. 

\subsection{Uniqueness of Serre invariant stability conditions}
Let $Y_d$ be smooth index two degree $d\geq 2$ prime Fano threefold and $X_{4d+2}$ index one degree $4d+2$ prime Fano threefold. In this section, we show that all Serre-invariant stability conditions on $\mathcal{K}u(Y_d)$ and $\mathcal{K}u(X_{4d+2})$(or $\mathcal{A}_{X_{4d+2}}$) are in the same $\widetilde{\mathrm{GL}}^+(2,\mathbb{R})$-orbit for each $d\geq 2$.

\begin{lemma} \label{homo dim 2}
Let $\sigma'$ be a Serre-invariant stability condition on $\Ku(Y_d)$ and $d\geq 2$, then the heart of $\sigma'$ has homological dimension at most 2.

\end{lemma}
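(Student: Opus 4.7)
The plan is to combine Serre duality with the Serre-invariance of $\sigma' = (\mathcal{P}, Z)$ to establish $\mathrm{Ext}^i(E, F) = 0$ for $i \geq 3$ and any $E, F$ in the heart $\mathcal{A}'$. Writing $S := S_{\Ku(Y_d)}$, the hypothesis produces $\tilde{g} = (g, M) \in \widetilde{\mathrm{GL}}^+_2(\mathbb{R})$ with $S \cdot \sigma' = \sigma' \cdot \tilde{g}$, so that $S$ maps $\mathcal{P}(\phi)$ onto $\mathcal{P}(g(\phi))$, where $g$ is monotone with $g(\phi+1) = g(\phi) + 1$.

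First I would perform a Serre duality reduction that converts the homological dimension bound into a bound on $g(1)$. For any $E \in \mathcal{A}' = \mathcal{P}((0,1])$ every HN factor of $S(E)$ has phase at most $g(1)$, so for $i \geq 3$ every HN factor of $S(E)[-i]$ has phase at most $g(1) - i$. If $g(1) \leq 3$ then this is $\leq 0$, which is strictly below the positive phases of the HN factors of $F \in \mathcal{A}'$. Standard properties of slicings then give $\mathrm{Hom}(F, S(E)[-i]) = 0$, and via Serre duality $\mathrm{Ext}^i(E, F) \cong \mathrm{Hom}(F, S(E)[-i])^{\vee} = 0$. So the proof reduces to showing $g(1) \leq 3$.

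The second step is the bound $g(1) \leq 3$, proved by a case analysis on $d \geq 2$ using the explicit Serre functor. For $d = 2$, the category $\Ku(Y_2)$ is Enriques, $S \cong \iota \circ [2]$ for an involutive autoequivalence $\iota$ coming from the covering involution; Serre-invariance gives $\tilde{g}_{\iota} \in \widetilde{\mathrm{GL}}^+_2(\mathbb{R})$ with $\tilde{g}_{\iota}^2 = \mathrm{id}$, and since $\widetilde{\mathrm{GL}}^+_2(\mathbb{R})$ is simply connected (hence torsion-free), $\tilde{g}_{\iota} = \mathrm{id}$, so $g(\phi) = \phi + 2$ and $g(1) = 3$. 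For $d = 3$, the category is fractional Calabi-Yau with $S^3 = [5]$; analyzing $\tilde{g}^3 = \widetilde{[5]}$ in $\widetilde{\mathrm{GL}}^+_2(\mathbb{R})$, the constraint that $g$ be monotone together with the rotation constraint on $M$ forces $g(\phi) = \phi + 5/3$, giving $g(1) = 8/3 < 3$. For $d = 4, 5$, one exploits the equivalences $\Ku(Y_4) \cong D^b(C_2)$ (with $C_2$ a smooth genus $2$ curve) and $\Ku(Y_5) \cong D^b(K_3)$ (the $3$-Kronecker quiver) together with the explicit form of their Serre functors to compute $\tilde{g}$ and verify $g(1) < 3$.

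The main obstacle is the case-by-case verification of $g(1) \leq 3$; the cleanest case is $d = 2$ where the Enriques structure and the torsion-freeness of $\widetilde{\mathrm{GL}}^+_2(\mathbb{R})$ pin down $g$ immediately. Once $g(1) \leq 3$ is in hand, the Serre duality argument of the first step closes the proof uniformly in $d$.
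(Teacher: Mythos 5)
Your route is genuinely different from the paper's, and in outline it works. The paper handles the lemma case by case: for $d=2$ it repeats the argument of Proposition~\ref{prop_heart_GM}(2) (Serre functor $\tau\circ[2]$ with $\tau$ preserving the heart, so $\mathrm{Ext}^i(E,F)\cong\mathrm{Ext}^{2-i}(F,\tau E)=0$ for $i\geq 3$), for $d=3$ it cites \cite[Lemma 5.10]{pertusi2020some}, and for $d=4,5$ it invokes $\mathcal{K}u(Y_4)\simeq D^b(C_2)$ and $\mathcal{K}u(Y_5)\simeq D^b(\mathrm{Rep}(K(3)))$. You instead give a uniform Serre-duality reduction to the bound $g(1)\leq 3$ for the phase function $g$ appearing in $S\cdot\sigma'=\sigma'\cdot(g,M)$, and then bound $g(1)$ case by case; the reduction step itself is correct. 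Your $d=2$ case is in fact tighter than the paper's: Proposition~\ref{prop_heart_GM} only establishes $\tau$-invariance of the heart for stability conditions in the closure of the orbit $\mathcal{K}$, whereas your torsion-freeness argument applies to an arbitrary Serre-invariant $\sigma'$. Do state explicitly, though, that deducing $\tilde{g}_{\iota}^2=\mathrm{id}$ from $\iota^2=\mathrm{id}$ uses freeness of the $\widetilde{\mathrm{GL}}^+_2(\mathbb{R})$-action on $\mathrm{Stab}(\mathcal{K}u(Y_2))$ (which holds because distinct slices of a slicing intersect only in $0$, so $\mathcal{P}(\phi)=\mathcal{P}(\phi+2k)$ forces $k=0$ when $Z$ has rank-two image).

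Two further points. In the $d=3$ case your intermediate claim is false as stated: $\tilde{g}^3=\widetilde{[5]}$, i.e.\ $g^{\circ 3}(\phi)=\phi+5$, pins down the rotation number $5/3$ but does not force $g$ to be the rigid translation $\phi\mapsto\phi+5/3$ (any conjugate of that translation by an increasing $h$ with $h(\phi+1)=h(\phi)+1$ also cubes to translation by $5$). Fortunately the bound you actually need follows directly from the same relation: if $g(\phi_0)\geq\phi_0+2$ for some $\phi_0$, then monotonicity and $g(\phi+1)=g(\phi)+1$ give $g^{\circ 3}(\phi_0)\geq\phi_0+6>\phi_0+5$, a contradiction, so $g(\phi)<\phi+2$ for all $\phi$ and in particular $g(1)<3$; with this one-line replacement the $d=3$ case is fine. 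Finally, for $d=4,5$ your "compute $\tilde{g}$ and verify $g(1)<3$" is only a sketch; to make it precise you would in any case have to invoke the classification of stability conditions on $D^b(C_2)$ and on $D^b(\mathrm{Rep}(K(3)))$ (a single $\widetilde{\mathrm{GL}}^+_2(\mathbb{R})$-orbit), at which point the vanishing of $\mathrm{Ext}^{\geq 3}$ between heart objects can be read off directly from the explicit Serre functor and the standard slicing — this is essentially what the paper's terse appeal to those equivalences amounts to, so you are at the same level of detail there as the paper itself.
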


\begin{proof}
When $d=2$, this is from the same argument in Proposition \ref{prop_heart_GM}. When $d=3$, this is from \cite[Lemma 5.10]{pertusi2020some}. When $d=4$ and $5$, since $\Ku(Y_4)\cong D^b(C_2)$ and $\Ku(Y_5)\cong D^b(\mathrm{Rep}(K(3)))$ where $C_2$ is a genus 2 smooth curve and $\mathrm{Rep}(K(3))$ is the category of representations of 3-Kronecker quiver (\cite{KPS2018}), then in these two cases the heart has homological dimension 1.
\end{proof}

\begin{lemma} \label{phase jump 2}
Let $\sigma'$ be a Serre-invariant stability condition on $\Ku(Y_d), d\geq 2$. %Then the phase with respect to $\sigma'$ satisfies  $\phi'(E)< \phi'(S_{\Ku(Y_d)}(E))\leq \phi'(E)+2$ for every $\sigma$-semistable object $E\in \Ku(Y_d)$.
If $E,F$ are two $\sigma'$-semistable objects with phases $\phi'(E)<\phi'(F)$, then $\mathrm{Hom}(E, F[2])=0$.
\end{lemma}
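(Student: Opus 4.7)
The plan is to translate the desired vanishing into a phase comparison via Serre duality on $\Ku(Y_d)$, and then exploit Serre-invariance of $\sigma'$ to bound the relevant phase. By Serre duality
\[
\mathrm{Hom}(E,F[2]) \cong \mathrm{Hom}\bigl(F,\, S_{\Ku(Y_d)}(E)[-2]\bigr)^{\vee},
\]
so it suffices to show that the right-hand side vanishes. Since $\sigma'$ is Serre-invariant, there is some $\tilde g = (g,M) \in \widetilde{GL}^+(2,\mathbb{R})$ with $S_{\Ku(Y_d)} \cdot \sigma' = \sigma' \cdot \tilde g$. In particular $S_{\Ku(Y_d)}(E)$ is $\sigma'$-semistable, and $S_{\Ku(Y_d)}(E)[-2]$ is $\sigma'$-semistable with phase $g^{-1}(\phi'(E))-2$.

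The key step is to show
\[
\phi'\bigl(S_{\Ku(Y_d)}(E)[-2]\bigr) \;=\; g^{-1}(\phi'(E)) - 2 \;\leq\; \phi'(E).
\]
Once this is established, combining with the hypothesis $\phi'(E) < \phi'(F)$ and the fact that $F, S_{\Ku(Y_d)}(E)[-2]$ are both $\sigma'$-semistable, semistability forces $\mathrm{Hom}\bigl(F, S_{\Ku(Y_d)}(E)[-2]\bigr) = 0$, which then gives the desired vanishing.

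For $d=2$ the phase bound is immediate: by Proposition~\ref{prop_heart_GM} (and its analogue for $Y_2$) Serre-invariance specializes to $\tau$-invariance, and since $S_{\Ku(Y_2)} \cong \tau \circ [2]$, one has $S_{\Ku(Y_2)}(E)[-2] = \tau(E)$, whose phase equals $\phi'(E)$ by $\tau$-invariance of $\sigma'$. For $d=3,4,5$ one uses the ``fractional Calabi--Yau'' nature of $S_{\Ku(Y_d)}$ --- namely $S^3_{\Ku(Y_3)} \cong [5]$ for $d=3$, together with the equivalences $\Ku(Y_4) \cong D^b(C_2)$ and $\Ku(Y_5) \cong D^b(\mathrm{Rep}(K(3)))$ recalled in Lemma~\ref{homo dim 2} --- which together with Serre-invariance force $\tilde g$ to act on phases as a translation by some $\epsilon < 2$. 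Hence $g^{-1}(\phi'(E)) - 2 = \phi'(E) + \epsilon - 2 < \phi'(E)$, and we conclude as before.

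The main obstacle is pinning down that $\tilde g$ is genuinely a translation in each case $d \geq 3$: one needs to argue that the Serre relation (e.g.\ $S^3 \cong [5]$) combined with $g(\phi+1)=g(\phi)+1$ leaves no freedom beyond translation, after which the explicit value of $\epsilon$ can be read off from the relation and is bounded above by $2$.
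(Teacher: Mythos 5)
Your overall strategy (Serre duality plus a bound on how much the Serre functor shifts phases) is indeed the idea behind the cases $d=2,3$, and your $d=2$ argument via $S_{\Ku(Y_2)}\cong\tau\circ[2]$ and phase-preservation of the involution is fine. But there are two genuine gaps. First, the claim you rely on for $d\geq 3$ --- that the Serre relation forces $\tilde g$ to act on phases by a \emph{translation} --- is not just unproven (as you admit), it is false in general: writing $\tilde g=(g,M)$, the relation $S^3_{\Ku(Y_3)}\cong[5]$ only gives $M^3=-\mathrm{id}$ and $g^{\circ 3}=\mathrm{id}+5$, and such an $M$ is merely \emph{conjugate} to a rotation by $\pi/3$, so the induced map $g$ on phases need not be $\phi\mapsto\phi+\tfrac{5}{3}$. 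Fortunately the inequality you actually need does not require the translation claim: if $g(\phi_0)\geq\phi_0+2$ for some $\phi_0$, then monotonicity of $g$ and $g(\phi+1)=g(\phi)+1$ give $g^{\circ 3}(\phi_0)\geq\phi_0+6$, contradicting $g^{\circ 3}=\mathrm{id}+5$; hence $g(\phi)<\phi+2$ for all $\phi$, which is exactly the bound $\phi'(S(E))<\phi'(E)+2$ that makes your Serre-duality step close for $d=3$. Your write-up should replace the translation claim by this elementary estimate.

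Second, your treatment of $d=4,5$ does not work as sketched: there is no relation of the form $S^k\cong[m]$ there (e.g.\ $S_{D^b(C_2)}=(-\otimes\omega_{C_2})[1]$ is not shift-periodic), so "fractional Calabi--Yau nature" gives you nothing, and you give no argument that Serre-invariance bounds the phase shift by $2$ in these cases. The paper handles $d=4,5$ by a different mechanism: via $\Ku(Y_4)\cong D^b(C_2)$ and $\Ku(Y_5)\cong D^b(\mathrm{Rep}(K(3)))$ the heart of any Serre-invariant stability condition has homological dimension $1$ (Lemma~\ref{homo dim 2}), and then writing $E=E'[m]$, $F=F'[n]$ with $E',F'$ in the heart, the hypothesis $\phi'(E)<\phi'(F)$ forces $n\geq m$, so $\mathrm{Hom}(E,F[2])=\mathrm{Hom}(E',F'[2+n-m])=0$. (For $d=2,3$ the paper simply cites Pertusi--Yang, whose argument is the Serre-duality/phase-bound one you are reconstructing.) So: your route is salvageable for $d=2,3$ with the fix above, but the $d=4,5$ cases need the homological-dimension argument (or some substitute), and as written the proposal does not establish them.
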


\begin{proof}
When $d=4$ and $5$, this is from that the heart of $\sigma'$ has homological dimension 1.  When $d=2$ and $3$ this is from \cite[Sec. 5, Sec. 6]{pertusi2020some}. 
\end{proof}

\begin{lemma} \emph{(Weak Mukai Lemma)} \label{mukai lemma}
Let $\sigma'$ be a Serre-invariant stability condition on $\Ku(Y_d), d\geq 2$. Let
$$F\to E\to G$$
be an exact triangle in $\Ku(Y_d)$ such that $\mathrm{Hom}(F,G)=\mathrm{Hom}(G,F[2])=0$. Then we have
\[\mathrm{ext}^1(F,F)+\mathrm{ext}^1(G,G)\leq \mathrm{ext}^1(E,E)\]
\end{lemma}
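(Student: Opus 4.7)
The plan is to derive the inequality purely from the long exact sequences obtained by applying $\Hom(-,-)$ to the triangle $F\to E\to G\xrightarrow{\delta}F[1]$, using only the two Hom-vanishing hypotheses. First I would unpack the two hypotheses. Applying $\Hom(-,F)$ and $\Hom(G,-)$ to the triangle and using $\Hom(G,F[2])=0$ yields the two surjections
\[
\Hom(E,F[1])\twoheadrightarrow\Hom(F,F[1]),\qquad \Hom(G,E[1])\twoheadrightarrow\Hom(G,G[1]).
\]
Symmetrically, applying $\Hom(F,-)$ and $\Hom(-,G)$ and using $\Hom(F,G)=0$ yields the injection $\iota:\Hom(G,G[1])\hookrightarrow\Hom(E,G[1])$ and the surjection $\Hom(G,G)\twoheadrightarrow\Hom(E,G)$, the latter being precomposition with the triangle map $p:E\to G$.

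The backbone of the argument is the exact sequence from $\Hom(E,-)$ applied to the triangle,
\[
\Hom(E,F[1])\xrightarrow{b_1}\Hom(E,E[1])\xrightarrow{b_2}\Hom(E,G[1]),
\]
which gives $\mathrm{ext}^1(E,E)=\dim\mathrm{im}(b_1)+\dim\mathrm{im}(b_2)$, so the lemma reduces to the two estimates $\dim\mathrm{im}(b_1)\ge\mathrm{ext}^1(F,F)$ and $\dim\mathrm{im}(b_2)\ge\mathrm{ext}^1(G,G)$. For the second, I would lift any $\phi\in\Hom(G,G[1])$ through the surjection $\Hom(G,E[1])\twoheadrightarrow\Hom(G,G[1])$ to some $\phi'\in\Hom(G,E[1])$; then $\phi'\circ p\in\Hom(E,E[1])$ satisfies $b_2(\phi'\circ p)=\iota(\phi)$, and injectivity of $\iota$ gives $\dim\mathrm{im}(b_2)\ge\mathrm{hom}(G,G[1])$.

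The main technical step is the estimate $\dim\mathrm{im}(b_1)\ge\mathrm{ext}^1(F,F)$, and it is here that $\Hom(F,G)=0$ does the real work. By exactness, $\ker(b_1)=\mathrm{im}(\partial)$ for the connecting map $\partial:\Hom(E,G)\to\Hom(E,F[1])$, which is simply post-composition with $\delta$. Using the surjection $\Hom(G,G)\twoheadrightarrow\Hom(E,G)$ coming from $\Hom(F,G)=0$, any $\eta\in\Hom(E,G)$ can be written as $\eta=\chi\circ p$ for some $\chi\in\Hom(G,G)$, so $\partial(\eta)=(\delta\chi)\circ p$ lies in the image of $c_1:\Hom(G,F[1])\to\Hom(E,F[1])$ extracted from the $\Hom(-,F[1])$ sequence. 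Combined with the identity $\dim\mathrm{im}(c_1)=\mathrm{hom}(E,F[1])-\mathrm{hom}(F,F[1])$ (which in turn uses $\Hom(G,F[2])=0$), this yields $\dim\ker(b_1)\le\mathrm{hom}(E,F[1])-\mathrm{hom}(F,F[1])$, hence $\dim\mathrm{im}(b_1)\ge\mathrm{ext}^1(F,F)$. Summing the two estimates gives the lemma. The only delicate point, and the main obstacle, is the inclusion $\ker(b_1)\subset\mathrm{im}(c_1)$, for which the factorisation $\eta=\chi\circ p$ afforded by $\Hom(F,G)=0$ is indispensable; the homological dimension bound from Lemma~\ref{homo dim 2} and the phase-jump vanishing from Lemma~\ref{phase jump 2} are not invoked in the proof itself, but are what allow the two Hom-vanishing hypotheses to be verified in the applications of the lemma.
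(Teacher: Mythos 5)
Your proof is correct: the two vanishing hypotheses give exactly the surjectivity/injectivity statements you list, the identity $\mathrm{ext}^1(E,E)=\dim\mathrm{im}(b_1)+\dim\mathrm{im}(b_2)$ follows from exactness of the $\Hom(E,-)$ sequence, and your two estimates (the lift through $\Hom(G,E[1])\twoheadrightarrow\Hom(G,G[1])$ for $b_2$, and the inclusion $\ker(b_1)=\mathrm{im}(\partial)\subset\mathrm{im}(c_1)$ via the factorisation $\eta=\chi\circ p$ for $b_1$) are each valid. The paper itself states this lemma without proof, deferring to the standard weak Mukai lemma argument in the cited references (cf.\ the earlier Lemma~\ref{lemma_Mukailemma} and \cite[Lemma 5.14]{pertusi2020some}), and your long-exact-sequence chase is essentially that same standard argument, just written out in full.
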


\begin{lemma} \label{decrease ext1}
Let $\sigma'$ be a Serre-invariant stability condition on $\Ku(Y_d)$ and $d\geq 2$. Assume that there's a triangle of $E\in \Ku(Y_d)$
\[F\to E\to G\]
such that phases of all $\sigma'$-semistable factors of $F$ are greater than  $\sigma'$-semistable factors of $G$, then we have $\mathrm{ext}^1(F,F)<\mathrm{ext}^1(E,E)$ and $\mathrm{ext}^1(G,G)<\mathrm{ext}^1(E,E)$
\end{lemma}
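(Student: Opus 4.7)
The plan is to obtain the strict inequalities by combining Lemma~\ref{mukai lemma} with a positivity statement for $\mathrm{ext}^1$ on nonzero aggregates of $\sigma'$-semistable objects. First I would verify the hypotheses of Lemma~\ref{mukai lemma} for the triangle $F\to E\to G$. Decomposing both sides into HN factors $F=\oplus F_i$, $G=\oplus G_j$, the assumption $\phi'(F_i)>\phi'(G_j)$ gives $\mathrm{Hom}(F_i,G_j)=0$ by the standard phase-vanishing between semistables of decreasing phase, and summing yields $\mathrm{Hom}(F,G)=0$. Lemma~\ref{phase jump 2} applied to each pair $(G_j,F_i)$, for which $\phi'(G_j)<\phi'(F_i)$, gives $\mathrm{Hom}(G_j,F_i[2])=0$, hence $\mathrm{Hom}(G,F[2])=0$ after summing. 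Lemma~\ref{mukai lemma} then delivers the weak bound
\[
\mathrm{ext}^1(F,F)+\mathrm{ext}^1(G,G)\leq \mathrm{ext}^1(E,E).
\]

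To upgrade to strict inequalities I would show $\mathrm{ext}^1(F,F)\geq 1$ and $\mathrm{ext}^1(G,G)\geq 1$; since both $F$ and $G$ are nonzero by the phase hypothesis, this suffices. For $d=2,3$, the numerical form on $\mathcal{N}(\Ku(Y_d))$ satisfies $\chi(v,v)\leq -1$ on every nonzero class (as in Proposition~\ref{prop_heart_GM}(3) for $d=2$), which together with $\mathrm{hom}(A,A)\geq 1$ forces $\mathrm{ext}^1(A,A)\geq 2$ for every nonzero HN aggregate $A$. For $d=4,5$, Lemma~\ref{homo dim 2} gives homological dimension $\leq 1$, so $\mathrm{ext}^1(A,A)=\mathrm{hom}(A,A)-\chi(A,A)$, and a case analysis on the Euler forms of $K_0(C_2)$ and $K_0(K(3))$ yields $\chi(A,A)\leq 0$ on the HN aggregates that can actually occur in a Serre-invariant heart. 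The Mukai bound then gives $\mathrm{ext}^1(F,F)\leq \mathrm{ext}^1(E,E)-\mathrm{ext}^1(G,G)\leq \mathrm{ext}^1(E,E)-1$, and symmetrically for $G$.

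The main obstacle is the borderline case in $\Ku(Y_5)\simeq D^b(K(3))$, where the simples $S_1,S_2$ are exceptional and the naive lower bound $\mathrm{ext}^1(A,A)\geq 1$ fails for $A$ a simple. To handle this, I would argue directly from the long exact sequences attached to the triangle $F\to E\to G$: even when $\mathrm{ext}^1(F,F)$ or $\mathrm{ext}^1(G,G)$ vanishes, the nonzero extension class $\eta\in\mathrm{Ext}^1(G,F)$ together with the nonvanishing of $\mathrm{Ext}^1(F,G)$ forced by the strict phase separation and the Euler form pushes $\mathrm{ext}^1(E,E)$ strictly above the Mukai bound, recovering both strict inequalities. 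This reduces to a small explicit computation using the unique $\widetilde{\mathrm{GL}}^+_2(\mathbb{R})$-orbit of Serre-invariant stability conditions on $D^b(K(3))$, whose determination is the content of the subsequent results of this section.
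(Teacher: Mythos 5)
Your first step coincides with the paper's proof: d\'evissage of the phase hypothesis to pairs of semistable factors gives $\mathrm{Hom}(F,G)=0$, Lemma~\ref{phase jump 2} applied to those pairs gives $\mathrm{Hom}(G,F[2])=0$, and Lemma~\ref{mukai lemma} then yields $\mathrm{ext}^1(F,F)+\mathrm{ext}^1(G,G)\le\mathrm{ext}^1(E,E)$; strictness is then a matter of knowing $\mathrm{ext}^1(A,A)\ge 1$ for the nonzero pieces, which your bounds for $d=2,3$ (negative definiteness of the Euler form on $\mathcal{N}(\Ku(Y_d))$) and $d=4$ ($\chi(A,A)\le 0$ on $D^b(C_2)$ together with Lemma~\ref{homo dim 2}) do supply. (Minor point: the HN factors form a filtration, not a direct sum decomposition $F=\oplus F_i$, but the vanishing argument by d\'evissage is unaffected.)

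The genuine gap is your treatment of $d=5$. You correctly notice that in $\Ku(Y_5)\simeq D^b(K(3))$ exceptional objects have $\mathrm{ext}^1=0$, so the positivity input fails there; but the patch you propose does not close the hole. First, the extension class $\eta\in\mathrm{Ext}^1(G,F)$ need not be nonzero: the hypothesis allows split triangles (for instance the HN triangle of a direct sum $F\oplus G$ of semistables of decreasing phase), so no argument may start from $\eta\neq 0$. Second, the assertion that $\mathrm{Ext}^1(F,G)\neq 0$ is ``forced by the strict phase separation and the Euler form'' fails in exactly the dangerous configuration: with $\mathrm{Hom}(F,G)=0$ and homological dimension $1$ one has $\mathrm{ext}^1(F,G)=-\chi(F,G)$, and $\chi(F,G)$ can vanish for a pair of exceptional objects --- e.g.\ for the two indecomposable projective $K(3)$-modules one has $\mathrm{Hom}(F,G)=\mathrm{Ext}^1(F,G)=\mathrm{Ext}^1(G,F)=0$ --- so nothing in your argument pushes $\mathrm{ext}^1(E,E)$ strictly above the Mukai bound when both pieces are exceptional. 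Third, appealing to ``the unique $\widetilde{\mathrm{GL}}{}^+_2(\mathbb{R})$-orbit of Serre-invariant stability conditions on $D^b(K(3))$, whose determination is the content of the subsequent results of this section'' is circular: Lemma~\ref{decrease ext1} is an ingredient of the induction proving Theorem~\ref{all_in_one_orbit}, so that uniqueness (for $Y_5$) cannot be assumed here. Closing the $d=5$ case requires an independent input --- for instance a direct analysis of which exceptional configurations can occur as semistable factors for a Serre-invariant stability condition on $D^b(K(3))$, or a restriction of the lemma to the situations actually used in the induction --- and none is supplied. For comparison, the paper's own proof is silent on this point as well: it only quotes Lemma~\ref{mukai lemma}, leaving the positivity of $\mathrm{ext}^1$ of the pieces implicit; your instinct that $d=5$ is the delicate case is right, but the proposed resolution is not a proof.
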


\begin{proof}
Since $\phi'(F)> \phi'(G)$, by Lemma \ref{phase jump 2} we have $\mathrm{Hom}(F,G)=0$ and $\mathrm{Hom}(G,F[2])=\mathrm{Hom}(F[2], S_{\Ku(Y_d)}(E))=0$. Thus the result follows from Lemma \ref{mukai lemma}.
\end{proof}

%\begin{lemma} \emph{\cite[Lemma 7.11]{liu2021note}} \label{compare slpoe}
%Let $\sigma_1, \sigma_2$ be two Serre-invariant stability condition on $\Ku(Y)$. Let $E,F\in \Ku(Y)$ be any two objects. Then $\mu_1(E)<(=)\mu_1(F)$ if and only if $\mu_2(E)<(=)\mu_2(F)$.
%\end{lemma}

Let $\sigma=\sigma(\alpha, -\frac{1}{2})$ and $Y:=Y_d, d\geq 2$. As shown in \cite[Section 4]{pertusi2020some}, the moduli space $\mathcal{M}_{\sigma}(\Ku(Y), -v)$ and $\mathcal{M}_{\sigma}(\Ku(Y), w-v)$ are non-empty. Let $A, B\in \mathcal{A}(\alpha, -\frac{1}{2})$ such that $[A]=-v, [B]=w-v$ are $\sigma$-stable objects. We denote the phase with respect to $\sigma=\sigma(\alpha, -\frac{1}{2})$ by $\phi(-):=\phi(\alpha, \beta)(-)$.

Now let $\sigma_1$ be any Serre-invariant stability condition on $\Ku(Y)$. By \cite[Remark 5.14]{pertusi2020some}, there's a $T=(t_{ij})_{1\leq i,j\leq 2}\in \mathrm{GL}_2^+(\mathrm{R})$ such that $Z_1=T\cdot Z(\alpha, -\frac{1}{2})$. Since $A$ is stable with respect to every Serre-invariant stailibty condition by \cite[Lemma 5.16]{pertusi2020some}, we can assume $A[m]\in \mathcal{A}_1$. Let $\sigma_2=\sigma\cdot \Tilde{g}$ for $\widetilde{g}:=(g, T)\in \widetilde{\mathrm{GL}}^+_2(\mathbb{R})$ such that $A[m]\in \mathcal{A}_2$ and $Z_2=Z_1$. Then we have $\phi_1(A)=\phi_2(A)$ and $\mathcal{A}_2=\mathcal{P}(\alpha, -\frac{1}{2})((g(0), g(0)+1])$.

\begin{lemma} \label{phase same}\cite[Lemma 4.21]{JLLZh2021}
Notations as above, then $A$ and $B$ are $\sigma_1$-stable with phase $\phi_1(A)=\phi_2(A)$ and $\phi_1(B)=\phi_2(B)$.
\end{lemma}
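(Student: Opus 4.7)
The lemma has two parts: $\sigma_1$-stability of $A$ and $B$, and agreement of their phases under $\sigma_1$ and $\sigma_2$. I would attack them in this order.

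\emph{Stability.} The classes $[A]=-v$ and $[B]=w-v$ are $(-1)$-classes in $\mathcal{N}(\Ku(Y))$, in the sense that $\chi([A],[A])=\chi([B],[B])=-1$. I would deduce $\sigma_1$-stability from the general principle that objects of $(-1)$-class are stable with respect to every Serre-invariant stability condition. Concretely: suppose $A$ were $\sigma_1$-unstable (or strictly semistable) and pick a Harder--Narasimhan-style triangle $F\to A\to G$ with $\phi_1(F)>\phi_1(G)$. Lemma~\ref{decrease ext1} shrinks $\mathrm{ext}^1$, and iterating produces $\sigma_1$-stable factors of $A$; the weak Mukai Lemma~\ref{mukai lemma} combined with $\chi([A],[A])=-1$ and Proposition~\ref{prop_heart_GM}(3)-style bounds then yields a numerical contradiction, exactly as in Corollary~\ref{lemma_ext3_stable} and Proposition~\ref{prop_stability_ext14}, or as in \cite[Lemma~5.16]{pertusi2020some} for $d=3$. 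The same reasoning applied to $B$ gives $\sigma_1$-stability of $B$.

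\emph{Phase equality for $A$.} This is immediate from the construction. The lift $\tilde{g}=(g,T)\in\widetilde{\mathrm{GL}}_2^+(\mathbb{R})$ used to define $\sigma_2=\sigma\cdot\tilde{g}$ was chosen precisely so that $Z_2=T\cdot Z(\alpha,-\tfrac{1}{2})=Z_1$ and $A[m]\in\mathcal{A}_2=\mathcal{P}(\alpha,-\tfrac{1}{2})((g(0),g(0)+1])$. Since we also arranged $A[m]\in\mathcal{A}_1$, the shifted object $A[m]$ lies in both hearts, and $\sigma_i$-stability forces $\phi_i(A[m])\in(0,1]$. But $\phi_i(A[m])$ is the unique representative of $\tfrac{1}{\pi}\arg Z_i(A[m])$ in $(0,1]$; since $Z_1(A)=Z_2(A)$, this representative is the same, giving $\phi_1(A)=\phi_2(A)$.

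\emph{Phase equality for $B$.} Let $n_i\in\mathbb{Z}$ be the unique integer with $B[n_i]\in\mathcal{A}_i$. The identity $Z_1(B)=Z_2(B)$ immediately gives $n_1\equiv n_2\pmod 2$, so the only thing to rule out is a nonzero even shift. For this I would exploit the fact that $A$ and $B$ are numerically linked: using the triangle structure in $\Ku(Y)$ relating ideal sheaves (or projection objects) of classes $-v$ and $w-v$, together with Serre duality $\mathrm{Ext}^k(A,B)\cong\mathrm{Ext}^{2-k}(B,S_{\Ku(Y)}(A))^\vee$ and the Serre-invariance of both $\sigma_1,\sigma_2$, one gets a nonvanishing $\Hom(A,B[k])\ne 0$ for a specific small $k$. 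This forces a phase inequality $\phi_i(A)\leq\phi_i(B)+k$ under both $\sigma_i$; combined with $\phi_1(A)=\phi_2(A)$ from the previous step, and with the analogous inequality in the reverse direction, the $2\mathbb{Z}$ ambiguity collapses and $\phi_1(B)=\phi_2(B)$.

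\emph{Main obstacle.} The delicate step is the last one. Central-charge equality alone only pins phases down modulo $2\mathbb{Z}$; the extra rigidity needed to eliminate this ambiguity comes from Serre-invariance. The technical heart of the argument is producing the correct Hom-/Ext-constraint between $A$ and $B$ (at the right degree and of the right sign) so that both $\sigma_1$ and $\sigma_2$ are forced to assign the same integer shift to $B$ once they agree on $A$. Any slack in the Serre-invariance bookkeeping at this stage would leave the even-integer ambiguity unresolved.
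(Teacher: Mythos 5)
Your proposal is correct and follows essentially the same route as the paper: $\sigma_1$-stability of $A,B$ is the known statement that objects of $(-1)$-class are stable for every Serre-invariant stability condition (the paper simply cites \cite[Lemma 5.13]{pertusi2020some}), $\phi_1(A)=\phi_2(A)$ is immediate from the construction of $\sigma_2$, and the even-shift ambiguity for $B$ is removed by confining $\phi_i(B)$ to a unit-length window around the common phase of $A$. The Hom-constraint you are looking for in the last step is exactly $\mathrm{hom}(B,A)>0$ and $\mathrm{hom}(A,B[1])>0$ (i.e.\ \cite[Remark 4.8]{pertusi2020some}), which gives $\phi_i(B)<\phi_i(A)<\phi_i(B)+1$ for both $i$ and closes the argument just as in the paper.
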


\begin{proof}
The stability of $A$ and $B$ is from \cite[Lemma 5.13]{pertusi2020some}. By definition of $\sigma_2$, we know $\phi_1(A)=\phi_2(A)$ and $\phi_2(B)< \phi_2(A)< \phi_2(B)+1$. And from \cite[Remark 4.8]{pertusi2020some} we know $\phi_1(B)<\phi_1(A)=\phi_2(A)< \phi_1(B)+1$. Thus $\phi_1(B)=\phi_2(B)$.

%For $D$, note that $\mathcal{M}_{\sigma}(\Ku(Y), [D])$ contains a family of objects obtained by projecting skyscraper sheaves into $\Ku(Y)$. 

%We have $\mu(B)<\mu(D)<\mu(A)< \mu(C)$. Thus $\phi_2(B)< \phi_2(D)< \phi_2(A)< \phi_2(C)$. Since $\phi_1(A)=\phi_2(A)$, from \cite[Remark 4.8]{PY20} we know $\phi_1(B)=\phi_2(B)$. %For $D$, we know $\chi(D,B)=-1$ and $\chi(B,D)>0$. Thus we have $\mathrm{hom}(D,B[1])>0$  and $\phi_1(D)<\phi_1(B)+1$. Now since $\mu(D)>\mu(B)$, from Lemma \ref{phase jump 2} $\mathrm{hom}(B,D[2])=0$ and hence $\mathrm{hom}(B,D)>0$. Therefore $\phi_1(B)<\phi_1(D)< \phi_1(B)+1$, which implies $\phi_1(D)=\phi_2(D)$.
%For $D$, we know $\chi(D,B)<0$ and $\chi(A,D)<0$. Thus $\mathrm{hom}(D,B[1])>0$ and $\mathrm{hom}(A, D[1])>0$, which implies  $\phi_1(A)-1<\phi_1(D)<\phi_1(B)+1$. Hence we know $|\phi_1(D)-\phi_2(D)|=0$ or $1$. But $|\phi_1(D)-\phi_2(D)|=1$ is impossible since $\phi_2(B)=\phi_1(B)< \phi_2(D)< \phi_1(A)=\phi_2(A)$. Thus $\phi_1(D)=\phi_2(D)$.

%(Will not use $C$)

%For $C$, since $\chi(B,C)>0$ and $\chi(C,B)>0$ with $\mu(B)< \mu(C)$, then $\phi_1(B)< \phi_1(C)<\phi_1(B)+2$. Thus $\phi_1(C)-\phi_2(C)=0$ or $1$.  If $\phi_1(C)-\phi_2(C)=1$ and assume $C[n]\in \mathcal{A}_2$, then $C[n-1]\in \mathcal{A}_1$.
%But $\phi_1(A[m])< \phi_1(C[n-1])$ and $m+1\leq n\leq m$, then from $\phi_1(B[m])< \phi_1(A[m])$ we have $n=m+1$. Thus $\phi_1(A)<\phi_1(C)$
\end{proof}

%Let $E\in \mathcal{A}(\alpha, -\frac{1}{2})$ be an object such that it is $\sigma$-semistable. We define $m_E$ to be the unique integer such that $E[m_E]\in \mathcal{A}_2$. If $F\in \mathcal{A}(\alpha, -\frac{1}{2})$ is another $\sigma$-semistable object such that $\mu(F)<\mu(E)$, then it's clear from definition of $\sigma_2$ that $m_E+1\geq m_F\geq m_E$. 

\begin{theorem}\label{all_in_one_orbit}\cite[Proposition 4.22]{JLLZh2021}

All Serre-invariant stability conditions on $\mathcal{K}u(X)$ are in the same $\widetilde{\mathrm{GL}}^+_2(\mathbb{R})$-orbit. Here $X:=X_{4d+2}$ or $Y_{d}$ for all $d\geq 2$ and $Y_2$.
\end{theorem}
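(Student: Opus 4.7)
The plan is to leverage Lemma~\ref{phase same} by showing that the auxiliary stability condition $\sigma_2$ built there, which by construction lies in the $\widetilde{\mathrm{GL}}^+_2(\mathbb{R})$-orbit of the reference $\sigma(\alpha,-\tfrac{1}{2})$, in fact equals the given Serre-invariant stability condition $\sigma_1$. Since $\sigma_2$ was arranged so that $Z_2=Z_1$, all that remains is to compare the slicings $\mathcal{P}_1$ and $\mathcal{P}_2$, or equivalently the hearts $\mathcal{A}_1$ and $\mathcal{A}_2$.

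First I would observe that the two classes $[A]=-v$ and $[B]=w-v$ form a $\mathbb{Z}$-basis of the rank-two lattice $\mathcal{N}(\Ku(Y))$. Because $Z_1=Z_2$, for any object $E\in\Ku(Y)$ that is semistable with respect to both $\sigma_i$ the phases satisfy $\phi_1(E)\equiv \phi_2(E)\pmod{2\mathbb{Z}}$; Lemma~\ref{phase same} then pins down the actual integer lift on the subgroup generated by $[A]$ and $[B]$, and hence on the entire numerical lattice. So the only thing that can go wrong is that the two slicings disagree on which objects they declare to be semistable.

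To rule this out, I would take any $\sigma_1$-stable $E$ and look at its $\sigma_2$-HN filtration. If this filtration has at least two pieces one obtains a triangle $F\to E\to G$ with $\phi_2(F)>\phi_2(G)$ and both $F,G$ nonzero $\sigma_2$-semistable. By Lemma~\ref{decrease ext1} applied to $\sigma_2$, this yields the strict bounds $\mathrm{ext}^1(F,F)<\mathrm{ext}^1(E,E)$ and $\mathrm{ext}^1(G,G)<\mathrm{ext}^1(E,E)$. On the other hand, $E$ is $\sigma_1$-stable, so writing the same triangle in the slicing $\mathcal{P}_1$ and using that $\sigma_1$ is Serre-invariant together with the weak Mukai Lemma~\ref{mukai lemma}, one can show that either $F$ or $G$ must carry essentially the full $\mathrm{ext}^1(E,E)$, contradicting the strict inequalities above. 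A symmetric argument interchanging the roles of $\sigma_1$ and $\sigma_2$ yields $\mathcal{P}_1=\mathcal{P}_2$ and hence $\sigma_1=\sigma_2$.

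The main obstacle lies in making the contradiction in the previous paragraph quantitative: the Mukai-type inequality must be combined with a careful tracking of how the classes $[F],[G]$ sum to $[E]\in\mathcal{N}(\Ku(Y))$, and one must verify that the $\sigma_2$-destabilization does produce the $\mathrm{Hom}$-vanishings needed to apply Lemma~\ref{mukai lemma} in the $\sigma_1$-picture (this is where Lemma~\ref{phase jump 2} and the homological-dimension bound of Lemma~\ref{homo dim 2} are crucial). The cases $d\geq 4$, where the heart of any Serre-invariant stability condition has homological dimension one, simplify considerably; the genuine difficulty is concentrated in $d=2,3$, where the possibility of nontrivial $\mathrm{Ext}^2$ groups forces one to invoke the full strength of the weak Mukai lemma together with $\tau$-invariance of the hearts established in Proposition~\ref{prop_heart_GM}.
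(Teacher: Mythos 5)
Your setup (the auxiliary $\sigma_2$ in the orbit with $Z_2=Z_1$, Lemma~\ref{phase same}, and the goal $\mathcal{A}_1=\mathcal{A}_2$) matches the paper, but the two steps that carry all the weight are not actually established, and one of them is argued incorrectly. First, the phase identification: knowing $\phi_1=\phi_2$ on $A$ and $B$ does not ``pin down the integer lift on the entire numerical lattice.'' The phase of a semistable object is only determined by its class modulo $2$, and resolving that ambiguity for an object of class $av+bw$ is exactly the content of the long case analysis in the paper's proof (the cases $b>0,\,a+b>0$; $b>0,\,a+b<0$; $b<0$; $b=0$; $a+b=0$), which uses the explicit signs of $\chi(E,A)$, $\chi(A,E)$, $\chi(E,B)$, $\chi(B,E)$, the homological-dimension bound of Lemma~\ref{homo dim 2}, and Lemma~\ref{phase jump 2} to produce nonzero Homs that sandwich $\phi_1(E)$ between phases of $A$ and $B$. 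There is no soft ``generation of the lattice'' argument that replaces this; phase is not additive in the class.

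Second, the semistability transfer. Your proposed contradiction --- that $\sigma_1$-stability of $E$ plus the weak Mukai Lemma~\ref{mukai lemma} forces one of $F,G$ to ``carry essentially the full $\mathrm{ext}^1(E,E)$,'' contradicting Lemma~\ref{decrease ext1} --- does not work: the Mukai lemma gives $\mathrm{ext}^1(F,F)+\mathrm{ext}^1(G,G)\leq \mathrm{ext}^1(E,E)$, an upper bound on the sum pointing in the \emph{same} direction as the strict inequalities of Lemma~\ref{decrease ext1}, so the two statements are perfectly compatible and no contradiction arises. What the paper actually does is an induction on $\mathrm{ext}^1(E,E)$: the base case $\mathrm{ext}^1(E,E)<2$ is known from \cite{pertusi2020some}; in the inductive step the extreme HN factors $A_0,A_n$ of $E$ with respect to the other stability condition have strictly smaller $\mathrm{ext}^1$ by Lemma~\ref{decrease ext1}, hence by the induction hypothesis they are semistable for \emph{both} conditions, hence by the (already proved) phase identification $\phi_2(A_0)>\phi_2(A_n)$; but $\mathrm{Hom}(A_0,E)\neq 0$ and $\mathrm{Hom}(E,A_n)\neq 0$ force $\phi_2(A_0)\leq\phi_2(E)\leq\phi_2(A_n)$, a contradiction on phases, not on dimensions of $\mathrm{Ext}^1$. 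Without the induction you have no way to compare the two phases of the HN factors, and without the case-by-case phase identification the comparison would be vacuous anyway. (You also leave out the $X_{10}$ case, which requires redoing the numerics in the lattice $\langle x,y\rangle$, and the curve-type cases $\Ku(X_{12}),\Ku(X_{16}),\Ku(X_{18})\cong\Ku(Y_4)$, which the paper settles by \cite[Theorem 2.7]{macri2007stability} rather than by the homological-dimension remark alone.)
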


\begin{proof}
Notations as above, we are going to show $\sigma_1=\sigma_2$. Since $\Ku(X_{12}), \Ku(X_{16})$ and $\Ku(X_{18})\cong \Ku(Y_4)$ are equivalent to the bounded derived categories of some smooth curves of positive genus, the results for these three cases are follow from \cite[Theorem 2.7]{macri2007stability}. And the results for $X_{14}$ and $X_{22}$ are from the results for $\Ku(Y_3)$ and $\Ku(Y_{5})$ and equivalences $\Ku(Y_d)\cong \Ku(X_{4d+2}), d\geq 3$ in \cite{KPS2018}. Thus we only need to prove this for $Y_d, d\geq 2$ and $X_{10}$

%By \cite[Remark 5.14]{PY20}, there's a $T=(t_{ij})_{1\leq i,j\leq 2}\in \mathrm{GL}_2^+(\mathrm{R})$ such that $Z_1=T\cdot Z(\alpha, -\frac{1}{2})$. WLOG we can assume $A[m]\in \mathcal{A}_1$ and $m$ is even. Let $\sigma_2=\sigma\cdot \Tilde{g}$ for $\tilde{g}:=(g, T)\in \tilde{\mathrm{GL}^+_2(\mathcal{R})}$ such that $A[m]\in \mathcal{A}_2$ and $Z_2=Z_1$.

We first prove this for $Y_d, d\geq 2$.

Let $E\in \mathcal{A}(\alpha, -\frac{1}{2})$ be a $\sigma$-semistable object with $[E]=av+bw$. First we are going to show that if $E$ is $\sigma_1$-semistable, then $\phi_2(E)= \phi_1(E)$.  Note that we have following relations:
\begin{enumerate}

    \item $\chi(E,A)=a+(d-1)b, \chi(A,E)=a+b$; $\mu^0_{\alpha, -\frac{1}{2}}(E)>\mu^0_{\alpha, -\frac{1}{2}}(A) \iff b<0$
    
    \item $\chi(E,B)=-b, \chi(B,E)=-[(d-2)a+(d-1)b]$; $\mu^0_{\alpha, -\frac{1}{2}}(E)>\mu^0_{\alpha, -\frac{1}{2}}(B) \iff a+b<0$
    
    %\item $\chi(E,C)=a+db, \chi(C,E)=(d-1)a+db$; $\mu(C)>\mu(E)$ always holds
    
    %\item $\chi(E,D)=(d-1)a+d(d-2)b, \chi(D,E)=a$; $\mu(E)>\mu(D) \iff a+db<0$
\end{enumerate}
From the definition of stability $\sigma=\sigma(\alpha, -\frac{1}{2})$ we have $a\leq 0$. When $a=0$, by definition of stability we have $b<0$. Thus in the case $b>0$ we always have $a<0$ and $\chi(D,E)<0$. %Hence from homological dimension of hearts we have $\mathrm{hom}(D, E[1])>0$ and  $\phi_1(D)-1<\phi_1(E)$. 
Note that by definition of $\sigma_2$ we have $\phi_2(B)<\phi_2(A)$ and both of them lie in the interval $(g(0), g(0)+1]$.

\begin{itemize}

    %\item Assume $b>0$ and $a+b>0$. Then $\mu(E)<\mu(A)$ and $\chi(E,A)>0$. Thus by Lemma \ref{phase jump 2} we have $\mathrm{Hom}(E,A[2])=0$. Hence $\chi(E,A)=\mathrm{hom}(E,A)-\mathrm{hom}(E,A[1])>0$ implies $\mathrm{hom}(E,A)>0$, and therefore $\phi_1(E)<\phi_{1}(A)$. Then we have $\phi_1(D)-1< \phi_1(E)< \phi_1(A)$ and $\phi_2(D)-1< \phi_2(E)< \phi_2(A)$. But by Lemma \ref{phase same} we know $\phi_1(D)=\phi_2(D)$, $\phi_1(A)=\phi_2(A)$. And by definition of $\sigma_2$ we have $|\phi_2(D)-\phi_2(A)|<1$ and $|\phi_2(A)-\phi_2(E)|<1$. Thus $\phi_2(E)-\phi_1(E)=0$ or $1$. But if $\phi_2(E)=\phi_1(E)+1$, then from $\mu(E)<\mu(B)$ and Lemma \ref{compare slpoe} we have $\phi_2(D)-1=\phi_1(D)-1<\phi_2(E)<\phi_1(B)=\phi_2(B)$. Since $\phi_2(B)< \phi_2(D)$, this means $1>\phi_1(B)-\phi_1(D)+1>\phi_2(E)-\phi_1(D)+1=\phi_1(E)-\phi_1(D)+2$, which is impossible since $\phi_1(D)-1< \phi_1(E)$. Thus we have $\phi_1(E)=\phi_2(E)$.
    
    \item Assume $b>0$ and $a+b>0$. Then $\mu^0_{\alpha, -\frac{1}{2}}(E)<\mu^0_{\alpha, -\frac{1}{2}}(B)<\mu^0_{\alpha, -\frac{1}{2}}(A)$ and hence $\phi_2(E)<\phi_2(B)< \phi_2(A)$. We also have $\chi(E,A)>0$. Thus by Lemma \ref{phase jump 2} we know $\mathrm{Hom}(E,A[2])=0$. Thus $\chi(E,A)=\mathrm{hom}(E,A)-\mathrm{hom}(E,A[1])>0$ implies $\mathrm{hom}(E,A)>0$, and therefore $\phi_1(E)<\phi_{1}(A)$. Also from $\chi(B, E)<0$ and Lemma \ref{homo dim 2} we have $\phi_1(B)-1< \phi_1(E)$. Then we have $\phi_1(B)-1< \phi_1(E)< \phi_1(A)$. But by Lemma \ref{phase same} we know $\phi_1(B)=\phi_2(B)$, $\phi_1(A)=\phi_2(A)$. And from definition of $\sigma_2$ we have $|\phi_2(B)-\phi_2(A)|<1$ and $|\phi_2(A)-\phi_2(E)|<1$. Thus $\phi_2(E)-\phi_1(E)=0$ or $1$. But if $\phi_2(E)=\phi_1(E)+1$, then $\phi_2(B)-1=\phi_1(B)-1<\phi_2(E)<\phi_1(B)=\phi_2(B)$. This implies $1=\phi_1(B)-\phi_1(B)+1>\phi_2(E)-\phi_1(B)+1=\phi_1(E)-\phi_1(B)+2$, which is impossible since $\phi_1(B)-1< \phi_1(E)$. Thus we have $\phi_1(E)=\phi_2(E)$.

    %Assume $\phi_2(E)=\phi_1(E)+1$ and $E[n]\in \mathcal{A}_2$. Then $m+1\geq n\geq m$ and $E[n+1]\in \mathcal{A}_1$. But if $E[n+1]\in \mathcal{A}_{1}$, we have $\phi_1(E[n+1])<\phi_1(A[m])$ by Lemma \ref{compare slpoe}. This is implies $n=m$ and $m_B=m$ since $\mu(E)<\mu(B)<\mu(A)$. Thus from Lemma \ref{compare slpoe} we have $\phi_1(E)<\phi_1(B)$. But $\phi_1(B)=\phi_2(B)<\phi_1(D)=\phi_2(D)$, thus $\phi_1(D)-1<\phi_1(E)<\phi_1(B)$, $\phi_1(D)-1<\phi_2(E)<\phi_1(B)$. This means $|\phi_1(E)-\phi_2(E)|<1$ and hence makes a contradiction. Thus we have $\phi_1(E)=\phi_2(E)$.
    
    \item Assume $b>0$ and $a+b<0$. Then $\mu^0_{\alpha, -\frac{1}{2}}(B)< \mu^0_{\alpha, -\frac{1}{2}}(E)< \mu^0_{\alpha, -\frac{1}{2}}(A)$ and hence $\phi_2(B)< \phi_2(E)< \phi_2(A)$. Since $\chi(A,E)<0$ and $\chi(E,B)<0$, from Lemma \ref{homo dim 2} we know $\mathrm{hom}(A, E[1])>0$ and $\mathrm{hom}(E, B[1])>0$, hence  $\phi_1(A)-1< \phi_1(E)< \phi_1(B)+1$. This means $|\phi_1(E)-\phi_2(E)|=0$ or $1$. But $|\phi_1(E)-\phi_2(E)|=1$ is impossible since $\phi_1(B)=\phi_2(B)< \phi_2(E)<\phi_2(A)=\phi_1(A)$. Therefore we have $\phi_1(E)=\phi_2(E)$.

    %\item Assume $b>0$ and $a+b<0$ and $a+b+\frac{1}{d-2}b<0$. Then from $\chi(E,B)<0$ we know $\mathrm{hom}(E, B[1])>0$ and $\phi_1(E)< \phi_1(B)+1$. Also we have $\chi(B,E)>0$ and $\mu(B)< \mu(E)$, then from Lemma \ref{phase jump 2} we have $\mathrm{hom}(B,E)>0$. Therefore $\phi_1(B)< \phi_1(E)< \phi_1(B)+1$. Thus from Lemma \ref{phase same} we have $\phi_1(E)=\phi_2(E)$.
    
    %\item Assume $b>0$ and $a+b<0$ and $a+b+\frac{1}{d-2}b\geq 0$. Then $\chi(E,D)>0$. Hence from $\mu(E)<\mu(D)$ and Lemma \ref{phase jump 2} we have $\mathrm{hom}(E,D)>0$. Thus $\phi_1(D)-1<\phi_1(E)< \phi_1(D)$, which implies $\phi_1(E)=\phi_2(E)$ by Lemma \ref{phase same}.
    
    %Thus $\phi_1(D)-1<\phi_1(E)< \phi_1(C)$. Hence by Lemma \ref{phase same} we know $\phi_2(E)-\phi_1(E)=0$ or $1$. Assume $\phi_2(E)-\phi_1(E)=1$ and $E[n]\in \mathcal{A}_2$. Then $m\geq n\geq m_C\geq  m-1$ and $E[n+1]\in \mathcal{A}_1$. By (2) we have $\phi_1(B[m_B])< \phi_1(E[n+1])$ and $m_B\geq m\geq n$. By (4) we have $\phi_1(E[n+1])< \phi_1(D[m_D])$.
    
    \item Assume $b<0$. Then $\mu^0_{\alpha, -\frac{1}{2}}(B)< \mu^0_{\alpha, -\frac{1}{2}}(A)< \mu^0_{\alpha, -\frac{1}{2}}(E)$ and hence $\phi_2(B)<\phi_2(A)< \phi_2(E)$. Since $\chi(E,A)<0$, from Lemma \ref{homo dim 2} we have $\mathrm{hom}(E, A[1])>0$ and $\phi_1(E)< \phi_1(A)+1$. By Lemma \ref{phase jump 2}, $\mu^0_{\alpha, -\frac{1}{2}}(B)<\mu^0_{\alpha, -\frac{1}{2}}(E)$ and  $\chi(B,E)>0$ we know that $\mathrm{hom}(B,E)>0$. Thus $\phi_1(B)< \phi_1(E)< \phi_1(A)+1$. Hence  $\phi_1(E)-\phi_2(E)=0$ or $1$. But since $\mu^0_{\alpha, -\frac{1}{2}}(A)<\mu^0_{\alpha, -\frac{1}{2}}(E)$, we have $\phi_2(A)=\phi_1(A)<\phi_2(E)$. Thus $\phi_1(A)<\phi_2(E)<\phi_1(A)+1$. Then $\phi_1(E)-\phi_2(E)=1$ is impossible since $\phi_1(E)<\phi_1(A)+1$. Therefore we have $\phi_1(E)=\phi_2(E)$.
    
    %Assume $\phi_1(E)-\phi_2(E)=1$ and $E[n]\in \mathcal{A}_2$. Then $E[n-1]\in \mathcal{A}_1$ and $n\leq m$. From (1) and Lemma \ref{compare slpoe} we have $\phi_1(E[n-1])>\phi_1(A[m])$. Hence from $\phi_1(E)< \phi_1(A)+1$ we have $n\geq m+1$, which is impossible.
    
    \item When $b=0$, we have $[E]=-a\cdot [A]$. Hence $\chi(E,A)=\chi(A,E)<0$ and we have $\phi_1(A)-1\leq  \phi_1(E)\leq \phi_1(A)+1$. But $\mu_1(E)=\mu_1(A)$, we know $\phi_1(E)-\phi_1(A)$ is an integer. Thus $\phi_1(E)=\phi_1(A)\pm 1$. But from definition of stability function, we have $\mathrm{Im}(Z_1(E[\pm 1]))=-\mathrm{Im}(Z_1(A))$. Thus $\phi_1(E)=\phi_1(A)=\phi_2(E)$.
    
    \item When $a+b=0$, we have $[E]=-a\cdot [B]$. Hence $\chi(E,B)=\chi(B,E)<0$ and we have $\phi_1(B)-1\leq  \phi_1(E)\leq \phi_1(B)+1$. But $\mu_1(E)=\mu_1(B)$, we know $\phi_1(E)-\phi_1(B)$ is an integer. Thus $\phi_1(E)=\phi_1(B)\pm 1$. But from definition of stability function, we have $\mathrm{Im}(Z_1(E[\pm 1]))=-\mathrm{Im}(Z_1(B))$. Thus $\phi_1(E)=\phi_1(B)=\phi_2(E)$.
    
    %\i Hence $\chi(E,B)=\chi(B,E)<0$ and we have $\phi_1(B)-1\leq  \phi_1(E)\leq \phi_1(B)+1$.tem The last case is $b>0, a+b<0, a+b+\frac{1}{d-2}b=0$.
\end{itemize}

%Now $\mathcal{A}_{\sigma_1}\subset \langle \mathcal{A}(\alpha, -\frac{1}{2})[m], \mathcal{A}(\alpha, -\frac{1}{2})[m+1] \rangle$. This means $\mathcal{A}_{\sigma_1}$ is a tilt of $\mathcal{A}(\alpha, -\frac{1}{2})[m]$ by \cite[Lemma 1.1.2]{pol06}. From \cite[Remark 5.14]{PY20}, there's an element $\Tilde{g}=(g, T)\in \Tilde{\mathrm{GL}^+_2(\mathrm{R})}$ such that $\sigma\cdot \widetilde{g}=(\mathcal{A}', Z_1)$. Since $\mathcal{A}'=\mathcal{P}'((0,1])=\mathcal{P}(\alpha, -\frac{1}{2})((r,r+1])$ for $r=g(0)$, up to some shifts we can assume $\mathcal{A}'$ is a tilt of $\mathcal{A}(\alpha, -\frac{1}{2})$. Since $\sigma_1$ and $\sigma\cdot \Tilde{g}$ have a same stability function, and their hearts are both tilt of $\mathcal{A}(\alpha, -\frac{1}{2})$ up to some shifts, thus by \cite[Lemma 8.11]{BMS}, they are the same stability condition up to some shifts.

Next we prove that $E\in \mathcal{A}_2$ is $\sigma$-semistable if and only if $E\in \mathcal{A}_1$ is $\sigma_1$-semistable. We prove by induction. 

If $\mathrm{ext}^1(E,E)<2$, this is from \cite[Sec. 5]{pertusi2020some}. Now assume this is true for all $E\in \mathcal{A}_2$ $\sigma_2$-semistable such that $\mathrm{ext}^1(E,E)<N$. 

When $E\in \mathcal{A}_2$ is  $\sigma_2$-semistable and has $\mathrm{ext}^1(E,E)=N$, assume otherwise $E$ is not $\sigma_1$-semistable. Let $A_0$ be the first HN-factor of $E$ with respect to $\sigma_1$ and $A_n$ be the last one. Then $\phi_1(A_0)>\phi_1(A_n)$. By Lemma \ref{decrease ext1}, $\mathrm{ext}^1(A_0,A_0)< N$ and $\mathrm{ext}^1(A_n,A_n)< N$. Thus $A_0$ and $A_n$ are $\sigma_2$-semistable by induction hypothesis and $\phi_2(A_0)>\phi_2(A_n)$ by results above. Since $\mathrm{Hom}(A_0, E)$ and $\mathrm{Hom}(E,A_n)$ are both non-zero, we know $\phi_2(A_0)\leq \phi_2(E)$ and $\phi_2(E)\leq \phi_2(A_n)$, which implies $\phi_2(A_0)\leq \phi_2(A_n)$ and makes a contradiction. Thus $E$ is $\sigma_1$-semistable. When $E\in \mathcal{A}_1$ is $\sigma_1$-semistable,  the same argument shows that $E\in \mathcal{A}_2$ is also $\sigma_2$-semistable.

Now since every objects in the heart is the extensions of semistable objects, we have $\mathcal{A}_1=\mathcal{A}_2$. And since $Z_1=Z_2$, we know actually $\sigma_1=\sigma_2=\sigma\cdot \widetilde{g}$. Hence $\sigma_1$ is in the orbit of $\sigma=\sigma(\alpha, -\frac{1}{2})$.  %From \cite[Remark 5.14]{PY20}, there's an element $\Tilde{g}=(g, T)\in \Tilde{\mathrm{GL}^+_2(\mathrm{R})}$ such that $\sigma\cdot \widetilde{g}=(\mathcal{A}', Z_1)$. Since $\mathcal{A}'=\mathcal{P}'((0,1])=\mathcal{P}(\alpha, -\frac{1}{2})((r,r+1])$ where $r=g(0)$, thus up to some shifts $\mathcal{A}'$ is a tilt of $\mathcal{A}(\alpha, -\frac{1}{2})$. Since $\sigma_1$ and $\sigma\cdot \Tilde{g}$ have a same stability function, and their hearts are both tilt of $\mathcal{A}(\alpha, -\frac{1}{2})$ up to some shifts, thus by \cite[Lemma 8.11]{BMS}, they are the same stability condition up to some shifts.

Now we prove our statement for Serre-invariant stability conditions on Kuznetsov component $\mathcal{A}_X$ for GM threefold $X:=X_{10}$. The argument is similar to previous cases. It is not hard to find two $\sigma$-stable objects $A,B\in \mathcal{A}_X$ such that $\mathrm{hom}(B, A)>0$ and $\mathrm{hom}(A, B[1])>0$. This means $\phi_{\sigma'}(A)-1<\phi_{\sigma'}(B)< \phi_{\sigma'}(A)$ for every Serre-invariant stability condition $\sigma'$ on $\mathcal{A}_{X_{10}}$. Then $[A]=-x$,  $[B]=y-2x$ and $\phi_{\sigma'}(B)< \phi_{\sigma'}(A)< \phi_{\sigma'}(B)+1$. For more details, see \cite[Theorem 4.25]{JLLZh2021}. 

%Let $C$ be a smooth conic on $X_{10}$ such that $I_C\in\cA_X$. If $F\in M_G(2,1,5)$ is a globally generated bundle, 

Let $\sigma:=\sigma(\frac{1}{10}, -\frac{1}{10})$. Assume $\sigma_1$ is any Serre-invariant stability condition on $\mathcal{A}_{X_{10}}$. Then as in \cite[Remark 5.14]{pertusi2020some}, there's a $T=(t_{ij})_{1\leq i,j\leq 2}\in \mathrm{GL}_2^+(\mathrm{R})$ such that $Z_1=T\cdot Z(\frac{1}{10}, -\frac{1}{10})$. Since $A$ is stable with respect to every Serre-invariant stailibty condition by \cite[Lemma 5.16]{pertusi2020some}, we can assume $A[m]\in \mathcal{A}_1$. Let $\sigma_2=\sigma\cdot \widetilde{g}$ for $\widetilde{g}:=(g, T)\in \widetilde{\mathrm{GL}}^+_2(\mathbb{R})$ such that $A[m]\in \mathcal{A}_2$ and $Z_2=Z_1$. Then we have $\phi_1(A)=\phi_2(A)$ and $\mathcal{A}_2=\mathcal{P}(\frac{1}{10}, -\frac{1}{10})((g(0), g(0)+1])$. Now the same argument in Lemma \ref{phase same} shows that $\phi_1(B)=\phi_2(B)$. Let $E\in \mathcal{A}(\frac{1}{10}, -\frac{1}{10})$ be a $\sigma$-semistable object with $[E]=ax+by$, then by definition of stability we have $a+b\leq 0$. We have following relations:

\begin{enumerate}
    \item $\chi(E,A)= a+2b, \chi(A,E)=a+2b$; $\mu^0_{\frac{1}{10},- \frac{1}{10}}(E)>\mu^0_{\frac{1}{10},- \frac{1}{10}}(A) \iff b<0$
    
    \item $\chi(E,B)=-b, \chi(B,E)=-b$; $\mu^0_{\frac{1}{10},- \frac{1}{10}}(E)>\mu^0_{\frac{1}{10},- \frac{1}{10}}(B) \iff a+2b<0$
\end{enumerate}
Let $a':=a+b$, then $a'\leq 0$ and these relations are of the same form as index 2 case. Since above lemmas is also true for $\mathcal{A}_{X_{10}}$, the same argument works in this case.

\end{proof}

\begin{remark}
The proof of Theorem~\ref{all_in_one_orbit} in the first version of the article is incomplete and Zhiyu Liu pointed out a gap, i.e. the induction hypothesis was too strong and the $\sigma_2$-(semi)stability of the object $E$ and the assertion $\phi_1(E)=\phi_2(E)$ could not be proved simultaneously. In \cite[Proposition 4.22]{JLLZh2021}, we give a uniform proof for uniqueness of Serre-invariant stability conditions on $\Ku(Y_d)$ and $\Ku(X_{4d+2})$ for all $d\geq 2$. I thank him for agreeing to include the results in this paper.  
\end{remark}

\section{Stability of ideal sheaf of twisted cubic}
\label{section5}
In this section, we prove stability of the ideal sheaf of a twisted cubic and its projection onto the Kuznetsov components. $X$ is assumed to be a special Gushel-Mukai threefold with general branch locus. 

\begin{theorem}
\label{theorem_stabilityofidealsheaf}
Let $C$ be a twisted cubic on $X$ such that $I_C\in\mathcal{K}u(X)$. Then $I_C$ is $\sigma$-stable for every $\tau$-invariant stability condition on $\mathcal{K}u(X)$. 
\end{theorem}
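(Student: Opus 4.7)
The plan is to reduce the statement to the criteria for stability of objects in the heart of a Serre-invariant stability condition established in Section~4. First I would recall two numerical facts about $I_C$ that are already in hand. By hypothesis $I_C\in\mathcal{K}u(X)$, so Proposition~\ref{Prop_extgroup}(1) applies and gives
\[
\mathrm{hom}(I_C,I_C)=1,\quad \mathrm{ext}^1(I_C,I_C)=3,\quad \mathrm{ext}^2(I_C,I_C)=\mathrm{ext}^3(I_C,I_C)=0,
\]
so in particular $\chi(I_C,I_C)=-2$. These are the precise numerics that feed the general stability criteria in Section~4.

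Next I would show that $I_C$, up to shift, lies in the heart $\mathcal{A}$ of any chosen $\tau$-invariant stability condition $\sigma=(\mathcal{A},Z)$ on $\mathcal{K}u(X)$. By Theorem~\ref{all_in_one_orbit}, every Serre-invariant (hence every $\tau$-invariant) stability condition on $\mathcal{K}u(X)$ lies in the single $\widetilde{\mathrm{GL}}^+_2(\mathbb{R})$-orbit $\mathcal{K}$, and so by Proposition~\ref{prop_heart_GM} the heart $\mathcal{A}$ has homological dimension at most $2$ and every nonzero object of $\mathcal{A}$ satisfies $\mathrm{hom}^1\ge 2$. Plugging $\mathrm{ext}^1(I_C,I_C)=3$ into the spectral sequence argument of Lemma~\ref{lemma_ext3_inheart}, the usual estimate
\[
3=\mathrm{hom}^1(I_C,I_C)\ge \sum_i \mathrm{hom}^1\bigl(\mathcal{H}^i_{\mathcal{A}}(I_C),\mathcal{H}^i_{\mathcal{A}}(I_C)\bigr)\ge 2r
\]
forces $r=1$, i.e.\ $I_C[k]\in\mathcal{A}$ for a unique integer $k$.

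With $I_C[k]\in\mathcal{A}$, $\mathrm{ext}^1(I_C,I_C)=3$ and $\chi(I_C,I_C)=-2$, Corollary~\ref{lemma_ext3_stable} applies verbatim and yields that $I_C[k]$ is $\sigma$-stable, hence $I_C$ is $\sigma$-stable. Since $\sigma$ was an arbitrary $\tau$-invariant stability condition on $\mathcal{K}u(X)$, this concludes the proof; alternatively one may prove stability for the reference Serre-invariant stability condition $\sigma(\alpha,\beta)$ from Theorem~\ref{Theorem_existence_stability} and then transport it to any other $\tau$-invariant stability condition via Corollary~\ref{corollary_uniqueness_elementary}.

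The only subtle point in this plan, and what I would regard as the main obstacle, is confirming that the ``heart up to shift'' argument of Lemma~\ref{lemma_ext3_inheart} is uniformly available for all $\tau$-invariant $\sigma$; this reduces to knowing that every such $\sigma$ has a heart of homological dimension $2$ in which every nonzero object has $\mathrm{hom}^1\ge 2$, which is exactly what Proposition~\ref{prop_heart_GM} provides once Theorem~\ref{all_in_one_orbit} has placed $\sigma$ in the orbit $\mathcal{K}$. Aside from this, the proof is a direct assembly of Proposition~\ref{Prop_extgroup}, Lemma~\ref{lemma_ext3_inheart}, and Corollary~\ref{lemma_ext3_stable}.
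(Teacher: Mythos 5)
Your proposal is correct and follows essentially the same route as the paper: it combines the ext-group computation of Proposition~\ref{Prop_extgroup} (the case $I_C\in\mathcal{K}u(X)$), the ``heart up to shift'' Lemma~\ref{lemma_ext3_inheart}, and the stability criterion of Corollary~\ref{lemma_ext3_stable}, with the applicability to every $\tau$-invariant $\sigma$ justified via Proposition~\ref{prop_heart_GM} and Theorem~\ref{all_in_one_orbit} (equivalently, via Corollary~\ref{corollary_uniqueness_elementary}), exactly as in the paper's argument. The only cosmetic difference is that the paper additionally remarks that the shifts can be chosen uniformly (citing the argument in Theorem~\ref{Theorem_bijective_closedpoints}), a point relevant for the later moduli construction rather than for the stability of a single $I_C$.
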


\begin{proof}
By Proposition~\ref{Prop_extgroup}, $\mathrm{ext}^1(I_C,I_C)=3$ and $\chi(I_C,I_C)=-2$. Then $I_C\in\mathcal{A}$ up to shifts by Lemma~\ref{lemma_ext3_inheart}. The shifts can be chosen uniformly by argument in Theorem~\ref{Theorem_bijective_closedpoints}. By Corollory~\ref{lemma_ext3_stable}, $I_C$ is $\sigma$-stable.    
\end{proof}

For twisted cubics $C$ such that $I_C\not\in\mathcal{K}u(X)$, we project it onto the Kuznetsov component and show that $\mathrm{pr}(I_C)$ is $\sigma$-stable. 

\begin{predl}
\label{prop_projectionideal}
Let $C$ be a twisted cubic whose ideal sheaf $I_C\not\in\mathcal{K}u(X)$. Let $E:=\mathrm{pr}(I_C)\in\mathcal{K}u(X)$ be the projection of the ideal sheaf of twisted cubic onto the Kuznetsov component. Then we have a distinguished triangle  $$\mathcal{O}_X(-H)[1]\rightarrow E\rightarrow\mathcal{E}_L$$ where $\mathcal{E}_L$ is the extension of $\mathcal{O}_L(-2)$ and $\mathcal{E}$, 
$$0\rightarrow\mathcal{O}_L(-2)\rightarrow\mathcal{E}_L\rightarrow\mathcal{E}\rightarrow 0$$
\end{predl}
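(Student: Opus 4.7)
The strategy is to realise $E=\mathrm{pr}(I_C)=\bL_{\mathcal{E}}\bL_{\mathcal{O}_X}(I_C)$ by two successive mutations and then splice the resulting triangles together with the octahedral axiom. First, by Lemma~\ref{lemma_cohomology_idealsheaf} we have $\mathrm{RHom}(\mathcal{O}_X,I_C)=0$, so $\bL_{\mathcal{O}_X}I_C=I_C$ and it remains to analyse $\bL_{\mathcal{E}}(I_C)$. Using $\chi(\mathcal{E},I_C)=0$ together with the vanishing $\mathrm{Ext}^{\ge 2}(\mathcal{E},I_C)=0$ established inside the proof of Theorem~\ref{theorem_criterion_inKu}, the hypothesis $I_C\notin\mathcal{K}u(X)$ forces $\pi(C)$ to be a conic (by Theorem~\ref{theorem_criterion_inKu} and Proposition~\ref{prop_classfication_cubic_degree}), whence $\mathrm{Hom}(\mathcal{E},I_C)=\mathrm{Ext}^1(\mathcal{E},I_C)=k$ by Proposition~\ref{prop_cubic_inKu}. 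In particular $\mathrm{RHom}(\mathcal{E},I_C)\cong k\oplus k[-1]$, a two-term complex.

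Next, let $s\in\mathrm{Hom}(\mathcal{E},I_C)$ be the unique nonzero morphism up to scalar and $G=\mathrm{cone}(s)$, so Proposition~\ref{Prop_computingcone} provides the triangle $\mathcal{O}_X(-H)[1]\to G\to\mathcal{O}_L(-2)$. Applying $\bL_{\mathcal{E}}$ to $\mathcal{E}\xrightarrow{s} I_C\to G$ and using $\bL_{\mathcal{E}}(\mathcal{E})=0$ gives $E\cong\bL_{\mathcal{E}}(G)$. To compute $\bL_{\mathcal{E}}(G)$, apply $\mathrm{RHom}(\mathcal{E},-)$ to the same triangle: the induced map $k=\mathrm{RHom}(\mathcal{E},\mathcal{E})\to\mathrm{RHom}(\mathcal{E},I_C)=k\oplus k[-1]$ is the evaluation at $s$ and is thus an isomorphism in degree $0$, so $\mathrm{RHom}(\mathcal{E},G)\cong k[-1]$. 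Consequently the defining mutation triangle reads $\mathcal{E}[-1]\to G\to E$, which after rotation becomes
$$G\to E\to\mathcal{E}.$$

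Finally, apply the octahedral axiom to the composition $\mathcal{O}_X(-H)[1]\to G\to E$, whose two factors have cones $\mathcal{O}_L(-2)$ and $\mathcal{E}$ respectively. The octahedron produces the desired triangle $\mathcal{O}_X(-H)[1]\to E\to\mathcal{E}_L$ together with a fourth triangle $\mathcal{O}_L(-2)\to\mathcal{E}_L\to\mathcal{E}$ identifying $\mathcal{E}_L$. Since $\mathcal{O}_L(-2)$ and $\mathcal{E}$ are both sheaves in degree $0$, the associated long exact cohomology sequence forces $\mathcal{E}_L$ to be a sheaf as well, and the fourth triangle becomes a short exact sequence $0\to\mathcal{O}_L(-2)\to\mathcal{E}_L\to\mathcal{E}\to 0$, as required. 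The main subtlety is the appearance of the extra $\mathrm{Ext}^1(\mathcal{E},I_C)$ contribution in the mutation, which means $E$ is \emph{not} simply $G$ but rather sits in an extension of $\mathcal{E}$ by $G$; once this is identified, the remaining verifications are formal manipulations with the octahedron.
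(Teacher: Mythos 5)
Your proposal is correct, and it assembles the result by a genuinely different route than the paper, even though the two arguments share the same inputs (the vanishing $\mathrm{RHom}^{\bullet}(\mathcal{O}_X,I_C)=0$, the computation $\mathrm{RHom}^{\bullet}(\mathcal{E},I_C)\cong k\oplus k[-1]$, and the triangle $\mathcal{O}_X(-H)[1]\to G\to\mathcal{O}_L(-2)$ of Proposition~\ref{Prop_computingcone}). The paper stays at the level of cohomology sheaves: it takes $\mathrm{Coh}(X)$-cohomology of the mutation triangle $(\mathcal{E}\oplus\mathcal{E}[-1])\to I_C\to \bL_{\mathcal{E}}(I_C)$, splits the resulting four-term exact sequence into two short exact sequences, and identifies $\mathcal{H}^{-1}(\bL_{\mathcal{E}}I_C)\cong\mathcal{O}_X(-H)$ and $\mathcal{H}^0(\bL_{\mathcal{E}}I_C)$ as the extension of $\mathcal{E}$ by $\mathcal{O}_L(-2)$ by re-running the argument of Proposition~\ref{Prop_computingcone}; the stated triangle is then the truncation triangle. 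You instead work purely triangulated: you first show $E\cong\bL_{\mathcal{E}}(G)$ and that $\mathrm{RHom}^{\bullet}(\mathcal{E},G)\cong k[-1]$, so $E$ sits in a triangle $G\to E\to\mathcal{E}$ --- which is exactly the content of the paper's subsequent Proposition~\ref{prop_triangle_projection} --- and then apply the octahedral axiom to $\mathcal{O}_X(-H)[1]\to G\to E$, with a short cohomology argument at the end to upgrade the triangle $\mathcal{O}_L(-2)\to\mathcal{E}_L\to\mathcal{E}$ to a short exact sequence of sheaves. What your route buys is that it avoids repeating the sheaf-level analysis of Proposition~\ref{Prop_computingcone} inside this proof and delivers Proposition~\ref{prop_triangle_projection} as a byproduct; what the paper's route buys is an explicit description of the cohomology sheaves of $\mathrm{pr}(I_C)$ without invoking the octahedron. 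All the individual steps you use (the isomorphism in degree $0$ of the evaluation map, exceptionality of $\mathcal{E}$, and the degeneration of the triangle of sheaves into a short exact sequence) check out.
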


\begin{proof}
The projection functor $\mathrm{pr}:D^b(X)\rightarrow\mathcal{K}u(X)$ is given by $\bL_{\mathcal{E}}\bL_{\mathcal{O}_X}$. Then $\mathrm{pr}(I_C)=\bL_{\mathcal{E}}\bL_{\mathcal{O}_X}(I_C)$. By Lemma~\ref{lemma_cohomology_idealsheaf},  $\mathrm{Hom}^i(\mathcal{O}_X,I_C)=0$ for all $i$. Thus $\mathrm{pr}(I_C)=\bL_{\mathcal{E}}(I_C)$. It is defined by an exact triangle:
$$\mathrm{RHom}(\mathcal{E},I_C)\otimes\mathcal{E}\rightarrow I_C\rightarrow \bL_{\mathcal{E}}(I_C)$$

By Proposition~\ref{prop_cubic_inKu}, we get a long exact sequence of cohomology with respect to $\mathrm{Coh}(X)$:

$$0\rightarrow \mathcal{H}^{-1}(\bL_{\mathcal{E}}I_C)\rightarrow\mathcal{E}\xrightarrow{\pi} I_C\rightarrow \mathcal{H}^0(\bL_{\mathcal{E}} I_C)\rightarrow\mathcal{E}\rightarrow 0$$ 

Split it to two short exact sequences,
$$0\rightarrow\mathcal{H}^{-1}(\bL_{\mathcal{E}} I_C)\rightarrow\mathcal{E}\xrightarrow{\pi}I_D\rightarrow 0$$ and $$0\rightarrow\frac{I_C}{I_D}\rightarrow \mathcal{H}^0(\bL_{\mathcal{E}} I_C)\rightarrow\mathcal{E}\rightarrow 0.$$ where $D$ is the zero locus of a section of $\mathcal{E}^{\vee}$ containing $C$.  Then by the same argument in Proposition~\ref{Prop_computingcone}, we have $\mathcal{H}^{-1}(\bL_{\mathcal{E}} I_C)\cong\mathcal{O}_X(-H)$ and $0\rightarrow\mathcal{O}_L(-2)\rightarrow\mathcal{H}^0(\bL_{\mathcal{E}}(I_C))\rightarrow\mathcal{E}\rightarrow 0$. The desired result follows. 
\end{proof}

Next proposition relates $\mathrm{pr}(I_C)$ to \emph{twisted derived dual} of a line $G$ we constructed in Proposition~\ref{Prop_computingcone}. Note that $\mathrm{Ext}^1(\mathcal{E},G)=k$. 

\begin{predl}
\label{prop_triangle_projection}
Let $\mathcal{F}$ be the object in $D^b(X)$ formed by the non trivial extension $G\rightarrow \mathcal{F}\rightarrow\mathcal{E}$, then $\mathcal{F}\cong\bL_{\mathcal{E}}(I_C)=\mathrm{pr}(I_C)$.
\end{predl}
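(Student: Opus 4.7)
The guiding idea is that the left mutation functor $\bL_{\mathcal{E}}$ is triangulated and annihilates $\mathcal{E}$, so applying it to the defining triangle of $G$ immediately yields $\bL_{\mathcal{E}}(I_C)\cong\bL_{\mathcal{E}}(G)$; the nontrivial extension defining $\mathcal{F}$ will then realize $\bL_{\mathcal{E}}(G)$.

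First, since $I_C\in\mathcal{O}_X^{\perp}$ by Lemma~\ref{lemma_cohomology_idealsheaf}, the projection collapses to $\mathrm{pr}(I_C)=\bL_{\mathcal{E}}(I_C)$, as in the proof of Proposition~\ref{prop_projectionideal}. Apply $\bL_{\mathcal{E}}$ to the triangle $\mathcal{E}\xrightarrow{s}I_C\to G$ of Proposition~\ref{Prop_computingcone}: because the evaluation $\mathrm{RHom}(\mathcal{E},\mathcal{E})\otimes\mathcal{E}=\mathcal{E}\to\mathcal{E}$ is the identity, we have $\bL_{\mathcal{E}}(\mathcal{E})=0$, and thus $\bL_{\mathcal{E}}(I_C)\cong\bL_{\mathcal{E}}(G)$.

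Next, I would compute $\mathrm{RHom}(\mathcal{E},G)=k[-1]$. This follows from the long exact sequence obtained by applying $\mathrm{RHom}(\mathcal{E},-)$ to $\mathcal{E}\to I_C\to G$, the exceptionality of $\mathcal{E}$, and the ext-groups $\mathrm{Hom}(\mathcal{E},I_C)=\mathrm{Ext}^1(\mathcal{E},I_C)=k$ together with $\mathrm{Ext}^{\geq 2}(\mathcal{E},I_C)=0$ already established in the proof of Theorem~\ref{theorem_criterion_inKu}; the crucial point is that the connecting map $k=\mathrm{Hom}(\mathcal{E},\mathcal{E})\to\mathrm{Hom}(\mathcal{E},I_C)=k$ sends $\mathrm{id}$ to the nonzero generator $s$ and is therefore an isomorphism. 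By the definition of left mutation, one then obtains the triangle $\mathcal{E}[-1]\xrightarrow{\mathrm{ev}}G\to\bL_{\mathcal{E}}(G)$, where $\mathrm{ev}$ is the tautological generator of $\mathrm{Hom}(\mathcal{E}[-1],G)=\mathrm{Ext}^1(\mathcal{E},G)=k$.

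Rotating this triangle yields $G\to\bL_{\mathcal{E}}(G)\to\mathcal{E}$ whose extension class is the nonzero element of the one-dimensional space $\mathrm{Ext}^1(\mathcal{E},G)$; this is exactly the defining data of $\mathcal{F}$, and as $\mathrm{Ext}^1(\mathcal{E},G)=k$ has a unique nontrivial extension up to isomorphism we conclude $\mathcal{F}\cong\bL_{\mathcal{E}}(G)\cong\bL_{\mathcal{E}}(I_C)=\mathrm{pr}(I_C)$. The argument is essentially formal; the only step requiring a brief verification is the identification $\mathrm{RHom}(\mathcal{E},G)=k[-1]$, but this follows at once from the ext computations already available.
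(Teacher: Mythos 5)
Your argument is correct and follows essentially the same route as the paper: apply the mutation $\bL_{\mathcal{E}}$ (equivalently $\mathrm{pr}$, since everything lies in $\mathcal{O}_X^{\perp}$) to the triangle $\mathcal{E}\to I_C\to G$, use $\bL_{\mathcal{E}}(\mathcal{E})=0$ to get $\mathrm{pr}(I_C)\cong\bL_{\mathcal{E}}(G)$, and identify $\bL_{\mathcal{E}}(G)$ with the unique nontrivial extension of $\mathcal{E}$ by $G$ via $\mathrm{RHom}^{\bullet}(\mathcal{E},G)=k[-1]$. The only difference is that you spell out the computation of $\mathrm{RHom}^{\bullet}(\mathcal{E},G)$ from the long exact sequence (using $\mathrm{Hom}(\mathcal{E},I_C)=\mathrm{Ext}^1(\mathcal{E},I_C)=k$ and the isomorphism $\mathrm{Hom}(\mathcal{E},\mathcal{E})\to\mathrm{Hom}(\mathcal{E},I_C)$, $\mathrm{id}\mapsto s$), which the paper only asserts.
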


\begin{proof}
Apply projection functor $\mathrm{pr}$ to the triangle $\mathcal{E}\rightarrow I_C\rightarrow G$ we get $\mathrm{pr}(I_C)\cong\mathrm{pr}(G)$ since $\mathrm{pr}(\mathcal{E})=\bL_{\mathcal{E}}(\mathcal{E})=0$. Note that $\mathrm{Hom}^{\bullet}(\mathcal{E},G)=k[-1]$. Then we have the exact triangle 
$$\mathcal{E}[-1]\rightarrow G\rightarrow\mathrm{pr}(G).$$ This implies that $\mathrm{pr}(G)$ is the non trivial extension of $\mathcal{E}$ by $G$, the desired result follows. 
\end{proof}

\begin{corollary}\label{corollary_computing_ext_projection_object}
$$\mathrm{hom}^k(\mathrm{pr}(I_C),\mathrm{pr}(I_C))=\begin{cases} 
1, k=0\\
3, k=1\\
0, k=2\\
0, k=3\end{cases}$$ 
In particular, $\mathrm{pr}(I_C)$ is $\sigma$-stable for every $\tau$-invariant stabilty condition $\sigma$ on $\mathcal{K}u(X)$.
\end{corollary}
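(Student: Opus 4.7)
The plan is to compute $\mathrm{Ext}^\bullet(\mathcal{F},\mathcal{F})$ for $\mathcal{F}:=\mathrm{pr}(I_C)$ via the spectral sequence of Lemma~\ref{lemma_SS}, and then feed the resulting Ext numbers into the stability criteria of Section~\ref{section4}. Rotating the triangle of Proposition~\ref{prop_triangle_projection}, I work with
\[
\mathcal{E}[-1]\xrightarrow{f} G\to \mathcal{F},
\]
and apply Lemma~\ref{lemma_SS} with $A_i=\mathcal{E}[-1]$, $B_i=G$, $C_i=\mathcal{F}$. The four $E_1$-inputs are essentially free: $\mathrm{Ext}^\bullet(\mathcal{E},\mathcal{E})=k$ by exceptionality; $\mathrm{Ext}^\bullet(G,G)=k\oplus k[-1]$ by Lemma~\ref{lemma_computing_extG} (the line $L$ is ordinary since $X$ is general, so $\mathcal{N}_{L|X}=\mathcal{O}_L\oplus\mathcal{O}_L(-1)$); $\mathrm{Ext}^\bullet(\mathcal{E},G)=k[-1]$ from the computation in the proof of Proposition~\ref{prop_triangle_projection}; and $\mathrm{Ext}^\bullet(G,\mathcal{E})=k^2[-1]$ as recorded in the proof of Proposition~\ref{Prop_extgroup}, obtained by applying $\mathrm{RHom}(-,\mathcal{E})$ to $\mathcal{O}_X(-H)[1]\to G\to\mathcal{O}_L(-2)$ together with Lemma~\ref{lemma_globalsection_stablebundle}.

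Writing out the page, the only entries relevant for $\mathrm{Ext}^{\le 3}(\mathcal{F},\mathcal{F})$ are $E_1^{-1,2}=k^2$, $E_1^{0,0}=k^{\oplus 2}$, $E_1^{0,1}=k$ and $E_1^{1,0}=k$; all others vanish. The horizontal $d_1$-differential $E_1^{0,0}\to E_1^{1,0}$ sends $(\lambda\,\mathrm{id}_{\mathcal{E}[-1]},\mu\,\mathrm{id}_G)$ to $(\mu-\lambda)f$, so it is surjective with one-dimensional kernel spanned by the diagonal, because $f$ is the nontrivial extension class defining $\mathcal{F}$ in Proposition~\ref{prop_triangle_projection}. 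All remaining $d_1$- and $d_2$-differentials vanish for range reasons, the sequence degenerates, and I read off
\[
\mathrm{hom}(\mathcal{F},\mathcal{F})=1,\quad \mathrm{ext}^1(\mathcal{F},\mathcal{F})=3,\quad \mathrm{ext}^2(\mathcal{F},\mathcal{F})=\mathrm{ext}^3(\mathcal{F},\mathcal{F})=0.
\]

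With these numbers and $\chi(\mathcal{F},\mathcal{F})=-2$ in hand, Lemma~\ref{lemma_ext3_inheart} places $\mathcal{F}\in\mathcal{A}$ after a suitable shift, and Corollary~\ref{lemma_ext3_stable} then gives $\sigma$-stability of $\mathcal{F}$ with respect to every $\tau$-invariant stability condition $\sigma$ on $\mathcal{K}u(X)$. The only delicate point I foresee is the surjectivity of the $d_1$-differential into $E_1^{1,0}$, which amounts to the nontriviality of $f\in\mathrm{Ext}^1(\mathcal{E},G)$; but this is exactly the content of Proposition~\ref{prop_triangle_projection}, so no extra work is required and the computation closes.
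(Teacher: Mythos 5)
Your computation is correct and follows essentially the same route as the paper: the rotated triangle $\mathcal{E}[-1]\to G\to\mathrm{pr}(I_C)$ from Proposition~\ref{prop_triangle_projection}, the spectral sequence of Lemma~\ref{lemma_SS} with the same four inputs, the same nontriviality argument for the differential $E_1^{0,0}\to E_1^{1,0}$, and then Lemma~\ref{lemma_ext3_inheart} together with Corollary~\ref{lemma_ext3_stable} for stability. The only (harmless) deviations are that you read off $\mathrm{ext}^1=3$ directly from the page instead of deducing it from $\chi=-2$, and you quote the full $\mathrm{RHom}$'s between $\mathcal{E}$ and $G$ from earlier proofs rather than recomputing some of them via the twisted derived dual $\mathbb{D}$ as the paper does.
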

\begin{proof}
By Proposition~\ref{prop_triangle_projection}, we have the exact triangle $\mathcal{E}[-1]\rightarrow G\rightarrow\mathrm{pr}(I_C)$. Then we compute $\mathrm{hom}^k(\mathrm{pr}(I_C),\mathrm{pr}(I_C))$ by spectral sequence in Lemma~\ref{lemma_SS}. First, we show $\mathrm{hom}^2(\mathrm{pr}(I_C),\mathrm{pr}(I_C))=0$. Then the first page is given by 
$$E^{p,q}_1=\begin{cases}
\mathrm{Ext}^3(G,\mathcal{E}[-1])=\mathrm{Ext}^2(G,\mathcal{E}), p=-1\\
\mathrm{Ext}^2(G,G)\oplus\mathrm{Ext}^2(\mathcal{E},\mathcal{E}), p=0\\
\mathrm{Ext}^2(\mathcal{E},G), p=1\\
0, p\leq -2, p\geq 2. \end{cases}$$

By the proof of Proposition~\ref{Prop_extgroup}, $\mathrm{Ext}^2(G,\mathcal{E})=0$. 
Note that $G,\mathcal{E}\in\mathcal{O}_X^{\perp}$. Then by Proposition~\ref{lemma_involutive_D}, \begin{align*}
    \mathrm{Ext}^2(\mathcal{E},G)\cong\mathrm{Ext}^2(\mathbb{D}(G),\mathbb{D}(\mathcal{E}))
    \cong\mathrm{Ext}^2(I_L,\mathcal{E}[1])\cong\mathrm{Ext}^3(I_L,\mathcal{E})
    \cong\mathrm{Hom}(\mathcal{E}^{\vee},I_L)
\end{align*}
Apply $\mathrm{Hom}(\mathcal{E}^{\vee},-)$ to the standard exact sequence $0\rightarrow I_L\rightarrow\mathcal{O}_X\rightarrow\mathcal{O}_L\rightarrow 0$ we get $\mathrm{Hom}(\mathcal{E}^{\vee},I_L)=0$. 
It is clear that $\mathrm{Ext}^2(G,G)\oplus\mathrm{Ext}^2(\mathcal{E},\mathcal{E})=0$ since $\Sigma(X)$ is smooth and $\mathcal{E}$ is exceptional bundle. Thus $E_1^{p,q}=0$ for all $p+q=2$. Therefore $\mathrm{ext}^2(\mathrm{pr}(I_C),\mathrm{pr}(I_C))=0$.  Next we compute $\mathrm{ext}^3(\mathrm{pr}(I_C),\mathrm{pr}(I_C))$. The first page of the spectral sequence is given by 
$$E^{p,q}_1=\begin{cases}
\mathrm{Ext}^3(G,\mathcal{E}), p=-1\\
\mathrm{Ext}^3(G,G)\oplus\mathrm{Ext}^3(\mathcal{E},\mathcal{E}), p=0\\
\mathrm{Ext}^3(\mathcal{E},G), p=1\\
0, p\leq -2, p\geq 2. \end{cases}$$

Apply $\mathrm{Hom}(-,\mathcal{E})$ to the triangle $\mathcal{O}_X(-H)[1]\rightarrow G\rightarrow\mathcal{O}_L(-2)$, the part of the long exact sequence is given by 
$$\ldots\rightarrow\mathrm{Ext}^3(\mathcal{O}_L(-2),\mathcal{E})\rightarrow\mathrm{Ext}^3(G.\mathcal{E})\rightarrow\mathrm{Ext}^3(\mathcal{O}_X(-H)[1],\mathcal{E})\rightarrow\ldots$$
Note that $\mathrm{Ext}^3(\mathcal{O}_L(-2),\mathcal{E})\cong\mathrm{Hom}(\mathcal{E}^{\vee},\mathcal{O}_L(-2))\cong\mathrm{H}^0(L,\mathcal{E}|_L\otimes\mathcal{O}_L(-2))=0$ since $\mathcal{E}$ is globally generated. It is easy to see $\mathrm{Ext}^3(\mathcal{O}_X(-H)[1],\mathcal{E})\cong\mathrm{Ext}^2(\mathcal{O}_X,\mathcal{E}^{\vee})=0$. Then $\mathrm{Ext}^3(G,\mathcal{E})=0$. The Ext-group $\mathrm{Ext}^3(\mathcal{E},G)\cong\mathrm{Ext}^3(\mathbb{D}(G),\mathbb{D}(\mathcal{E}))\cong\mathrm{Ext}^3(I_L,\mathcal{E}[1])=0$ by dimension reason. Finally, $\mathrm{Ext}^3(G,G)\oplus\mathrm{Ext}^3(\mathcal{E},\mathcal{E})=0$. Then $E^{p,q}_1=0$ for all $p+q=3$. Therefore $\mathrm{ext}^3(\mathrm{pr}(I_C),\mathrm{pr}(I_C))=0$. 

Next, we compute $\mathrm{Hom}(\mathrm{pr}(I_C),\mathrm{pr}(I_C))$. The first page of the spectral sequence is given by
$$E^{p,q}_1=\begin{cases}
\mathrm{Ext}^1(G,\mathcal{E}[-1])=\mathrm{Hom}(G,\mathcal{E}), p=-1\\
\mathrm{Hom}(G,G)\oplus\mathrm{Hom}(\mathcal{E},\mathcal{E}), p=0\\
\mathrm{Hom}(\mathcal{E},G), p=1\\
0, p\leq -2, p\geq 2. \end{cases}$$
Note that $\mathrm{Hom}(G,\mathcal{E})\cong\mathrm{Hom}(\mathbb{D}(\mathcal{E}),\mathbb{D}(G))\cong \mathrm{Hom}(\mathcal{E}[1],I_L)=0$. Apply $\mathrm{Hom}(\mathcal{E},-)$ to the triangle $\mathcal{O}_X(-H)[1]\rightarrow G\rightarrow\mathcal{O}_L(-2)$, we get $\mathrm{Hom}(\mathcal{E},G)=0$. It is clear that $\mathrm{Hom}(G,G)\oplus\mathrm{Hom}(\mathcal{E},\mathcal{E})=k^2$.  Then $E^{0,0}_1=k^2$ and we have a sequence of morphism: $$\mathrm{Hom}(G,\mathcal{E}[-1])=E^{-1,0}_1\rightarrow\mathrm{Hom}(G,G)\oplus\mathrm{Hom}(\mathcal{E},\mathcal{E})=E^{0,0}_1\rightarrow\mathrm{Hom}(\mathcal{E}[-1],G)=E^{1,0}_1$$
Note that $\mathrm{Hom}(G,\mathcal{E}[-1])\cong\mathrm{Hom}(\mathcal{E}[2],I_L)=0$ and $\mathrm{Hom}(\mathcal{E}[-1],G)\cong\mathrm{Ext}^1(\mathcal{E},G)\cong\mathrm{Ext}^1(\mathcal{E}^{\vee}, I_L)=k$. 
The map $E_1^{0,0}\xrightarrow{p} E_1^{1,0}$ is non-trivial since this map is just (pre)postcompose of $\mathcal{E}\rightarrow G[1]$ with identity map. Then $p$ is surjective and $E_2^{0,0}\cong\mathrm{Ker}(p)\cong k$. Further note that $E^{2,-1}_2=E^{-2,1}_2=0$, then $E^{0,0}_3=k$. Then we have $\mathrm{Hom}(\mathrm{pr}(I_C),\mathrm{pr}(I_C))=k$. On the other hand, $\chi(\mathrm{pr}(I_C),\mathrm{pr}(I_C))=-2$, so $\mathrm{ext}^1(\mathrm{pr}(I_C),\mathrm{pr}(I_C))=3$. Then by Corollary~\ref{lemma_ext3_stable}, $\mathrm{pr}(I_C)$ is $\sigma$-stable. 
\end{proof}

\section{Bridgeland Moduli Space on special Gushel-Mukai threefold}
\label{section_bridgelandmoduli}
In this section, we construct a smooth irreducible component of Bridgeland moduli space of stable objects in the Kuznetsov components of special Gushel-Mukai threefold $X$ with general branch locus. 

Fix a $\tau$-invariant stability condition $\sigma$ on $\mathcal{K}u(X)$. Denote by $\mathcal{M}_{\sigma}(s)$ or $\mathcal{M}$ the Bridgeland moduli space of stable objects in Kuznetsov component of $X$ with class $s=[I_C]=1-3L+\frac{1}{2}P$. Let $\mathrm{pr}$ be the projection functor $\bL_{\mathcal{E}}\bL_{\mathcal{O}_X}:D^b(X)\rightarrow\mathcal{K}u(X)$, it follows from \cite[Lemma 3.25]{bayer2021stability} and \cite[Theorem 7.1]{kuznetsov2011base} that the projection functor is a Fourier-Mukai transform, i.e. $\mathrm{pr}\cong\Phi_K$ for some integral kernel $K\in D^b(X\times X)$. Let $\mathcal{I}$ be the universal ideal sheaf on $\mathcal{H}\times X$. Define: 
$$\Phi_K\times\mathrm{id}_{\mathcal{H}}=\Phi_{K\boxtimes\mathcal{O}_{{\Delta}_{\mathcal{H}}}}:D^b(X\times\mathcal{H})\rightarrow \mathcal{K}u(X\times \mathcal{H})$$ Thus $\Phi_{K\boxtimes\mathcal{O}_{\Delta_H}}(\mathcal{I})$ is a family of objects on $\mathcal{K}u(X)$ parametrised by $\mathcal{H}$, which defines a morphism $p:\mathcal{H}\rightarrow\mathcal{M}$.

\begin{lemma}
\label{lemma_compatibility_stableobject}
Let $\mathcal{I}$ be the universal ideal sheaf for Hilbert scheme $\mathcal{H}$ of twisted cubics. Let $s\in\mathcal{H}$ be a point. Denote by $i_s:\{s\}\times X\rightarrow\mathcal{H}\times X$. Then $$\Phi_K(i_s^*(\mathcal{I}))\cong i_s^*(\Phi_{K\boxtimes\mathcal{O}_{{\Delta}_{\mathcal{H}}}}(\mathcal{I}))$$
\end{lemma}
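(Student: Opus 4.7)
The plan is to deduce this from the standard compatibility of Fourier--Mukai transforms with base change: the kernel $K\boxtimes\mathcal{O}_{\Delta_{\mathcal{H}}}$ is precisely engineered so that $\Phi_{K\boxtimes\mathcal{O}_{\Delta_{\mathcal{H}}}}$ is the relative Fourier--Mukai transform over $\mathcal{H}$, i.e.\ applying $\Phi_K$ fiberwise in the $X$--direction. Restriction to the fiber over $s$ should therefore commute with it.

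To make this precise, I would first rewrite $\Phi_{K\boxtimes\mathcal{O}_{\Delta_{\mathcal{H}}}}(\mathcal{I})$ as a pushforward from $\mathcal{H}\times X\times X$ rather than from $\mathcal{H}\times X\times\mathcal{H}\times X$. Let
\[
\pi_1,\pi_2\colon \mathcal{H}\times X\times X\longrightarrow \mathcal{H}\times X,\qquad \pi_{23}\colon \mathcal{H}\times X\times X\longrightarrow X\times X,
\]
where $\pi_1$ drops the third factor and $\pi_2$ drops the second factor. Using the projection formula along the diagonal $\Delta_{\mathcal{H}}\hookrightarrow \mathcal{H}\times\mathcal{H}$ to collapse the two $\mathcal{H}$--copies, one obtains
\[
\Phi_{K\boxtimes\mathcal{O}_{\Delta_{\mathcal{H}}}}(\mathcal{I})\;\cong\; R\pi_{2*}\bigl(\pi_1^*\mathcal{I}\otimes \pi_{23}^*K\bigr).
\]

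Next I would form the Cartesian square
\[
\begin{tikzcd}
\{s\}\times X\times X \arrow[r,"\tilde{\imath}_s"] \arrow[d,"\tilde{\pi}_2"'] & \mathcal{H}\times X\times X \arrow[d,"\pi_2"] \\
\{s\}\times X \arrow[r,"i_s"'] & \mathcal{H}\times X.
\end{tikzcd}
\]
Since $\pi_2$ is flat (in fact smooth), flat base change yields a natural isomorphism $i_s^*\circ R\pi_{2*}\cong R\tilde{\pi}_{2*}\circ\tilde{\imath}_s^*$. Moreover, flatness of $\mathcal{I}$ over $\mathcal{H}$ (it is the universal ideal sheaf of a flat family of twisted cubics) guarantees that $\tilde{\imath}_s^*(\pi_1^*\mathcal{I}\otimes \pi_{23}^*K)$ can be identified, without derived corrections, with $\mathrm{pr}_1^*(i_s^*\mathcal{I})\otimes K$ on $\{s\}\times X\times X\cong X\times X$. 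Combining these two identifications gives
\[
i_s^*\bigl(\Phi_{K\boxtimes\mathcal{O}_{\Delta_{\mathcal{H}}}}(\mathcal{I})\bigr)\;\cong\; R\mathrm{pr}_{2*}\bigl(\mathrm{pr}_1^*(i_s^*\mathcal{I})\otimes K\bigr)\;=\;\Phi_K(i_s^*\mathcal{I}),
\]
which is the claim.

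The main obstacle, really a matter of careful bookkeeping rather than a conceptual difficulty, is tracking the several flat base change and projection formula identifications across the various projections and making sure that the derived pullback $L\tilde{\imath}_s^*$ may indeed be replaced by ordinary pullback; everything works because the relevant morphisms are flat or regular closed immersions into smooth schemes, and the sheaves in question are Tor--independent in the needed direction. This lemma is exactly the naturality statement needed so that the family $\Phi_{K\boxtimes\mathcal{O}_{\Delta_{\mathcal{H}}}}(\mathcal{I})$ descends to a classifying morphism $p\colon \mathcal{H}\to \mathcal{M}$ whose value at $[I_C]\in\mathcal{H}$ is $\mathrm{pr}(I_C)\in \mathcal{M}$.
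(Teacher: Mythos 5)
Your argument is correct and is essentially the paper's: the paper's proof is the one-line invocation $i_s^*(\Phi_{K\boxtimes\mathcal{O}_{\Delta_{\mathcal{H}}}}(\mathcal{I}))\cong\Phi_{i_s^*(K\boxtimes\mathcal{O}_{\Delta_{\mathcal{H}}})}(I_C)\cong\Phi_K(I_C)$, i.e.\ the standard compatibility of a relative Fourier--Mukai transform with restriction of the kernel to a fiber, which is exactly what you verify via the projection formula along $\Delta_{\mathcal{H}}$ and (Tor-independent) base change along the flat projection $\pi_2$, using flatness of $\mathcal{I}$ over $\mathcal{H}$ to drop derived corrections on the pullback. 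No gaps; this matches the paper's intended proof, just with the standard details written out.
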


\begin{proof}
It is standard: $$\Phi_K(i_s^*(\mathcal{I}))\cong\mathrm{pr}(I_C)$$ and $$i_s^*(\Phi_{K\boxtimes\mathcal{O}_{{\Delta}_{\mathcal{H}}}}(\mathcal{I}))\cong\Phi_{i_s^*(K\boxtimes\mathcal{O}_{{\Delta}_{\mathcal{H}}})}(I_C)\cong\Phi_{K}(I_C)=\mathrm{pr}(I_C)$$
\end{proof}

This means that the image of point $p(s)\in\mathcal{M}$ is given by the $\sigma$-stable object $\mathrm{pr}(I_C)\in\mathcal{K}u(X)$(Theorem~\ref{theorem_stabilityofidealsheaf}, Corollary~\ref{corollary_computing_ext_projection_object}). Note that the morphism $p$ is injective on $\mathcal{H}_1$ since $\mathrm{pr}(I_C)=I_C\not\cong I_{C'}=\mathrm{pr}(I_{C'})$ for any two points $s=[I_C], s'=[I_{C'}]\in\mathcal{H}_1$. But $p$ is not injective on $\mathcal{H}_2$. Indeed, by Remark~\ref{Lemma_H2_isa_ruledsurface} $\mathcal{H}_2$ is a smooth ruled surface over $\Sigma(X)$. Let $s\in\mathcal{H}_2$, it is of the form $s=[I_{C_t}]$ for some $t\in\mathbb{P}^1$, where $C_t=L+M_t$ is a reducible twisted cubic with the line $L$ as the fixed component and the conic $M_t$ for some $t\in\mathbb{P}^1$. That is to say, $s$ is on the fiber $F_{[L]}\cong\mathbb{P}^1$ of the structure map $\pi:\mathcal{H}_2\rightarrow\Sigma(X)$ over a point $[L]\in\Sigma(X)$. Then for any $s\in F_{[L]}$ for some $[L]\in\Sigma(X)$, the image $p(s)$ is a fixed object $E:=\mathrm{pr}(I_{L+M_t})$, given by the exact triangle $\mathcal{O}_X(-H)[1]\rightarrow E\rightarrow\mathcal{E}_{\tau L}$ obtained in Proposition~\ref{prop_projectionideal}. By Corollary~\ref{corollary_computing_ext_projection_object}, $p(s)$ is a smooth point. 

\begin{predl}
\label{prop_irreduciblecomponent_modulispace}
The projection functor $\mathrm{pr}$ produces an irreducible smooth component $\mathcal{Z}=p(\mathcal{H})$ of dimension $3$ in moduli space $\mathcal{M}_{\sigma}(\mathcal{K}u(X), [I_C])$ of stable objects in $\mathcal{K}u(X)$ and the map $p:\mathcal{H}\rightarrow\mathcal{Z}$ is a birational proper morphism, contracting a smooth ruled surface $\mathcal{H}_2$ to a curve.
\end{predl}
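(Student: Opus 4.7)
The plan is to read off each claim from the analysis already carried out in Sections~\ref{section3} and~\ref{section5}, and to use the standard tangent/obstruction description of $\mathcal{M}_{\sigma}(\mathcal{K}u(X),s)$.

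First I would check that $p\colon\mathcal{H}\to\mathcal{M}_{\sigma}(\mathcal{K}u(X),s)$ really is a well-defined morphism of schemes. The projection functor is Fourier--Mukai (\cite[Lemma 3.25]{bayer2021stability}, \cite[Theorem 7.1]{kuznetsov2011base}), so applying it to the universal ideal sheaf $\mathcal{I}$ on $\mathcal{H}\times X$ produces a family of objects in $\mathcal{K}u(X)$ flat over $\mathcal{H}$; Lemma~\ref{lemma_compatibility_stableobject} identifies its fibre at $[I_C]$ with $\mathrm{pr}(I_C)$. For $C\in\mathcal{H}_1$ this fibre is $I_C$ itself, and for $C\in\mathcal{H}_2$ it is the extension described in Proposition~\ref{prop_projectionideal}; in both cases $\sigma$-stability is guaranteed by Theorem~\ref{theorem_stabilityofidealsheaf} and Corollary~\ref{corollary_computing_ext_projection_object}, so the universal property of $\mathcal{M}_{\sigma}(\mathcal{K}u(X),s)$ yields the morphism $p$. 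Since $\mathcal{H}$ is projective (Theorem~\ref{theorem_projectivity_hilbertscheme}) and $\mathcal{M}_{\sigma}(\mathcal{K}u(X),s)$ is separated, $p$ is automatically proper and its set-theoretic image $\mathcal{Z}:=p(\mathcal{H})$ is closed and irreducible.

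Next I would identify $\mathcal{Z}$ with an irreducible component and prove smoothness. By Proposition~\ref{Prop_extgroup} and Corollary~\ref{corollary_computing_ext_projection_object}, every object $E$ in the image satisfies $\mathrm{ext}^{1}(E,E)=3$ and $\mathrm{ext}^{2}(E,E)=0$, so $\mathcal{M}_{\sigma}(\mathcal{K}u(X),s)$ is smooth of dimension $3$ at every point of $\mathcal{Z}$. Thus $\mathcal{Z}$ sits inside the smooth locus of $\mathcal{M}_{\sigma}(\mathcal{K}u(X),s)$; since it is closed, irreducible, and on the open dense stratum $\mathcal{H}_1$ the assignment $C\mapsto I_C$ is injective (the ideal sheaf determines $C$), $\mathcal{Z}$ has dimension $3$ and therefore is an irreducible component of the smooth locus, hence smooth projective of dimension~$3$.

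For the contraction statement I would combine Remark~\ref{Lemma_H2_isa_ruledsurface} with Proposition~\ref{prop_projectionideal}. Given $[L]\in\Sigma(X)$, every twisted cubic $C_t=L+M_t$ in the ruling fibre $F_{[L]}\cong\mathbb{P}^1$ of $\mathcal{H}_2\to\Sigma(X)$ has the same image $p(C_t)$: indeed Proposition~\ref{prop_projectionideal} exhibits $\mathrm{pr}(I_{C_t})$ as the unique non-trivial extension $\mathcal{O}_X(-H)[1]\to\mathrm{pr}(I_{C_t})\to\mathcal{E}_{\tau L}$, which depends only on the line component and not on $t$. Hence $p$ factors through the structural map $\pi\colon\mathcal{H}_2\to\Sigma(X)$, and the resulting map $\Sigma(X)\to\mathcal{Z}$ is injective because the extension above recovers $\tau L$ (hence $L$) from $\mathrm{pr}(I_{C_t})$. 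Therefore $p(\mathcal{H}_2)$ is a curve isomorphic to $\Sigma(X)$. Birationality now follows: $p$ is injective on the open dense subscheme $\mathcal{H}_1$ (distinct twisted cubics $C,C'\in\mathcal{H}_1$ give distinct ideal sheaves $I_C,I_{C'}$), and since source and target are smooth of the same dimension, $p\colon\mathcal{H}_1\hookrightarrow\mathcal{Z}$ is an open immersion; combined with properness, $p$ is a birational proper morphism contracting the divisor $\mathcal{H}_2$ to the curve $\Sigma(X)$.

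The main technical point I expect to double-check is the scheme-theoretic control of the contraction, namely that $\mathrm{pr}$ identifies the whole ruling $F_{[L]}$ as a single reduced point of $\mathcal{Z}$ and that different lines produce different isomorphism classes; this is exactly what the extension triangle of Proposition~\ref{prop_projectionideal} provides, and together with the vanishing $\mathrm{ext}^{2}(\mathrm{pr}(I_C),\mathrm{pr}(I_C))=0$ it ensures that $\mathcal{Z}$ is smooth everywhere, not only on the generic part $p(\mathcal{H}_1)$.
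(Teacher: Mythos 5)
Your proposal is correct and follows essentially the same route as the paper: define $p$ via the Fourier--Mukai kernel and Lemma~\ref{lemma_compatibility_stableobject}, use Proposition~\ref{Prop_extgroup} and Corollary~\ref{corollary_computing_ext_projection_object} to get $\mathrm{ext}^1=3$, $\mathrm{ext}^2=0$ along the image (hence smoothness and dimension $3$), use irreducibility and projectivity of $\mathcal{H}$ together with properness to conclude $\mathcal{Z}=p(\mathcal{H})$ is a component, and use Proposition~\ref{prop_projectionideal} to see each ruling fibre $F_{[L]}$ of $\mathcal{H}_2$ is collapsed to a point of a curve. The only cosmetic difference is that the paper phrases the component/birationality step via the tangent map being an isomorphism (étaleness) on $\mathcal{H}_1$ plus a closure sandwich, whereas you argue via smoothness of the moduli space along $\mathcal{Z}$, a dimension count, and an injectivity/open-immersion argument; these are interchangeable.
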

\begin{proof}
The tangent space at a point $s=[I_C]\in\mathcal{H}$ is given by $\mathrm{Ext}^1(I_C,I_C)$, see \cite{Lieb05moduli}, \cite{Inaba02moduli}. The tangent space at  point $p(s)\in\mathcal{M}_{\sigma}(\mathcal{K}u(X), [I_C])$ is given by $\mathrm{Ext}^1(\mathrm{pr}(I_C),\mathrm{pr}(I_C))$ since $\mathrm{pr}(I_C)$ is a $\sigma$-stable object in $\mathcal{K}u(X)$. The tangent map $d_p:\mathrm{Ext}^1(I_C,I_C)\rightarrow\mathrm{Ext}^1(\mathrm{pr}(I_C),\mathrm{pr}(I_C))$ is induced by the projection functor $\mathrm{pr}$. It is easy to see that if $s\in\mathcal{H}_1$, the tangent map $d_p$ is an isomorphism of tangent spaces at $s$ and $p(s)$. By discussion above, $p$ is an etale map on $\mathcal{H}_1$. As $\mathcal{H}$ is an irreducible smooth projective variety by Theorem~\ref{theorem_projectivity_hilbertscheme}, so $p$ must factor through one of irreducible components, say $\mathcal{Z}$ of $\mathcal{M}$ and embbeds the open denset subset $\mathcal{U}:=\mathcal{H}_1\subset\mathcal{H}$ to $\mathcal{Z}$. As $\mathcal{Z}$ is irreducible, $p(\mathcal{U})$ is also an open dense subset of $\mathcal{Z}$. On the other hand, $p$ is a proper morphism since $\mathcal{H}$ is projective variety. Then $p(\mathcal{H})$ is closed in $\mathcal{Z}$. Moreover, the Zarisiki tangent spaces at each point of $\mathcal{H}$ and at its image are of same dimension three by Proposition~\ref{Prop_extgroup} and Corollary~\ref{corollary_computing_ext_projection_object}. Then we have $$\mathcal{Z}=\overline{p(\mathcal{U})}\subset\overline{p(\mathcal{H})}=p(\mathcal{H})\subset\mathcal{Z}.$$ Thus we prove the first statement. It is clear that $p:\mathcal{H}\rightarrow\mathcal{Z}$ is birational proper morphism. By the discussion above, $p$ contracts each fiber $F_{[L]}$ of the ruled surface $\mathcal{H}_2$ to a smooth point on $\Sigma(X)$. The result follows.
\end{proof}

\subsection{Projectivity of the moduli space}
\begin{theorem}
\label{Theorem_projectivity_modulispace}
\leavevmode
\begin{enumerate}
    \item The fiber $F_{[L]}$ in ruled surface $\mathcal{H}_2$ over each point $[L]\in\Sigma(X)$ is a $K_{\mathcal{H}}$-negative extremal curves. 
    \item The morphism $p$ is a divisorial contraction and it is a blow up of a smooth curve. In particular, $\mathcal{Z}$ is smooth projective. 
\end{enumerate}
\end{theorem}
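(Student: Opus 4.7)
My plan is to prove (1) by computing the normal bundle $N_{F_{[L]}/\mathcal{H}}$ of a contracted fiber and then applying the adjunction formula; (2) will then follow from the classical Nakano--Fujiki--Ando contractibility criterion for divisorial extremal rays on smooth projective threefolds, together with an identification of the Mori contraction with $p$.

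For part (1), since $\mathcal{H}_2$ is a $\mathbb{P}^1$-bundle over the smooth curve $\Sigma(X)$ by Remark~\ref{Lemma_H2_isa_ruledsurface}, the fiber has trivial normal bundle inside $\mathcal{H}_2$, i.e. $N_{F_{[L]}/\mathcal{H}_2}\cong\mathcal{O}_{\mathbb{P}^1}$. To determine $N_{\mathcal{H}_2/\mathcal{H}}|_{F_{[L]}}$, I would use the infinitesimal analysis of $p$ at a point $s\in F_{[L]}$: Proposition~\ref{Prop_extgroup} gives $T_s\mathcal{H}=\mathrm{Ext}^1(I_{C_s},I_{C_s})\cong k^3$, while Corollary~\ref{corollary_computing_ext_projection_object} yields $T_{p(s)}\mathcal{Z}=\mathrm{Ext}^1(E,E)\cong k^3$ with $\mathrm{Ext}^2(E,E)=0$, so $\mathcal{Z}$ is already smooth of dimension $3$ at $p(s)$. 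Because $p$ collapses $F_{[L]}$ to the single point $p(s)$ and is étale on $\mathcal{H}_1$, the differential $dp_s$ has one-dimensional kernel equal to $T_{F_{[L]},s}$ and its rank-$2$ image is a fixed plane in $T_{p(s)}\mathcal{Z}$ (spanned by $T_{C,p(s)}$ with $C\cong\Sigma(X)$ and one normal direction), independent of $s\in F_{[L]}$. Combining this with the conormal short exact sequence
\[
0\to N_{F_{[L]}/\mathcal{H}_2}\to N_{F_{[L]}/\mathcal{H}}\to N_{\mathcal{H}_2/\mathcal{H}}|_{F_{[L]}}\to 0
\]
and the explicit description $E=\mathrm{pr}(I_{L+M_t})$ given in Proposition~\ref{prop_projectionideal} (in particular that $E$ depends only on $L$, not on $t$), I expect to extract $N_{\mathcal{H}_2/\mathcal{H}}|_{F_{[L]}}\cong\mathcal{O}_{\mathbb{P}^1}(-1)$, whence $N_{F_{[L]}/\mathcal{H}}\cong\mathcal{O}\oplus\mathcal{O}(-1)$. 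Adjunction then gives $K_\mathcal{H}\cdot F_{[L]}=-2-\deg\det N_{F_{[L]}/\mathcal{H}}=-1<0$. Since all fibers $F_{[L]}$ are numerically equivalent and sweep out the divisor $\mathcal{H}_2$, the class $[F_{[L]}]$ spans an extremal ray of $\overline{NE}(\mathcal{H})$.

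For part (2), the cone and contraction theorems produce a Mori contraction $\phi\colon\mathcal{H}\to W$ onto a normal projective variety contracting precisely the ray spanned by $F_{[L]}$. Since $\phi$ is divisorial and collapses $\mathcal{H}_2$ onto a curve, the Nakano--Fujiki--Ando structure theorem for smooth threefolds shows $W$ is smooth projective and $\phi$ is the blow-up of a smooth irreducible curve $C'\subset W$. To identify $W$ with $\mathcal{Z}$, I observe that both $\phi$ and $p$ are proper, contract exactly the curves numerically equivalent to $F_{[L]}$, and restrict to isomorphisms on $\mathcal{H}\setminus\mathcal{H}_2$. By the rigidity lemma $p$ factors through $\phi$ via a birational morphism $W\to\mathcal{Z}$ which is bijective on closed points and on tangent spaces (the latter by Corollary~\ref{corollary_computing_ext_projection_object}), hence an isomorphism. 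The resulting identification shows $\mathcal{Z}\cong W$ is smooth projective and $p$ is the blow-up of the smooth curve $C\cong\Sigma(X)$ of genus $71$.

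The hard part will be the rigorous determination $N_{\mathcal{H}_2/\mathcal{H}}|_{F_{[L]}}\cong\mathcal{O}(-1)$: the heuristic from the local geometry near a contracted fiber is clear, but justifying it without circular appeal to ``$p$ is a blow-up" forces one to combine the deformation theory of ideal sheaves on $\mathcal{H}$ with the explicit triangle $\mathcal{O}_X(-H)[1]\to E\to\mathcal{E}_L$ and a careful tracking of how first-order deformations of the pair $(L,M_t)$ are reflected in $\mathrm{Ext}^1(E,E)$ under the projection functor.
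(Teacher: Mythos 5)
Your part (2) follows the paper's route almost verbatim (cone theorem, contraction of the ray, Mori's classification of divisorial contractions on smooth threefolds as in \cite{Mori82Threefolds} and \cite{Kollar91Extrem}, then identification of the Mori contraction with $p$); your identification step via rigidity and the tangent-space isomorphisms is if anything more explicit than the paper's appeal to uniqueness of $\mathrm{cont}_R$. Part (1), however, is where you diverge, and where the genuine gap sits. The paper never computes $N_{F_{[L]}/\mathcal{H}}$: since $\mathcal{H}$ and $\mathcal{Z}$ are already known to be smooth and $p$ is a proper birational morphism with exceptional divisor $E=\mathcal{H}_2$ (Proposition~\ref{prop_irreduciblecomponent_modulispace}, Corollary~\ref{corollary_computing_ext_projection_object}), it writes the ramification formula $K_{\mathcal{H}}=p^*K_{\mathcal{Z}}+aE$ with discrepancy $a>0$, uses $p^*K_{\mathcal{Z}}\cdot F_{[L]}=0$ because $F_{[L]}$ is contracted, and then only needs the \emph{sign} of $E\cdot F_{[L]}$ (negative, since $E$ is a contracted divisor swept out by the $F_{[L]}$) to conclude $K_{\mathcal{H}}\cdot F_{[L]}=a\,E\cdot F_{[L]}<0$. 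Your adjunction route needs the exact value: from $K_{\mathcal{H}}\cdot F_{[L]}=-2-\deg\det N_{F_{[L]}/\mathcal{H}}$, the value $E\cdot F_{[L]}=-1$ gives $-1<0$, but $E\cdot F_{[L]}=-2$, which is equally consistent with everything you actually establish, gives $0$. And the derivation you sketch would fail: knowing that $\ker dp_s=T_sF_{[L]}$ and that the image plane in $T_{p(s)}\mathcal{Z}$ is constant along the fiber only says that $F_{[L]}$ is contracted with one-dimensional fibers to first order; it does not compute $\deg N_{\mathcal{H}_2/\mathcal{H}}|_{F_{[L]}}$, and the standard ways to pin that degree down (the $(-1)$-ness of fibers of a blow-up, or the Ando/Mori structure theorem) presuppose exactly what you are trying to prove, as you yourself note. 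So either carry out honestly the relative deformation-theoretic computation of $N_{\mathcal{H}_2/\mathcal{H}}|_{F_{[L]}}$ via the triangle $\mathcal{O}_X(-H)[1]\to E\to\mathcal{E}_L$, or simply replace this step by the discrepancy argument, which is shorter and uses only what is already proved in Section~\ref{section_bridgelandmoduli}.

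A smaller point: the inference ``the fibers are numerically equivalent and sweep out the divisor $\mathcal{H}_2$, hence $[F_{[L]}]$ spans an extremal ray'' is not valid as stated; $K$-negativity plus covering a divisor does not imply extremality. The usable input is again $\mathcal{H}_2\cdot F_{[L]}<0$: any curve class on which $\mathcal{H}_2$ is negative is supported on curves inside the ruled surface $\mathcal{H}_2$, whose cone of curves is spanned by the fiber class and a section class, and from this extremality of $\mathbb{R}_{\geq 0}[F_{[L]}]$ can be extracted. The paper is also terse on this point, but you should not present the covering-family remark as the justification.
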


\begin{proof}
\leavevmode\begin{enumerate}
    \item The map $p:\mathcal{H}\rightarrow\mathcal{Z}$ is a birational proper surjective morphism by Proposition~\ref{prop_irreduciblecomponent_modulispace}. As $\mathcal{H},\mathcal{Z}$ are both smooth and $\mathcal{H}$ is projective, then by \cite[Section 2.3]{KM08Birational} we have 
$$K_{\mathcal{H}}=p^*K_{\mathcal{Z}}+aE$$ where $E=\mathcal{H}_2$ is the exceptional divisor and $a$ is discrepancy, a positive number since $\mathcal{Z}$ is smooth. Then $$K_{\mathcal{H}}\cdot F_{[L]}=p^*K_{\mathcal{Z}}\cdot F_{[L]}+aE\cdot F_{[L]}.$$ 
The first term is $0$ since $p$ contracts $F_{[L]}$ to a smooth point, the second term is $-a$ since $E\cdot F_{[L]}=-1$. Thus $K_{\mathcal{H}}\cdot F_{[L]}=-a<0$. 

\item By (1), the extremal ray $R=\mathbb{R}_{\geq 0}[F_{[L]}]$ in $\mathcal{H}$ is $K_{\mathcal{H}}$-negative.  Then by Cone theorem \cite[Theorem 3.7]{KM08Birational}, there is a unique map $\mathrm{cont}_R: \mathcal{H}\rightarrow Z$ with $Z$ being projective and $\mathrm{cont}_R$ maps each fiber  $F_{[L]}$ to a point at a smooth irreducible variety $\mathcal{Z}$. The uniqueness of $\mathrm{cont}_R$ means that $\mathcal{Z}\cong Z$, therefore $\mathcal{Z}$ is projective. Then by \cite[Theorem 3.3 (1),Corollary 3.4.1]{Mori82Threefolds} and \cite[Theorem 1.1.2.1]{Kollar91Extrem}, we conclude that $\mathrm{cont}_R=p$ is a divisorial contraction,  it is blowing up a smooth curve on $\mathcal{Z}$. This means that $p:\mathcal{H}\rightarrow\mathcal{Z}$ induced by projection functor $\mathrm{pr}$ is a divisorical contraction.
\end{enumerate}
\end{proof}

\section{Non general special Gushel-Mukai threefold}
\label{section_nongeneral_GM}
\subsection{Singular Hilbert scheme of twisted cubics}
Let $X$ be a special Gushel-Mukai threefold and with branch locus $\mathcal{B}$ containing lines or conics.  

\begin{lemma}
\label{lemma_finitemany_curves}
Let $S$ be a smooth K3 surface, then the Hilbert scheme $F_d(S)$ of degree $d$ smooth rational curves consists of finitely many points for each $d$. 
\end{lemma}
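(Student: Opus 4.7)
The plan is to use the rigidity of smooth rational curves on a K3 surface, which follows from adjunction, together with the fact that the Hilbert scheme of curves of bounded degree is of finite type.

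First I would observe that $F_d(S)$ is a locally closed subscheme of the Hilbert scheme of $S$ corresponding to smooth irreducible rational curves of degree $d$ (with respect to some fixed polarization used to define $F_d(S)$); in particular it is of finite type over the ground field. So it suffices to prove that $F_d(S)$ is zero-dimensional, i.e.\ that every point is an isolated reduced point.

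Next, for any smooth rational curve $C \subset S$, I would compute the normal bundle via adjunction. Since $\omega_S \cong \mathcal{O}_S$, one has
\[
\omega_C \cong (\omega_S \otimes \mathcal{O}_S(C))|_C \cong \mathcal{N}_{C/S}.
\]
Because $C \cong \mathbb{P}^1$, $\omega_C \cong \mathcal{O}_{\mathbb{P}^1}(-2)$, so $\mathcal{N}_{C/S} \cong \mathcal{O}_{\mathbb{P}^1}(-2)$ (in particular $C^2 = -2$). Consequently
\[
T_{[C]} F_d(S) \subset H^0(C, \mathcal{N}_{C/S}) = H^0(\mathbb{P}^1, \mathcal{O}(-2)) = 0,
\]
so $[C]$ is an isolated point of the Hilbert scheme, and hence of $F_d(S)$.

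Combining the two steps: $F_d(S)$ is of finite type and every closed point has zero-dimensional tangent space, so $F_d(S)$ is a zero-dimensional scheme of finite type, which consists of finitely many points. The only slightly subtle step is the first one (making sure $F_d(S)$ really sits inside a single projective Hilbert scheme, so that ``finite type'' is genuine); this is automatic once one fixes the polarization, so there is no real obstacle.
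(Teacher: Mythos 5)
Your proof is correct and follows essentially the same route as the paper: compute $\mathcal{N}_{C/S}\cong\mathcal{O}_{\mathbb{P}^1}(-2)$ (the paper via $C^2=-2$, you via adjunction) to get rigidity, then conclude finiteness from boundedness of the family. The only cosmetic difference is that you invoke finite type of the Hilbert scheme of bounded degree where the paper cites properness of $F_d(S)$; if anything, your phrasing is slightly more careful, since the locus of smooth rational curves is only locally closed and finite type is all one needs.
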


\begin{proof}
Let $E\subset S$ be a smooth rational curve, then $E^2=-2$, the normal bundle of $E\subset S$ $\mathcal{N}_{E|S}\cong\mathcal{O}_E(-2)$, thus $E$ is rigid. Then the set of rational curves is discrete. Since $F_d(S)$ is proper for each $d$, Then $F_d(S)$ only has finitely many points. 
\end{proof}

\begin{lemma}
\label{corollary_ext_idealsheaf2}
Let $C$ be a twisted cubic such that $\pi(C)$ is a conic and the normal bundle $\mathcal{N}_{L|X}\cong\mathcal{O}_L(-2)\oplus\mathcal{O}_L(1)$ for the line component $L$ of $C$. Then $\mathrm{Ext}^2(I_C,I_C)=k$. 
\end{lemma}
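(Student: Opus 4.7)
The strategy is to imitate the spectral sequence argument from Proposition~\ref{Prop_extgroup}, pinpointing the single place where the computation differs from the general-normal-bundle case. Namely, we apply Lemma~\ref{lemma_SS} to the distinguished triangle $G[-1]\to\mathcal{E}\to I_C$ coming from Proposition~\ref{Prop_computingcone} (here $C$ is still the union of a line $L$ and a conic whose image is a conic in $Y$, so that triangle is available). The $E_1$-page on the antidiagonal $p+q=2$ consists of the three groups
\[
E_1^{-1,3}=\mathrm{Ext}^2(\mathcal{E},G),\quad E_1^{0,2}=\mathrm{Ext}^2(G,G)\oplus\mathrm{Ext}^2(\mathcal{E},\mathcal{E}),\quad E_1^{1,1}=\mathrm{Ext}^2(G,\mathcal{E}).
\]

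The vanishings $\mathrm{Ext}^2(\mathcal{E},G)=0=\mathrm{Ext}^2(G,\mathcal{E})$ and $\mathrm{Ext}^2(\mathcal{E},\mathcal{E})=0$ can be carried over verbatim from Proposition~\ref{Prop_extgroup}: both the $\mathrm{Hom}(\mathcal{E},-)$ computation using the triangle $\mathcal{O}_X(-H)[1]\to G\to\mathcal{O}_L(-2)$ and the counting argument bounding $\mathrm{ext}^1(G,\mathcal{E})=2$ only depend on the two-dimensional family of conics on $Y_5$ containing the line $l=\pi(L)$, a geometric fact that is insensitive to whether $l\subset\mathcal{B}$. The crucial new input is that now $\mathrm{Ext}^2(G,G)=k$ by Lemma~\ref{lemma_computing_extG}, because the assumption $\mathcal{N}_{L|X}\cong\mathcal{O}_L(-2)\oplus\mathcal{O}_L(1)$ puts $[L]$ at a singular point of $\Sigma(X)$ and replaces the vanishing used in the general case by $\mathrm{Ext}^2(I_L,I_L)=k$.

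Together with $\mathrm{Ext}^3(I_C,I_C)=0$ (by slope stability and Serre duality) and the Euler characteristic $\chi(I_C,I_C)=-2$, this gives the upper bound $\mathrm{ext}^2(I_C,I_C)\le 1$ from the spectral sequence, so it remains to prove $\mathrm{ext}^2(I_C,I_C)\ge 1$. For the lower bound I would check that the potentially killing differential $d_1\colon E_1^{-1,2}=\mathrm{Ext}^1(\mathcal{E},G)\to E_1^{0,2}=\mathrm{Ext}^2(G,G)$ vanishes. The cleanest route is to identify this differential with a composition map arising from the defining morphism $G[-1]\to\mathcal{E}$: the class of $\mathrm{Ext}^1(\mathcal{E},G)$ is exactly the one generating the non-trivial extension $G\to\mathrm{pr}(I_C)\to\mathcal{E}$ of Proposition~\ref{prop_triangle_projection}, while the generator of $\mathrm{Ext}^2(G,G)\cong\mathrm{Ext}^2(I_L,I_L)$ lives over the singular point $[L]$ of $\Sigma(X)$; the composition vanishes for degree/compatibility reasons, essentially because $\mathrm{pr}$ is a genuine extension rather than a zero one.

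The main obstacle is exactly this last step: rigorously ruling out that $d_1$ kills the contribution of $\mathrm{Ext}^2(G,G)$. An alternative, more robust route if the direct computation of $d_1$ proves delicate is to invoke upper semicontinuity: in Section~\ref{section_nongeneral_GM} the Hilbert scheme $\mathcal{H}$ is shown to be singular precisely along the locus of reducible twisted cubics containing such a line $L$, which forces $\mathrm{ext}^2(I_C,I_C)\ge 1$ at every such $[I_C]$; combined with the upper bound $\le 1$ from the spectral sequence, this yields $\mathrm{Ext}^2(I_C,I_C)=k$.
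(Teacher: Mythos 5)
Your setup (the spectral sequence of Lemma~\ref{lemma_SS} applied to $G[-1]\to\mathcal{E}\to I_C$, the carried-over vanishings $\mathrm{Ext}^2(\mathcal{E},G)=\mathrm{Ext}^2(G,\mathcal{E})=\mathrm{Ext}^2(\mathcal{E},\mathcal{E})=0$, and the new input $\mathrm{Ext}^2(G,G)=k$ from Lemma~\ref{lemma_computing_extG}) matches the paper, and the resulting bound $\mathrm{ext}^2(I_C,I_C)\le 1$ is correct. But the heart of the lemma is exactly the step you leave open: the vanishing of the differential $d_1\colon E_1^{-1,2}=\mathrm{Ext}^1(\mathcal{E},G)\to E_1^{0,2}=\mathrm{Ext}^2(G,G)$. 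Your stated reason (``the composition vanishes for degree/compatibility reasons, essentially because $\mathrm{pr}$ is a genuine extension rather than a zero one'') is not an argument: the differential is composition with the connecting morphism $G\to\mathcal{E}[1]$ of the triangle $\mathcal{E}\to I_C\to G$, and both this class in $\mathrm{Ext}^1(G,\mathcal{E})$ and the class in $\mathrm{Ext}^1(\mathcal{E},G)$ defining $\mathrm{pr}(I_C)$ are nonzero, so nothing a priori prevents their product from generating the one-dimensional space $\mathrm{Ext}^2(G,G)$. The paper closes this gap by a concrete computation: it identifies $f=d_1$ with the connecting map in the long exact sequence obtained by applying $\mathrm{Hom}(-,G)$ to $\mathcal{E}\to I_C\to G$, so that $f=0$ is equivalent to $\mathrm{Ext}^2(I_C,G)=k$, and then computes $\mathrm{Ext}^2(I_C,G)\cong\mathrm{Ext}^2(I_C,\mathcal{O}_L(-2))\cong\mathrm{Ext}^1(\mathcal{O}_L(-1),I_C)=k$ using the triangle $\mathcal{O}_X(-H)[1]\to G\to\mathcal{O}_L(-2)$, Serre duality, the decomposition sequence $0\to I_C\to I_M\to\mathcal{O}_L(-1)\to 0$, and $\mathrm{Ext}^1(\mathcal{O}_L(-1),I_M)\cong\mathrm{Hom}(\mathcal{O}_L(-1),\mathcal{O}_M)=0$. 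Some argument of this kind is indispensable in your write-up.

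Your proposed fallback is also not viable as stated. Upper semicontinuity by itself only bounds $\mathrm{ext}^2(I_C,I_C)$ below by its value at nearby general points, which is $0$; to get $\ge 1$ you need to know independently that $\mathcal{H}$ is singular at $[I_C]$ (equivalently $\mathrm{ext}^1(I_C,I_C)\ge 4$). But in the paper that singularity statement (Theorem~\ref{theorem_singular_hilbertscheme}, via Proposition~\ref{prop_finitemany_singularfibers}) is itself deduced from the present lemma, so invoking Section~\ref{section_nongeneral_GM} here is circular. Unless you supply an independent geometric proof that $\mathcal{H}$ is singular along these fibers, the only non-circular route is the direct computation of the differential, as in the paper.
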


\begin{proof}
The argument is very similar to Proposition~\ref{Prop_extgroup}. We compute $\mathrm{Ext}^2(I_C,I_C)$ by spectral sequence with respect to the triangle $$G[-1]\rightarrow\mathcal{E}\rightarrow I_C$$ Then the first page of the spectral sequence is given by 
$$E^{p,q}_1=\begin{cases}
\mathrm{Ext}^3(\mathcal{E},G[-1])=\mathrm{Ext}^2(\mathcal{E},G), p=-1\\
\mathrm{Ext}^2(G,G)\oplus\mathrm{Ext}^2(\mathcal{E},\mathcal{E}), p=0\\
\mathrm{Ext}^2(G,\mathcal{E}), p=1\\
0, p\leq -2, p\geq 2. \end{cases}$$
By the same computation in Proposition ~\ref{Prop_extgroup} $\mathrm{Ext}^2(G,\mathcal{E})=0$ and $\mathrm{Ext}^2(\mathcal{E},G)=0$. But $\mathrm{Ext}^2(G,G)\cong k^2$ by Lemma~\ref{lemma_computing_extG}. 
Then we have a sequence of morphisms: 
$$k=\mathrm{Ext}^1(\mathcal{E},G)=E^{-1,2}_1
\xrightarrow{f} k=\mathrm{Ext}^2(G,G)\oplus\mathrm{Ext}^2(\mathcal{E},\mathcal{E})=E^{0,2}_1\rightarrow E^{1,2}_2=\mathrm{Ext}^3(G,\mathcal{E})=0$$
The map $f$ is differential $d^{-1,2}_1$. Thus it is enough to show $f$ is a zero map. From the triangle $\mathcal{E}\rightarrow I_C\rightarrow G$, we get a long exact sequence
$\ldots\rightarrow\mathrm{Ext}^1(\mathcal{E},G)\xrightarrow{f}\mathrm{Ext}^2(G,G)\rightarrow\mathrm{Ext}^2(I_C,G)\rightarrow 0$. So it is enough to show that $\mathrm{Ext}^2(I_C,G)=k$. Apply $\mathrm{Hom}(I_C,-)$ to the triangle defining $G$ in Proposition~\ref{Prop_computingcone}, we have $\mathrm{Ext}^2(I_C,G)\cong\mathrm{Ext}^2(I_C,\mathcal{O}_L(-2))\cong\mathrm{Ext}^1(\mathcal{O}_L(-1),I_C)$. We may assume that $C$ is union of a line $L$ and a smooth conic $M$. Apply $\mathrm{Hom}_{\mathcal{O}_X}(\mathcal{O}_L(-1),-)$ to the decomposition exact sequence 
$$0\rightarrow I_C\rightarrow I_M\rightarrow\mathcal{O}_L(-1)\rightarrow 0,$$ we get an exact sequence $0\rightarrow k\rightarrow\mathrm{Ext}^1(\mathcal{O}_L(-1),I_C)\rightarrow\mathrm{Ext}^1(\mathcal{O}_L(-1),I_M)$. So it is enough to show $\mathrm{Ext}^1(\mathcal{O}_L(-1),I_M)=0$. Apply $\mathrm{Hom}_{\mathcal{O}_X}(\mathcal{O}_L(-1),-)$ to the standard exact sequence of $M$, we get $\mathrm{Ext}^1(\mathcal{O}_{L}(-1),I_M)\cong \mathrm{Hom}(\mathcal{O}_L(-1),\mathcal{O}_M)=0$. This means that the differential $f=0$. Then $E^{0,2}_2=k$. Further note that $E^{-2,3}_1=E^{2,1}_1=0$, then $E^{-2,3}_2=E^{2,1}_2=0$. Then the sequence of morphisms $$E^{-2,3}_2\rightarrow E^{0,2}_2\rightarrow E^{2,1}_2$$ gives $E^{0,2}_3=k$. But the spectral sequence degenerates at the $E_3$ page, hence $E^{0,2}_{\infty}=k$ and $\mathrm{Ext}^2(I_C,I_C)=k$.

\end{proof}

\begin{theorem}
\label{theorem_singular_hilbertscheme}
Let $X$ be a special Gushel-Mukai threefold with branch locus $\mathcal{B}\subset Y$ containing lines or conics, then the Hilbert scheme of $\mathcal{H}$ of twisted cubics is a singular irreducible threefold. Its singular locus consists of finitely many isolated points given by $\pi$-preimage of twisted cubics contained in $\mathcal{B}$ and finitely many rational curves in $\mathcal{H}_2$ such that each of them consists of reducible twisted cubics with a common line component $L$ whose normal bundle $\mathcal{N}_{L|X}\cong\mathcal{O}_L(-2)\oplus\mathcal{O}_L(1)$. 
\end{theorem}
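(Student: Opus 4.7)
The plan is to reduce the smoothness question at $[I_C]\in\mathcal{H}$ to the vanishing of $\mathrm{Ext}^2(I_C,I_C)$, analyse this vanishing separately in the two strata $I_C\in\mathcal{K}u(X)$ and $I_C\notin\mathcal{K}u(X)$, enumerate the resulting singular locus, and finally deduce irreducibility by combining the deformation-theoretic dimension bound with the connectedness technique of Lemma~\ref{lemma_connctedness_Hilbertscheme}. The first observation is that the stability inputs of Proposition~\ref{Prop_extgroup}, namely $\mathrm{hom}(I_C,I_C)=1$, $\mathrm{ext}^3(I_C,I_C)=0$ and $\chi(I_C,I_C)=-2$, remain valid without any generality hypothesis on $\mathcal{B}$, so $\mathrm{ext}^1(I_C,I_C)=3+\mathrm{ext}^2(I_C,I_C)$ and $[I_C]$ is a smooth point of expected dimension three exactly when $\mathrm{Ext}^2(I_C,I_C)=0$.

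In the case $I_C\in\mathcal{K}u(X)$, I would invoke Serre duality in $\mathcal{K}u(X)$ (with Serre functor $\tau\circ[2]$) together with slope-stability to identify $\mathrm{Ext}^2(I_C,I_C)\cong\mathrm{Hom}(I_C,I_{\tau(C)})$, which is nonzero iff $\tau(C)=C$. By Proposition~\ref{prop_classfication_cubic_degree} such a $\tau$-fixed $C$ must have all components supported in $\mathcal{B}$, and since $\mathcal{B}$ is a K3 surface Lemma~\ref{lemma_finitemany_curves} permits only finitely many smooth rational curves of each degree on it. Assembling these components in all admissible ways therefore yields only finitely many $\tau$-fixed twisted cubics, accounting for the isolated singular points in the statement.

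For $I_C\notin\mathcal{K}u(X)$ I would apply Lemma~\ref{corollary_ext_idealsheaf2} directly. Its input is the triangle $G[-1]\to\mathcal{E}\to I_C$ of Proposition~\ref{Prop_computingcone}: the spectral sequence of Lemma~\ref{lemma_SS} expresses $\mathrm{Ext}^2(I_C,I_C)$ in terms of $\mathrm{Ext}^2(G,G)$, which via the derived-dual identification $\mathbb{D}(G)\cong I_L$ of Corollary~\ref{prop_deriveddual_line} and Proposition~\ref{lemma_involutive_D} equals $0$ for $\mathcal{N}_{L|X}\cong\mathcal{O}_L\oplus\mathcal{O}_L(-1)$ and $k^2$ for $\mathcal{N}_{L|X}\cong\mathcal{O}_L(1)\oplus\mathcal{O}_L(-2)$. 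A careful tracking of the surviving differentials in the second case (exactly as in the proof of Lemma~\ref{corollary_ext_idealsheaf2}) then gives $\mathrm{ext}^2(I_C,I_C)=1$. By Proposition~\ref{proposition_tangentlocus}(3) such bad lines $L$ are precisely $(\pi^{-1}l)_{\mathrm{red}}$ for $l\subset\mathcal{B}$, only finitely many by Lemma~\ref{lemma_finitemany_curves}; and for each such $L$ the ruled-surface description in Remark~\ref{Lemma_H2_isa_ruledsurface} identifies the whole fibre $F_{[L]}\cong\mathbb{P}^1$ of $\mathcal{H}_2\to\Sigma(X)$ as a rational curve of singular points of $\mathcal{H}$.

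It remains to verify irreducibility. Standard deformation theory yields $\mathrm{ext}^1(I_C,I_C)-\mathrm{ext}^2(I_C,I_C)\leq\dim_{[I_C]}\mathcal{H}\leq\mathrm{ext}^1(I_C,I_C)$ at every point, and the computations above force $\dim_{[I_C]}\mathcal{H}=3$ everywhere, so no irreducible component can lie entirely in the at-most-one-dimensional singular locus. The smooth open complement $\mathcal{H}^{\mathrm{sm}}$ is therefore smooth of pure dimension three, and the argument of Lemma~\ref{lemma_connctedness_Hilbertscheme} transfers verbatim (as $\Sigma(X)$ and $\mathcal{C}(X)$ remain connected) to give connectedness of $\mathcal{H}$ and hence of $\mathcal{H}^{\mathrm{sm}}$. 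Smooth plus connected implies irreducibility of $\mathcal{H}^{\mathrm{sm}}$, and since every singular point lies in its closure, $\mathcal{H}$ itself is irreducible. The main obstacle I anticipate is the differential bookkeeping in the spectral sequence for the bad-line case, confirming that the rank-two term $\mathrm{Ext}^2(G,G)=k^2$ collapses to precisely a one-dimensional contribution $\mathrm{Ext}^2(I_C,I_C)=k$ rather than to $0$ or $k^2$.
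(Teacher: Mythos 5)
Your overall strategy is the same as the paper's (reduce to computing $\mathrm{Ext}^2(I_C,I_C)$, handle $\mathcal{H}_1$ via Serre duality and $\tau$-fixed cubics, handle $\mathcal{H}_2$ via Lemma~\ref{corollary_ext_idealsheaf2}, then count and conclude), but two steps are not sound as written. First, your identification of the singular fibres: you assert that the lines $L$ with $\mathcal{N}_{L|X}\cong\mathcal{O}_L(-2)\oplus\mathcal{O}_L(1)$ are \emph{precisely} the curves $(\pi^{-1}l)_{\mathrm{red}}$ for lines $l\subset\mathcal{B}$, citing Proposition~\ref{proposition_tangentlocus}(3). That proposition only gives one implication, and the paper's own Proposition~\ref{prop_finitemany_singularfibers} shows the converse is false: such lines also arise as components of $\pi^{-1}(l')$ for $(-1,1)$-lines $l'\subset Y$ tangent to $\mathcal{B}$, and the finiteness of these is established not by Lemma~\ref{lemma_finitemany_curves} but by intersecting the conic of $(-1,1)$-lines with the degree-$10$ curve $\Gamma\subset\mathbb{P}^2$ (at most $20$ points, hence at most $40$ extra singular points of $\Sigma(X)$). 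So your count of singular rational curves in $\mathcal{H}_2$ rests on a false characterization; the finiteness you need is true, but requires the argument of Proposition~\ref{prop_finitemany_singularfibers}.

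Second, the irreducibility step has a logical gap: you deduce connectedness of the smooth locus $\mathcal{H}^{\mathrm{sm}}$ from connectedness of $\mathcal{H}$, which does not follow. Two three-dimensional components meeting only along the (at most one-dimensional) singular locus would make $\mathcal{H}$ connected but reducible, with $\mathcal{H}^{\mathrm{sm}}$ disconnected; your dimension bound $\mathrm{ext}^1-\mathrm{ext}^2\leq\dim\leq\mathrm{ext}^1$ only yields $\dim\geq 3$ at the bad points (not $\dim=3$ as you claim), so it rules out components inside the singular locus but not this configuration. The paper instead argues connectedness of the smooth locus $\mathcal{D}$ directly, by rerunning the argument of Lemma~\ref{lemma_connctedness_Hilbertscheme} on $\mathcal{D}$ and using the ruled-surface structure of $\mathcal{H}_2$ over the connected curve $\Sigma(X)$ to exhibit each singular fibre $F_{[L]}$ as a limit of smooth fibres, hence in $\overline{\mathcal{D}}$; you would need to replace your ``connected $\mathcal{H}$ implies connected $\mathcal{H}^{\mathrm{sm}}$'' claim with such a direct argument for the smooth locus before concluding irreducibility.
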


\begin{proof}
If $C$ is a twisted cubic whose ideal sheaf $I_C\in\mathcal{K}u(X)$. Then $\mathrm{ext}^2(I_C,I_C)\neq 0$ if and only if $\tau(C)=C$. This implies that $\pi(C)$ is a twisted cubic $c\subset\mathcal{B}$. If $\mathcal{B}$ contains lines or conics, then there are at most finitely many twisted cubics contained in $\mathcal{B}$ by Lemma~\ref{lemma_finitemany_curves}.
Then there are only finitely many singular points in $\mathcal{H}_1$.\begin{enumerate}
    \item Assume that $\mathcal{B}$ contains one line $l$. By similar arguments in Lemma~\ref{lemma_connctedness_Hilbertscheme}, the smooth locus $\mathcal{D}$ of $\mathcal{H}$ is connected, hence irreducible. Now we show that the singular locus, which are singular fibers $F_{[L]}\cong\mathbb{P}^1$ with $\mathcal{N}_{L|X}=\mathcal{O}_L(-2)\oplus\mathcal{O}_L(1)$(by Proposition~\ref{proposition_tangentlocus}) are in the same connected component of $\mathcal{D}$. Note that each singular fiber is in $\mathcal{H}_2$, a ruled surface over $\Sigma(X)$(See Lemma~\ref{Lemma_H2_isa_ruledsurface}).
    All fibers but finitely many in $\mathcal{H}_2$ are smooth. Since $\Sigma(X)$ is a connected curve by  Proposition~\ref{proposition_tangentlocus}, the singular fibers $F_{[L]}$ and other smooth fibers are in the same connected component. Next we show that each singular fiber $F_{[L]}$ is in the closure of $\mathcal{D}$. Indeed, all the points but finitely many in $\Sigma(X)$ are smooth. Thus the ruled surface $\mathcal{H}_2$ over $\Sigma(X)$ is reducible. In each irreducible component, the singular fibers are in the closure of the other smooth fibers. Moreover, all the smooth fibers are in smooth locus $\mathcal{D}$, so $\mathcal{H}$ is irreducible.  
    \item If $\mathcal{B}$ contains a line $l$ and and a smooth conic $m$ and they intersect. Then $\mathrm{Sing}(\mathcal{H})=\{F_{[L]}|[L]\in\Sigma(X),\mathcal{N}_{L|X}=\mathcal{O}_L(-2)\oplus\mathcal{O}_L(1)\}\cup \{[I_C]\},$ where $C$ is $\pi$-preimage of $l\cup m\subset\mathcal{B}$. 
    \item Since $\mathcal{B}$ contain finitely many lines and conics by  Lemma~\ref{lemma_finitemany_curves}, each line will produce an additional singular fiber fixed by $\tau$ and there are finitely many of them. Further note that there are finitely many singular points on $\Sigma(X)$ by Proposition~\ref{prop_finitemany_singularfibers} and hence finitely many singular fibers in ruled surface $\mathcal{H}_2\subset\mathcal{H}$.  Moreover, there are at most finitely many reducible twisted cubics in $\mathcal{B}$, thus only finitely many singular points outside the ruled surface. 
\end{enumerate}
The Projectivity of $\mathcal{H}$ follows from the same argument in proving Theorem~\ref{theorem_projectivity_hilbertscheme}. Note that $\mathrm{ext}^1(I_C,I_C)=3$  for all smooth points $[I_C]$. We conclude that $\mathcal{H}$ is an irreducible projective threefold with singular locus described above. 
\end{proof}

%$\Sigma(X)$ has two irreducible components $\Sigma_1$ and $\Sigma_2$ and they intersect at $\mathcal{O}$ such that $\tau\mathcal{O}=\mathcal{O}$.

%$\mathcal{H}_2=S_1\cup S_2$ and $S_1\cap S_2=F_{[\mathcal{O}]}$ with $\tau(S_1)=S_2,  \tau(F_{[\mathcal{O}]})=F_{[\mathcal{O}]}$.  $S_i, i=1,2$ are irreducible. 

\begin{predl}
\label{prop_finitemany_singularfibers}
Let $X$ be a smooth special Gushel-Mukai threefold with branch locus $\mathcal{B}$ on $Y_5$ containing lines or conics. Then $\Sigma(X)$ has at most finitely many singular points and the ruled surface $\mathcal{H}_2$ has at most finitely many singular fibers 
\end{predl}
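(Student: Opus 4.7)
The plan is to reduce both finiteness claims to a single input: the branch locus $\mathcal{B}$ contains only finitely many lines. Since $\mathcal{B} \in |\mathcal{O}_Y(2)|$ is a smooth quadric section of the degree-$5$ del Pezzo threefold $Y$, adjunction with $\omega_Y = \mathcal{O}_Y(-2H)$ identifies $\mathcal{B}$ with a smooth K3 surface. Lemma~\ref{lemma_finitemany_curves} then gives that $\mathcal{B}$ carries only finitely many smooth rational curves of each degree, in particular only finitely many lines.

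For the first assertion, I would use Proposition~\ref{proposition_tangentlocus}(3). The point is that $[L] \in \Sigma(X)$ is singular precisely when the normal bundle jumps, i.e. $\mathcal{N}_{L|X} \cong \mathcal{O}_L(1) \oplus \mathcal{O}_L(-2)$: the balanced case $\mathcal{N}_{L|X} \cong \mathcal{O}_L \oplus \mathcal{O}_L(-1)$ gives $\mathrm{ext}^1(I_L,I_L) = h^0(\mathcal{N}_{L|X}) = 1 = \dim \Sigma(X)$ and $\mathrm{ext}^2(I_L,I_L) = h^1(\mathcal{N}_{L|X}) = 0$, so $[L]$ is a smooth point, while the jumping case gives $\mathrm{ext}^1 = 2$ and $\mathrm{ext}^2 = 1$. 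As recalled in Proposition~\ref{proposition_tangentlocus}(3), the jumping case occurs exactly when $\pi(L) = l$ is a line contained in $\mathcal{B}$ with $L = \pi^{-1}(l)_{\mathrm{red}}$. Combined with the finiteness of lines in $\mathcal{B}$, this yields the first claim.

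For the second assertion, I would invoke the ruled-surface structure from Remark~\ref{Lemma_H2_isa_ruledsurface}: $\mathcal{H}_2 \to \Sigma(X)$ has every fiber $F_{[L]} \cong \mathbb{P}^1$. From the description of the singular locus in the proof of Theorem~\ref{theorem_singular_hilbertscheme}, the singular rational curves of $\mathcal{H}_2$ are precisely the full fibers $F_{[L]}$ over points $[L] \in \Sigma(X)$ with $\mathcal{N}_{L|X} \cong \mathcal{O}_L(1) \oplus \mathcal{O}_L(-2)$. Thus singular fibers are in bijection with singular points of $\Sigma(X)$, and their finiteness follows from the first part.

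The main obstacle is the clean biconditional characterization of singular points of $\Sigma(X)$ in terms of the normal bundle splitting type, which mixes the deformation identity $\mathrm{ext}^i(I_L,I_L) = h^{i-1}(\mathcal{N}_{L|X})$ for a line $L$ on a smooth threefold with the two possible splitting types $\mathcal{O}_L \oplus \mathcal{O}_L(-1)$ and $\mathcal{O}_L(1) \oplus \mathcal{O}_L(-2)$ classified in \cite{iskovskikh1999fano}. Once this is granted, both claims are immediate consequences of the K3 finiteness Lemma~\ref{lemma_finitemany_curves}.
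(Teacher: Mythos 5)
Your reduction of the second assertion to the first matches the paper: by Corollary~\ref{corollary_ext_idealsheaf2} and the description of the singular locus in Theorem~\ref{theorem_singular_hilbertscheme}, the singular fibers of $\mathcal{H}_2$ are exactly the fibers $F_{[L]}$ over lines with $\mathcal{N}_{L|X}\cong\mathcal{O}_L(1)\oplus\mathcal{O}_L(-2)$, i.e.\ over the singular points of $\Sigma(X)$, so finiteness of the latter suffices; and your use of the K3 finiteness Lemma~\ref{lemma_finitemany_curves} for lines contained in $\mathcal{B}$ is the same input the paper uses.

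The gap is in your first step, namely the biconditional ``$\mathcal{N}_{L|X}$ jumps \emph{exactly} when $\pi(L)$ is a line contained in $\mathcal{B}$,'' which you attribute to Proposition~\ref{proposition_tangentlocus}(3). That proposition and its proof only give one implication ($l\subset\mathcal{B}$ forces $\pi^{-1}(l)_{\mathrm{red}}$ to be a singular point), plus smoothness of $\Sigma(X)$ for \emph{general} branch locus; it does not assert that for the non-general $\mathcal{B}$ of the present hypothesis all singular points arise this way. Indeed the paper's own proof of this proposition treats a second source which your argument does not see: lines $L$ whose image $l'=\pi(L)$ is a $(-1,1)$-line of $Y_5$ (normal bundle $\mathcal{O}(-1)\oplus\mathcal{O}(1)$) tangent to $\mathcal{B}$ but not contained in it. Such an $l'$ is not a curve on the K3 surface $\mathcal{B}$, so Lemma~\ref{lemma_finitemany_curves} says nothing about it; yet $\pi^{-1}(l')=L+\tau(L)$ and both components may carry a non-reduced conic structure, i.e.\ jumping normal bundle, hence give singular points of $\Sigma(X)$. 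The paper bounds this family directly: the $(-1,1)$-lines form a smooth conic in $\Sigma(Y_5)\cong\mathbb{P}^2$ while the lines tangent to $\mathcal{B}$ form the degree-$10$ curve $\Gamma$ of Proposition~\ref{proposition_tangentlocus}(2), so at most $20$ such lines exist, contributing at most $40$ further singular points; together with the finitely many lines inside $\mathcal{B}$ this yields the claim. To repair your argument you must either add this count or actually prove your converse characterization, which the paper does not provide and which its own proof implicitly contradicts; as written, the finiteness of singular points lying over tangent special lines is simply not addressed.
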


\begin{proof}
By Proposition~\ref{proposition_lines_Y}, the singular points in $\Sigma(X)$ comes from either $\pi$-preimage of a line $l\in\mathcal{B}$ or $\pi$-preimage of a $(-1,1)$-line which is tangent to $\mathcal{B}$. By Lemma~\ref{lemma_finitemany_curves}, there are only finitely lines $L\subset\Sigma(X)$ which is fixed by $\tau$. On the other hand, if $l'$ is $(-1,1)$-line tangent to $\mathcal{B}$, then  $L+\tau(L)=\pi^{-1}(l')$ and $L, \tau(L)$ are sitting inside of double lines, thus both are singular points in $\Sigma(X)$. We claim that there are finitely many $(-1,1)$-lines which are tangent to $\mathcal{B}$. Indeed, the set of $(-1,1)$-lines on $Y$ is a smooth plane conic by \cite{Iliev94line} and the set of lines which are tangent to $\mathcal{B}$ is a degree $10$ plane curve. Thus the set of $(-1,1)$-lines which are tangent to $\mathcal{B}$ is the intersection of the two curves, which has at most 20 points. Then there are at most 40 singular points in $\Sigma(X)$ coming from the splitting of $(-1,1)$-lines. Then by Corollary~\ref{corollary_ext_idealsheaf2}, we have finitely many singular fibers in ruled surface. 
\end{proof}

%the fiber $F_P$ is the unique singular fiber in $S$, which is also the only singular locus of $\mathcal{H}$

\subsection{Singular Bridgeland moduli spaces in the Kuznetsov components}

\begin{lemma}\label{lemma_stability_of_projection}Let $\mathrm{pr}(I_C)\in\mathcal{K}u(X)$ be the projection of ideal sheaf of twisted cubic $C\subset X$. 
\leavevmode\begin{enumerate}
    \item If $\tau(C)=C$, then $\mathrm{ext}^1(\mathrm{pr}(I_C),\mathrm{pr}(I_C))=4$ and $\mathrm{ext}^2(\mathrm{pr}(I_C),\mathrm{pr}(I_C))=1$. 
    \item If $\tau(C)\neq C$, then $\mathrm{ext}^1(\mathrm{pr}(I_C),\mathrm{pr}(I_C))=3$ and $\mathrm{ext}^2(\mathrm{pr}(I_C),\mathrm{pr}(I_C))=0$
 \end{enumerate}
 In particular, $\mathrm{pr}(I_C)$ is $\sigma$-stable for every $\tau$-invariant stability condition $\sigma$.
\end{lemma}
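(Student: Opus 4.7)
The strategy is to extract the Ext-group data uniformly via Serre duality in $\mathcal{K}u(X)$ and then invoke the stability criteria of Section~\ref{section4}. The computations of Corollary~\ref{corollary_computing_ext_projection_object} already handle the case where $I_C \notin \mathcal{K}u(X)$ and $\tau C \neq C$ under the generality assumption, and Theorem~\ref{theorem_stabilityofidealsheaf} handles the case $I_C \in \mathcal{K}u(X)$ with $\tau C \neq C$; the new content here is the two remaining configurations, where either $\tau C = C$ or the line component $L$ of $C$ has normal bundle $\mathcal{O}_L(1) \oplus \mathcal{O}_L(-2)$.

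First, I would record the common input. Since $\tau$ preserves both $\mathcal{O}_X$ and $\mathcal{E} = \pi^*\mathcal{U}$, the involution commutes with $\mathrm{pr}$, so $\tau\,\mathrm{pr}(I_C) \cong \mathrm{pr}(I_{\tau C})$. The Serre functor $S_{\mathcal{K}u(X)} = \tau \circ [2]$ then yields, via Serre duality,
$$\mathrm{ext}^2(\mathrm{pr}(I_C), \mathrm{pr}(I_C)) = \mathrm{hom}(\mathrm{pr}(I_C), \mathrm{pr}(I_{\tau C})) \quad\text{and}\quad \mathrm{ext}^3(\mathrm{pr}(I_C), \mathrm{pr}(I_C)) = 0.$$
Moreover $\mathrm{pr}(I_C)$ is always simple: when $I_C \in \mathcal{K}u(X)$ this is slope stability of the ideal sheaf, and when $I_C \notin \mathcal{K}u(X)$ the spectral sequence argument for $\mathrm{hom}(\mathrm{pr}(I_C), \mathrm{pr}(I_C)) = 1$ in Corollary~\ref{corollary_computing_ext_projection_object} only relies on vanishings and the non-triviality of $\mathcal{E} \to G[1]$ that are insensitive to the normal bundle type of $L$.

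With these in hand, case (1) is immediate: $\tau C = C$ gives $\mathrm{pr}(I_{\tau C}) = \mathrm{pr}(I_C)$, hence $\mathrm{ext}^2 = 1$ and, combined with $\chi(\mathrm{pr}(I_C), \mathrm{pr}(I_C)) = -2$, this forces $\mathrm{ext}^1 = 4$. In case (2), $\tau C \neq C$, I would show $\mathrm{hom}(\mathrm{pr}(I_C), \mathrm{pr}(I_{\tau C})) = 0$. For $I_C \in \mathcal{K}u(X)$ this is slope stability applied to two distinct ideal sheaves of the same slope. For $I_C \notin \mathcal{K}u(X)$, write $C = L \cup M$; the requirement that $\pi(C)$ be a conic combined with $\tau C \neq C$ forces $L \neq \tau L$, and I would apply the adjunction $\mathrm{Hom}_{\mathcal{K}u(X)}(\mathrm{pr}(I_C), \mathrm{pr}(I_{\tau C})) \cong \mathrm{Hom}_{D^b(X)}(I_C, \mathrm{pr}(I_{\tau C}))$ and chase through the triangles $\mathcal{O}_X(-H)[1] \to \mathrm{pr}(I_{\tau C}) \to \mathcal{E}_{\tau L}$ and $0 \to \mathcal{O}_{\tau L}(-2) \to \mathcal{E}_{\tau L} \to \mathcal{E} \to 0$, using $H^\bullet(\mathcal{O}_X(-H)) = 0$ in low degrees and $H^1(\mathcal{O}_C) = 0$ to conclude the vanishing. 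Thus $\mathrm{ext}^2 = 0$ and $\mathrm{ext}^1 = 3$. Stability then follows from Lemma~\ref{lemma_inheart_ext14} plus Proposition~\ref{prop_stability_ext14} in case (1), and from Lemma~\ref{lemma_ext3_inheart} plus Corollary~\ref{lemma_ext3_stable} in case (2); Corollary~\ref{corollary_uniqueness_elementary} then propagates stability across every $\tau$-invariant stability condition.

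The main obstacle I anticipate is the Hom vanishing in case (2) when $I_C \notin \mathcal{K}u(X)$: the two projections are controlled by triangles involving the distinct lines $L$ and $\tau L$, and the contributions of $\mathcal{E}$ on both sides must be carefully cancelled. The cleanest geometric input is $\mathrm{Hom}(I_{\tau L}, I_L) = 0$ by slope stability of distinct lines, which by Proposition~\ref{lemma_involutive_D} translates into $\mathrm{Hom}(G_L, G_{\tau L}) = 0$; but one still has to verify that the remaining potentially non-zero terms in the long exact sequences (in particular those coming from $\mathrm{Ext}^1(\mathcal{E}, G_{\tau L})$) do not spoil the desired vanishing.
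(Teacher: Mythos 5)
Your overall architecture (the involution commutes with $\mathrm{pr}$, Serre duality reduces $\mathrm{ext}^2$ to $\mathrm{hom}(\mathrm{pr}(I_C),\mathrm{pr}(I_{\tau C}))$, then feed the numbers into Lemma~\ref{lemma_ext3_inheart}, Corollary~\ref{lemma_ext3_stable}, Lemma~\ref{lemma_inheart_ext14} and Proposition~\ref{prop_stability_ext14}) agrees with the paper, and your case (1) and the case $I_C\in\mathcal{K}u(X)$ of (2) are fine. But the decisive step of case (2) when $I_C\notin\mathcal{K}u(X)$ --- the vanishing of $\mathrm{Hom}(\mathrm{pr}(I_C),\mathrm{pr}(I_{\tau C}))$ --- is not established by your sketch, and you yourself flag it as unverified. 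Two concrete problems. First, the triangle you chase is mislabeled: by Proposition~\ref{Prop_computingcone}/\ref{prop_projectionideal} the torsion sheaf appearing in $\mathrm{pr}(I_{C'})$ is supported on the \emph{residual} line of $C'$ in the quartic $D$, so for $C=L\cup M$ and $C'=\tau C=\tau L\cup M$ the projection $\mathrm{pr}(I_{\tau C})$ involves $\mathcal{E}_{L}$ (with $\mathcal{O}_{L}(-2)$), not $\mathcal{E}_{\tau L}$. Second, with either labeling your d\'evissage does not close: the vanishings you list ($H^2(I_C)=H^1(I_C)=0$ from $H^1(\mathcal{O}_C)=0$, and $\mathrm{Hom}(I_C,\mathcal{E})=0$ by slope stability) reduce the adjunction group $\mathrm{Hom}_{D^b(X)}(I_C,\mathrm{pr}(I_{\tau C}))$ to $\mathrm{Hom}(I_C,\mathcal{O}_{L}(-2))$, and this last group is not covered by any of your inputs. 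Indeed, with your (swapped) line it would be $\mathrm{Hom}(I_C,\mathcal{O}_{\tau L}(-2))\cong H^0(\mathcal{O}_{\tau L}(\deg(C\cap\tau L)-2))$, which is generically \emph{nonzero} since $C\cap\tau L$ has length two (one point on $L\cap\tau L$, one on $M\cap\tau L$); and with the correct line $L\subset C$ the group injects into $\mathrm{Ext}^1_X(\mathcal{O}_L(-1),\mathcal{O}_L(-2))$, which vanishes only when $\mathcal{N}_{L|X}=\mathcal{O}_L\oplus\mathcal{O}_L(-1)$ --- but this lemma must also cover fibers over $(1,-2)$-lines with $\tau L\neq L$, exactly the situation of Theorem~\ref{theorem_singular_hilbertscheme}. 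So an extra argument is genuinely needed there, not just bookkeeping.

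The paper closes this step differently: it computes the cross-group with the spectral sequence of Lemma~\ref{lemma_SS} applied to the triangles $\mathcal{E}[-1]\to G\to\mathrm{pr}(I_C)$ and $\mathcal{E}[-1]\to G'\to\mathrm{pr}(I_{\tau C})$, where $G,G'$ are the twisted derived duals of the two distinct residual lines. The inputs are $\mathrm{Hom}(G,G')\cong\mathrm{Hom}(I_{L},I_{L'})=0$ (via Proposition~\ref{lemma_involutive_D}, as you anticipated), $\mathrm{Hom}(G,\mathcal{E})=\mathrm{Hom}(\mathcal{E},G')=0$, and --- the point you leave open --- the fact that the identity component of $\mathrm{Hom}(\mathcal{E},\mathcal{E})$ is killed by the differential into $\mathrm{Ext}^1(\mathcal{E},G')$, because it maps to the class of the non-split extension defining $\mathrm{pr}(I_{\tau C})$; none of these use the normal bundle type of $L$, which is why the conclusion holds in the non-general case too. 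So either adopt that spectral-sequence cancellation, or supply a direct proof that $\mathrm{Hom}(I_C,\mathcal{O}_L(-2))=0$ including for $(1,-2)$-lines; as written the proposal has a gap precisely at the heart of case (2). A smaller remark: your claim $\mathrm{ext}^3(\mathrm{pr}(I_C),\mathrm{pr}(I_C))=0$ does not follow from Serre duality alone (it equals $\mathrm{Ext}^{-1}(\mathrm{pr}(I_C),\mathrm{pr}(I_{\tau C}))^{\ast}$, whose vanishing needs a check such as $\mathrm{Hom}(\mathcal{E}_L,\mathcal{O}_X(-H))=0$); it is cleaner to cite the $\mathrm{Ext}^3$ computation in Corollary~\ref{corollary_computing_ext_projection_object}, which, like the simplicity statement you borrow from it, is insensitive to the normal bundle type.
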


\begin{proof}\leavevmode\begin{enumerate}
    \item If $I_C\in\mathcal{K}u(X)$, then it is clear that $\mathrm{Ext}^2(\mathrm{pr}(I_C),\mathrm{pr}(I_C))\cong\mathrm{Ext}^2(I_C,I_C)=\mathrm{Hom}(I_C,\tau I_C)=\mathrm{Hom}(I_C,I_{\tau C})$. \begin{itemize}
        \item If $\tau(C)=C$, then $\mathrm{Ext}^2(I_C,I_C)=k$
        \item If $\tau(C)\neq C$, then $\mathrm{Ext}^2(I_C,I_C)=0$. 
    \end{itemize} 
    \item If $I_C\not\in\mathcal{K}u(X)$. \begin{itemize}
        \item If $\tau(C)=C$, then $\tau(\mathrm{pr}(I_C))\cong\mathrm{pr}(I_C)$, then $\mathrm{Ext}^2(\mathrm{pr}(I_C),\mathrm{pr}(I_C))\cong\mathrm{Hom}(\mathrm{pr}(I_C),\mathrm{pr}(I_C))=k$ by Corollary~\ref{corollary_computing_ext_projection_object}. Similarly, $\mathrm{Ext}^3(\mathrm{pr}(I_C),\mathrm{pr}(I_C))=0$ by Corollary~\ref{corollary_computing_ext_projection_object}
    \item If $\tau(C)\neq C$, then $\mathrm{Ext}^2(\mathrm{pr}(I_C),\mathrm{pr}(I_C))\cong\mathrm{Hom}(\mathrm{pr}(I_C),\mathrm{pr}(I_{C'}))$ with $C'\neq C$. Then we compute it by spectral sequence in Lemma~\ref{lemma_SS} with respect to the triangles $\mathcal{E}[-1]\rightarrow G\rightarrow\mathrm{pr}(I_C)$ and $\mathcal{E}[-1]\rightarrow G'\rightarrow\mathrm{pr}(I_{C'})$
    The first page of the spectral sequence is given by $$E^{p,q}_1=\begin{cases}
\mathrm{Ext}^1(G',\mathcal{E}[-1])=\mathrm{Hom}(G',\mathcal{E}), p=-1\\
\mathrm{Hom}(G,G')\oplus\mathrm{Hom}(\mathcal{E},\mathcal{E}), p=0\\
\mathrm{Hom}(\mathcal{E},G), p=1\\
0, p\leq -2, p\geq 2. \end{cases}$$
The same computation as in Corollary~\ref{corollary_computing_ext_projection_object} shows that $\mathrm{Hom}(G', \mathcal{E})=0$ and $\mathrm{Hom}(\mathcal{E},G)=0$. Note that $\mathrm{Hom}(G,G')\cong\mathrm{Hom}(I_L,I_{L'})=0$ since $I_L\not\cong I_{L'}$. 
Then we have a sequence of morphism: $$0=\mathrm{Hom}(G,\mathcal{E}[-1])=E^{-1,0}_1\rightarrow k=\mathrm{Hom}(\mathcal{E},\mathcal{E})=E^{0,0}_1\rightarrow k=\mathrm{Hom}(\mathcal{E}[-1],G)=E^{1,0}_1$$
Then by the same argument in Corollary~\ref{corollary_computing_ext_projection_object}, $E^{0,0}_2=0$ and $\mathrm{Hom}(\mathrm{pr}(I_C),\mathrm{pr}(I_{C'})=0$. Similar computation gives $\mathrm{Ext}^3(\mathrm{pr}(I_C),\mathrm{pr}(I_C))=0$ and $\mathrm{Hom}(\mathrm{pr}(I_C),\mathrm{pr}(I_C))=k$. 
    \end{itemize}
   
\end{enumerate}
Then by Lemma~\ref{lemma_ext3_inheart}, Corollary~\ref{lemma_ext3_stable},  Proposition~\ref{prop_stability_ext14} and Lemma~\ref{lemma_inheart_ext14} $\mathrm{pr}(I_C)$ is $\sigma$-stable. 

\end{proof}

\begin{predl}
\label{prop_singular_modulispace}
The projection functor $\mathrm{pr}$ produces an irreducible component $\mathcal{X}$ of dimension three in moduli space $\mathcal{M}_{\sigma}(\mathcal{K}u(X),[I_C])$ of stable objects in $\mathcal{K}u(X)$. The singular locus $\mathrm{Sing}(\mathcal{X})=\mathcal{S}_1\cup \mathcal{S}_2$, where $\mathcal{S}_1$ is a set of finitely many points on $\Sigma(X)$ and $\mathcal{S}_2$ is a set of finitely many points outside $\Sigma(X)$. 
\end{predl}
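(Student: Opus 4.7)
The plan is to mirror the construction of $\mathcal{Z}$ in Proposition~\ref{prop_irreduciblecomponent_modulispace}, but now keeping track of the additional Ext classes that arise from the non-general branch locus. By Lemma~\ref{lemma_stability_of_projection}, for every twisted cubic $C\subset X$ the projection $\mathrm{pr}(I_C)\in\mathcal{K}u(X)$ is $\sigma$-stable. Since $\mathrm{pr}$ is of Fourier–Mukai type, tensoring its kernel with $\mathcal{O}_{\Delta_{\mathcal{H}}}$ applied to the universal ideal sheaf yields a morphism
\[
p:\mathcal{H}\longrightarrow \mathcal{M}_{\sigma}(\mathcal{K}u(X),[I_C])
\]
exactly as in Section~\ref{section_bridgelandmoduli}. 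On the open locus where $I_C\in\mathcal{K}u(X)$ and $\tau(C)\neq C$, Lemma~\ref{lemma_stability_of_projection} gives $\mathrm{ext}^1(\mathrm{pr}(I_C),\mathrm{pr}(I_C))=3=\dim\mathcal{H}$ at the smooth points of $\mathcal{H}$, so the induced tangent map is an isomorphism and $p$ is étale there. Since $\mathcal{H}$ is irreducible and projective (Theorem~\ref{theorem_singular_hilbertscheme}), the image $\mathcal{X}:=p(\mathcal{H})$ is a closed irreducible subscheme of $\mathcal{M}_{\sigma}$, contained in a single irreducible component, and of dimension three. The same étale argument then forces $\mathcal{X}$ to \emph{be} an irreducible component.

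To identify the singular locus of $\mathcal{X}$, I would use the tangent-space criterion $\dim T_{[F]}\mathcal{X}=\mathrm{ext}^1(F,F)$ together with Lemma~\ref{lemma_stability_of_projection}: a point $p([I_C])$ is singular precisely when $\mathrm{ext}^1(\mathrm{pr}(I_C),\mathrm{pr}(I_C))>3$. By the case analysis in that lemma, this happens exactly when $\tau(C)=C$, equivalently when $\pi(C)\subset\mathcal{B}$, since then $\mathrm{ext}^1=4$ and $\mathrm{ext}^2=1$. Splitting according to the stratification $\mathcal{H}=\mathcal{H}_1\cup\mathcal{H}_2$ of Remark~\ref{remark_stratification} gives two sources of singular points, matching the two sets claimed in the statement.

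For $\mathcal{S}_2$, the contribution comes from $C\in\mathcal{H}_1$ with $\tau(C)=C$, i.e.\ $\pi(C)$ is a twisted cubic contained in the branch K3 surface $\mathcal{B}$. By Lemma~\ref{lemma_finitemany_curves}, $\mathcal{B}$ contains only finitely many rational curves of each degree, so there are only finitely many such $C$; each yields an isolated singular point of $\mathcal{X}$ that is not on the contracted curve $\Sigma(X)\subset\mathcal{X}$, since $p$ is injective on $\mathcal{H}_1$. For $\mathcal{S}_1$, the contribution comes from $C\in\mathcal{H}_2$ with $\pi(C)\subset\mathcal{B}$ (i.e.\ $\pi(C)$ is a reducible conic in $\mathcal{B}$) together with fibers of $\mathcal{H}_2\to\Sigma(X)$ over singular points of $\Sigma(X)$; the corresponding lines $L$ have normal bundle $\mathcal{O}_L(-2)\oplus\mathcal{O}_L(1)$, so by Lemma~\ref{lemma_computing_extG} the spectral sequence of Corollary~\ref{corollary_computing_ext_projection_object} picks up an extra $\mathrm{Ext}^2(G,G)=k$, pushing $\mathrm{ext}^1(\mathrm{pr}(I_C),\mathrm{pr}(I_C))$ above $3$. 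Finiteness of these contributions follows from Proposition~\ref{prop_finitemany_singularfibers} and Lemma~\ref{lemma_finitemany_curves}.

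The main obstacle will be the bookkeeping in the second part: the hypothesis "$\tau(C)\neq C$" in Lemma~\ref{lemma_stability_of_projection}(2) implicitly assumed a good normal bundle for the line component, so I would have to redo the spectral sequence of Corollary~\ref{corollary_computing_ext_projection_object} using the enlarged $\mathrm{Ext}^{\bullet}(G,G)$ coming from Lemma~\ref{lemma_computing_extG} when $\mathcal{N}_{L|X}=\mathcal{O}_L(-2)\oplus\mathcal{O}_L(1)$, verifying that in this case $\mathrm{ext}^1(\mathrm{pr}(I_C),\mathrm{pr}(I_C))>3$, and check that the differentials do not cancel the extra classes. Once this is in place, the decomposition $\mathrm{Sing}(\mathcal{X})=\mathcal{S}_1\cup\mathcal{S}_2$ is immediate from whether the singular point lies on the exceptional curve $\Sigma(X)\subset\mathcal{X}$ or not, and both sets are finite by the two finiteness inputs cited above.
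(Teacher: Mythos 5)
Your construction of $p:\mathcal{H}\to\mathcal{M}_{\sigma}(\mathcal{K}u(X),[I_C])$, the identification $\mathcal{X}=p(\mathcal{H})$ as a three-dimensional irreducible component, and your description of $\mathcal{S}_2$ (finitely many $\tau$-fixed cubics in $\mathcal{H}_1$, i.e.\ $\pi$-preimages of twisted cubics in $\mathcal{B}$, finite by Lemma~\ref{lemma_finitemany_curves}) all follow the paper's argument. The gap is in your description of $\mathcal{S}_1$, precisely in the step you flag as the ``main obstacle''. You claim that every fiber of $\mathcal{H}_2\to\Sigma(X)$ over a singular point of $\Sigma(X)$, i.e.\ every line $L$ with $\mathcal{N}_{L|X}\cong\mathcal{O}_L(-2)\oplus\mathcal{O}_L(1)$, gives a singular point of $\mathcal{X}$, because the extra class $\mathrm{Ext}^2(G,G)=k$ from Lemma~\ref{lemma_computing_extG} should push $\mathrm{ext}^1(\mathrm{pr}(I_C),\mathrm{pr}(I_C))$ above $3$. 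That step fails. The right computation is via Serre duality in $\mathcal{K}u(X)$: $\mathrm{Ext}^2(\mathrm{pr}(I_C),\mathrm{pr}(I_C))\cong\mathrm{Hom}(\mathrm{pr}(I_C),\mathrm{pr}(I_{\tau C}))$, and when $\tau(C)\neq C$ this vanishes \emph{independently} of the normal bundle of $L$: one uses $\mathrm{Hom}(G,G')\cong\mathrm{Hom}(I_L,I_{\tau L})=0$, $\mathrm{Hom}(G',\mathcal{E})=\mathrm{Hom}(\mathcal{E},G')=0$, and the fact that $\mathrm{Hom}(\mathcal{E},\mathcal{E})$ is killed by composition with the nontrivial extension class in $\mathrm{Ext}^1(\mathcal{E},G')$ --- exactly the computation in Lemma~\ref{lemma_stability_of_projection}(2), whose proof nowhere assumes $\mathcal{N}_{L|X}\cong\mathcal{O}_L\oplus\mathcal{O}_L(-1)$. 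Together with $\mathrm{hom}=1$, $\mathrm{ext}^3=0$ and $\chi=-2$ this forces $\mathrm{ext}^1(\mathrm{pr}(I_C),\mathrm{pr}(I_C))=3$. So the bad lines arising from the splitting of $(-1,1)$-lines tangent to $\mathcal{B}$ give \emph{smooth} points of $\mathcal{X}$, even though they are singular points of $\mathcal{H}$ and of $\Sigma(X)$: the extra $\mathrm{Ext}^2(G,G)$ class only contributes to $\mathrm{Ext}^2(I_C,I_C)$ (Lemma~\ref{corollary_ext_idealsheaf2}), i.e.\ to the singularity of the Hilbert scheme, and does not survive for the projected object. Carrying out the spectral sequence you propose would therefore disprove, not verify, the inequality $\mathrm{ext}^1>3$ at those points.

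The correct criterion, and the one the paper uses, is $\tau$-fixedness alone: by Lemma~\ref{lemma_stability_of_projection}, $p([I_C])$ has $\mathrm{ext}^1=4$ exactly when $\tau(C)=C$. For $C\in\mathcal{H}_2$, since the whole fiber $F_{[L]}$ is contracted to a single object determined by $L$, this means $\tau(L)=L$, i.e.\ $L=(\pi^{-1}(l))_{red}$ for one of the finitely many lines $l\subset\mathcal{B}$; you do not need (and should not require) $\pi(C)\subset\mathcal{B}$ as a whole reducible conic, only that the line component of $\pi(C)$ lies in $\mathcal{B}$. These finitely many points on $\Sigma(X)$ constitute $\mathcal{S}_1$, while the finitely many $\tau$-fixed cubics in $\mathcal{H}_1$ give $\mathcal{S}_2$; finiteness follows from Lemma~\ref{lemma_finitemany_curves} and the finiteness of lines in $\mathcal{B}$ (compare Proposition~\ref{prop_finitemany_singularfibers}). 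With $\mathcal{S}_1$ corrected in this way, your argument coincides with the paper's proof.
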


\begin{proof}
By similary argument in  Lemma~\ref{lemma_compatibility_stableobject} and Proposition~\ref{prop_irreduciblecomponent_modulispace}, the projection functor induces a proper dominant morphism: $p:\mathcal{H}\rightarrow\mathcal{M}$ and produces an irreducible component $\mathcal{X}=p(\mathcal{H})$ of moduli space $\mathcal{M}_{\sigma}(\mathcal{K}u(X),[I_C])$. Since there are only finitely many $\tau$-fixed points on $\Sigma(X)$ and finitly many twisted cubics in the branch locus $\mathcal{B}$. The result follows from Lemma~\ref{lemma_stability_of_projection}
\end{proof}

\section{Bridgeland moduli space on ordinary Gushel-Mukai threefold}
\label{section_OGM}
In this section, we construct an irreducible component of Bridgeland moduli space of $\sigma$-stable object with a $-1$-classe in the Kuznetsov component of smooth ordinary Gushel-Mukai threefold $X'$. 

\begin{predl}\cite[Proposition 2.3]{KPS2018}
\label{prop_GM_category}
The semi-orthogonal decomposition of $D^b(X')$ is given by $$D^b(X')=\langle\mathcal{A}_{X'},\mathcal{O}_{X'},\mathcal{E}^{\vee}_{X'}\rangle$$  \end{predl}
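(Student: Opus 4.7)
The plan is to establish the semi-orthogonal decomposition by verifying that the pair $(\mathcal{O}_{X'}, \mathcal{E}^{\vee}_{X'})$ is an exceptional collection in $D^b(X')$, and then defining $\mathcal{A}_{X'} := \langle\mathcal{O}_{X'},\mathcal{E}^{\vee}_{X'}\rangle^{\perp}$ as the right orthogonal complement. The existence of such a complement as an admissible subcategory, together with the exceptionality of the generators, formally produces the desired decomposition.

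First I would check that $\mathcal{O}_{X'}$ is exceptional: since $X'$ is a smooth Fano threefold, Kodaira vanishing gives $H^{i}(X',\mathcal{O}_{X'}) = 0$ for $i \geq 1$, so $\mathrm{Ext}^{\bullet}(\mathcal{O}_{X'}, \mathcal{O}_{X'}) = k$. Next I would check exceptionality of $\mathcal{E}^{\vee}_{X'}$ by computing $\mathrm{Ext}^{\bullet}(\mathcal{E}^{\vee}_{X'}, \mathcal{E}^{\vee}_{X'}) \cong H^{\bullet}(X', \mathcal{E}_{X'} \otimes \mathcal{E}^{\vee}_{X'})$. Since $\mathcal{E}_{X'}$ is the restriction under $j\colon X' \hookrightarrow \mathrm{Gr}(2,5)$ of the tautological subbundle $\mathcal{U}$, and $X'$ is cut out in $\mathrm{Gr}(2,5)$ by two hyperplane sections and one quadric, one uses the Koszul resolution
\[
0 \to \mathcal{O}(-4) \to \mathcal{O}(-3)\oplus\mathcal{O}(-2)^{\oplus 2} \to \mathcal{O}(-2)\oplus\mathcal{O}(-1)^{\oplus 2} \to \mathcal{O} \to j_*\mathcal{O}_{X'} \to 0
\]
on $\mathrm{Gr}(2,5)$. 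Twisting by $\mathcal{U}\otimes\mathcal{U}^{\vee}$ (respectively by $\mathcal{U}$) and invoking Borel--Weil--Bott on $\mathrm{Gr}(2,5)$ term by term computes the hypercohomology, and shows that the only surviving class in the $\mathcal{U}\otimes\mathcal{U}^{\vee}$ case is the identity, while every group in the $\mathcal{U}$ case vanishes. This simultaneously verifies exceptionality of $\mathcal{E}^{\vee}_{X'}$ and semi-orthogonality $\mathrm{Ext}^{\bullet}(\mathcal{E}^{\vee}_{X'}, \mathcal{O}_{X'}) = H^{\bullet}(X',\mathcal{E}_{X'}) = 0$.

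Having established the exceptional collection, admissibility of $\langle\mathcal{O}_{X'}, \mathcal{E}^{\vee}_{X'}\rangle$ in $D^b(X')$ is automatic (each exceptional object generates an admissible subcategory on a smooth projective variety, and admissibility is preserved under forming the subcategory generated by an exceptional collection). Hence $\mathcal{A}_{X'} = \langle\mathcal{O}_{X'},\mathcal{E}^{\vee}_{X'}\rangle^{\perp}$ is well defined and the semi-orthogonal decomposition $D^b(X') = \langle \mathcal{A}_{X'},\mathcal{O}_{X'},\mathcal{E}^{\vee}_{X'}\rangle$ follows from general theory.

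The main obstacle is to organize the Borel--Weil--Bott bookkeeping for the several twisted Koszul terms; in particular one must be careful that $\mathcal{U}\otimes\mathcal{U}^{\vee}$ decomposes (in characteristic zero) as $\mathcal{O}_{\mathrm{Gr}}\oplus\mathfrak{sl}(\mathcal{U})$, and verify acyclicity for the traceless summand twisted by $\mathcal{O}(-k)$ with $0\le k\le 4$. Once this combinatorial step is done, the argument is completely formal.
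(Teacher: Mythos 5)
Your argument is correct and is essentially the standard proof of this statement; note that the paper itself gives no proof here but simply quotes it from \cite[Proposition 2.3]{KPS2018}, where the same strategy (exceptionality of the pair $(\mathcal{O}_{X'},\mathcal{E}^{\vee}_{X'})$ plus the formal theory of admissible subcategories generated by exceptional collections) is used, so you are not taking a genuinely different route. One small correction to your bookkeeping: since $X'$ is cut out in $\mathrm{Gr}(2,5)$ by two hyperplanes and one quadric, the $\Lambda^2$-term of the Koszul resolution is $\mathcal{O}(-3)^{\oplus 2}\oplus\mathcal{O}(-2)$ rather than $\mathcal{O}(-3)\oplus\mathcal{O}(-2)^{\oplus 2}$; this does not affect the argument, as the required Borel--Weil--Bott vanishings are for the twists $\mathcal{O}(-k)$, $1\le k\le 4$, independently of multiplicities, and they (together with $H^{\bullet}(\mathrm{Gr}(2,5),\mathcal{U}\otimes\mathcal{U}^{\vee})=k$ and the acyclicity of the $\mathcal{U}$-twists) go through exactly as you describe.
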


\begin{remark}\label{remark_twoversion_Kuznetsovcomponent}
The subcategory $\mathcal{A}_{X'}\subset D^b(X')$ is also called Kuznetsov component and it is triangulated equivalent to $\mathcal{K}u(X')$. In this section, it is easier to work with $\mathcal{A}_{X'}$. The projection functor is given by $\mathrm{pr}:=\bL_{\mathcal{O}_{X'}}\bL_{\mathcal{E}_{X'}^{\vee}}$. 
\end{remark}

\begin{lemma}
\label{lemma_equivalence_twoKuz}
The two Kuznetsov components are equivalent, i.e. there is an equivalence of categories $\Xi : \mathcal{A}_{X'} \xrightarrow{\sim} \mathcal{K}u(X)$ given by $E \mapsto \bL_{\mathcal{O}_{X'}}(E\otimes\mathcal{O}_X(H))$, in the reverse direction. $F \mapsto (\mathrm{R}_{\mathcal{O}_{X'}} F) \otimes\mathcal{O}_X(-H)$ for the forwards direction.
\end{lemma}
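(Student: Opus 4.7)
The plan is to verify directly, via mutation theory, that the two expressions
\[
\Xi(E):=\mathbb{L}_{\mathcal{O}_{X'}}\!\bigl(E\otimes\mathcal{O}_{X'}(H)\bigr)
\quad\text{and}\quad
\Psi(F):=\bigl(\mathbb{R}_{\mathcal{O}_{X'}}F\bigr)\otimes\mathcal{O}_{X'}(-H)
\]
define mutually inverse equivalences between $\mathcal{A}_{X'}$ and $\mathcal{K}u(X')$. The key simplification throughout is the identity $\mathcal{E}^{\vee}\cong\mathcal{E}(H)$, a consequence of $\mathcal{E}$ being rank two with $\det\mathcal{E}=\mathcal{O}_{X'}(-H)$, which intertwines the two exceptional pairs cutting out the two Kuznetsov components.

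First I would show $\Xi$ lands in $\mathcal{K}u(X')=\mathcal{O}_{X'}^{\perp}\cap\mathcal{E}^{\perp}$. Containment in $\mathcal{O}_{X'}^{\perp}$ is automatic from the definition of the left mutation. For the containment in $\mathcal{E}^{\perp}$, applying $\mathrm{RHom}(\mathcal{E},-)$ to the defining triangle
\[
\mathrm{RHom}(\mathcal{O}_{X'},E(H))\otimes\mathcal{O}_{X'}\longrightarrow E(H)\longrightarrow \Xi(E)
\]
and using $\mathrm{RHom}(\mathcal{E},\mathcal{O}_{X'})=H^{\bullet}(\mathcal{E}^{\vee})=k^{5}$ (the analogue for the ordinary case of Lemma~\ref{lemma_globalsection_stablebundle}), together with the tautological sequence $0\to\mathcal{Q}^{\vee}|_{X'}\to\mathcal{O}_{X'}^{\oplus 5}\to\mathcal{E}^{\vee}\to 0$, the vanishing $\mathrm{RHom}(\mathcal{E},\Xi(E))=0$ reduces to showing $\mathrm{RHom}(\mathcal{Q}|_{X'}(-H),E)=0$ for $E\in\mathcal{A}_{X'}$. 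I would then place $\mathcal{Q}|_{X'}(-H)$ into the admissible subcategory $\langle\mathcal{O}_{X'},\mathcal{E}^{\vee}\rangle$ via the Koszul-type resolution inherited from the Grassmannian structure, so that the two defining vanishings of $\mathcal{A}_{X'}$ together force the desired $\mathrm{Ext}$ to vanish. A completely symmetric computation, using Serre duality on $X'$ to transfer the vanishings defining $\mathcal{K}u(X')$, shows that $\Psi$ maps $\mathcal{K}u(X')$ into $\mathcal{A}_{X'}$.

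To conclude that $\Psi$ and $\Xi$ are mutually inverse, I would invoke the standard mutation-theoretic fact that for the exceptional object $\mathcal{O}_{X'}$ the functors $\mathbb{L}_{\mathcal{O}_{X'}}$ and $\mathbb{R}_{\mathcal{O}_{X'}}$ restrict to mutually inverse equivalences between $\mathcal{O}_{X'}^{\perp}$ and ${}^{\perp}\mathcal{O}_{X'}$, combined with the obvious invertibility of the twist $-\otimes\mathcal{O}_{X'}(\pm H)$; this yields $\Psi\circ\Xi\cong\mathrm{id}_{\mathcal{A}_{X'}}$ and $\Xi\circ\Psi\cong\mathrm{id}_{\mathcal{K}u(X')}$ after tracking the defining triangles. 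The main technical obstacle will be the cohomological vanishing $\mathrm{RHom}(\mathcal{Q}|_{X'}(-H),E)=0$ for $E\in\mathcal{A}_{X'}$: this is the step where the specific geometry of GM threefolds, namely the explicit description of $\mathcal{Q}|_{X'}$ via the ambient Grassmannian, is unavoidably used.
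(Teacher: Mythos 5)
Your plan has the two functors pointing in the wrong directions, and the key vanishing your reduction needs is false. In the lemma (and in the paper's proof) the left-mutation formula $E\mapsto\bL_{\mathcal{O}_{X'}}(E\otimes\mathcal{O}_{X'}(H))$ is the equivalence in the \emph{reverse} direction, i.e.\ it is applied to $E\in\mathcal{K}u(X')$ and lands in $\mathcal{A}_{X'}$; there the check is immediate, since $\mathcal{E}^{\vee}\cong\mathcal{E}(H)$ gives $\mathrm{RHom}(\mathcal{E}^{\vee},E(H))\cong\mathrm{RHom}(\mathcal{E},E)=0$ and $\mathrm{RHom}(\mathcal{E}^{\vee},\mathcal{O}_{X'})\cong H^{\bullet}(\mathcal{E})=0$, so the mutation triangle forces $\mathrm{RHom}(\mathcal{E}^{\vee},\bL_{\mathcal{O}_{X'}}(E(H)))=0$. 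You instead apply this formula to $E\in\mathcal{A}_{X'}$ and try to land in $\mathcal{K}u(X')$. Your reduction itself is correct: from the mutation triangle and $0\to\mathcal{E}\to\mathcal{O}_{X'}^{\oplus 5}\to\mathcal{Q}|_{X'}\to 0$ one gets $\mathrm{RHom}(\mathcal{E},\bL_{\mathcal{O}_{X'}}(E(H)))\cong\mathrm{RHom}(\mathcal{Q}|_{X'}(-H),E)[1]$. But the vanishing $\mathrm{RHom}(\mathcal{Q}|_{X'}(-H),E)=0$ for all $E\in\mathcal{A}_{X'}$ cannot hold: it would say $\mathcal{Q}|_{X'}(-H)\in{}^{\perp}\mathcal{A}_{X'}=\langle\mathcal{O}_{X'},\mathcal{E}^{\vee}\rangle$, which fails already numerically, since $\mathrm{ch}(\mathcal{Q}|_{X'}(-H))=3-2H+4L+\tfrac{2}{3}P$ is not in the span of $\mathrm{ch}(\mathcal{O}_{X'})=1$ and $\mathrm{ch}(\mathcal{E}^{\vee})=2+H+L-\tfrac{1}{3}P$ (the $H$- and $L$-coefficients would have to agree). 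Concretely, for a conic $D$ with $I_D\in\mathcal{A}_{X'}$ (which exist by Proposition~\ref{prop_conics_contraction_P1OGM}) a Riemann--Roch computation gives $\chi(\mathcal{Q}|_{X'}(-H),I_D)=3\neq 0$, so your $\Xi(I_D)$ is not right-orthogonal to $\mathcal{E}$ and does not lie in $\mathcal{K}u(X')$; the ``completely symmetric'' argument for $\Psi$ breaks for the same reason.

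The fix is either to run your direct verification with the directions interchanged (then the Grassmannian geometry of $\mathcal{Q}|_{X'}$ is not needed at all, and the mutation fact you quote, that $\bL_{\mathcal{O}_{X'}}$ and $\bR_{\mathcal{O}_{X'}}$ are mutually inverse between ${}^{\perp}\mathcal{O}_{X'}$ and $\mathcal{O}_{X'}^{\perp}$, finishes the proof once you note $\mathcal{K}u(X')\otimes\mathcal{O}_{X'}(H)\subset{}^{\perp}\mathcal{O}_{X'}$ by Serre duality with $\omega_{X'}\cong\mathcal{O}_{X'}(-H)$), or to argue as the paper does, with no object-level computation: twist $D^b(X')=\langle\mathcal{K}u(X'),\mathcal{E},\mathcal{O}_{X'}\rangle$ by $\mathcal{O}_{X'}(H)$, use $\mathcal{E}(H)\cong\mathcal{E}^{\vee}$, rotate $\mathcal{O}_{X'}(H)$ to the front via the Serre functor, and left-mutate $\mathcal{K}u(X')\otimes\mathcal{O}_{X'}(H)$ through $\mathcal{O}_{X'}$; comparing with $D^b(X')=\langle\mathcal{A}_{X'},\mathcal{O}_{X'},\mathcal{E}^{\vee}\rangle$ identifies $\mathcal{A}_{X'}=\bL_{\mathcal{O}_{X'}}(\mathcal{K}u(X')\otimes\mathcal{O}_{X'}(H))$, from which both formulas of the lemma follow.
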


\begin{proof}
We manipulate the semiorthogonal decomposition as follows:
\begin{align*}
     D^b(X) &= \langle\mathcal{K}u(X'),\mathcal{E},\mathcal{O}_{X'}\rangle \\
     &\simeq \langle\mathcal{K}u(X')\otimes\mathcal{O}_{X'}(H),\mathcal{E}^{\vee},\mathcal{O}_{X'}(H)\rangle \\
     &\simeq \langle\mathcal{O}_{X'},\mathcal{K}u(X')\otimes\mathcal{O}_{X'}(H),\mathcal{E}^{\vee}\rangle \\
     &\simeq \langle \bL_{\mathcal{O}_{X'}}(\mathcal{K}u(X')\otimes\mathcal{O}_{X'}(H)),\mathcal{O}_{X'},\mathcal{E}^{\vee}\rangle .
\end{align*}
The desired result follows. 
\end{proof}

\begin{lemma}
\label{lemma_Grothendieckgroup_kuz}
The numerical Grothendieck group $\mathcal{N}(\mathcal{A}_{X'})$ of $\mathcal{A}_{X'}$ is a rank 2 lattice spanned by
\[x:=[I_D]=1-2L, y:=H-4L-\frac{5}{6}P\] where $I_D$ is an ideal sheaf of a conic $D$. The The Euler form is given by
\begin{equation}
\left[               
\begin{array}{cc}   
-1 & -2 \\  
-2 & -5 \\
\end{array}
\right] 
\end{equation}
In particular, the $(-1)$-classes in $\mathcal{N}(A_{X'})$ is $x=1-2L$ and $2x-y=2-H+\frac{5}{6}P$ up to sign.
\end{lemma}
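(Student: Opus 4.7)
The plan is to exhibit $\mathcal{N}(\mathcal{A}_{X'})$ as the right-orthogonal of the exceptional pair $\langle\mathcal{O}_{X'},\mathcal{E}^\vee\rangle$ inside $\mathcal{N}(D^b(X'))$, produce two generators with the prescribed Chern characters, and then pin down the Euler form by Hirzebruch--Riemann--Roch.

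First I would record that the semiorthogonal decomposition of Proposition~\ref{prop_GM_category} gives an orthogonal direct sum
\[\mathcal{N}(D^b(X'))\;=\;\mathcal{N}(\mathcal{A}_{X'})\;\oplus^{\perp}\;\mathbb{Z}\langle[\mathcal{O}_{X'}],[\mathcal{E}^\vee]\rangle\]
with respect to the Euler pairing, and that $\mathcal{N}(\mathcal{A}_{X'})$ is precisely the right-orthogonal to $[\mathcal{O}_{X'}]$ and $[\mathcal{E}^\vee]$. Since $\mathcal{N}(D^b(X'))$ is a rank-$4$ lattice (spanned by $1,H,L,P$) and the two exceptional classes are numerically independent (the rank-$0$ components of $\mathrm{ch}(\mathcal{O}_{X'})$ and $\mathrm{ch}(\mathcal{E}^\vee)$ are $1$ and $2$), the rank-$2$ assertion is immediate.

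Next I would set up Hirzebruch--Riemann--Roch on $X'$ using $H^3=10$, $c_1(X')=H$, and the standard second Chern class intersection number $c_2(X')\cdot H = 22$ for a genus-$6$ prime Fano threefold (see \cite{iskovskikh1999fano} or \cite{KPS2018}). This yields $\mathrm{td}(X')=1+\tfrac{H}{2}+\tfrac{H^2+c_2}{12}+\tfrac{H\,c_2}{24}$, and hence gives a closed formula for $\chi(v,w)=\int_{X'}v^\vee\cdot w\cdot\mathrm{td}(X')$ on $\mathcal{N}(D^b(X'))$. In particular this pins down $\mathrm{ch}(\mathcal{E}^\vee)=2+H+L-\tfrac{1}{3}P$ (already used in Lemma~\ref{lemma_globalsection_stablebundle}).

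For the generator $x$ I would take a smooth conic $D\subset X'$, noting that the HRR computation gives $\mathrm{ch}(\mathcal{O}_D)=2L$ (since $\chi(\mathcal{O}_D)=1$ forces the point-class coefficient to vanish, parallel to the twisted-cubic computation in Lemma~\ref{GiesekerHilb}), so $\mathrm{ch}(I_D)=1-2L$. The inclusion $I_D\in\mathcal{A}_{X'}$ follows from $\mathrm{RHom}(\mathcal{O}_{X'},I_D)=0$ (standard, since $D$ is a rational curve) and $\mathrm{RHom}(\mathcal{E}^\vee,I_D)=0$ (since every conic on $X'$ is the zero locus of a section of $\mathcal{E}^\vee$, by the analogue of Proposition~\ref{prop_classfication_zerolocus} pulled back from $\mathrm{Gr}(2,5)$; this is spelled out in \cite{KPS2018}). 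For $y$ I would directly verify that $\chi(\mathcal{O}_{X'},y)=\chi(\mathcal{E}^\vee,y)=0$ using the HRR formula, so $y\in\mathcal{N}(\mathcal{A}_{X'})$, and then check primitivity: any element of the right-orthogonal of $\{[\mathcal{O}_{X'}],[\mathcal{E}^\vee]\}$ is an integer combination of $x$ and $y$, by solving the two linear orthogonality equations over $\mathbb{Z}\langle 1,H,L,P\rangle$ and observing that the solution lattice has $\{x,y\}$ as a basis.

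The Euler form is then computed entry-by-entry by HRR: $\chi(x,x)=-1$, $\chi(x,y)=\chi(y,x)=-2$, $\chi(y,y)=-5$, giving the stated matrix. Finally, writing an element as $ax+by$, the condition $\chi(ax+by,ax+by)=-1$ becomes
\[(a+2b)^2+b^2\;=\;1,\]
whose only integer solutions are $(a,b)\in\{(\pm1,0),(\mp2,\pm1)\}$, yielding the $(-1)$-classes $\pm x$ and $\pm(2x-y)$; a direct substitution gives $2x-y=2-H+\tfrac{5}{6}P$. The main obstacle is simply the bookkeeping in HRR: the coefficient $-\tfrac{5}{6}$ in $y$ is delicate and requires the correct value of $c_2(X')\cdot H$ to land on the exact Chern character in the statement.
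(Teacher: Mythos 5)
The paper itself offers no details here (its proof reads ``It is a simple computation''), and your route is exactly the intended one: identify $\mathcal{N}(\mathcal{A}_{X'})$ with the right orthogonal of $[\mathcal{O}_{X'}]$ and $[\mathcal{E}^{\vee}]$ inside the rank-$4$ lattice $\mathcal{N}(D^b(X'))$ and evaluate the Euler pairing by Hirzebruch--Riemann--Roch; your final outputs (the Gram matrix and the $(-1)$-classes $\pm x$, $\pm(2x-y)$) are correct.

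Two of your stated inputs are wrong, however, and one of them is load-bearing. First, $c_2(X')\cdot H=24$, not $22$: HRR for the structure sheaf gives $\chi(\mathcal{O}_{X'})=\tfrac{1}{24}c_1c_2$, and since $X'$ is Fano of index one with $c_1=H$ this forces $H\cdot c_2=24$ and $\mathrm{td}(X')=1+\tfrac{H}{2}+\tfrac{17}{6}L+P$, which is the value the paper itself uses (e.g.\ in the computation $\chi(\mathcal{E}^{\vee})=2+\tfrac{17}{6}+\tfrac{1}{2}-\tfrac{1}{3}=5$ in Lemma~\ref{lemma_globalsection_stablebundle}). With $22$ you would get $\chi(\mathcal{O}_{X'})=\tfrac{11}{12}$ and, for instance, $\chi(x,x)=\tfrac{11}{12}-2=-\tfrac{13}{12}$ instead of $-1$, so the ``delicate'' coefficient $-\tfrac{5}{6}$ in $y$ and the whole matrix would come out wrong; since you single this constant out as the crux, it must be corrected. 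Second, your justification that $I_D\in\mathcal{A}_{X'}$ is false as stated: on an ordinary Gushel--Mukai threefold the zero locus of a section of $\mathcal{E}^{\vee}$ is a quartic curve (its class is $c_2(\mathcal{E}^{\vee})=4L$), not a conic, and in fact $I_D\notin\mathcal{A}_{X'}$ precisely for the $\mathbb{P}^1$ of conics cut out by sections of $\mathcal{Q}$ (Proposition~\ref{prop_conics_contraction_P1OGM}). This slip does not affect the lemma, because only the numerical class matters: $x=1-2L$ satisfies $\chi([\mathcal{O}_{X'}],x)=\chi([\mathcal{E}^{\vee}],x)=0$, and $\mathcal{N}(\mathcal{A}_{X'})$ is the full numerical right orthogonal by the K-theoretic decomposition induced by the semiorthogonal decomposition, exactly as in the rest of your argument; but the sentence should be replaced by this numerical check rather than the geometric claim about sections of $\mathcal{E}^{\vee}$.
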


\begin{proof}
It is a simple computation.
\end{proof}

\begin{lemma}\label{unique-extension}
There is a short exact sequence $$0\rightarrow \mathcal{E}\rightarrow\mathcal{Q}^{\vee}\rightarrow I_D\rightarrow 0,$$ where $D\subset X'$ is a conic. 
\end{lemma}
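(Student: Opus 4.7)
The plan is to realize $\mathcal{Q}^{\vee}$ as the unique non-trivial extension of $I_D$ by $\mathcal{E}$ for a suitable conic $D\subset X'$, relying on the exceptionality of $\mathcal{E}$ and a Chern-character match at the end.

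First I would fix a smooth conic $D\subset X'$ with unbalanced restriction $\mathcal{E}|_D\cong\mathcal{O}_D\oplus\mathcal{O}_D(-2)$. The constraint $\deg(\mathcal{E}|_D)=-H\cdot D=-2$ leaves only this splitting type and the balanced $\mathcal{O}_D(-1)^{\oplus 2}$; existence of the unbalanced type is a geometric fact about conics on GM threefolds (cf.\ the classification in \cite{Iliev94conic,logachev2012fano}). Applying $\Hom(-,\mathcal{E})$ to the standard sequence $0\to I_D\to\mathcal{O}_{X'}\to\mathcal{O}_D\to 0$ and using $H^{\bullet}(X',\mathcal{E})=0$, which follows from the semi-orthogonal decomposition in Proposition~\ref{prop_GM_category}, yields
\[
\Ext^1(I_D,\mathcal{E})\cong\Ext^2(\mathcal{O}_D,\mathcal{E}).
\]
Since $D\subset X'$ is a smooth codimension-$2$ local complete intersection, $\EExt^i(\mathcal{O}_D,\mathcal{O}_{X'})$ vanishes for $i\ne 2$ and equals $\det\mathcal{N}_{D/X'}$ for $i=2$. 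By adjunction $\det\mathcal{N}_{D/X'}=\omega_D\otimes\omega_{X'}^{-1}|_D=\mathcal{O}_D$. A local-to-global argument then gives
\[
\Ext^2(\mathcal{O}_D,\mathcal{E})\cong H^0(D,\mathcal{E}|_D\otimes\det\mathcal{N}_{D/X'})\cong H^0(D,\mathcal{E}|_D)=k
\]
for the unbalanced splitting.

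The generator of this one-dimensional extension group produces a unique non-trivial sequence $0\to\mathcal{E}\to\mathcal{G}\to I_D\to 0$. To see that $\mathcal{G}$ is locally free of rank~$3$, I would identify the chosen extension class, via the local-to-global spectral sequence, with a global section of $\EExt^1(I_D,\mathcal{E})\cong\mathcal{E}|_D$. Under the splitting $\mathcal{E}|_D=\mathcal{O}_D\oplus\mathcal{O}_D(-2)$, this section is the generator $(1,0)$, which is nowhere vanishing. Consequently the extension resolves the non-local-freeness of $I_D$ along $D$, and $\mathcal{G}$ is a rank-$3$ locally free sheaf.

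Finally I would identify $\mathcal{G}$ with $\mathcal{Q}^{\vee}$. A direct computation using the tautological sequence $0\to\mathcal{Q}^{\vee}\to\mathcal{O}_{X'}^{\oplus 5}\to\mathcal{E}^{\vee}\to 0$ and Lemma~\ref{lemma_Grothendieckgroup_kuz} gives $\mathrm{ch}(\mathcal{G})=\mathrm{ch}(\mathcal{E})+\mathrm{ch}(I_D)=\mathrm{ch}(\mathcal{Q}^{\vee})$; both $\mathcal{G}$ and $\mathcal{Q}^{\vee}$ are $\mu$-stable rank-$3$ bundles, and one concludes that they coincide either by invoking uniqueness of slope-stable reflexive sheaves in this Chern class, or, more concretely, by constructing a morphism $\mathcal{G}\to\mathcal{Q}^{\vee}$ from a nonzero element of $\Hom(\mathcal{E},\mathcal{Q}^{\vee})$ that one extracts via Borel--Weil--Bott on $\Gr(2,5)$ together with the Koszul resolution of $X'$ inside the quintic del Pezzo fourfold $\Gr(2,5)\cap\mathbb{P}^7$. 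The main obstacle is precisely this last identification: verifying $\mathcal{G}\cong\mathcal{Q}^{\vee}$ as bundles, rather than just numerically, is the delicate step and requires either the stated uniqueness statement for stable rank-$3$ bundles on $X'$ or the explicit construction of the map.
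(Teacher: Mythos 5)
Your construction runs opposite to the one actually needed and stalls at the decisive point, as you yourself note: after producing the rank-$3$ locally free extension $0\to\mathcal{E}\to\mathcal{G}\to I_D\to 0$ from the one-dimensional group $\Ext^1(I_D,\mathcal{E})\cong H^0(D,\mathcal{E}|_D)$, nothing in the argument identifies $\mathcal{G}$ with $\mathcal{Q}^{\vee}$, and that identification is the entire content of the lemma. Neither of your two suggested repairs is available as stated. First, the $\mu$-stability of $\mathcal{G}$ is asserted, not proved (you would need to exclude saturated rank-$1$ and rank-$2$ subsheaves of slope $\geq -\tfrac{1}{3}$, which requires vanishings such as $H^0(\mathcal{G})=0$ and a further computation involving $\Lambda^2\mathcal{G}$), and even granting stability there is no citable uniqueness theorem for $\mu$-stable rank-$3$ sheaves on a Gushel-Mukai threefold with $\mathrm{ch}(\mathcal{Q}^{\vee})$ --- the classical uniqueness statement (Mukai) concerns the rank-$2$ bundle $\mathcal{E}$, so the uniqueness you invoke would itself have to be proved. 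Second, a nonzero $f\in\Hom(\mathcal{E},\mathcal{Q}^{\vee})$ does not automatically produce a map $\mathcal{G}\to\mathcal{Q}^{\vee}$: extending $f$ over the extension requires the image of your class under $f_*\colon\Ext^1(I_D,\mathcal{E})\to\Ext^1(I_D,\mathcal{Q}^{\vee})$ to vanish, which is not checked, and one would still need stability of $\mathcal{G}$ (or an injectivity argument) to upgrade such a map from a numerical coincidence to an isomorphism. So the key assertion --- that the middle term of the extension is $\mathcal{Q}^{\vee}$ --- is missing.

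The paper argues in the forward direction, which avoids any uniqueness input: $\Hom(\mathcal{E},\mathcal{Q}^{\vee})=k^2$ by Borel--Weil--Bott via the Koszul resolution of $X'$ in $\Gr(2,5)$; any nonzero map $p\colon\mathcal{E}\to\mathcal{Q}^{\vee}$ is injective because both bundles are $\mu$-stable with $\mu(\mathcal{E})=-\tfrac{1}{2}<\mu(\mathcal{Q}^{\vee})=-\tfrac{1}{3}$; and $\coker(p)$ is identified with $I_D$ for a conic $D$ by relating it to the zero locus of the corresponding section of $\mathcal{Q}$ (which is a conic or two points, since $X'$ contains no plane or quadric surface) together with the Chern character count. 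Your computations of $\Ext^1(I_D,\mathcal{E})=k$ and of local freeness of the extension are correct and would make a useful supplement --- they show the sequence is the unique nontrivial extension once its existence is known --- but as a proof of the lemma the proposal is incomplete at the step $\mathcal{G}\cong\mathcal{Q}^{\vee}$.
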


\begin{proof}
Note that $\mathrm{Hom}(\mathcal{E},\mathcal{Q}^{\vee})=k^2$, this is a simple computation by Borel-Weil-Bott theorem. Take a nontrivial map $p\in\mathrm{Hom}(\mathcal{E},\mathcal{Q}^{\vee})$, this map is injective. Indeed, since $\mathcal{Q}^{\vee}$ and $\mathcal{E}$ are both $\mu$-stable and $\mu(\mathcal{Q}^{\vee})=-\frac{1}{3}$ and $\mu(\mathcal{E})=-\frac{1}{2}$, so that if $p$ is non injective, then either its kernel destablizes $\mathcal{E}$ or its image destablizes $\mathcal{Q}^{\vee}$. Note that the image of $\mathcal{Q}^{\vee}\xrightarrow{s}\mathcal{O}_X$ is the ideal sheaf $I_D$ of conic $D\subset X$ or two points. Indeed, the image of $s$ is the ideal sheaf of zero locus of section $s$ of $\mathcal{Q}$. By very similar argument in \cite[Lemma 2.18]{sanna2014rational}, the zero locus of $s$ is either a $\mathbb{P}^2\cap Q$ or $\mathbb{P}^1\cap Q$, where $Q$ is the quadric defining the ordinary GM threefold. But $X$ does not contain any quadric or plane, so the zero locus of $s$ is either a conic or two points. Then since the character of $\mathrm{Coker}(\mathcal{E}\xrightarrow{p}\mathcal{Q}^{\vee})$ is exactly the character of ideal sheaf of some conic, we have the short exact sequence:
$$0\rightarrow\mathcal{E}\rightarrow\mathcal{Q}^{\vee}\rightarrow I_D\rightarrow 0.$$
\end{proof}

\begin{predl}\label{prop_conics_contraction_P1OGM}
Let $D\subset X$ be a conic on ordinary Gushel-Mukai threefold $X$, then $I_D\not\in\mathcal{A}_X$ iff there is a resolution of $I_D$ of the form $$0\rightarrow\cE\rightarrow\cQ^{\vee}\rightarrow I_D\rightarrow 0.$$
In particular, such family of conics is parametrised by $\mathbb{P}^1$. 
\end{predl}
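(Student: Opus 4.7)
The plan is to reduce the condition $I_D\not\in\mathcal{A}_{X'}$ to the non-vanishing of $H^0(D,\mathcal{E}|_D)$, and then to realise the resulting conics as the image of an injection $\mathbb{P}^1=\mathbb{P}\mathrm{Hom}(\mathcal{E},\mathcal{Q}^\vee)\to\mathcal{C}(X')$ produced by Lemma~\ref{unique-extension}.

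For the forward direction, suppose the resolution $0\to\mathcal{E}\to\mathcal{Q}^\vee\to I_D\to 0$ exists. The dual tautological sequence $0\to\mathcal{Q}^\vee\to\mathcal{O}_{X'}^{\oplus 5}\to\mathcal{E}^\vee\to 0$ places $\mathcal{Q}^\vee$ in the admissible subcategory $\langle\mathcal{O}_{X'},\mathcal{E}^\vee\rangle$, so $\mathrm{pr}(\mathcal{Q}^\vee)=0$. Applying the projection functor $\mathrm{pr}=\bL_{\mathcal{O}_{X'}}\bL_{\mathcal{E}^\vee}$ to the corresponding triangle gives $\mathrm{pr}(I_D)\cong\mathrm{pr}(\mathcal{E})[1]$. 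If $I_D$ lay in $\mathcal{A}_{X'}$ this would force $I_D\cong\mathrm{pr}(\mathcal{E})[1]$, which is absurd since $\mathrm{pr}(\mathcal{E})[1]$ is not a sheaf concentrated in degree $0$ (visible from the defining triangle $\mathrm{RHom}(\mathcal{E}^\vee,\mathcal{E})\otimes\mathcal{E}^\vee\to\mathcal{E}\to\bL_{\mathcal{E}^\vee}\mathcal{E}$).

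For the converse, any conic satisfies $\mathrm{RHom}(\mathcal{O}_{X'},I_D)=0$, so $I_D\not\in\mathcal{A}_{X'}$ is equivalent to $\mathrm{Ext}^\bullet(\mathcal{E}^\vee,I_D)\neq 0$. Applying $\mathrm{Hom}(\mathcal{E}^\vee,-)$ to the standard sequence for $I_D$ and using $H^\bullet(\mathcal{E})=0$ identifies $\mathrm{Ext}^i(\mathcal{E}^\vee,I_D)\cong H^{i-1}(D,\mathcal{E}|_D)$ for $i\geq 1$, while Riemann--Roch on $D\cong\mathbb{P}^1$ gives $\chi(\mathcal{E}|_D)=\deg(\mathcal{E}|_D)+2=0$. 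Hence $I_D\not\in\mathcal{A}_{X'}$ is equivalent to $H^0(D,\mathcal{E}|_D)\neq 0$, i.e.\ to the existence of an inclusion $\mathcal{O}_D\hookrightarrow\mathcal{E}|_D$. To package this geometrically I would invoke Lemma~\ref{unique-extension}: by Borel--Weil--Bott $\mathrm{Hom}(\mathcal{E},\mathcal{Q}^\vee)=k^2$, and each non-zero map is injective with cokernel an ideal sheaf of a conic. This yields a morphism $\varphi\colon\mathbb{P}^1=\mathbb{P}\mathrm{Hom}(\mathcal{E},\mathcal{Q}^\vee)\to\mathcal{C}(X')$, injective because the surjection $\mathcal{Q}^\vee\twoheadrightarrow I_{\varphi([p])}$ recovers $[p]$ from its kernel. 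By the forward direction $\mathrm{Im}(\varphi)\subset\{D:I_D\not\in\mathcal{A}_{X'}\}$; conversely, starting from a non-zero section $\sigma\colon\mathcal{O}_D\hookrightarrow\mathcal{E}|_D$ I would lift $\sigma$ to a global map $\mathcal{E}\to\mathcal{Q}^\vee$ by dualising to $\mathcal{E}^\vee|_D\twoheadrightarrow\mathcal{O}_D$ and pulling through the tautological sequence $0\to\mathcal{Q}^\vee\to\mathcal{O}_{X'}^{\oplus 5}\to\mathcal{E}^\vee\to 0$, obtaining the required resolution. The $\mathbb{P}^1$-parametrisation in the last sentence follows immediately, since the family in question coincides with the image of $\varphi$.

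The main obstacle is the surjectivity of $\varphi$, that is, upgrading a pointwise section of $\mathcal{E}|_D$ to a genuine global map $\mathcal{E}\to\mathcal{Q}^\vee$ with cokernel exactly $I_D$, rather than merely matching dimensions of Ext groups. The cleanest attack is a tangent-space/deformation comparison using $\mathrm{Hom}(\mathcal{E},\mathcal{Q}^\vee)=k^2$: showing that both the source $\mathbb{P}^1$ and the locus $\{D:I_D\not\in\mathcal{A}_{X'}\}$ are $1$-dimensional and that $\varphi$ is unramified and proper, so that $\varphi$ is a closed immersion and hence parametrises the entire locus.
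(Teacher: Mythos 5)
Your reduction of the condition $I_D\not\in\mathcal{A}_{X}$ to $H^0(D,\mathcal{E}|_D)\neq 0$ is correct, and your forward implication is fine: it is a legitimate alternative to the paper's direct computation, using that $\mathcal{Q}^\vee$ lies in $\langle\mathcal{O}_{X},\mathcal{E}^{\vee}\rangle$ (via the dual tautological sequence), so $\mathrm{pr}(\mathcal{Q}^\vee)=0$ and $\mathrm{pr}(I_D)\cong\mathrm{pr}(\mathcal{E})[1]$, which has two non-vanishing cohomology sheaves and hence cannot be $I_D$ — though to see this you still have to carry out essentially the computation of Proposition~\ref{prop_conic_inKuz}, which your parenthetical glosses over. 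The genuine gap is in the converse, which is where the content of the statement lies. From $H^0(\mathcal{E}|_D)\neq 0$ you must produce the resolution $0\to\mathcal{E}\to\mathcal{Q}^\vee\to I_D\to 0$ for the \emph{given} conic $D$; your recipe of "dualising $\sigma$ to $\mathcal{E}^\vee|_D\twoheadrightarrow\mathcal{O}_D$ and pulling through the tautological sequence" is not a construction: the dual of a section of $\mathcal{E}|_D$ need not be surjective, and no mechanism is given that turns a map of sheaves on $D$ into a global map $\mathcal{E}\to\mathcal{Q}^\vee$ whose cokernel is exactly $I_D$. Your fallback — show $\varphi\colon\mathbb{P}\mathrm{Hom}(\mathcal{E},\mathcal{Q}^\vee)\to\mathcal{C}(X)$ is a closed immersion and the locus $\{D: I_D\notin\mathcal{A}_{X}\}$ is one-dimensional, then deduce surjectivity — does not close the gap either: a closed immersion of $\mathbb{P}^1$ into a one-dimensional locus is onto only if that locus is irreducible (and reduced), and neither its dimension nor its irreducibility is established anywhere in your outline; indeed its description is exactly what is being proved.

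The missing idea is the paper's direct construction. Applying $\mathrm{Hom}(-,I_D)$ to the dual tautological sequence $0\to\mathcal{Q}^\vee\to\mathcal{O}_{X}^{\oplus 5}\to\mathcal{E}^\vee\to 0$ and using $\mathrm{RHom}(\mathcal{O}_{X},I_D)=0$ gives $\mathrm{Hom}(\mathcal{Q}^\vee,I_D)\cong\mathrm{Ext}^1(\mathcal{E}^\vee,I_D)$, so non-membership yields a nonzero map $\pi\colon\mathcal{Q}^\vee\to I_D$. The key geometric input is that such a $\pi$ corresponds to a section $s$ of $\mathcal{Q}$, its image is $I_{Z(s)}$ with $D\subseteq Z(s)$, and zero loci of sections of $\mathcal{Q}$ are conics or length-two subschemes; since $Z(s)$ contains the conic $D$ it equals $D$, so $\pi$ is surjective. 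Finally $\ker\pi$ is forced to be $\mathcal{E}$ by the stability/Chern-character argument of Lemma~\ref{unique-extension}, which produces the resolution for the given $D$, and the $\mathbb{P}^1$-parametrisation then follows from $\mathrm{Hom}(\mathcal{E},\mathcal{Q}^\vee)=k^2$. Without this (or an equivalent) step, your proof of the "only if" direction is incomplete.
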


\begin{proof}\leavevmode\begin{enumerate}\item First we show that if there is a short exact sequence $$0\rightarrow\cE\rightarrow\cQ^{\vee}\rightarrow I_D\rightarrow 0,$$ then $I_C\not\in\mathcal{A}_X$. Indeed, apply $\mathrm{Hom}(\cE^{\vee},-)$ to this exact sequence, we get $$\mathrm{Hom}(\cE^{\vee},\cE)\rightarrow\mathrm{Hom}(\cE^{\vee},\cQ^{\vee})\rightarrow\mathrm{Hom}(\cE^{\vee},I_D)\rightarrow\mathrm{Ext}^1(\cE^{\vee},\cE)\rightarrow\mathrm{Ext}^1(\cE^{\vee},\cQ^{\vee})$$
    $$\rightarrow\mathrm{Ext}^1(\cE^{\vee},I_D)\rightarrow\mathrm{Ext}^2(\cE^{\vee},\cE)\rightarrow\mathrm{Ext}^2(\cE^{\vee},\cQ^{\vee})\rightarrow\mathrm{Ext}^2(\cE^{\vee},I_D)\rightarrow$$
$$\mathrm{Ext}^3(\cE^{\vee},\cE)\rightarrow\mathrm{Ext}^3(\cE^{\vee},\cQ^{\vee})\rightarrow\mathrm{Ext}^3(\cE^{\vee},I_D)\rightarrow 0$$
Note that $\mathrm{Hom}(\cE^{\vee},\cQ^{\vee})=0$ by slope stability of $\cE^{\vee}$ and $\cQ^{\vee}$. Further note that $\mathrm{Ext}^1(\cE^{\vee},\cE)\cong\mathrm{Ext}^2(\cE,\cE)=0$, so that $\mathrm{Hom}(\cE^{\vee},I_D)=0$. It also follows that $\mathrm{Ext}^1(\cE^{\vee},I_D)\cong\mathrm{Ext}^1(\cE^{\vee},\cQ^{\vee})$ since $\mathrm{Ext}^2(\cE^{\vee},\cE)\cong\mathrm{Ext}^1(\cE,\cE)=0$. Also note that $\mathrm{Ext}^3(\cE^{\vee},I_D)=0$ since $\mathrm{Ext}^3(\cE^{\vee},\cQ^{\vee})\cong\mathrm{Hom}(\cQ^{\vee},\cE)=0$ again by stability. Apply $\mathrm{Hom}(-,\cE)$ to tautological exact sequence $$0\rightarrow\cE\rightarrow\mathcal{O}_X^{\oplus 5}\rightarrow\cQ\rightarrow 0,$$ we get $\mathrm{Ext}^1(\cE^{\vee},\cQ^{\vee})\cong\mathrm{Ext}^1(\cQ,\cE)\cong\mathrm{Hom}(\cE,\cE)=k$. But $\chi(\cE^{\vee},I_D)=0$, thus $\mathrm{Ext}^2(\cE,I_D)=k$. Then $I_D\not\in\cA_X$. 
\item Now we show that if $I_D\not\in\mathcal{A}_X$, then there is the short exact sequence above. Note that $\mathrm{Hom}(\cE^{\vee},I_D)=\mathrm{Ext}^3(\cE^{\vee},I_D)=0$. Indeed, apply $\mathrm{Hom}(\cE^{\vee},-)$ to the standard exact sequence 
$$0\rightarrow I_D\rightarrow\mathcal{O}_X\rightarrow\mathcal{O}_D\rightarrow 0, $$ we get $\mathrm{Hom}(\cE^{\vee},I_D)\hookrightarrow\mathrm{Hom}(\mathcal{O}_X,\cE)=0$ and $\mathrm{Ext}^3(\cE^{\vee},I_D)=0$ since $\mathrm{Ext}^2(\mathcal{O}_X,\cE|_D)=0$ and $\mathrm{Ext}^3(\mathcal{O}_X,\cE)=0$. But note that $\chi(\cE^{\vee},I_D)=0$, this implies that $I_D\not\in\mathcal{A}_X$ if and only if $\mathrm{Ext}^1(\cE^{\vee},I_D)\neq 0$ since $\mathrm{Hom}^{\bullet}(\mathcal{O}_X,I_D)=0$. If $\mathrm{Ext}^1(\cE^{\vee},I_D)\neq 0$, then $\mathrm{Hom}(\cQ^{\vee},I_D)\neq 0$. Thus we only need to show that if $\mathrm{Hom}(\cQ^{\vee},I_D)=0$ then $\mathrm{Ext}^1(\cE^{\vee},I_D)=0$. Apply $\mathrm{Hom}(-,I_D)$ to the dual of tautological exact sequence $$0\rightarrow\cQ^{\vee}\rightarrow\mathcal{O}_X^5\rightarrow\cE^{\vee}\rightarrow 0,$$ we get $$0\rightarrow\mathrm{Hom}(\cQ^{\vee},I_D)\rightarrow\mathrm{Ext}^1(\cE^{\vee},I_D)\rightarrow\mathrm{Ext}^1(\mathcal{O}_X,I_D)^{\oplus 5}=0.$$
Then $\mathrm{Ext}^1(\mathcal{E}^{\vee},I_D)=0$. Thus if $I_D\not\in\mathcal{A}_X$ then $\mathrm{Hom}(\cQ^{\vee},I_D)\neq 0$. Let $\pi:\cQ^{\vee}\rightarrow I_D$ be a non zero map. Then we claim that $\pi$ is surjective. indeed, its image is the ideal sheaf $I_{D'}$ of the zero locus $D'$ of section $s$ of $\cQ$ containing the conic $D$. But it is known that the zero locus of $s$ is either two points or a conic, then $D'$ must be $D$ and image of $\pi$ is just $I_D$, hence $\pi$ is surjective. Then we have a short exact sequence $$0\rightarrow\mathrm{Ker}\pi\rightarrow\cQ^{\vee}\rightarrow I_D\rightarrow 0.$$
Then by Lemma~\ref{unique-extension}, $\mathrm{Ker}\pi\cong\cE$ and we have this short exact sequence:
$$0\rightarrow\cE\rightarrow\cQ^{\vee}\rightarrow I_D\rightarrow 0.$$
\end{enumerate}
In particular, such conics are parametrised by $\mathbb{P}\mathrm{Hom}(\cE,\cQ^{\vee})\cong\mathbb{P}^1$. Indeed $\mathrm{Hom}(\cE,\cQ^{\vee})\cong k^2$ by application of Borel-Weil-Bott theorem. The proof is complete. 
\end{proof}

\begin{predl}
\label{prop_conic_inKuz}
Let $X'$ be a smooth ordinary Gushel-Mukai threefold and $D\subset X'$ a conic. Let $\mathrm{pr}:D^b(X')\rightarrow\mathcal{A}_{X'}$ be the projection functor. If $I_D\in\cA_{X'}$, then $\mathrm{pr}(I_D)=I_D$ and if $I_D\not\in\cA_{X'}$, then we have the 
following exact triangle:
$$\mathcal{E}[1]\rightarrow\mathrm{pr}(I_D)\rightarrow \mathcal{Q}^{\vee}$$ where $\mathcal{Q}$ is the tautological quotient bundle. 
\end{predl}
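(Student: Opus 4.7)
The first assertion is immediate: if $I_D\in\mathcal{A}_{X'}$, then $I_D$ is already left-orthogonal to $\mathcal{O}_{X'}$ and $\mathcal{E}^{\vee}$, so the mutations $\bL_{\mathcal{O}_{X'}}$ and $\bL_{\mathcal{E}^{\vee}}$ both act as the identity.

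For the second assertion, my plan is to reduce the computation of $\mathrm{pr}(I_D)$ to that of $\mathrm{pr}(\mathcal{E})$ and then carry out the two left mutations explicitly. Proposition~\ref{prop_conics_contraction_P1OGM} supplies the short exact sequence
\[0\to\mathcal{E}\to\mathcal{Q}^{\vee}\to I_D\to 0,\]
and the dual tautological sequence $0\to\mathcal{Q}^{\vee}\to\mathcal{O}_{X'}^{\oplus 5}\to\mathcal{E}^{\vee}\to 0$ realises $\mathcal{Q}^{\vee}$ as an object of the admissible subcategory $\langle\mathcal{O}_{X'},\mathcal{E}^{\vee}\rangle$, so that $\mathrm{pr}(\mathcal{Q}^{\vee})=0$. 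Applying $\mathrm{pr}$ to the first sequence then yields $\mathrm{pr}(I_D)\cong\mathrm{pr}(\mathcal{E})[1]$, and it is therefore enough to produce a triangle $\mathcal{E}\to\mathrm{pr}(\mathcal{E})\to\mathcal{Q}^{\vee}[-1]$.

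For the first mutation I compute $\mathrm{RHom}(\mathcal{E}^{\vee},\mathcal{E})$. By Serre duality together with the decomposition $\mathcal{E}^{\vee\otimes 2}\cong\mathrm{Sym}^2\mathcal{E}^{\vee}\oplus\mathcal{O}(H)$ coming from $\det\mathcal{E}^{\vee}=\mathcal{O}(H)$, this reduces to verifying $H^{\bullet}(\mathrm{Sym}^2\mathcal{E}^{\vee}(-H))=0$. This is the main technical step; I would prove it by Borel--Weil--Bott on $\mathrm{Gr}(2,5)$ combined with the Koszul resolution of $X'\subset\mathrm{Gr}(2,5)$ as a codimension-three complete intersection of two hyperplanes and a quadric, since each bundle $\mathrm{Sym}^2\mathcal{U}^{\vee}(-k)$ on $\mathrm{Gr}(2,5)$ for $k\in\{1,\ldots,5\}$ has a singular weight after shifting by $\rho$ and is therefore acyclic. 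Combined with $H^{0}(\mathcal{O}_{X'})=k$ this gives $\mathrm{RHom}(\mathcal{E}^{\vee},\mathcal{E})=k[-3]$, and hence a triangle $\mathcal{E}\to\bL_{\mathcal{E}^{\vee}}(\mathcal{E})\to\mathcal{E}^{\vee}[-2]$.

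For the second mutation, applying $\mathrm{RHom}(\mathcal{O}_{X'},-)$ to the above triangle and using $H^{\bullet}(\mathcal{E})=0$ together with $H^{0}(\mathcal{E}^{\vee})=k^{5}$ (higher cohomology vanishing) gives $\mathrm{RHom}(\mathcal{O}_{X'},\bL_{\mathcal{E}^{\vee}}(\mathcal{E}))=k^{5}[-2]$, which produces a triangle $\mathcal{O}_{X'}^{\oplus 5}[-2]\to\bL_{\mathcal{E}^{\vee}}(\mathcal{E})\to\mathrm{pr}(\mathcal{E})$. The composite $\mathcal{O}_{X'}^{\oplus 5}[-2]\to\bL_{\mathcal{E}^{\vee}}(\mathcal{E})\to\mathcal{E}^{\vee}[-2]$ is a shift of the tautological evaluation $\mathcal{O}_{X'}^{\oplus 5}\to\mathcal{E}^{\vee}$, whose cone is $\mathcal{Q}^{\vee}[-1]$. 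The octahedral axiom applied to this composition yields a triangle $\mathrm{pr}(\mathcal{E})\to\mathcal{Q}^{\vee}[-1]\to\mathcal{E}[1]$, which rotates to $\mathcal{E}\to\mathrm{pr}(\mathcal{E})\to\mathcal{Q}^{\vee}[-1]$; shifting by $[1]$ and recalling $\mathrm{pr}(I_D)\cong\mathrm{pr}(\mathcal{E})[1]$ gives the claimed triangle $\mathcal{E}[1]\to\mathrm{pr}(I_D)\to\mathcal{Q}^{\vee}$.
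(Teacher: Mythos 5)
Your proof is correct and follows essentially the same route as the paper: both reduce to computing $\mathrm{pr}(\mathcal{E})[1]$ via the resolution $0\to\mathcal{E}\to\mathcal{Q}^{\vee}\to I_D\to 0$ together with $\mathrm{pr}(\mathcal{Q}^{\vee})=0$, and then perform the two left mutations of $\mathcal{E}$ using $\mathrm{RHom}(\mathcal{E}^{\vee},\mathcal{E})\cong k[-3]$ and the evaluation $\mathcal{O}_{X'}^{\oplus 5}\to\mathcal{E}^{\vee}$ with kernel $\mathcal{Q}^{\vee}$. Your minor variations are all sound but only cosmetic: deducing $\mathrm{pr}(\mathcal{Q}^{\vee})=0$ directly from $\mathcal{Q}^{\vee}\in\langle\mathcal{O}_{X'},\mathcal{E}^{\vee}\rangle$ is in fact cleaner than the paper's computation of $\mathrm{RHom}(\mathcal{E}^{\vee},\mathcal{Q}^{\vee})$; the Borel--Weil--Bott argument for $\mathrm{RHom}(\mathcal{E}^{\vee},\mathcal{E})=k[-3]$ (which the paper merely asserts) works but is overkill, since Serre duality plus $\mathcal{E}^{\vee}(-H)\cong\mathcal{E}$ and exceptionality of $\mathcal{E}$ give it in one line; and your octahedron step is just a hands-on substitute for the paper's shortcut of applying the exact functor $\bL_{\mathcal{O}_{X'}}$ to the first mutation triangle and using $\bL_{\mathcal{O}_{X'}}\mathcal{E}^{\vee}\cong\mathcal{Q}^{\vee}[1]$, $\bL_{\mathcal{O}_{X'}}\mathcal{E}\cong\mathcal{E}$.
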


\begin{proof}
If $I_D\not\in\cA_{X'}$, then by Proposition~\ref{prop_conics_contraction_P1OGM}, there is a resolution of $I_D$ of the form
$$0\rightarrow\cE\rightarrow\cQ^{\vee}\rightarrow I_D\rightarrow 0.$$
Apply $\mathrm{pr}=\bL_{\mathcal{O}}\bL_{\cE^{\vee}}$ to this short exact sequence, we get the triangle $$\mathrm{pr}(\cE)\rightarrow\mathrm{pr}(\cQ^{\vee})\rightarrow\mathrm{pr}(I_D).$$
Note that $\mathrm{pr}(\cQ^{\vee})=0$. Indeed, $\mathrm{RHom}^\bullet(\cE^{\vee},\cQ^{\vee})\cong k[-1]$. This is because  $\mathrm{Hom}(\cE^{\vee},\cQ^{\vee})=\mathrm{Ext}^3(\cE^{\vee},\cQ^{\vee})=0$ by stability of the vector bundles $\cE^{\vee}$ and $\cQ^{\vee}$ and comparison of their slopes. Also note that $\mathrm{Ext}^1(\cE^{\vee},\cQ^{\vee})\cong\mathrm{Ext}^1(\cQ,\cE)\cong\mathrm{Hom}(\cQ,\cQ)=k$ and $\mathrm{Ext}^2(\cQ,\cE)\cong\mathrm{Ext}^1(\cQ,\cQ)=0$ since $\cQ$ is an exceptional bundle. Then we have the triangle 
$$\cE^{\vee}[-1]\rightarrow\cQ^{\vee}\rightarrow\bL_{\cE^{\vee}}\cQ^{\vee}.$$ Applying $\bL_{\mathcal{O}_X}$ to this triangle we get 
$$\cQ^{\vee}\rightarrow\cQ^{\vee}\rightarrow\mathrm{pr}(\cQ^{\vee}),$$ so $\mathrm{pr}(\cQ^{\vee})=0$. Then $\mathrm{pr}(I_C)\cong\mathrm{pr}(\cE)[1]$. Now we compute the projection $\mathrm{pr}(\cE)=\bL_{\mathcal{O}_X}\bL_{\mathcal{E}^{\vee}}\cE$. We have the triangle
$$\mathrm{RHom}^\bullet(\cE^{\vee},\cE)\otimes\cE^{\vee}\rightarrow\cE\rightarrow\bL_{\cE^{\vee}}\cE.$$ Since $\mathrm{RHom}^\bullet(\cE^{\vee},\cE) \cong k[-3]$, we get $\cE^{\vee}[-3]\rightarrow\cE\rightarrow\bL_{\cE^{\vee}}\cE$. Now apply $\bL_{\mathcal{O}_X}$ to this triangle. We get
$\bL_{\mathcal{O}_X}\cE^{\vee}[-3]\rightarrow \cE\rightarrow\bL_{\mathcal{O}_X}\bL_{\mathcal{E}^{\vee}}\cE=\mathrm{pr}(\cE)$, which is equivalently $$\cQ^{\vee}[-2]\rightarrow\cE\rightarrow\mathrm{pr}(\cE).$$
Therefore we obtain the triangle
$$\cE[1]\rightarrow\mathrm{pr}(\cE)[1]\rightarrow\cQ^{\vee}$$
and the desired result follows. 
\end{proof}

\begin{lemma}\label{lemma_ideal_projection_smooth}
\label{lemma_ideal_projection_smooth} If $D\subset X'$ is a conic such that $I_D\not\in\cA_{X'}$, then \[ \mathrm{RHom}^\bullet(\mathrm{pr}(I_D), \mathrm{pr}(I_D)) = k \oplus k^2[-1] . \]
\end{lemma}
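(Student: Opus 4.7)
The plan is to apply the spectral sequence of Lemma~\ref{lemma_SS} to the exact triangle $\mathcal{E}[1] \to \mathrm{pr}(I_D) \to \mathcal{Q}^\vee$ established in Proposition~\ref{prop_conic_inKuz}. Rotating it to $\mathcal{Q}^\vee[-1] \to \mathcal{E}[1] \to \mathrm{pr}(I_D)$ and feeding two copies into Lemma~\ref{lemma_SS}, one obtains an $E_1$ spectral sequence converging to $\mathrm{Ext}^{p+q}(\mathrm{pr}(I_D), \mathrm{pr}(I_D))$ whose page is assembled purely from $\mathrm{Ext}^\bullet$ groups among the exceptional bundles $\mathcal{E}$ and $\mathcal{Q}^\vee$.

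The diagonal terms are immediate: $\mathrm{Ext}^\bullet(\mathcal{E},\mathcal{E}) = \mathrm{Ext}^\bullet(\mathcal{Q}^\vee, \mathcal{Q}^\vee) = k$ concentrated in degree zero. For the off-diagonal groups $\mathrm{Ext}^\bullet(\mathcal{E}, \mathcal{Q}^\vee)$ and $\mathrm{Ext}^\bullet(\mathcal{Q}^\vee, \mathcal{E})$, I would apply $\mathrm{Hom}(\mathcal{E}, -)$ (resp.\ $\mathrm{Hom}(-, \mathcal{E})$) to the dual tautological sequence $0 \to \mathcal{Q}^\vee \to \mathcal{O}_{X'}^{\oplus 5} \to \mathcal{E}^\vee \to 0$ and invoke the cohomology vanishings attached to the exceptional collection $\langle \mathcal{O}_{X'}, \mathcal{E}^\vee\rangle$ together with Borel--Weil--Bott on the ambient Grassmannian $\mathrm{Gr}(2,5)$ for the splitting $\mathcal{E}^\vee \otimes \mathcal{E}^\vee = \mathrm{Sym}^2\mathcal{E}^\vee \oplus \mathcal{O}_{X'}(H)$. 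Note that $\mathrm{Hom}(\mathcal{E}, \mathcal{Q}^\vee) = k^2$ and $\mathrm{Ext}^1(\mathcal{Q}, \mathcal{E}) = k$ are already extracted inside the proofs of Lemma~\ref{unique-extension} and Proposition~\ref{prop_conics_contraction_P1OGM}, and the vanishings in the remaining degrees follow from slope stability of the pair.

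With these pieces in hand, the only nontrivial $d_1$-differentials are the pre- and post-compositions with the extension class $\mathcal{Q}^\vee[-1] \to \mathcal{E}[1]$ representing the triangle. A direct inspection shows they kill the expected contributions, so that the only surviving $E_\infty$ terms sit in total degrees $0$ and $1$ and give $\mathrm{Hom}(\mathrm{pr}(I_D), \mathrm{pr}(I_D)) = k$ and $\mathrm{Ext}^1(\mathrm{pr}(I_D), \mathrm{pr}(I_D)) = k^2$, with vanishing in higher degrees. As a sanity check, the Euler characteristic $\chi(\mathrm{pr}(I_D), \mathrm{pr}(I_D)) = \chi(x,x) = -1$ computed from Lemma~\ref{lemma_Grothendieckgroup_kuz} is consistent with $1 - 2 = -1$.

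The hard part will be the careful bookkeeping of $\mathrm{Ext}^\bullet(\mathcal{E}, \mathcal{Q}^\vee)$ and $\mathrm{Ext}^\bullet(\mathcal{Q}^\vee, \mathcal{E})$ in every degree, and then verifying that the $d_1$-differentials have precisely the ranks that produce the stated answer rather than leaving spurious higher Ext groups. In contrast to the twisted cubic computation of Corollary~\ref{corollary_computing_ext_projection_object}, no twisted derived dual shortcut is available on an ordinary Gushel--Mukai threefold, so the calculation has to be performed directly from the exceptional collection on $X'$.
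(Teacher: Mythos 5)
Your proposal follows essentially the same route as the paper: it runs the spectral sequence of Lemma~\ref{lemma_SS} on the triangle $\mathcal{Q}^{\vee}[-1]\to\mathcal{E}[1]\to\mathrm{pr}(I_D)$ from Proposition~\ref{prop_conic_inKuz}, with the off-diagonal $\mathrm{Ext}$-groups between $\mathcal{E}$ and $\mathcal{Q}^{\vee}$ killed or identified via slope stability, exceptionality, and Koszul/Borel--Weil--Bott, exactly as in the paper's argument. The only cosmetic difference is bookkeeping: the paper computes $\mathrm{Hom}$, $\mathrm{Ext}^2$, $\mathrm{Ext}^3$ from the spectral sequence and then extracts $\mathrm{ext}^1=2$ from $\chi(x,x)=-1$, whereas you read off $\mathrm{Ext}^1$ directly and use $\chi$ only as a check.
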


\begin{proof}
Note that 
We have an exact triangle: $Q^{\vee}[-1]\rightarrow\mathcal{E}[1]\rightarrow\mathrm{pr}(I_D)$. We apply the spectral sequence in Lemma~\ref{lemma_SS} to compute $\mathrm{Hom}^2(\mathrm{pr}(I_D),\mathrm{pr}(I_D))$
$$E_1^{p,q}=\begin{cases}
\mathrm{Hom}^3(\mathcal{E}[1],Q^{\vee}[-1])=\mathrm{Ext}^1(Q,\mathcal{E}^{\vee}), p=-1\\
\mathrm{Ext}^2(\mathcal{E},\mathcal{E})\oplus\mathrm{Ext}^2(Q^{\vee},Q^{\vee})=0, p=0\\
\mathrm{Ext}^1(Q^{\vee}[-1],\mathcal{E}[1]), p=1\\
0, p\geq 2, p\leq -2
\end{cases}$$

Note that $\mathrm{Ext}^3(Q^{\vee},\mathcal{E})\cong\mathrm{Hom}(\mathcal{E}^{\vee},Q^{\vee})=0$, since $\mathcal{E}^{\vee}$ and $Q^{\vee}$ are both $\mu$-stable and $\mu(\mathcal{E}^{\vee})=\frac{1}{2}>-\frac{1}{3}=\mu(Q^{\vee})$. $\mathrm{Ext}^3(Q,\mathcal{E}^{\vee})\cong\mathrm{Hom}(\mathcal{E}^{\vee},Q(-H))=0$ since $\mu(\mathcal{E}^{\vee})=\frac{1}{2}>-\frac{2}{3}=\mu(Q(-H))$. On the other hand $\mathrm{Ext}^1(Q,\mathcal{E}^{\vee})\cong H^1(X,Q^{\vee}\otimes\mathcal{E}^{\vee})=0$, indeed one computes $H^1(X,Q^{\vee}\otimes\mathcal{E}^{\vee})$ by taking global section in the Koszul complex of $X\xhookrightarrow{}\mathrm{Gr}(2,5)$ and apply Borel-Weil-Bott over $\mathrm{Gr}(2,5)$. 
Then we have $\mathrm{Ext}^2(\mathrm{pr}(I_D),\mathrm{pr}(I_D))=0$. Similar computations give $\mathrm{Ext}^3(\mathrm{pr}(I_D),\mathrm{pr}(I_D))=0$ and $\mathrm{Hom}(\mathrm{pr}(I_D),\mathrm{pr}(I_D))=k$. As $\mathrm{pr}(I_D)$ is a $(-1)$-class, the desired result follows. 

 %Apply $\mathrm{Hom}(-,\mathcal{E}^{\vee})$ to tautological exact sequence, we have
%$$0\rightarrow\mathrm{Hom}(Q,\mathcal{E}^{\vee})\rightarrow\mathrm{Hom}(\mathcal{O}_X,\mathcal{E}^{\vee})^5\xrightarrow{\pi}\mathrm{Hom}(\mathcal{O}_X,\mathcal{E}^{\vee}\otimes\mathcal{E}^{\vee})\rightarrow\mathrm{Ext}^1(Q,\mathcal{E}^{\vee})\rightarrow 0$$

\end{proof}

\begin{corollary}\label{prop_stability_ideal_conic}
Let $\sigma$ be a $\tau$-invariant stability condition on $\mathcal{A}_{X'}$. Let $D\subset X'$ be a conic. Then $\mathrm{pr}(I_D)$ is $\sigma$-stable.
\end{corollary}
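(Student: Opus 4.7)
The plan is to reduce the stability of $\mathrm{pr}(I_D)$ to the same small-ext-group criterion used in the special Gushel-Mukai case (Corollary~\ref{lemma_ext3_stable} and Proposition~\ref{prop_stability_ext14}), applied now to the Kuznetsov component $\mathcal{A}_{X'}$. Two cases arise from Proposition~\ref{prop_conics_contraction_P1OGM}. If $I_D \notin \mathcal{A}_{X'}$, Lemma~\ref{lemma_ideal_projection_smooth} directly gives that $F := \mathrm{pr}(I_D)$ has $\hom(F,F) = 1$, $\mathrm{ext}^1(F,F) = 2$, and all higher ext groups vanishing. If $I_D \in \mathcal{A}_{X'}$ then $\mathrm{pr}(I_D) = I_D$, and I would compute $\mathrm{RHom}(I_D, I_D)$ by hand: $\hom = k$ and $\mathrm{Ext}^3 = 0$ follow from $\mu$-stability of the rank-one torsion-free sheaf $I_D$; $\chi(I_D,I_D) = -1$ from the Euler form of Lemma~\ref{lemma_Grothendieckgroup_kuz}; and $\mathrm{Ext}^2(I_D, I_D) = 0$ either from smoothness of the Fano surface of conics at $[D]$ or from a direct computation with the standard sequence $0 \to I_D \to \mathcal{O}_{X'} \to \mathcal{O}_D \to 0$, yielding the same numerics.

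Next I would install for $\mathcal{A}_{X'}$ the analogue of Proposition~\ref{prop_heart_GM}: the heart $\mathcal{A}$ of any Serre-invariant $\sigma$ has homological dimension at most two, and every nonzero $A \in \mathcal{A}$ satisfies $\mathrm{ext}^1(A,A) \geq 1$, since by Lemma~\ref{lemma_Grothendieckgroup_kuz} the form $\chi(ax + by, ax + by) = -(a + 2b)^2 - b^2$ is $\leq -1$ on every nonzero class. Running the spectral sequence of Lemma~\ref{lemma_ext3_inheart} with $\mathrm{ext}^1(F,F) = 2$ then forces $F$ to have a single nonzero cohomology object with respect to $\mathcal{A}$, so $F \in \mathcal{A}$ up to shift; the shift can be normalised uniformly across the family of conics, as in the proof of Theorem~\ref{theorem_stabilityofidealsheaf}.

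For $\sigma$-stability I would follow the weak Mukai template of Corollary~\ref{lemma_ext3_stable}. A would-be destabilising triangle $A \to F \to B$ with $\phi(A) > \phi(B)$ satisfies $\mathrm{Hom}(A, B) = 0$, and the $\tau$-invariance of $\sigma$ gives $\mathrm{Hom}(A, \tau B) = 0$ (with $\tau B$ still $\sigma$-semistable of the same phase as $B$). Lemma~\ref{lemma_Mukailemma} then forces $\mathrm{ext}^1(A,A) + \mathrm{ext}^1(B,B) \leq 2$, so both summands equal one and both have $\chi = -1$. By Lemma~\ref{lemma_Grothendieckgroup_kuz}, the only $(-1)$-classes in $\mathcal{N}(\mathcal{A}_{X'})$ are $\pm x$ and $\pm(2x - y)$, so only finitely many decompositions $[F] = [A] + [B]$ with $[F] = x$ remain. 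For each such pair I would feed the Euler pairings $\chi(A, B)$, $\chi(B, A)$ into the spectral sequence of Lemma~\ref{lemma_SS}, exactly as in the table of Proposition~\ref{prop_stability_ext14}, and extract a linear system in the unknowns $\hom(B, A)$ and $\hom(A, B[1])$ that is incompatible with the target $\hom(F,F) = 1$, $\mathrm{ext}^1(F,F) = 2$. The residual case $\mathrm{Hom}(A, \tau B) \neq 0$ is absorbed by the three-step refactoring used at the end of Corollary~\ref{lemma_ext3_stable}, upgrading $\sigma$-semistability to $\sigma$-stability. Finally, Theorem~\ref{all_in_one_orbit} lets me fix a single representative Serre-invariant $\sigma$ and propagate the conclusion to every $\sigma$ in the orbit.

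The main obstacle will be the numerical case analysis in the strictly semistable step: although the list of $(-1)$-classes is short, each compatible pair $([A], [B])$ requires its own spectral-sequence tableau, and the Euler pairings $\chi(A,B)$, $\chi(B,A)$ must be cross-checked against $\chi(F,F) = -1$ to confirm that no non-negative assignment of the remaining Hom ranks reproduces the prescribed numerics of $F$.
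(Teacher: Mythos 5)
Your treatment of the case $I_D\not\in\mathcal{A}_{X'}$ matches the paper (it is exactly Lemma~\ref{lemma_ideal_projection_smooth}), but there is a genuine gap in the case $I_D\in\mathcal{A}_{X'}$: you assert $\mathrm{Ext}^2(I_D,I_D)=0$ unconditionally, justified either by smoothness of the Fano surface of conics at $[D]$ or by a computation with $0\to I_D\to\mathcal{O}_{X'}\to\mathcal{O}_D\to 0$. The smoothness of $\mathcal{C}(X')$ holds only for \emph{general} $X'$ (Logachev), while the corollary is stated for an arbitrary smooth ordinary Gushel--Mukai threefold and is later invoked for non-general $X'$ in Theorem~\ref{theorem_irreduciblecomponent_conics}, precisely where $p(\mathcal{C}(X'))$ acquires singularities; at a singular point $[D]$ one has $\mathrm{Ext}^2(I_D,I_D)\neq 0$, so no standard-sequence computation can make it vanish. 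The paper's proof deals with this by writing $\mathrm{Ext}^2(I_D,I_D)\cong\mathrm{Hom}(I_D,\tau I_D)$ (Serre duality in $\mathcal{A}_{X'}$) and using the description of $\tau I_D$ from \cite{JLLZh2021}: $\tau I_D$ is either $I_{D'}$ for the residual conic $D'$ of $D$ in a zero locus of a section of $\mathcal{E}^{\vee}$, or the projection object of Proposition~\ref{prop_conic_inKuz}; this yields $\mathrm{ext}^1(I_D,I_D)=2$ \emph{or} $3$ (with $\mathrm{ext}^2=0$ or $1$), and the $\mathrm{ext}^1=3$ possibility must then be handled by the $\mathrm{ext}^1=3$ stability criterion in the style of Lemma~\ref{lemma_ext3_inheart} and Corollary~\ref{lemma_ext3_stable}. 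Your proposal never reaches these numerics, so as written it only proves the statement for general $X'$.

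A smaller point: from $\chi(A,A)\leq -1$ and $\mathrm{hom}(A,A)\geq 1$ you in fact get $\mathrm{ext}^1(A,A)\geq 2$ for every nonzero object of the heart (the analogue of Proposition~\ref{prop_heart_GM}(3)), not merely $\geq 1$. With the correct bound, the weak Mukai Lemma~\ref{lemma_Mukailemma} kills any destabilising sequence for an object with $\mathrm{ext}^1=2$ (and also $\mathrm{ext}^1=3$) immediately, so the elaborate enumeration of $(-1)$-class decompositions and the accompanying spectral-sequence tableaux you plan for the strictly semistable step are unnecessary; only the refactoring trick for $\mathrm{Hom}(A,\tau B)\neq 0$ from Corollary~\ref{lemma_ext3_stable} is needed, as in the paper.
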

\begin{proof}
\leavevmode\begin{enumerate}
    \item If $I_D\in\mathcal{A}_{X'}$, then $\mathrm{pr}(I_D)=I_D$, then $\mathrm{Hom}(I_D,I_D)=1$ and $\mathrm{Ext}^3(I_D,I_D)=0$ by slope-stability.\begin{itemize}
        \item If $X'$ is general, then by \cite{logachev2012fano} the Fano surface $\mathcal{C}(X')$ is smooth. Then $\mathrm{Ext}^2(I_D,I_D)=0$ and $\mathrm{Ext}^1(I_D,I_D)=k^2$. By almost the same argument in Lemma~\ref{lemma_ext3_inheart}, Corollary~\ref{lemma_ext3_stable}, $I_D$ is in the heart $\mathcal{A}$ of stability condition $\sigma$ up to shift and it is $\sigma$-stable.
        \item If $X'$ is not general. Then $\mathrm{Ext}^2(I_D,I_D)\cong\mathrm{Hom}(I_D,\tau I_D)$. Then by \cite[Proposition 9.3]{JLLZh2021}, $\tau I_D$ is either $I_{D'}$ for some conic $D'\subset X'$ such that $D\bigcup D'=Z(s)$, where $Z(s)$ is the zero locus of section $s\in H^0(\mathcal{E}^{\vee})$ or $\tau I_D\cong\pi$, which is represented by the distinguished triangle in Proposition~\ref{prop_conic_inKuz}. In previous case, $\mathrm{Hom}(I_D,\tau I_D)\cong\mathrm{Hom}(I_D,I_{D'})$. It is either $k$ or $0$, depends on whether $D=D'$, then $\mathrm{ext}^1(I_D,I_D)=2$ or $3$. In the latter case, it is clear that $I_D\not\cong \tau I_D$, then $\mathrm{ext}^2(I_D,I_D)=0$ and $\mathrm{ext}^1(I_D,I_D)=2$. In both cases, by similar argument in Lemma~\ref{lemma_ext3_inheart}, Corollary~\ref{lemma_ext3_stable}, $I_D$ is in the heart $\mathcal{A}$ of stability condition $\sigma$ up to shift and it is $\sigma$-stable.
    \end{itemize}
\item If $I_D\not\in\mathcal{K}u(X')$. Then by Lemma~\ref{lemma_ideal_projection_smooth}, $\mathrm{ext}^1(\mathrm{pr}(I_D),\mathrm{pr}(I_D)=2$. Then by similar reasoning as above, $\mathrm{pr}(I_D)$ is $\sigma$-stable. 
\end{enumerate}
\end{proof}

Next, we state and prove the main result in this section. 

\begin{theorem}
\label{theorem_irreduciblecomponent_conics}
Let $X'$ be a general smooth ordinary Gushel-Mukai threefold. The projection functor $\mathrm{pr}: D^b(X')\rightarrow\mathcal{A}_{X'}$ produces a smooth irreducible component $\mathcal{S}=p(\mathcal{C}(X'))$ of dimension two in moduli space $\mathcal{M}_{\sigma}(\mathcal{A}_{X'},x=-(1-2L))$ of $\sigma$-stable objects in $\mathcal{A}_{X'}$, where $p:\mathcal{C}(X')\rightarrow\mathcal{S}$ is
a blow up at a smooth point in $\mathcal{S}$. It is induced 
by projection functor $\mathrm{pr}$. In particular, $\mathcal{S}$ is isomorphic to the minimal model $\mathcal{C}_m(X')$ of Fano surface $\mathcal{C}(X')$ of conics on $X'$.  If $X'$ is non general, then $\mathcal{S}=p(\mathcal{C}(X'))$ has singularities. 
\end{theorem}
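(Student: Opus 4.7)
The strategy mirrors that of Theorem~\ref{Theorem_projectivity_modulispace}: we use the projection functor to produce a morphism from the Fano surface of conics $\mathcal{C}(X')$ to $\mathcal{M}_\sigma(\mathcal{A}_{X'},x)$, identify its image as an irreducible component, and analyze the contracted locus. First, exactly as in Lemma~\ref{lemma_compatibility_stableobject}, the projection functor $\mathrm{pr}=\bL_{\mathcal{O}_{X'}}\bL_{\mathcal{E}^{\vee}}$ is a Fourier--Mukai functor, so applying $\mathrm{pr}\times\mathrm{id}_{\mathcal{C}(X')}$ to the universal conic ideal sheaf on $\mathcal{C}(X')\times X'$ produces a family of $\sigma$-stable objects in $\mathcal{A}_{X'}$ with class $x=1-2L$ (stability on each fiber is Corollary~\ref{prop_stability_ideal_conic}). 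This gives a morphism $p:\mathcal{C}(X')\to\mathcal{M}_\sigma(\mathcal{A}_{X'},x)$.

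Next I would analyze $p$ fiberwise using the dichotomy of Proposition~\ref{prop_conics_contraction_P1OGM}: on the open locus $\mathcal{C}(X')^\circ:=\{D\mid I_D\in\mathcal{A}_{X'}\}$ the projection is the identity $\mathrm{pr}(I_D)=I_D$, so $p$ is injective there (distinct conics have non-isomorphic ideal sheaves); on the complementary closed $\mathbb{P}^1\subset\mathcal{C}(X')$ of ``bad'' conics, Proposition~\ref{prop_conic_inKuz} shows $\mathrm{pr}(I_D)$ is a single isomorphism class (the unique extension $\cE[1]\to\mathrm{pr}(I_D)\to\cQ^\vee$), so $p$ contracts this $\mathbb{P}^1$ to one point $q\in\mathcal{M}_\sigma$. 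For the general $X'$, Lemma~\ref{lemma_ideal_projection_smooth} combined with the first bullet of Corollary~\ref{prop_stability_ideal_conic} gives $\mathrm{ext}^1(\mathrm{pr}(I_D),\mathrm{pr}(I_D))=2$ at every conic (using smoothness of $\mathcal{C}(X')$ from Logachev), so the tangent space to $\mathcal{M}_\sigma$ at each point of $p(\mathcal{C}(X'))$ has dimension exactly $2$. Since $\mathcal{C}(X')$ is irreducible projective of dimension $2$, $p$ is proper and embeds the open dense $\mathcal{C}(X')^\circ$, one deduces by the same argument as Proposition~\ref{prop_irreduciblecomponent_modulispace} that $\mathcal{S}:=p(\mathcal{C}(X'))$ is an irreducible component of $\mathcal{M}_\sigma$, smooth of dimension $2$.

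To identify $\mathcal{S}$ with the minimal model $\mathcal{C}_m(X')$, I would invoke the Castelnuovo contractibility criterion: $p$ is a birational morphism of smooth projective surfaces contracting the smooth rational curve $\mathbb{P}^1$ to a smooth point $q$, so $\mathbb{P}^1$ must be a $(-1)$-curve and $p$ is the blow-down of that $(-1)$-curve. Since by \cite{logachev2012fano} the Fano surface $\mathcal{C}(X')$ contains exactly one exceptional curve (namely this $\mathbb{P}^1$), the resulting surface $\mathcal{S}$ is precisely the minimal model $\mathcal{C}_m(X')$. This handles part (1).

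For part (2), when $X'$ is not general, $\mathcal{C}(X')$ is itself singular and the second bullet of Corollary~\ref{prop_stability_ideal_conic} shows that at certain conics the ext$^1$ jumps (because of the possibility $\tau I_D\cong I_{D'}$ or $\tau I_D\cong \mathrm{pr}(I_D)$ giving nontrivial $\mathrm{Ext}^2$'s in the boundary behavior). Thus the tangent spaces to $\mathcal{S}$ at those image points have dimension $>2$, producing singularities in $\mathcal{S}$. The main technical obstacle will be verifying uniqueness of the $(-1)$-curve and that the image point $q$ is a smooth point of the moduli space (i.e.\ that the tangent computation at the contracted fiber is the same as at generic points) — this is what the ext-computation in Lemma~\ref{lemma_ideal_projection_smooth} secures, so the key geometric input is the precise identification of the contracted $\mathbb{P}^1$ with Logachev's exceptional curve, which follows from the classification in Proposition~\ref{prop_conics_contraction_P1OGM}.
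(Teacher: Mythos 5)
Your proposal is correct and follows essentially the same route as the paper: the projection functor gives a proper morphism from $\mathcal{C}(X')$ to the moduli space via the dichotomy of Proposition~\ref{prop_conics_contraction_P1OGM}, injective and \'etale off the $\mathbb{P}^1$ of non-Kuznetsov conics, with smoothness of the image secured by Lemma~\ref{lemma_ideal_projection_smooth} and the identification with $\mathcal{C}_m(X')$ coming from contracting that rational curve and the uniqueness of the exceptional curve on $\mathcal{C}(X')$. The only cosmetic differences are that you phrase the contraction via Castelnuovo's criterion and cite Logachev for uniqueness of the $(-1)$-curve, where the paper cites \cite[Section 5.1]{debarre2012period}; mathematically the arguments coincide.
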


\begin{proof}
By Proposition~\ref{prop_conics_contraction_P1OGM}, it is known that family $\mathbb{L}$ of conics $D\subset X'$ whose ideal sheaf $I_D\not\in\cA_{X'}$ is parametrised by a $\mathbb{P}^1$. It is the fiber of the first quadric fibration over the scheme $\Sigma_1(X')$ in the language of \cite{debarre2015gushel}. In particular, the projection object $\mathrm{pr}(I_D)\in\cA_{X'}$ is given by the exact triangle in Proposition~\ref{prop_conic_inKuz}. The ideal sheaf $I_D\in\cA_{X'}$ for all
the conics $[D]$ in the complement of $\mathbb{L}$ in Fano surface $\mathcal{C}(X)$ of conics. Then $\mathrm{pr}(I_D[1])=I_D[1]\in\cA_{X'}$, it is $\sigma$-stable by Proposition \ref{prop_stability_ideal_conic} and $\mathrm{ext}^1(I_D,I_D)=2, \mathrm{ext}^2(I_D,I_D)=0$ for all $[D]\in\mathcal{C}(X')$. Let $\mathcal{C}$ be the universal family of conics on $X'\times\mathcal{C}(X')$. The functor $\mathrm{pr}$ induces a morphism $p:\mathcal{C}(X')\rightarrow\mathcal{M}_{\sigma}(\mathcal{A}_{X'},-x)$ factoring through one of the irreducible components $\mathcal{S}$ of $\mathcal{M}_{\sigma}(\mathcal{A}_{X'},-x)$. The complement of $\mathbb{L}$ in $\mathcal{C}(X')$ is a dense open subset $\mathcal{U}$ of $\mathcal{C}(X')$ since $\mathcal{C}(X')$ is irreducible. The morphism $p|_{\mathcal{U}}$ is injective and étale, so $p(\mathcal{U})\subset\mathcal{S}$ is also a dense open subset of $\mathcal{S}$. But $\mathcal{C}(X')$ is a projective surface and $p$ is a proper morphism, so $p(\mathcal{C}(X'))=\mathcal{S}$. By Lemma~\ref{lemma_ideal_projection_smooth}, $\mathcal{S}$ is smooth. Then $p$ is a birational dominant proper morphism from  $\mathcal{C}(X')$ to $\mathcal{S}$. In particular, $\mathbb{L}\cong\mathbb{P}^1$ is contracted by $p$ to a smooth point by Lemma~\ref{lemma_ideal_projection_smooth}. Thus $\mathcal{S}$ is smooth surface obtained by blowing down a smooth rational curve on a smooth irreducible projective surface. This implies that $\mathcal{S}$ is also a smooth projective surface. On the other hand, it is known that there is a unique rational curve $L_{\sigma}\cong\mathbb{L}\subset\mathcal{C}(X')$ and it is the unique exceptional curve by \cite[Section 5.1]{debarre2012period}. Thus $\mathcal{S}$ is the mininal model $\mathcal{C}_m(X')$ of Fano surface of conics on $X'$. If $X'$ is non general, then by the argument in 
Corollary~\ref{prop_stability_ideal_conic} and \cite{debarre2012period}, the surface $\mathcal{C}(X')$ is irreducible and may have singularities. Then $\mathcal{S}=p(\mathcal{C}(X'))$ is an irreducible surface with singularities. 
\end{proof}

\begin{remark}
\label{minimalmodel_conics}
In \cite[Theorem 14.5]{JLLZh2021}, it is proved that the Bridgeland moduli space $\mathcal{M}_{\sigma}(\cA_X,-x)\cong p(\mathcal{C}(X))$. 
\end{remark}

%\begin{predl}\label{prop_normality_Fanosurface_conics}
%If the ordinary Gushel-Mukai threefold $X'$ is non general, then the irreducible component $\mathcal{S}=p(\mathcal{C}(X'))$ in the moduli space $\mathcal{M}_{\sigma}(\mathcal{A}_{X'},x)$ is a normal surface. 
%\end{predl}

%\begin{proof}
    
%\end{proof}

\section{Kuznetsov components of quartic double solids and special Gushel-Mukai threefolds}
\label{section7}
In this section, we state and prove the main result in the article for smooth special Gushel-Mukai threefolds. We will write  $\mathcal{M}_{\sigma}(\mathcal{K}u(X),t)$ as $\mathcal{M}_{\sigma}(t)$ when there is no ambiguity. 
\begin{theorem}
\label{theorem_main}
    Let $Y$ be a quartic double solid and $X$ be a special Gushel-Mukai threefold. Then $\mathcal{K}u(Y)\not\simeq\mathcal{K}u(X)$.
 \end{theorem}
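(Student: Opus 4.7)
The plan is to assume for contradiction an equivalence $\Phi \colon \mathcal{K}u(Y) \xrightarrow{\sim} \mathcal{K}u(X)$ and play off the known structure of Bridgeland moduli spaces on the two sides. Since any such equivalence preserves the Euler form, it sends a $(-2)$-class to a $(-2)$-class; the $(-2)$-classes in $\mathcal{N}(\mathcal{K}u(Y))$ are $\pm w$ and $\pm(2v-w)$, and those in $\mathcal{N}(\mathcal{K}u(X))$ are $\pm s$ and $\pm(3s-2t)$. Using the isomorphism $\mathcal{M}_\sigma(\mathcal{K}u(Y),w)\cong\mathcal{M}_\sigma(\mathcal{K}u(Y),w-2v)$ from \cite[Proposition 5.4]{pertusi2020some} one reduces to the case $\Phi_*(w)=\pm s$, so that it suffices to compare the moduli spaces $\mathcal{M}_\sigma(\mathcal{K}u(Y),w)$ and $\mathcal{M}_{\sigma'}(\mathcal{K}u(X),s)$.

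Next, I would upgrade $\Phi$ to an isomorphism of moduli schemes. By Theorem~\ref{all_in_one_orbit}, every Serre-invariant stability condition on either Kuznetsov component lies in a single $\widetilde{\mathrm{GL}}_2^+(\mathbb{R})$-orbit, and $\Phi$ commutes with Serre functors. Hence $\Phi^*\sigma'$ is in the same orbit as $\sigma$, which means $\Phi$ sends $\sigma$-stable objects to $\sigma'$-stable objects up to an overall shift. Together with the fact that $\Phi$ is of Fourier--Mukai type on the ambient derived categories (by Kuznetsov's results), this produces an isomorphism
\[
\Phi \colon \mathcal{M}_\sigma(\mathcal{K}u(Y),w) \xrightarrow{\;\sim\;} \mathcal{M}_{\sigma'}(\mathcal{K}u(X),s)
\]
that identifies tangent spaces $\mathrm{Ext}^1(E,E)$ at corresponding closed points.

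The heart of the argument is then a geometric incompatibility between the two sides. By \cite{altavilla2019moduli}, the moduli space $\mathcal{M}_\sigma(\mathcal{K}u(Y),w)$ has two irreducible components $\mathcal{Y}$ and $\mathcal{C}$ which meet along a two-dimensional ramification locus $\mathcal{R}$; the tangent space has dimension $4$ at every point of $\mathcal{R}$ and dimension $3$ elsewhere. On the other hand, Theorem~\ref{Theorem_projectivity_modulispace} shows that when $X$ is general the irreducible component $\mathcal{Z}\subset\mathcal{M}_{\sigma'}(\mathcal{K}u(X),s)$ obtained from the Hilbert scheme of twisted cubics is smooth of dimension $3$, so every tangent space has dimension $3$; Proposition~\ref{prop_singular_modulispace} treats the non-general case and produces at most a \emph{finite} singular set inside this component. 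Since $X$ varies in a moduli space that contains both general and non-general special Gushel--Mukai threefolds, no component of $\mathcal{M}_{\sigma'}(\mathcal{K}u(X),s)$ can contain a two-dimensional locus of points with four-dimensional tangent space, contradicting the existence of an isomorphism identifying $\mathcal{Y}$ (or $\mathcal{C}$) with some component of $\mathcal{M}_{\sigma'}(\mathcal{K}u(X),s)$.

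The main obstacle will be the second step: showing that the categorical equivalence $\Phi$ actually descends to an isomorphism of moduli schemes that preserves tangent spaces, rather than merely a bijection on points. This requires both the uniqueness of Serre-invariant stability conditions (so that $\sigma$-stability is transported to $\sigma'$-stability up to shift) and a Fourier--Mukai style representability statement to make sense of the induced morphism between fine moduli spaces. Once this is in place, the comparison of tangent space dimensions provided by Theorem~\ref{Theorem_projectivity_modulispace} and Proposition~\ref{prop_singular_modulispace} versus the structure of $\mathcal{M}_\sigma(\mathcal{K}u(Y),w)$ from \cite{altavilla2019moduli} delivers the contradiction.
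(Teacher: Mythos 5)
Your overall strategy coincides with the paper's: reduce via the induced isometry and the rotation-functor isomorphism $\mathcal{M}_{\sigma}(\mathcal{K}u(Y),w)\cong\mathcal{M}_{\sigma}(\mathcal{K}u(Y),2v-w)$ to comparing $\mathcal{M}_{\sigma}(\mathcal{K}u(Y),w)$ with $\mathcal{M}_{\sigma'}(\mathcal{K}u(X),s)$, transport stability through the equivalence using Serre-invariance, produce a map of moduli spaces identifying tangent spaces, and contradict the tangent-space dimensions of \cite{altavilla2019moduli} against Theorem~\ref{theorem_main2}. Two steps as written, however, have real problems.

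First, you assert that $\Phi$ is of Fourier--Mukai type ``by Kuznetsov's results''. No such general statement is available for an equivalence between Kuznetsov components of two different Fano threefolds, and the paper deliberately does not assume it: in Section~\ref{subsection_family} the Fourier--Mukai case is treated, and for an arbitrary equivalence the families over $\mathcal{Y}$ and $\mathcal{C}$ are instead built by hand via the convolution techniques of \cite{altavilla2019moduli} and \cite{BMMS}, using only that $\mathcal{Y}$ and $\mathcal{C}$ are irreducible projective. Without one of these devices you have no family of objects parametrised by $\mathcal{Y}$ or $\mathcal{C}$, hence no morphism to $\mathcal{M}_{\sigma'}(\mathcal{K}u(X),s)$, let alone an isomorphism of moduli schemes; your transport of stability via Theorem~\ref{all_in_one_orbit} (a legitimate alternative to the paper's small-$\mathrm{ext}^1$ stability criteria) only gives a map on closed points by itself.

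Second, the concluding contradiction is misstated. Theorem~\ref{Theorem_projectivity_modulispace} and Proposition~\ref{prop_singular_modulispace} control only the particular component $\mathcal{Z}$ (resp.\ $\mathcal{X}$) built from twisted cubics, not every irreducible component of $\mathcal{M}_{\sigma'}(\mathcal{K}u(X),s)$, so the claim that ``no component'' can contain a two-dimensional locus of points with four-dimensional tangent space is unjustified; and the sentence about letting $X$ vary in its moduli space is a non sequitur, since the whole argument concerns one fixed pair $(Y,X)$. The contradiction must be localized to the known component, which is what the paper does via the bijectivity on closed points (Theorem~\ref{Theorem_bijective_closedpoints}): since $\mathcal{Z}$ is an irreducible component of the target, it is dominated by one of the two source components $\mathcal{Y}$, $\mathcal{C}$, and because the two-dimensional locus $\mathcal{R}$ with four-dimensional tangent spaces lies in $\mathcal{Y}\cap\mathcal{C}$, its image yields infinitely many points of $\mathcal{Z}$ with four-dimensional tangent spaces, contradicting Theorem~\ref{theorem_main2}. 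As you wrote it, if $\mathcal{Y}$ were identified with some other, uncontrolled component of $\mathcal{M}_{\sigma'}(\mathcal{K}u(X),s)$, no contradiction would follow.
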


We outline several steps toward proving this theorem. We prove it by contradiction, that is we assume that there is an exact equivalence $\Phi:\mathcal{K}u(Y)\simeq\mathcal{K}u(X)$. Then \begin{itemize}
    \item Show that $\Phi$ induces a bijective map $h$ between the closed points of $\mathcal{M}_{\sigma}(\mathcal{K}u(Y),w)$ and $\mathcal{M}_{\sigma'}(\mathcal{K}u(X),\phi(w))$, where $\sigma'$ and $\sigma$ are $\tau$-invariant stability conditions in $\mathcal{K}u(X)$ and $\mathcal{K}u(Y)$ respectively. 
    
    \item Show that the rotation functor $\mathrm{R}:=\bL_{\mathcal{O}_Y}(-\otimes\mathcal{O}_Y(H))$ will identify moduli spaces $\mathcal{M}_{\sigma}(\mathcal{K}u(Y),w)$ and $\mathcal{M}_{\sigma}(\mathcal{K}u(Y),w-2v)$. 
    
    \item Show that there is a morphism $\gamma$ (induced by $h$) identifying irreducible component $\mathcal{Y}$ (or $\mathcal{C}$) of Bridgeland moduli space $\mathcal{M}_{\sigma}(\mathcal{K}u(Y),w)$ with that of $\mathcal{M}_{\sigma'}(\mathcal{K}u(X),\phi(w))$ and then observe that this is impossiple. This means that the hypothetical equivalence does not exist. 
\end{itemize}

%%orginal proof using the morphism from $X$ side to $Y$ side. 
% \item Show that there is a morphism $\gamma$ induced by $h$ from the irreducible component $\mathcal{Z}$ (or $\mathcal{X}$ if $X_{10}$ is not general) of Bridgeland moduli space $\mathcal{M}_X$ in $\mathcal{K}u(X)$ to Bridgeland moduli space $\mathcal{M}_Y$ in $\mathcal{K}u(Y)$, which would identify $\mathcal{Z}$(or $\mathcal{X}$) with one of irreducible component of $\mathcal{M}_Y$ and then observe that this is impossiple. This means that the hypothetical equivalence does not exist. 

%\begin{definition}
%Let $X$ be an index 1 or 2 Fano threefold and $\mathcal{K}u(X)$ be an Enrique category with an involution $\tau$. the stability condition $\sigma$ on $\mathcal{K}u(X)$ is called $\tau$-invariant if $\tau(\sigma)=\sigma$. 
%\end{definition}

Recall that rotation functor $\Psi:E\mapsto \bL_{\mathcal{O}_Y}(E\otimes\mathcal{O}_Y(H))$ is 
an auto-equivalence of $\mathcal{K}u(Y)$ of quartic double solid. It is easy to check that the induced linear automorphism on numerical Grothendieck group $\mathcal{N}(\mathcal{K}u(Y))$ maps $w\mapsto 2v-w$ and $v\mapsto v-w$(See \cite[Lemma 3.15]{altavilla2019moduli}. 

\begin{lemma}
\label{lemma_rotation_functor_preserves_stability}
Let $Y$ be a quartic double solid and $\sigma$ the stability condition constructed in \cite[Section 3.2]{pertusi2020some}. Then there are isomorphisms of the moduli spaces: $\mathcal{M}_{\sigma}(\mathcal{K}u(Y),w)\cong\mathcal{M}_{\sigma}(\mathcal{K}u(Y), 2v-w)$.  $\mathcal{M}_{\sigma}(\mathcal{K}u(Y),v)\cong\mathcal{M}_{\sigma}(\mathcal{K}u(Y), v-w)$. Where $\sigma$ is any $\tau$-invariant stability condition on $\mathcal{K}u(Y)$. 
\end{lemma}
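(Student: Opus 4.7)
The plan is to show that the rotation functor $\Psi=\bL_{\mathcal{O}_Y}(-\otimes\mathcal{O}_Y(H))$ gives the desired isomorphisms by sending stable objects to stable objects and matching the numerical classes. Since $\Psi$ is an auto-equivalence of $\mathcal{K}u(Y)$, an object $E\in\mathcal{K}u(Y)$ is $\sigma$-stable if and only if $\Psi(E)$ is $\Psi\cdot\sigma$-stable, where $\Psi\cdot\sigma$ is the pushforward stability condition defined by $(\Psi(\mathcal{P}),Z\circ\Psi_*^{-1})$. On numerical classes $\Psi_*$ sends $w\mapsto 2v-w$ and $v\mapsto v-w$, so $\Psi$ induces an isomorphism
\[
\Psi_*\colon \mathcal{M}_{\sigma}(\mathcal{K}u(Y),w)\xrightarrow{\;\sim\;}\mathcal{M}_{\Psi\cdot\sigma}(\mathcal{K}u(Y),2v-w),
\]
and analogously for the pair $(v,v-w)$.

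The remaining task is to identify $\mathcal{M}_{\Psi\cdot\sigma}$ with $\mathcal{M}_{\sigma}$ for the same numerical class. For this I will use Theorem~\ref{all_in_one_orbit}: all Serre-invariant stability conditions on $\mathcal{K}u(Y)$ lie in a single $\widetilde{\mathrm{GL}}^+_2(\mathbb{R})$-orbit. The auto-equivalence $\Psi$ commutes with the Serre functor $S_{\mathcal{K}u(Y)}$, so $\Psi\cdot\sigma$ is Serre-invariant whenever $\sigma$ is; hence $\Psi\cdot\sigma=\sigma\cdot\widetilde{g}$ for some $\widetilde{g}\in\widetilde{\mathrm{GL}}^+_2(\mathbb{R})$. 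The $\widetilde{\mathrm{GL}}^+_2(\mathbb{R})$-action only relabels phases while preserving the sets of semistable (and stable) objects of each numerical class, so $\mathcal{M}_{\Psi\cdot\sigma}(\mathcal{K}u(Y),2v-w)=\mathcal{M}_{\sigma}(\mathcal{K}u(Y),2v-w)$ as moduli spaces. Composing this identification with the map $\Psi_*$ yields the claimed isomorphism, and the same argument applies verbatim to the second pair.

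The main obstacle is the Serre-invariance check together with the appeal to uniqueness of Serre-invariant stability conditions. Concretely I need to verify that every $\tau$-invariant stability condition on $\mathcal{K}u(Y)$ is actually Serre-invariant in the sense required by Theorem~\ref{all_in_one_orbit} (recall the Serre functor on $\mathcal{K}u(Y)$ is of the form $\tau\circ[2]$ by \cite{kuznetsov2018derived}, so the two notions coincide), and that $\Psi$ descends to an auto-equivalence preserving this orbit. Once this is in place, the isomorphism of moduli spaces is formal.
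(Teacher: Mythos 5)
Your argument is correct, and it reaches the paper's conclusion by a slightly different justification of the key compatibility. The paper's own proof is a one-line appeal to \cite[Proposition 5.4]{pertusi2020some}, which asserts directly that the rotation functor $\Psi$ preserves the stability condition constructed there, i.e.\ $\Psi\cdot\sigma\in\sigma\cdot\widetilde{\mathrm{GL}}^+_2(\mathbb{R})$; combined with the action of $\Psi_*$ on classes ($w\mapsto 2v-w$, $v\mapsto v-w$, stated just before the lemma) this gives the isomorphisms. You instead derive $\Psi\cdot\sigma\in\sigma\cdot\widetilde{\mathrm{GL}}^+_2(\mathbb{R})$ from two general facts: any auto-equivalence commutes with the Serre functor, so $\Psi\cdot\sigma$ is Serre-invariant whenever $\sigma$ is (in the sense of Definition~\ref{definition_S_invariant_stab_condition}, which for $\mathcal{K}u(Y)$ with $S_{\mathcal{K}u(Y)}=\tau\circ[2]$ matches the $\tau$-invariance in the statement), and Theorem~\ref{all_in_one_orbit} puts all Serre-invariant stability conditions on $\mathcal{K}u(Y_2)$ in a single $\widetilde{\mathrm{GL}}^+_2(\mathbb{R})$-orbit; since the $\widetilde{\mathrm{GL}}^+_2(\mathbb{R})$-action only relabels phases, $\mathcal{M}_{\Psi\cdot\sigma}(c)=\mathcal{M}_{\sigma}(c)$ and the claim follows. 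Your route buys something the bare citation does not: it covers uniformly ``any $\tau$-invariant $\sigma$'' as the lemma asserts (the cited proposition addresses the specific $\sigma(\alpha,\beta)$), at the cost of invoking the comparatively heavy uniqueness theorem; conversely the paper's route is shorter and does not depend on Theorem~\ref{all_in_one_orbit}. The only point worth making explicit if you write this up is that $\Psi$, being of Fourier--Mukai type, acts on families, so the induced identification of moduli spaces is an isomorphism of schemes and not merely a bijection on points.
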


\begin{proof}
It follows from the fact that the rotation functor $\Psi:\mathcal{K}u(Y)\simeq\mathcal{K}u(Y)$ preserves the stability condition by \cite[Proposition 5.4]{pertusi2020some}. 
\end{proof}

\begin{theorem}
\label{Theorem_bijective_closedpoints}
Let $E\in\mathcal{K}u(Y)$ be a $\sigma$-stable object of $-2$-class $w$ and $\Phi:\mathcal{K}u(Y)\rightarrow\mathcal{K}u(X)$ the equivalence of Kuznetsov components. Then the closed points of Bridgeland moduli spaces $\mathcal{M}_{\sigma}(w)(k)$ and $\mathcal{M}_{\sigma'}(\phi(w))(k)$ are bijective, where $\sigma$ and $\sigma'$ are $\tau$-invariant stability conditions in $\mathcal{K}u(Y)$ and $\mathcal{K}u(X)$ respectively. 
\end{theorem}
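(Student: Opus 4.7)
The plan is to transport the stability condition $\sigma$ on $\mathcal{K}u(Y)$ to $\mathcal{K}u(X)$ via $\Phi$, and then invoke the uniqueness theorem for Serre-invariant stability conditions to identify it with $\sigma'$ up to the natural group actions. First, I would define the pushforward stability condition $\Phi_\ast\sigma := (\Phi(\mathcal{P}_\sigma),\, Z_\sigma\circ \Phi_\ast^{-1})$ on $\mathcal{K}u(X)$, where $\Phi_\ast$ is the induced isomorphism on numerical Grothendieck groups. Because any exact equivalence of triangulated categories intertwines Serre functors, i.e.\ $\Phi\circ S_{\mathcal{K}u(Y)}\cong S_{\mathcal{K}u(X)}\circ \Phi$, Serre-invariance of $\sigma$ immediately transfers to Serre-invariance of $\Phi_\ast\sigma$.

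Next, I would apply Theorem~\ref{all_in_one_orbit}, which asserts that every Serre-invariant stability condition on $\mathcal{K}u(X)$ lies in the single $\widetilde{\mathrm{GL}}^+_2(\mathbb{R})$-orbit containing $\sigma'$. Thus there exists $\tilde{g}\in\widetilde{\mathrm{GL}}^+_2(\mathbb{R})$ with $\Phi_\ast\sigma = \sigma'\cdot\tilde{g}$. Since the right action of $\widetilde{\mathrm{GL}}^+_2(\mathbb{R})$ merely relabels phases without altering the collections of (semi)stable objects, an object of $\mathcal{K}u(X)$ is $\Phi_\ast\sigma$-stable if and only if it is $\sigma'$-stable.

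Third, by the very definition of $\Phi_\ast\sigma$, an object $E\in\mathcal{K}u(Y)$ is $\sigma$-stable if and only if $\Phi(E)\in\mathcal{K}u(X)$ is $\Phi_\ast\sigma$-stable, and clearly $[\Phi(E)]=\phi(w)$ when $[E]=w$. Combined with the previous step, this produces a map on closed points
\[
h: \mathcal{M}_\sigma(\mathcal{K}u(Y),w)(k)\longrightarrow \mathcal{M}_{\sigma'}(\mathcal{K}u(X),\phi(w))(k),\qquad [E]\mapsto [\Phi(E)].
\]
The analogous construction for $\Phi^{-1}$ provides a two-sided inverse, giving the desired bijection.

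The main technical point is the book-keeping of shifts: the pushforward heart $\Phi(\mathcal{A}_\sigma)$ might a priori differ from $\mathcal{A}_{\sigma'}$ by a shift, and the class $\phi(w)$ is only canonical up to sign. The uniqueness theorem handles this, because the element $\tilde{g}$ producing $\Phi_\ast\sigma=\sigma'\cdot\tilde{g}$ can be chosen so that $\phi(w)$ is unambiguously a $(-2)$-class of the type declared in the statement (a consistent shift can be absorbed into $\tilde{g}$, which is exactly the mechanism used in the argument preceding the theorem for comparing the two $(-2)$-classes $w$ and $2v-w$ via Lemma~\ref{lemma_rotation_functor_preserves_stability}). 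With this convention in place, no further obstacle arises.
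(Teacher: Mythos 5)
Your argument is correct, but it follows a genuinely different route from the paper's own proof of Theorem~\ref{Theorem_bijective_closedpoints}. You transport the stability condition: $\Phi_*\sigma$ is again Serre-invariant because an equivalence intertwines the (intrinsic) Serre functors, and then the orbit-uniqueness result (Theorem~\ref{all_in_one_orbit}) forces $\Phi_*\sigma=\sigma'\cdot\tilde g$, so $\sigma$-stability of $E$ is equivalent to $\sigma'$-stability of $\Phi(E)$, giving the bijection at once; the uniform choice of shift is also automatic, since a single $\tilde g$ works for all objects. The paper instead argues pointwise on objects: from $\chi(E,E)=-2$ and $\sigma$-stability it deduces $\mathrm{ext}^1(E,E)=3$ or $4$, hence the same for $F=\Phi(E)$, and then invokes the small-$\mathrm{Ext}^1$ criteria (Lemma~\ref{lemma_ext3_inheart}, Corollary~\ref{lemma_ext3_stable}, Lemma~\ref{lemma_inheart_ext14}, Proposition~\ref{prop_stability_ext14}) to conclude that $F$ is stable for \emph{every} $\tau$-invariant stability condition on $\mathcal{K}u(X)$, with Lemma~\ref{lemma_rotation_functor_preserves_stability} handling the class bookkeeping in the inverse direction and a separate phase argument giving the uniform shift. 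Your approach buys brevity and handles arbitrary Serre-invariant $\sigma,\sigma'$ and the shift normalization in one stroke, but it leans on the full strength of the orbit-uniqueness theorem (historically the delicate ingredient, cf.\ the remark following Theorem~\ref{all_in_one_orbit}); the paper's route only needs the weaker pointwise stability criteria, which is the mechanism it reuses throughout. One small caution: in the statement $\phi(w)$ is the honest image of $w$ under the induced isometry, so for the inverse map $\Phi^{-1}(F)$ has class exactly $w$; the sign/$(2v-w)$ ambiguity you mention only enters later, in Section~\ref{subsection_proof_main_result}, when one asks which $(-2)$-class $\phi(w)$ is, and your appeal to absorbing shifts into $\tilde g$ is the right fix for the heart-versus-shift issue but is logically independent of Lemma~\ref{lemma_rotation_functor_preserves_stability}.
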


\begin{proof}
Let $E$ be a $\sigma$-stable object of class $[E]=w$ in $\mathcal{K}u(Y)$. As $\sigma$ is $\tau$-invariant, then $\Phi(\sigma)$ is also an $\tau$-invariant stability condition on $\mathcal{K}u(X)$. Since $E$ is $\sigma$-stable and $\chi(E,E)=-2$,  $\mathrm{ext}^1(E,E)=3$ or $4$. Thus $F=\Phi(E)$ is also a $(-2)$-class on $\mathcal{K}u(X)$ with $\mathrm{ext}^1(F,F)=3$ or $4$. Then by Lemma~\ref{lemma_ext3_inheart},Corollary~\ref{lemma_ext3_stable}, Proposition~\ref{prop_stability_ext14} and Lemma~\ref{lemma_inheart_ext14}. $F$ is $\sigma'$-stable for any $\tau$-invariant stability condition on $\mathcal{K}u(X)$. Thus we have shown that the map $E\mapsto\Phi(E)$ sends a point in $\mathcal{M}_{\sigma}(w)$ to a point in $\mathcal{M}_{\sigma'}(\phi(w))$ and the map
$$h: \mathcal{M}_{\sigma}(w) \rightarrow \mathcal{M}_{\sigma'}(\phi(w)) $$ is constructed. 

Conversely, Let $F$ be a $\sigma'$-stable object with class $\phi(w)$ in $\mathcal{K}u(X)$.($\chi(F,F)=-2$). Then up to sign, $\Phi^{-1}(F)$ is an object in $\mathcal{K}u(Y)$ with class $w$ or $2v-w$. By the same argument above, $E:=\Phi^{-1}(F)$ is $\sigma$-stable for every $\tau$-invariant stability condition on $\mathcal{K}u(Y)$. Then by Lemma~\ref{lemma_rotation_functor_preserves_stability}, we may assume that $E:=\Phi^{-1}(F)$ is an object with class $w$. Thus we have shown that the map $F\mapsto\Phi^{-1}(F)=E$ sends a point in 
$\mathcal{M}_{\sigma'}(\phi(w))$ to a point $\mathcal{M}_{\sigma}(w)$, which provides an inverse map 
to $h$. Then we have a bijection on closed points on the two Bridgeland moduli spaces. Note that by Corollary~\ref{lemma_ext3_inheart} and ~\ref{lemma_inheart_ext14}, for any object $E\in\mathcal{M}_{\sigma}(\mathcal{K}u(Y),w), \Phi(E)[n_E]\in\mathcal{A}'$ for some integer $n_E$, where $\mathcal{A}'$ is the heart of stability condition $\sigma'$ on $\mathcal{K}u(X)$. By similar argument in \cite[Section 5.1]{bernardara2012categorical} and \cite[Proposition 3.16]{altavilla2019moduli}, we show that the shift $n_E$ can be chosen uniformly. Let $E,F$ be different $\sigma$-stable objects of class $w$ and assume $\Phi(E)[n_E]$ has phase $\frac{1}{2}$ and $\Phi(F)[n_F]$ has phase $\phi\in\mathbb{R}$. Note that $\mathrm{Hom}(\phi(E),\Phi(F)[1])\neq 0, \mathrm{Hom}(\Phi(F),\Phi(E)[1])\neq 0$ by Euler characteristic calulations. This implies that $-\frac{1}{2}<\phi<\frac{3}{2}$, implies $\phi=\frac{1}{2}$. But this means that $n_E=n_F$. \end{proof}

 \subsection{Construct Morphisms identifying two moduli spaces} 
\label{subsection_family}
 Let $\mathcal{Y}$ and $\mathcal{C}$ be the two irreducible components of $\mathcal{M}_{\sigma}(w)$ \cite[Theorem 1.3]{altavilla2019moduli} and let $\mathcal{F}_{\mathcal{Y}}$ and $\mathcal{F}_{\mathcal{C}}$ be the universal families on $\mathcal{Y}\times X$ and $\mathcal{C}\times X$ respectively. To construct a morphism $\gamma: \mathcal{M}_{\sigma}(\mathcal{K}u(Y),w)\rightarrow\mathcal{M}_{\sigma'}(\mathcal{K}u(X),\phi(w)=s)$, one needs to construct morphisms $\gamma_1:\mathcal{Y}\rightarrow\mathcal{M}_{\sigma'}(s)$ and $\gamma_2:\mathcal{C}\rightarrow\mathcal{M}_{\sigma'}(s)$ respectively. It is sufficient to construct families of objects in $D^b(X)$ parametrised by $\mathcal{Y}$ and $\mathcal{C}$, respectively. If $\Phi:\mathcal{K}u(Y)\simeq\mathcal{K}u(X)$ is a Fourier-Mukai type equivalence, i.e. the composition $$D^b(Y)\xrightarrow{\mathrm{pr}}\mathcal{K}u(Y)\xrightarrow{\Phi}\mathcal{K}u(X)\xrightarrow{e} D^b(X)$$ is a Fourier-Mukai transform(still denoted by $\Phi$). Then $\Phi\cong\mathrm{\Phi}_{\mathcal{J}}$ for some integral kernel on $D^b(X\times Y)$. Following \cite[Section 3.32]{altavilla2019moduli}, we define
$$\mathrm{\Phi}_{\mathcal{J}}\times\mathrm{id}_{\mathcal{Y}}:=\mathrm{\Phi}_{\mathcal{J}\boxtimes\mathcal{O}_{\Delta_{\mathcal{Y}}}}:D^b(Y\times\mathcal{Y})\rightarrow D^b(X\times\mathcal{Y})$$ and $\mathrm{\Phi}_{\mathcal{J}\boxtimes\mathcal{O}_{\Delta_{\mathcal{Y}}}}(\mathcal{\mathcal{F}_{\mathcal{Y}}})$ is a family of objects of $D^b(X)$ parametrised by $\mathcal{Y}$. Similarly, define $$\mathrm{\Phi}_{\mathcal{J}}\times\mathrm{id}_{\mathcal{C}}:=\mathrm{\Phi}_{\mathcal{J}\boxtimes\mathcal{O}_{\Delta_{\mathcal{C}}}}:D^b(Y\times\mathcal{C})\rightarrow D^b(X\times\mathcal{C})$$ and $\mathrm{\Phi}_{\mathcal{J}\boxtimes\mathcal{O}_{\Delta_{\mathcal{C}}}}(\mathcal{\mathcal{F}_{\mathcal{C}}})$ is a family of objects of $D^b(X)$ parametrised by $\mathcal{C}$. The constructions give morphisms: $\gamma_1:\mathcal{Y}\rightarrow\mathcal{M}_{\sigma'}(s)$ and $\gamma_2:\mathcal{C}\rightarrow\mathcal{M}_{\sigma'}(s)$ and by the same argument in  Lemma~\ref{lemma_compatibility_stableobject}, one sees that the image $\gamma_1(p)\in\mathcal{M}_{\sigma'}(s)$ of point  $p\in\mathcal{Y}$ is given by $\sigma'$-stable object $\Phi(E_p)$ and $\gamma_2(p)\in\mathcal{M}_{\sigma'}(s)$ of point $p\in\mathcal{C}$ is given by $\sigma'$-stable object $\Phi(E'_p)$, where $E_p=i_p^*\mathcal{F}_{\mathcal{Y}}, p\in\mathcal{Y}$ and $E_p'=i_p^*\mathcal{F}_{\mathcal{C}}, p\in\mathcal{C}$. This defines a morphism $\gamma:\mathcal{M}_{\sigma}(w)\rightarrow\mathcal{M}_{\sigma'}(s)$ via  $$\gamma(p)=\begin{cases}\gamma_1(p), p\in\mathcal{Y}\\
\gamma_2(p), p\in\mathcal{C}\end{cases}$$. 

If the equivalence $\Phi$ is not of Fourier-Mukai type, then we can construct families of objects $\mathcal{Q}_1$ and $\mathcal{Q}_2$ correspondent to $\mathcal{F}_{\mathcal{Y}}$ and $\mathcal{F}_{\mathcal{C}}$ by hand,  following the same pattern as in \cite[Section 3.32]{altavilla2019moduli} and \cite[Section 5.2]{BMMS} since $\mathcal{Y}$ and $\mathcal{C}$ are both irreducible projective varieties. We omit the details here. 

\subsection{Upshot}
\label{subsection_upshot}
Either by Fourier-Mukai functor or convolution techniques described in \cite[Section 3.32]{altavilla2019moduli} and \cite[Section 5.2]{BMMS}, the morphisms $\gamma_1,\gamma_2$ are projective morphisms from $\mathcal{Y}$ or $\mathcal{C}$ to $\mathcal{M}_{\sigma'}(s)$. Together with $\gamma:\mathcal{M}_{\sigma}(w)\rightarrow\mathcal{M}_{\sigma'}(s)$, they are induced by the conjectural equivalence of categories $\Phi:\mathcal{K}u(Y)\simeq\mathcal{K}u(X)$. Then by Theorem~\ref{Theorem_bijective_closedpoints}, $\gamma$ is a bijective map between the closed points of $\mathcal{M}_{\sigma}(w)$ and $\mathcal{M}_{\sigma'}(s)$. Thus there must be one of morphisms $\gamma_1$ and $\gamma_2$ factoring through $\mathcal{Z}\subset\mathcal{M}_{\sigma'}(s)$(or $\mathcal{X}\subset\mathcal{M}_{\sigma'}(s)$ if $X$ is not general). Note that for each point $p\in\mathcal{M}_{\sigma}(w)$, the differential of $\gamma:T_p\mathcal{M}_{\sigma}(w)\rightarrow T_{\gamma(p)}\mathcal{M}_{\sigma'}(s)$ is an isomorphism since $d_{\gamma_p}$ is given by $$\mathrm{d}\Phi:\mathrm{Ext}^1(E_p,E_p)\rightarrow\mathrm{Ext}^1(\Phi(E_p),\Phi(E_p))$$ and $\Phi$ is equivalence, hence fully faithful. This means that $\gamma$ is etale and $\gamma_1$ and $\gamma_2$ are both projective dominant. \begin{itemize}
    \item If $\gamma_1:\mathcal{Y}\rightarrow\mathcal{M}_{\sigma'}(s)$ factors through $\mathcal{Z}$(or $\mathcal{X}$), then $\mathrm{im}(\gamma_1)=\mathcal{Z}$(or $\mathrm{im}(\gamma_1)=\mathcal{X}$), thus at each point $p\in\mathcal{Y}$, tangent spaces at $p$ and its image $\gamma_1(p)$ will be isomorphic, but this is impossible since dimension of tangent spaces of points in $\mathcal{Y}$ are 4 at a two dimensional locus \cite[Lemma 3.4]{altavilla2019moduli}. It is a K3 surface $\mathcal{R}$ \cite[Theorem 1.3]{altavilla2019moduli}, while dimension of tangent spaces is 3 everywhere at $\mathcal{Z}$  (dimension of tangent spaces is 4 only at finitely many points of $\mathcal{X}$ if $X$ is non general) by Theorem~\ref{theorem_main1}.
    \item If $\gamma_2:\mathcal{C}\rightarrow\mathcal{M}_{\sigma'}(s)$ factors through $\mathcal{Z}$ (or $\mathcal{X}$), it is also impossible by the same reason as above.
\end{itemize}

\subsection{Proof of Theorem~\ref{theorem_main}}
\label{subsection_proof_main_result}
We start with a lemma:
\begin{lemma}
\label{lemma_isometry}
Assume that there is an equivalence $\Phi:\mathcal{K}u(Y)\rightarrow\mathcal{K}u(X)$ and let $\mathcal{N}(\mathcal{K}u(Y))=\langle v=1-L, w=H-L-\frac{2}{3}P\rangle$. Then up to sign, $$\phi\langle w, v\rangle=\begin{cases}
\langle  3s-2t, s-t\rangle\\
\langle  3s-2t, 2s-t\rangle\\
\langle  s,     s-t\rangle\\
\langle  s,     2s-t\rangle
\end{cases},$$
where $\phi:\mathcal{N}(\mathcal{K}u(Y))\rightarrow \mathcal{N}(\mathcal{K}u(X))$ is the isometry induced by $\Phi$. 
\end{lemma}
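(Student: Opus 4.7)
The strategy is purely representation-theoretic: an exact equivalence $\Phi\colon\mathcal{K}u(Y)\simeq\mathcal{K}u(X)$ induces an isometry $\phi\colon\mathcal{N}(\mathcal{K}u(Y))\xrightarrow{\sim}\mathcal{N}(\mathcal{K}u(X))$ with respect to the Euler form. Since $\chi(w,w)=-2$ and $\chi(v,v)=-1$, the images $\phi(w)$ and $\phi(v)$ must be a $(-2)$-class and a $(-1)$-class in $\mathcal{N}(\mathcal{K}u(X))$, respectively. The plan is therefore: (i) enumerate the $(-r)$-classes in $\mathcal{N}(\mathcal{K}u(X))$ for $r=1,2$; (ii) use the off-diagonal Euler pairings $\chi(w,v)$ and $\chi(v,w)$ computed on $Y$ to cut the combinatorial list down to the four cases in the statement.

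First I would carry out step (i). Writing a general class as $as+bt\in\mathcal{N}(\mathcal{K}u(X))$ with $a,b\in\mathbb{Z}$, we already have the quadratic form $-\chi(as+bt,as+bt)=2a^2+6ab+5b^2$ (as recorded in Proposition~\ref{prop_heart_GM}). Solving $2a^2+6ab+5b^2=2$ over $\mathbb{Z}$ reduces modulo small primes to finitely many cases and gives, up to sign, only $(a,b)=(1,0)$ and $(a,b)=(3,-2)$; hence the $(-2)$-classes are $\pm s$ and $\pm(3s-2t)$. Similarly, $2a^2+6ab+5b^2=1$ admits only the integer solutions $(a,b)=\pm(1,-1),\pm(2,-1)$, so the $(-1)$-classes are $\pm(s-t)$ and $\pm(2s-t)$.

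For step (ii) I would compute the Euler pairings $\chi(w,v),\chi(v,w)$ on the quartic double solid side by Hirzebruch--Riemann--Roch (using $v=1-L$, $w=H-L-\tfrac{2}{3}P$), and record the (rank two) Euler matrix on $\mathcal{N}(\mathcal{K}u(Y))$ in the basis $(w,v)$. On the $X$ side, the Euler form is the known bilinear form on $\mathcal{N}(\mathcal{K}u(X))=\langle s,t\rangle$ (of which $-\chi$ is the Gram matrix computed above together with $\chi(s,t)$ and $\chi(t,s)$ obtained by polarization). Then, taking $\phi(w)\in\{\pm s,\pm(3s-2t)\}$ and $\phi(v)\in\{\pm(s-t),\pm(2s-t)\}$, the two compatibility equations
\begin{equation*}
\chi(\phi(w),\phi(v))=\chi(w,v),\qquad \chi(\phi(v),\phi(w))=\chi(v,w)
\end{equation*}
become a pair of integer linear equations in the unknown signs, which after absorbing a global sign of the pair leave exactly the four combinations listed.

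The only genuine work is bookkeeping: fixing a consistent sign convention (I would fix $\phi(w)$ to be either $s$ or $3s-2t$, letting the global sign act at the end), writing out the two linear equations, and checking that each of the four listed pairs actually satisfies them while the remaining combinations violate at least one. I expect no conceptual obstacle; the mild subtlety is that the Euler form on $\mathcal{K}u(X)$ is not symmetric, so both $\chi(w,v)$ and $\chi(v,w)$ must be matched separately --- it is this asymmetry that rules out the ``wrong'' pairings among the sixteen sign-choices.
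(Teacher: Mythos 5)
Your proposal is correct and its essential content coincides with the paper's proof: the paper simply records the Euler matrices in the bases $(v,w)$ and $(s,t)$, solves $2a^2+6ab+5b^2=2$ (and, implicitly, $=1$) to get $\phi(w)\in\{\pm s,\pm(3s-2t)\}$ and $\phi(v)\in\{\pm(s-t),\pm(2s-t)\}$, and concludes, since the four listed pairs are exactly all combinations of these classes up to sign. Your step (ii) is therefore superfluous, and its rationale is slightly off: the numerical Euler forms here are symmetric (the Serre functors are $\tau\circ[2]$ with $\tau$ acting trivially on numerical classes, and indeed both Gram matrices $\begin{psmallmatrix}-1&-1\\-1&-2\end{psmallmatrix}$ and $\begin{psmallmatrix}-2&-3\\-3&-5\end{psmallmatrix}$ are symmetric), and matching $\chi(w,v)=\chi(v,w)=-1$ eliminates none of the four class combinations --- a direct check gives $\chi(s,s-t)=1$ and $\chi=-1$ in the other three cases, so the off-diagonal condition only pins down the relative sign of $\phi(w)$ and $\phi(v)$ in each case (which is why the lemma is stated only ``up to sign''). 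Since adding constraints can only shrink the list, your argument still lands inside the stated four cases, so the lemma is proved either way.
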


\begin{proof}
Assume that there is an equivalence $\Phi: \mathcal{K}u(Y)\cong\mathcal{K}u(X)$. Since $\Phi$ preserves the hom-space, it induces the isometry between $\mathcal{N}(\mathcal{K}u(Y)$ and $\mathcal{N}(\mathcal{K}u(X))$ with respect to the Euler form, denoted by $\phi$: $$\chi_{\mathcal{K}u(X)}(\phi(u),\phi(v))=\chi_{\mathcal{K}u(Y)}(u,v)$$ for all $u,v\in \mathcal{N}(\mathcal{K}u(Y))$. Let $K_0(\mathcal{K}u(Y_2))=\langle v=1-L,w=H-L-\frac{2}{3}P\rangle$. The Euler form with respect to the basis vectors is 
$$\chi_{\mathcal{K}u(Y_2)}=\begin{bmatrix}
   -1      & -1  \\
   -1      & -2\\
    \end{bmatrix} $$
    
Then $\langle\phi(w),\phi(w)\rangle_X=\langle w, w\rangle_Y=-2$, Since $\mathcal{N}(\mathcal{K}u(X))$ is generated by $s=1-3L+\frac{1}{2}P=[I_C], t=H-6L+\frac{1}{6}P$. We may assume that $\phi(w)=as+bt$, then $\langle\phi(w),\phi(w)\rangle_X=\langle as+bt, as+bt\rangle=-2$. It is known that the Euler form with repsect to the basis vectors $s$ and $t$ is 
$$\chi_{\mathcal{K}u(X)}=\begin{bmatrix}
   -2      & -3  \\
   -3      & -5\\
    \end{bmatrix} $$ 
Then we have $2a^2+6ab+5b^2=2$. Then the desired result follows. 
\end{proof}

Now, we start to prove Theorem~\ref{theorem_main}.

\begin{proof}[Proof of Theorem~\ref{theorem_main}]
Assume that there is an equivalence $\Phi: \mathcal{K}u(Y)\simeq\mathcal{K}u(X)$, then by Lemma~\ref{lemma_isometry} we have 
$$\phi\langle w, v\rangle=\begin{cases}
\langle 3s-2t, s-t\rangle\\
\langle 3s-2t, 2s-t\rangle\\
\langle s, s-t\rangle\\
\langle s, 2s-t\rangle
\end{cases}$$

If $\phi(w)=s([I_C]=s)$, then by Section~\ref{subsection_upshot}, it is impossibe. Then we must have $\phi(w)=3s-2t$. Then we have two cases: \begin{itemize}
    \item If $\phi(v)=2s-t$, then $\phi\langle w,v\rangle=\langle 3s-2t, 2s-t\rangle$. This means that $\phi(2v-w)=s$. By Lemma~\ref{lemma_rotation_functor_preserves_stability}, $\mathcal{M}_{\sigma}(w)\cong\mathcal{M}_{\sigma}(2v-w)$. It reduces to the previous case, which is again impossible. 
     \item if $\phi(v)=s-t$. Then $\phi(2v-w)=-s$. But $\mathcal{M}_{\sigma'}(s)\cong\mathcal{M}_{\sigma'}(-s)$ and $\mathcal{M}_{\sigma}(w)\cong\mathcal{M}_{\sigma}(2v-w)$. Then it again reduces to the first case, another contradiction arises. 
     \end{itemize} 
Therefore, $\phi$ does not exist, this means that the hypothetical equivalence $\Phi$ does not exist. Thus $\mathcal{K}u(Y)$ and $\mathcal{K}u(X)$ are not equivalent. This completes the proof. 
\end{proof}

\section{Kuznetsov components of quartic double solids and ordinary Gushel-Mukai threefolds}
\label{section_maintheorem_for_OGM}
In this section, we prove the main result for smooth ordinary Gushel-Mukai threefolds. 
\begin{theorem}
\label{Theorem_main_OGM}
 Let $Y$ be a quartic double solid and $X'$ be an  ordinary Gushel-Mukai threefold. Then $\mathcal{K}u(Y)\not\simeq\mathcal{K}u(X')$
\end{theorem}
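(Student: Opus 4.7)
The strategy parallels Section~\ref{section7}, but now replaces the $(-2)$-class computation by a $(-1)$-class computation, and replaces the dimension obstruction by a birational-geometric one. Assume for contradiction that there exists an exact equivalence $\Phi: \Ku(Y) \xrightarrow{\sim} \Ku(X')$; by Lemma~\ref{lemma_equivalence_twoKuz} we may compose with $\Xi$ and work instead with $\Phi: \Ku(Y) \xrightarrow{\sim} \cA_{X'}$. Fix Serre-invariant stability conditions $\sigma$ on $\Ku(Y)$ and $\sigma'$ on $\cA_{X'}$; by Theorem~\ref{all_in_one_orbit} any two such lie in the same $\widetilde{\mathrm{GL}}_2^+(\R)$-orbit, so after composing $\Phi$ with such an element we may assume $\Phi(\sigma) = \sigma'$.

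First, I would analyze the induced isometry $\phi: \mathcal{N}(\Ku(Y)) \to \mathcal{N}(\cA_{X'})$. With respect to the bases $\{v,w\}$ and $\{x,y\}$ from Lemma~\ref{lemma_Grothendieckgroup_kuz}, the Euler forms are $\left(\begin{smallmatrix}-1 & -1 \\ -1 & -2\end{smallmatrix}\right)$ and $\left(\begin{smallmatrix}-1 & -2 \\ -2 & -5\end{smallmatrix}\right)$. Up to sign, the only $(-1)$-classes are $v, v-w$ on the one side and $x, 2x-y$ on the other. The rotation functor $\Psi = \bL_{\O_Y}(-\otimes \O_Y(H))$ acts on $\mathcal{N}(\Ku(Y))$ by $v \mapsto v-w$ and preserves $\sigma$-stability (Lemma~\ref{lemma_rotation_functor_preserves_stability}); an analogous rotation auto-equivalence on $\cA_{X'}$ swaps the classes $x$ and $2x-y$. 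After composing $\Phi$ with these auto-equivalences and, if needed, a shift, I reduce to the case $\phi(v) = x$.

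Next, I would construct the morphism $\gamma: \mathcal{M}_\sigma(\Ku(Y), v) \to \mathcal{M}_{\sigma'}(\cA_{X'}, x)$ exactly as in Section~\ref{subsection_family}: either through a Fourier--Mukai kernel for $\Phi$, or via the universal-family convolution technique of \cite[Section 5.2]{BMMS}. Because $\Phi$ is an equivalence, at each $E$ the differential $d\gamma_E: \Ext^1(E,E) \xrightarrow{\sim} \Ext^1(\Phi E, \Phi E)$ is an isomorphism, and the map on closed points is bijective (the $(-1)$-class analogue of Theorem~\ref{Theorem_bijective_closedpoints}, using Corollary~\ref{prop_stability_ideal_conic} and the standard fact that a $\sigma$-stable object of a $(-1)$-class is $\sigma'$-stable for every Serre-invariant stability condition). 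Since $\mathcal{M}_\sigma(\Ku(Y), v) \cong \Sigma(Y)$ by \cite{pertusi2020some} is an irreducible smooth projective surface, and $\gamma$ is etale and proper, its image is an open-and-closed, hence entire, two-dimensional irreducible component of $\mathcal{M}_{\sigma'}(\cA_{X'}, x)$. By Theorem~\ref{theorem_modulispace_OGM}(1), the component $\mathcal{S} = p(\mathcal{C}(X')) \cong \mathcal{C}_m(X')$ is a two-dimensional irreducible component of $\mathcal{M}_{\sigma'}(\cA_{X'}, x)$; the same reasoning applied to $\Phi^{-1}$ and Theorem~\ref{Theorem_bijective_closedpoints}-style bijectivity shows that $\gamma$ must restrict to an isomorphism $\Sigma(Y) \xrightarrow{\sim} \mathcal{S} \cong \mathcal{C}_m(X')$.

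The contradiction then comes from \cite{logachev2012fano}: for a generic ordinary Gushel--Mukai threefold $X'$, the minimal model $\mathcal{C}_m(X')$ of the Fano surface of conics is \emph{not} isomorphic to the Fano surface of lines $\Sigma(Y)$ on any quartic double solid $Y$ (they are distinguished by their Albanese varieties / period maps). This contradicts the isomorphism $\Sigma(Y) \cong \mathcal{C}_m(X')$ produced in the previous step, so the hypothetical equivalence $\Phi$ cannot exist. The main obstacle I expect is verifying that $\gamma$ lands \emph{exactly} in the component $\mathcal{S}$ rather than in some other two-dimensional component of $\mathcal{M}_{\sigma'}(\cA_{X'}, x)$; this requires either describing all components of $\mathcal{M}_{\sigma'}(\cA_{X'}, x)$ (e.g.\ by deforming stable objects and tracking wall-crossing), or an intrinsic argument matching the irreducibility and connectedness of $\Sigma(Y)$ with that of the component of $\mathcal{M}_{\sigma'}(\cA_{X'}, x)$ containing $\gamma(\Sigma(Y))$. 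The non-generic case is handled by specialization, using Theorem~\ref{theorem_modulispace_OGM}(2) together with the singularity invariants of $\mathcal{M}_{\sigma'}$ being preserved by an equivalence.
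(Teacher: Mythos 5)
Your proposal is correct and follows essentially the same route as the paper: reduce via the numerical isometry and the rotation functor on $\mathcal{K}u(Y)$ to the case $\phi(v)=x$, transport $\Sigma(Y)\cong\mathcal{M}_{\sigma}(\mathcal{K}u(Y),v)$ by a Fourier--Mukai/convolution-induced morphism $\gamma$ into the component $\mathcal{S}=p(\mathcal{C}(X'))\cong\mathcal{C}_m(X')$ of $\mathcal{M}_{\sigma'}(\mathcal{A}_{X'},x)$ using stability of $(-1)$-class objects for all Serre-invariant stability conditions, and contradict Logachev's non-isomorphism $\Sigma(Y)\not\cong\mathcal{C}_m(X')$. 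The only cosmetic difference is your unproven (and unnecessary) appeal to a rotation auto-equivalence on $\mathcal{A}_{X'}$ swapping $x$ and $2x-y$; the paper, like the rest of your argument, gets by with the $\mathcal{K}u(Y)$-side identification $\mathcal{M}_{\sigma}(v)\cong\mathcal{M}_{\sigma}(v-w)$ alone, and it settles the component question and the non-general cases exactly as you sketch.
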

\begin{theorem}
\label{theorem_bijective_closedpoints_conics}
Let $E\in\mathcal{K}u(Y)$ be a $\sigma$-stable object of $-1$-class $v$ and $\widetilde{\Phi}:\mathcal{K}u(Y_2)\simeq\mathcal{A}_{X'}$ be an equivalence of the Kuznetsov components. Then the closed points of Bridgeland moduli spaces $\mathcal{M}_{\sigma}(v)(k)$ and $\mathcal{M}_{\sigma}(\sigma')(k)$ are bijective, where $\sigma$ and $\sigma'$ are $\tau$-invariant stability conditions in $\mathcal{K}u(Y)$ and $\mathcal{A}_{X'}$ respectively. 
\end{theorem}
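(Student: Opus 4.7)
The plan is to mirror the strategy of Theorem~\ref{Theorem_bijective_closedpoints}, but adapted to $(-1)$-classes and the Kuznetsov component $\mathcal{A}_{X'}$ of an ordinary Gushel-Mukai threefold. Fix a Serre-invariant stability condition $\sigma$ on $\mathcal{K}u(Y)$ and transport it under $\widetilde{\Phi}$ to get a Serre-invariant stability condition $\widetilde{\Phi}(\sigma)$ on $\mathcal{A}_{X'}$; by Theorem~\ref{all_in_one_orbit} this lies in the unique $\widetilde{\mathrm{GL}}^+_2(\mathbb{R})$-orbit of Serre-invariant stability conditions, so (after acting by an element of the group) we may assume it agrees with our fixed $\sigma'$. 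Let $\phi:\mathcal{N}(\mathcal{K}u(Y))\to\mathcal{N}(\mathcal{A}_{X'})$ be the isometry induced by $\widetilde{\Phi}$; since the $(-1)$-classes of $\mathcal{A}_{X'}$ are $\pm x$ and $\pm(2x-y)$ by Lemma~\ref{lemma_Grothendieckgroup_kuz}, and since rotation on $\mathcal{K}u(Y)$ intertwines $v$ and $v-w$ (Lemma~\ref{lemma_rotation_functor_preserves_stability}), we may harmlessly restrict attention to a fixed target class, say $\phi(v)$.

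Next I would check that $\widetilde{\Phi}$ sends $\sigma$-stable objects of class $v$ to $\sigma'$-stable objects of class $\phi(v)$. If $E$ is $\sigma$-stable with $[E]=v$, then $\chi(E,E)=-1$, and Serre-invariance together with $\tau$-invariance force $\mathrm{ext}^1(E,E)\in\{2,3\}$, hence the image $F:=\widetilde{\Phi}(E)$ is a $(-1)$-class object in $\mathcal{A}_{X'}$ with the same small Ext-groups. By the analogues of Lemma~\ref{lemma_ext3_inheart} and Lemma~\ref{lemma_inheart_ext14} for $\mathcal{A}_{X'}$ (whose heart also has homological dimension $2$, as its Serre functor is $\tau\circ[2]$ for the GM involution), $F[n_E]$ lies in the heart $\mathcal{A}'$ for a suitable shift $n_E$; then the arguments of Corollary~\ref{lemma_ext3_stable} and Proposition~\ref{prop_stability_ext14}, applied to $\sigma'$, show that $F[n_E]$ is $\sigma'$-stable. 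Running the symmetric argument with $\widetilde{\Phi}^{-1}$ gives the inverse map on closed points.

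To promote this to a genuine bijection $h:\mathcal{M}_\sigma(v)(k)\to\mathcal{M}_{\sigma'}(\phi(v))(k)$, I need to show that the shift $n_E$ may be chosen uniformly. This is the one place that requires a little care: following the argument at the end of the proof of Theorem~\ref{Theorem_bijective_closedpoints}, I pick two $\sigma$-stable objects $E,E'$ of class $v$, arrange by a fixed shift that $\widetilde{\Phi}(E)[n_E]$ has phase $\tfrac{1}{2}$ with respect to $\sigma'$, and then use that the Euler pairing forces both $\mathrm{Hom}(\widetilde{\Phi}(E),\widetilde{\Phi}(E')[1])$ and $\mathrm{Hom}(\widetilde{\Phi}(E'),\widetilde{\Phi}(E)[1])$ to be nonzero; this confines the phase of $\widetilde{\Phi}(E')[n_E]$ to the open interval $(-\tfrac{1}{2},\tfrac{3}{2})$ and thus forces $n_{E'}=n_E$.

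The main obstacle I anticipate is verifying the ``small-Ext stability'' toolkit for $\mathcal{A}_{X'}$ in the $(-1)$-case: one must know that the heart of every Serre-invariant $\sigma'$ has homological dimension $\le 2$, that a weak Mukai lemma holds, and that every nonzero object has $\mathrm{ext}^1\ge 2$. These are all established in Section~4.3 (see Proposition~\ref{prop_heart_GM} and its analogue, together with Corollary~\ref{corollary_uniqueness_elementary}), so the proof reduces to bookkeeping once the uniqueness of Serre-invariant stability conditions in Theorem~\ref{all_in_one_orbit} is invoked.
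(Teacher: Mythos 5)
Your proposal is correct and follows essentially the same route as the paper, which proves this statement by repeating the argument of Theorem~\ref{Theorem_bijective_closedpoints} with the $(-1)$-class stability facts ($\mathrm{ext}^1=2$ or $3$ forces membership in the heart and $\sigma'$-stability via the weak Mukai lemma) and the rotation identification $\mathcal{M}_{\sigma}(\mathcal{K}u(Y),v)\cong\mathcal{M}_{\sigma}(\mathcal{K}u(Y),v-w)$ to fix the target class, plus the same uniform-shift phase argument. You simply spell out details the paper leaves to the reader (and your appeal to Theorem~\ref{all_in_one_orbit} and to the $\mathrm{ext}^1=4$ machinery of Proposition~\ref{prop_stability_ext14} is harmless but not needed here).
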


\begin{proof}
The argument is almost the same as Theorem~\ref{Theorem_bijective_closedpoints}. The only difference is that here we use the fact the object $E\in\mathcal{K}u(Y)$ or $\mathcal{A}_{X'}$ with $\mathrm{ext}^1(E,E)=2$ or $3$ is $\sigma$-stable(or $\sigma'$-stable) for every $\tau$(or $\tau'$)-invariant stabilty conditions on $\mathcal{K}u(Y)$(or $\mathcal{A}_{X'}$), which can be shown very easily by similar arguments in Lemma~\ref{lemma_ext3_stable}. Also in this case, we make use of the fact that $\mathcal{M}_{\sigma}(\mathcal{K}u(Y),v)\cong\mathcal{M}_{\sigma}(\mathcal{K}u(Y), v-w)$. The details are left to interested reader. 
\end{proof}

We collect some classical facts on Fano surface of lines on quartic double solids and Fano surface of conics on ordinary Gushel-Mukai threefold in the following proposition.
\begin{predl}
\label{prop_lines_Y_2}
Let $Y$ be a smooth quartic double solid with branch locus $W\subset\mathbb{P}^3$. 
\begin{enumerate}
    \item The Hilbert scheme $\Sigma(Y)$ of lines is a smooth irreducible projective surface if $W$ does not contain lines while it is singular if $W$ contains lines and the singular locus of $\Sigma(Y)$ consists of finitely many points correspond to finitely many lines in $W$. \cite[Theorem 2.1]{Tihomirov1981Fanosurface} 
    \item The Bridgeland moduli space of $\sigma$-stable objects $\mathcal{M}_{\sigma}(\mathcal{K}u(Y),v=1-L)$ in $\mathcal{K}u(Y)$ is isomorphic to $\Sigma(Y)$. \cite[Theorem 1.1]{pertusi2020some}
    \item The Hilbert scheme $\Sigma(Y)$ of lines is not isomorphic to the minimal model of Fano surface $\mathcal{C}_m(X')$ of conics on $X'$. \cite[Section 3]{logachev2012fano}. 
\end{enumerate}
\end{predl}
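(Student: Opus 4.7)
The plan is to treat the three assertions separately. Parts (1) and (2) are essentially deformation-theoretic / moduli-theoretic and follow by specializing the general machinery already set up in the paper, while (3) is a birational-geometric comparison of two explicit surfaces and is the main obstacle.

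For (1), I would describe $\Sigma(Y)$ via the double cover $\pi\colon Y\to\mathbb{P}^3$. Because $\deg(L)=1$, every line $L\subset Y$ must map isomorphically onto a line $\ell\subset\mathbb{P}^3$ such that $\pi^{-1}(\ell)$ splits as $L\cup\tau(L)$; this happens exactly when the quartic $W$ restricts to $\ell$ as a divisor of type $(2,2)$ or $(3,1)$. The tangent space at $[L]$ is $H^0(L,\mathcal{N}_{L/Y})$, which I would compute from the normal sequence
\[
0\to\mathcal{N}_{L/Y}\to\pi^*\mathcal{N}_{\ell/\mathbb{P}^3}\to\pi^*\mathcal{O}_\ell(2)|_L\otimes\mathcal{O}_L(\text{ramification})\to 0,
\]
together with the splitting types of $\mathcal{N}_{L/Y}$ allowed by Grothendieck's theorem on $\mathbb{P}^1$. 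A short case analysis shows $h^0(\mathcal{N}_{L/Y})=2$ whenever $L\not\subset\pi^{-1}(W)$, yielding smooth points of the expected dimension, and $h^0(\mathcal{N}_{L/Y})>2$ precisely when $\ell\subset W$, giving the singular points. Smoothness of the quartic K3 surface $W$ then forces the set of lines in $W$ (and hence $\operatorname{Sing}\Sigma(Y)$) to be finite.

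For (2), I would construct a morphism $\Sigma(Y)\to\mathcal{M}_{\sigma}(\Ku(Y),v)$ by applying the projection functor $\mathrm{pr}\colon D^b(Y)\to\Ku(Y)$ to the universal ideal sheaf on $\Sigma(Y)\times Y$, as in the construction of Section~\ref{subsection_family}. The points to verify, each by a direct cohomology calculation, are: (i) $\mathrm{pr}(I_L)\in\Ku(Y)$; (ii) $\mathrm{pr}(I_L)$ is $\sigma$-stable for every Serre-invariant $\sigma$, using the small-$\mathrm{ext}^1$ criteria of Corollary~\ref{lemma_ext3_stable} combined with Theorem~\ref{all_in_one_orbit}; (iii) injectivity on closed points, from the fact that $\Phi_L\mapsto\mathrm{pr}(I_L)$ distinguishes non-isomorphic ideal sheaves; and (iv) surjectivity, by classifying $\sigma$-stable objects of class $v$ in the heart---any such object can be shown to be isomorphic to $\mathrm{pr}(I_L)$ for some line $L$ by running the wall-crossing/wall-crossing-free arguments of \cite{pertusi2020some}. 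Finally, étaleness follows from comparing $\mathrm{Ext}^1(I_L,I_L)$ with $\mathrm{Ext}^1(\mathrm{pr}(I_L),\mathrm{pr}(I_L))$; both are two-dimensional at smooth points, and the tangent map is induced by the equivalence $\mathrm{pr}\big|_{I_L^\perp\text{-complement}}$, hence is an isomorphism.

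For (3) I would argue by comparing birational invariants. From Tihomirov's description one extracts the holomorphic invariants of $\Sigma(Y)$ (irregularity, geometric genus, $K^2$), and from Logachev's analysis of the conic fibrations on $X'$ one obtains the corresponding invariants of $\mathcal{C}(X')$ together with the identification of the unique $(-1)$-curve contracted to obtain $\mathcal{C}_m(X')$. The cleanest route is to compare the Albanese morphisms: $\Sigma(Y)$ sits naturally inside (and in fact generates) the intermediate Jacobian $J(Y)$ via the Abel--Jacobi map, and a parallel statement holds for $\mathcal{C}_m(X')$ inside $J(X')$; a hypothetical isomorphism $\Sigma(Y)\cong\mathcal{C}_m(X')$ would then force an isomorphism of Albanese varieties incompatible with the known distinct structures of $J(Y)$ and $J(X')$. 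The main obstacle will be assembling the exact numerical data on both sides---in particular ruling out ``accidental'' coincidence of invariants---which is where the detailed input from \cite{Tihomirov1981Fanosurface} and \cite{logachev2012fano} is indispensable; I would ultimately invoke those references rather than redo the full computation.
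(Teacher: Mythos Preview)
The paper gives no proof of this proposition at all: each of the three items is stated with an explicit citation and treated as an imported fact. So your write-up is not a reconstruction of the paper's proof but rather an outline of what one would find in Tihomirov, Pertusi--Yang, and Logachev respectively. That is perfectly legitimate, but it changes what needs to be evaluated.

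Your sketches for (1) and (2) are broadly on target. One small correction in (2): for the quartic double solid, $I_L$ already lies in $\Ku(Y)$ (one checks $\mathrm{RHom}^\bullet(\mathcal{O}_Y,I_L)=\mathrm{RHom}^\bullet(\mathcal{O}_Y(1),I_L)=0$ directly), so the projection functor is the identity on it and the map $\Sigma(Y)\to\mathcal{M}_\sigma(\Ku(Y),v)$ is simply $[L]\mapsto[I_L]$. This actually simplifies your outline rather than damaging it.

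For (3) there is a genuine gap in the route you propose. You want to pass from a hypothetical isomorphism $\Sigma(Y)\cong\mathcal{C}_m(X')$ to an isomorphism of Albanese varieties $J(Y)\cong J(X')$ and then declare the latter ``incompatible with the known distinct structures.'' But both $J(Y)$ and $J(X')$ are $10$-dimensional principally polarized abelian varieties, and it is not known \emph{a priori} (for arbitrary pairs $(Y,X')$) that they are never isomorphic; indeed that statement is essentially equivalent in strength to the non-equivalence of Kuznetsov components the paper is trying to prove, so invoking it here is circular. The argument in Logachev does not go through the Jacobians in this way: it proceeds by computing concrete numerical invariants of the two surfaces (e.g.\ $K_S^2$, $\chi(\mathcal{O}_S)$, the degree and behaviour of the canonical map) and observing that they disagree. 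Your closing remark that you would ``ultimately invoke those references'' is the honest resolution; the Albanese heuristic preceding it does not stand on its own.
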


\begin{lemma}
\label{lemma_isometry_-1_class}
Assume that there is an equivalence $\widetilde{\Phi}:\mathcal{K}u(Y_2)\simeq\mathcal{A}_{X'}$ and let $\mathcal{N}(\mathcal{K}u(Y_2))=\langle v=1-L, w=H-L-\frac{2}{3}P\rangle$.  $$\widetilde{\phi}\langle v, w\rangle=\begin{cases}
\langle  x, y-x\rangle\\
\langle  x, 3x-y\rangle\\
\langle  y-2x, y-x\rangle\\
\langle  y-2x, y-3x\rangle
\end{cases}$$ up to sign
where $\widetilde{\phi}:\mathcal{N}(\mathcal{K}u(Y))\rightarrow \mathcal{N}(\mathcal{A}_{X'})$ is the isometry induced by $\widetilde{\Phi}$. 
\end{lemma}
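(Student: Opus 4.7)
The proof follows the same pattern as Lemma~\ref{lemma_isometry}. An equivalence $\widetilde{\Phi}:\mathcal{K}u(Y_2)\xrightarrow{\sim}\mathcal{A}_{X'}$ induces an isometry $\widetilde{\phi}:\mathcal{N}(\mathcal{K}u(Y_2))\to\mathcal{N}(\mathcal{A}_{X'})$ with respect to the Euler forms. The Euler form on $\mathcal{N}(\mathcal{K}u(Y_2))$ in the basis $\{v,w\}$ is
\[
\chi_{\mathcal{K}u(Y_2)}=\begin{bmatrix} -1 & -1 \\ -1 & -2 \end{bmatrix},
\]
so $\chi(v,v)=-1$, $\chi(w,w)=-2$, and $\chi(v,w)=\chi(w,v)=-1$. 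The plan is to enumerate all pairs $(\alpha,\beta)\in\mathcal{N}(\mathcal{A}_{X'})^{\oplus 2}$ with $\chi(\alpha,\alpha)=-1$, $\chi(\beta,\beta)=-2$, $\chi(\alpha,\beta)=-1$, since this is precisely the system $\widetilde{\phi}(v,w)$ must satisfy.

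First I would classify $(-1)$-classes. Writing an arbitrary element as $ax+by$ and using the Euler form matrix from Lemma~\ref{lemma_Grothendieckgroup_kuz}, the equation $\chi(ax+by,ax+by)=-1$ becomes $(a+2b)^2+b^2=1$, whose integral solutions are $(a,b)=(\pm 1,0)$ and $(a,b)=(\mp 2,\pm 1)$. Hence up to sign the $(-1)$-classes are exactly $x$ and $y-2x$. A similar computation for $(-2)$-classes yields $(c+2d)^2+d^2=2$, giving up to sign $y-x$ and $3x-y$ (equivalently $y-3x$).

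Finally I would impose the cross-pairing constraint $\chi(\widetilde{\phi}(v),\widetilde{\phi}(w))=-1$. For $\widetilde{\phi}(v)=x$ one computes $\chi(x,y-x)=-1$ and $\chi(x,3x-y)=-1$, whereas $\chi(x,x-y)=\chi(x,y-3x)=+1$; so the only compatible choices for $\widetilde{\phi}(w)$ (up to the sign already absorbed into $\widetilde{\phi}(v)$) are $y-x$ and $3x-y$. For $\widetilde{\phi}(v)=y-2x$ an analogous computation gives $\chi(y-2x,y-x)=-1$ and $\chi(y-2x,y-3x)=-1$, while the other two sign choices give $+1$. Together these four compatible combinations are exactly the four cases listed in the statement, and simultaneously negating both entries recovers the remaining possibilities, justifying the ``up to sign'' clause.

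This is a purely numerical argument, so there is no real obstacle beyond a careful case enumeration; the small subtlety to watch is that the Euler form on $\mathcal{N}(\mathcal{A}_{X'})$ is symmetric (as visible from Lemma~\ref{lemma_Grothendieckgroup_kuz}), so the constraints $\chi(\widetilde{\phi}(v),\widetilde{\phi}(w))=\chi(\widetilde{\phi}(w),\widetilde{\phi}(v))=-1$ coincide and give a single equation.
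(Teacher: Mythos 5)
Your proposal is correct and is exactly the calculation the paper has in mind: the paper's proof of this lemma is left to the reader as a "simple calculation," and your enumeration of $(-1)$- and $(-2)$-classes via $(a+2b)^2+b^2=1,2$ together with the cross-pairing $\chi(\widetilde{\phi}(v),\widetilde{\phi}(w))=-1$ mirrors the computation carried out in the paper's analogous Lemma~\ref{lemma_isometry} for the special Gushel--Mukai case. Your sign discussion (only simultaneous negation survives the pairing constraint) is also the intended reading of the "up to sign" clause.
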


\begin{proof}
Simple calculations, we leave them to reader.
\end{proof}

\subsection{Construct Morphism identifying two moduli spaces}
\label{subsection_construct_morphism_conicslines}
Assume that there is an equivalence $\widetilde{\Phi}:\mathcal{K}u(Y_2)\simeq\mathcal{A}_{X'}$, denote by $S=p(\mathcal{C}(X'))$ an irreducible component of $M_{\sigma'}(\mathcal{A}_{X'}, x=1-2L)$ constructed in Theorem~\ref{theorem_irreduciblecomponent_conics}. Let $\mathcal{I}_{\Sigma}$ be the universal ideal on $\Sigma(Y)\times Y$. Note that $Y$ is a smooth projective variety and $\Sigma(Y)$ is irreducible and projective. Then we can construct a morphism $\gamma: \Sigma(Y)\cong M_{\sigma}(\mathcal{K}u(Y_2),v)\rightarrow M_{\sigma'}(\mathcal{A}_{X'}, \widetilde{\phi}(v))$, either by Fourier-Mukai functor or convolution techniques described in \cite[Section 3.32]{altavilla2019moduli} and \cite[Section 5.2]{BMMS}. By the same argument in Lemma~\ref{lemma_compatibility_stableobject}, the image $\gamma(q)\in M_{\widetilde{\Phi}(\sigma)}(\mathcal{A}_{X'},\widetilde{\phi}(v))$ of point $q\in\Sigma(Y_2)$ is given by $\widetilde{\sigma'}$-stable object $\widetilde{\Phi}(I_L)$(Proposition~\ref{prop_stability_ideal_conic}). \begin{enumerate}
    \item If $\widetilde{\phi}(v)=x$. Then $\gamma$ factor through one of irreducible components of $M_{\sigma'}(\mathcal{A}_{X'},x)$. But since $\mathcal{M}_{\sigma}(\mathcal{K}u(Y),v)$ is irreducible and $\widetilde{\Phi}$ is an equivalence, then $\gamma$ must factor through $S$ by Theorem~\ref{theorem_bijective_closedpoints_conics}. Also note that $\gamma$ is projective dominant morphism. Then we have $\mathcal{S}=p(\mathcal{C}(X'))=\gamma(\Sigma(Y))$.
    \item If $\widetilde{\phi}(v)=y-2x$. By the same argument above, we can construct a projective dominant morphism $\gamma$ from  $\Sigma(Y)\cong M_{\sigma}(\mathcal{K}u(Y_2),v-w)\cong M_{\sigma}(\mathcal{K}u(Y_2),v)$ to  $M_{\widetilde{\Phi}(\sigma)}(\mathcal{A}_{X'},\widetilde{\phi}(v-w)=\pm x)\cong p(\mathcal{C}(X'))$. 
\end{enumerate}

\begin{proof}[Proof of Theorem~\ref{Theorem_main_OGM}]
The argument is very similar to the proof of Theorem~\ref{theorem_main} and discussions in Section~\ref{subsection_upshot}. It almost suffices to show that the Fano surface $\Sigma(Y)$ of lines on $Y$ can not be ismorphic to  $\mathcal{C}_m(X')$ of conics on a smooth ordinary Gushel-Mukai threefold $X'$. If $\tilde{\phi}(v)=x$, then 
\begin{itemize}
    \item If $X'$ is general and $Y$ is general, then the morphism $\gamma$ constructed in Section~\ref{subsection_construct_morphism_conicslines} is a bijection of $\Sigma(Y)$ and $\mathcal{S}=p(\mathcal{C}(X'))=\mathcal{C}_m(X')$ and bijection of their tangent spaces at each point. Then $\Sigma(Y)\cong\mathcal{C}_m(X')$ since both of them are smooth. But this is impossible by Proposition~\ref{prop_lines_Y_2}. 
    \item If $X'$ is general and $Y$ is not general, i.e, $\mathcal{M}_{\sigma}(\mathcal{K}u(Y),v)\cong\Sigma(Y)$ has finitely many singular points and dimension of tangent spaces at those points are 3. $\gamma$ would identify tangent spaces of $\mathcal{C}_m(X')$ and $\Sigma(Y)$ at each point, but this is again impossible since $\mathcal{C}_m(X')$ is smooth.
    \item If $X'$ is not general and $Y$ is general, then $p(\mathcal{C}(X'))$ has singular points, where the dimension of tangent space at $[C]$ would jump. But $\Sigma(Y)$ is smooth, where the dimension of tangent space is a constant. It is again impossible. 
    \item If none of $X'$ and $Y$ is general. As the morphism $\gamma$ is a bijectivity on closed points of $\Sigma(Y)$ and $p(\mathcal{C}(X'))$, the surface $\Sigma(Y)$ is connected(\cite[Remark 2.2.9]{KPS2018}) and the surface $p(\mathcal{C}(X'))$ is irreducible and normal(\cite[Theorem 5.1]{debarre2020double} and \cite{debarre2012period}). This means that $\Sigma(Y)\cong p(\mathcal{C}(X'))$. But this is again impossible by Proposition~\ref{prop_lines_Y_2} (3). 
    \end{itemize}
   
Thus, $\tilde{\phi}(v)=y-2x$. Then there are also two cases: $\tilde{\phi}(w)=y-x$, then $\tilde{\phi}(v-w)=y-2x-y+x=-x$. If $\tilde{\phi}(w)=y-3x$, then $\tilde{\phi}(v-w)=y-2x-y+3x=x$. But $\mathcal{M}_{\sigma}(\mathcal{K}u(Y),v)\cong\mathcal{M}_{\sigma}(\mathcal{K}u(Y), v-w)$ by Lemma~\ref{lemma_rotation_functor_preserves_stability} and $M_{\sigma'}(\mathcal{A}_{X'},\widetilde{\phi}(v-w)=\pm x)\cong p(\mathcal{C}(X'))$. Then it reduces to previous case. Thus $\mathcal{K}u(Y)\not\simeq\mathcal{A}_{X'}$. But  $\mathcal{A}_{X'}\simeq\mathcal{K}u(X')$ by Remark~\ref{remark_twoversion_Kuznetsovcomponent}. The desired result follows. The proof for Theorem~\ref{Theorem_main_OGM} is complete.
\end{proof}

\bibliographystyle{alpha}
{\small{\bibliography{Fano threefold conjecture.bib}}}

\end{document}